\pgfplotsset{compat=1.15}
\numberwithin{equation}{section}
\def\@tocline#1#2#3#4#5#6#7{\relax
  \ifnum #1>\c@tocdepth 
  \else
    \par \addpenalty\@secpenalty\addvspace{#2}%
    \begingroup \hyphenpenalty\@M
    \@ifempty{#4}{%
      \@tempdima\csname r@tocindent\number#1\endcsname\relax
    }{%
      \@tempdima#4\relax
    }%
    \parindent\z@ \leftskip#3\relax \advance\leftskip\@tempdima\relax
    \rightskip\@pnumwidth plus4em \parfillskip-\@pnumwidth
    #5\leavevmode\hskip-\@tempdima
      \ifcase #1
       \or\or \hskip 1em \or \hskip 2em \else \hskip 3em \fi%
      #6\nobreak\relax
    \hfill\hbox to\@pnumwidth{\@tocpagenum{#7}}\par
    \nobreak
    \endgroup
  \fi}
\newcommand{\dist}{\operatorname{dist}}
\newtheorem{thm}{Theorem}[section]
\newtheorem{cor}[thm]{Corollary} 
\newtheorem{prop}[thm]{Proposition}
\newtheorem{lemma}[thm]{Lemma}
\theoremstyle{definition}
\newtheorem{defn}[thm]{Definition}
\newtheorem{exmp}[thm]{Example}
\newtheorem{ques}[thm]{Question}
\theoremstyle{remark}
\newtheorem{rem}[thm]{Remark}
\newtheorem*{clm}{Claim}
\newtheorem*{ack}{Acknowledgment}
\renewcommand{\th}[1]{\begin{thm}\label{#1}}
	\renewcommand{\eth}{\end{thm}}
\newcommand{\co}[1]{\begin{cor}\label{#1}}
	\newcommand{\eco}{\end{cor}}
\newcommand{\pr}[1]{\begin{prop}\label{#1}}
	\newcommand{\epr}{\end{prop}}
\newcommand{\df}[1]{\begin{defn}\label{#1}}
	\newcommand{\edf}{\end{defn}}
\newcommand{\ex}[1]{\begin{exmp}\label{#1}} 
	\newcommand{\eex}{\end{exmp}}
\newcommand{\qu}[1]{\begin{ques}\label{#1}}
	\newcommand{\equ}{\end{ques}}  
\newcommand{\mk}{\begin{rem}}
	\newcommand{\emk}{\end{rem}}
\newcommand{\cl}{\begin{clm}}
	\newcommand{\ecl}{\end{clm}} 
\newcommand{\ac}{\begin{ack}}
	\newcommand{\eac}{\end{ack}} 
\newcommand{\ga}{\begin{gather}}
\newcommand{\ega}{\end{gather}}
\newcommand{\gan}{\begin{gather*}}
\newcommand{\egan}{\end{gather*}}
\newcommand{\al}{\begin{gngn}}
	\newcommand{\eal}{\end{align}}
\newcommand{\aln}{\begin{align*}}
\newcommand{\ealn}{\end{align*}}
\newcommand{\eq}[1]{\begin{equation}\label{#1}}
\newcommand{\eeq}{\end{equation}}
\newcommand{\pa}{\partial{}}
\newcommand{\zbd}{\partial_{\ov{z}}}
\newcommand{\we}{\wedge}
\newcommand{\ra}{\longrightarrow}
\newcommand{\sm}{\setminus}
\newcommand{\DD}[2]{\frac{\partial #1}{\partial #2}}
\newcommand{\R}{\mathbb{R}} 
\newcommand{\C}{\mathbb{C}}
\newcommand{\D}{\mathbb{D}}
\newcommand{\U}{\mathcal{U}} 
\newcommand{\mc}{\mathcal}
\newcommand{\mf}{\mathfrak}
\newcommand{\tit}{\textit}
\newcommand{\ov}{\overline}
\newcommand{\wti}{\widetilde}
\newcommand{\hht}{\widehat}
\newcommand{\dbar}{\overline\partial}
\newcommand{\all}{\alpha}
\newcommand{\del}{\delta}
\newcommand{\Del}{\Delta}
\newcommand{\var}{\varphi}
\newcommand{\ve}{\varepsilon}
\newcommand{\om}{\omega}
\newcommand{\Om}{\Omega}
\newcommand{\la}{\lambda}
\newcommand{\gm}{\gamma}
\newcommand{\Gm}{\Gamma}
\newcommand{\Si}{\Sigma}
\newcommand{\yh}{\frac{1}{2}}
\newcommand{\yf}{\frac{1}{4}}
\newcommand{\re}[1]{(\ref{#1})}
\newcommand{\rl}[1]{Lemma~\ref{#1}}
\newcommand{\rp}[1]{Proposition~\ref{#1}}
\newcommand{\rt}[1]{Theorem~\ref{#1}}
\newcommand{\rd}[1]{Definition~\ref{#1}}
\newcommand{\nn}{\nonumber}
\newcommand{\nid}{\noindent}
\newcounter{pp}
\newcommand{\bpp}{\begin{list}{$\hspace{-1em}\alph{pp})$}{\usecounter{pp}}}
	\newcommand{\epp}{\end{list}}
\newcounter{ppp}
\newcommand{\bppp}{\begin{list}{$\hspace{-1em}(\roman{ppp})$}{\usecounter{ppp}}}
	\newcommand{\eppp}{\end{list}}
\begin{document}
	
	\title[]{Oblique derivative boundary value problems on families of planar domains} 
	\author[]{Ziming Shi}
	
	
	\address{Department of Mathematics,
		Rutgers University - New Brunswick, Piscataway, NJ 08854} 
	\email{zs327@math.rutgers.edu}

\keywords{oblique derivative boundary value problem, generalized Riemann-Hilbert problem, generalized analytic function, singular integral equation}  
\subjclass[2020]{30E25, 30G20, 35J25, 45E05}

	\begin{abstract}
		We consider second-order elliptic equations with oblique derivative boundary conditions, defined on a family of bounded domains in $\mathbb{C}$ that depend smoothly on a real parameter $\lambda \in [0,1]$. We derive sharp regularity properties of the solutions in all variables, including the parameter $\lambda$. More specifically we show that the solution and its derivatives are continuous in all variables, and the H\"older norms of the space variables are bounded uniformly in $\lambda$.  
	\end{abstract}

	\thanks{Supported in part by NSF grant DMS-1500162}  	
	\maketitle
	\tableofcontents

	
	\setcounter{thm}{0}\setcounter{equation}{0}
	
	\section{Introduction} 
	In this paper we study the domain dependence of solutions to the oblique boundary value problem on the plane. We consider the following problem on a family of ``smoothly varying" bounded domains $\Om^{\la} $ in $\C$ that depend on some parameter $\la \in [0,1]$: 
	\eq{odpintro} 
	\mc{L}^{\la} u^{\la} = f^{\la} \quad \text{in $\Om^{\la}$ },  \qquad \frac{du^{\la}}{d\Upsilon^{\la}} = \gm^{\la} \quad \text{on $b \Om^{\la}$. } 
	\eeq
	Here $\mc{L}^{\la}$ is a second-order elliptic operator defined by 
	\eq{Llambda}
	\mc{L}^{\la} u^{\la} = a^{\la} (x,y) u^{\la}_{xx}+ 2b^{\la} (x,y) u^{\la}_{xy} + c^{\la} (x, y) u^{\la}_{yy} + 
	d^{\la}(x,y) u^{\la}_x + e^{\la}(x,y) u^{\la}_y,  
	\eeq
	and $\Upsilon^{\la}$ is an arbitrary non-vanishing directional vector field along $b \Om^{\la}$. We assume $a^{\la} >0$ and the uniform ellipticity condition: 
	\eq{alamba}
	a^{\la} c^{\la} - (b^{\la})^2 \geq \del_0 > 0,\quad\forall \; \lambda\in[0,1]. 
	\eeq
	We show that if all the coefficient functions $a^{\la}, b^{\la}, c^{\la}, d^{\la}, e^{\la}, F^{\la}, \gm^{\la}$ depend on the parameter $\la$ ``smoothly". Then with certain suitable conditions to guarantee the existence and uniqueness of the problem \re{odpintro}, the family of solutions $u^{\la}$ will also depend on the parameter ``smoothly". 
	
	We now make the concept of domain dependence precise. 
	Let $\mathbb N$ be the set of non-negative integers.  Let $\Om$ be a bounded domain in $\mathbb C$, and let $k, j\in \mathbb N$ satisfy $k \geq j$. Given a family of functions $u^\la$ defined on $\Om$,   
	we say that $u^{\la}$ is in the space $\mc{C}^{k+\mu,j}(\ov{\Om})$ if for every integer $i$ with $0 \leq i \leq j$, $[\la \mapsto \pa_{\la}^{i} u^{\la}]$ is a continuous map from $[0,1]$ to $C^{k-i} (\ov{\Om})$ and a bounded map from $[0,1]$ to $C^{k-i+\mu} (\ov{\Om})$. Our main result is 
	\begin{thm} \label{mtintro} 
	  Let $k, j$ be in $\mathbb N\cup\{\infty\}$ with $k \geq j$, and let $0 < \mu < 1$.
		Let $\Om$ be a bounded domain in $\C$ with $\mc{C}^{k+2+\mu}$ boundary. Let $\Gm^{\la}$ be a family of maps that embed $\Om$ onto $\Om^{\la}$, with $\Gm^{\la} \in \mc{C}^{k+2+\mu,j}(\ov{\Om})$ and $\Gamma^0$ being the identity map. 
		Suppose that $a^{\la}, b^{\la}, c^{\la}, d^{\la}, e^{\la}, f^{\la}$ are functions in $\Om^{\la}$, such that $a^{\la} \circ \Gm^{\la}, b^{\la} \circ \Gm^{\la}, c^{\la} \circ \Gm^{\la} \in \mc{C}^{k+1+\mu,j} (\ov{\Om})$, $d^{\la} \circ \Gm^{\la}, e^{\la} \circ \Gm^{\la}, f^{\la} \circ \Gm^{\la} \in \mc{C}^{k + \mu,j} (\ov{\Om})$, and suppose that $\Upsilon^{\la}, \gm^{\la}$ are functions on $b \Om^{\la}$ such that $\Upsilon^{\la} \circ \Gm^{\la}, \gm^{\la} \circ \Gm^{\la} \in \mc{C}^{k+1+\mu,j} (b \Om)$. 
		Suppose certain well-posedness conditions (see \rp{c1ut}) are given so that \re{odpintro} with \re{alamba}  has a unique solution for each $\la$. Then $u^{\la} \circ \Gm^{\la} \in \mc{C}^{k+2+\mu,j} (\ov{\Om})$.  
	\end{thm}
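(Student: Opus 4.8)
The plan is first to transplant the problem to the fixed domain $\Om$ and then to combine the classical reduction of a planar oblique derivative problem to a singular integral equation with an induction on $j$. Set $v^{\la}:=u^{\la}\circ\Gm^{\la}$. Composing the equation and the boundary condition in \re{odpintro} with $\Gm^{\la}$ and applying the chain rule turns the problem into an oblique derivative problem $\wti{\mc L}^{\la}v^{\la}=\wti f^{\la}$ in $\Om$, $dv^{\la}/d\wti l^{\la}=\wti\gm^{\la}$ on $b\Om$, where $\wti{\mc L}^{\la}$ is again uniformly elliptic (with ellipticity constant controlled by $\del_0$ and the $C^1$ norm of $\Gm^{\la}$), and where all the transformed coefficients $\wti a^{\la},\dots,\wti\gm^{\la}$ inherit, via the composition estimates established earlier, the $\mc C^{\cdot,j}$ regularity in $\la$ of the original data. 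Hence we may assume from now on that $\Om^{\la}\equiv\Om$ and $\Gm^{\la}=\mathrm{id}$.

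\textbf{Reduction to a singular integral equation.} Following the theory of generalized analytic functions, pass from $u^{\la}$ to $w^{\la}:=u^{\la}_x-iu^{\la}_y$; after a $\la$-dependent quasiconformal change of the space variables normalizing the principal part to the Laplacian (whose regularity in $\la$ comes from solving a Beltrami equation with parameter), $w^{\la}$ solves a Bers--Vekua equation $\partial_{\ov z}w^{\la}+A^{\la}w^{\la}+B^{\la}\ov{w^{\la}}=F^{\la}$ in $\Om$, and the oblique condition becomes a Riemann--Hilbert condition of the form $\RE\!\big(\ov{G^{\la}}\,w^{\la}\big)=\gm^{\la}$ on $b\Om$. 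Representing $w^{\la}$ through the Vekua $T$-transform and a Cauchy-type integral over $b\Om$ and inserting the boundary condition reduces the problem to a singular integral equation $(\id+\mc K^{\la})\phi^{\la}=\psi^{\la}$ on $b\Om$ with $\mc K^{\la}$ compact on the relevant H\"older space of $b\Om$; $u^{\la}$ is then recovered from $\phi^{\la}$ by fixed integral operators and a quadrature. Since $b\Om$ and all the kernels are now $\la$-independent, $\mc K^{\la}$ and $\psi^{\la}$ depend on $\la$ exactly as well as $A^{\la},B^{\la},F^{\la},G^{\la},\gm^{\la}$ do, i.e.\ they lie in $\mc C^{\cdot,j}$; and the well-posedness hypothesis of \rp{c1ut} is precisely that $\id+\mc K^{\la}$ is invertible for each $\la$.

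\textbf{Base case $j=0$ and induction.} For $j=0$: since $\la\mapsto\mc K^{\la}$ is continuous into the bounded operators on the H\"older space and $[0,1]$ is compact, invertibility is uniform, so $\sup_{\la}\|(\id+\mc K^{\la})^{-1}\|<\infty$ and $\la\mapsto(\id+\mc K^{\la})^{-1}$ is continuous; consequently $\phi^{\la}=(\id+\mc K^{\la})^{-1}\psi^{\la}$ is bounded in $C^{k+1+\mu}(b\Om)$ and continuous in $\la$ into $C^{k+1}(b\Om)$, and undoing the recovery gives $v^{\la}$ bounded in $C^{k+2+\mu}(\ov\Om)$ and continuous in $\la$ into $C^{k+2}(\ov\Om)$, i.e.\ $v^{\la}\in\mc C^{k+2+\mu,0}(\ov\Om)$. (Equivalently: the a priori Schauder estimate $\|v^{\la}\|_{k+2+\mu}\le C(\|\wti f^{\la}\|_{k+\mu}+\|\wti\gm^{\la}\|_{k+1+\mu})$ with $C$ independent of $\la$ gives boundedness, after which compactness of the embedding together with the uniqueness from \rp{c1ut} forces $v^{\la_n}\to v^{\la}$ in $C^{k+2}$ whenever $\la_n\to\la$.) For the inductive step assume the theorem for $j-1$ and all $k$; in particular $v^{\la}\in\mc C^{k+2+\mu,j-1}(\ov\Om)$. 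The difference quotient $g^{\la,h}:=h^{-1}(v^{\la+h}-v^{\la})$ solves the oblique problem with operator $\wti{\mc L}^{\la+h}$, direction $\wti l^{\la+h}$, and right-hand sides $h^{-1}(\wti f^{\la+h}-\wti f^{\la})-h^{-1}(\wti{\mc L}^{\la+h}-\wti{\mc L}^{\la})v^{\la}$ and the analogous boundary expression; letting $h\to0$, these data converge and, by the uniform well-posedness at nearby parameters, $g^{\la,h}$ converges in $C^{k+1}(\ov\Om)$, so $\pal v^{\la}$ exists and solves
\beq
\wti{\mc L}^{\la}(\pal v^{\la})=\pal\wti f^{\la}-(\pal\wti{\mc L}^{\la})v^{\la}\ \text{ in }\Om,\qquad \frac{d(\pal v^{\la})}{d\wti l^{\la}}=\pal\wti\gm^{\la}-\Big(\pal\frac{d}{d\wti l^{\la}}\Big)v^{\la}\ \text{ on }b\Om.
\eeq
This is again an oblique derivative problem for the same well-posed operator, and a Leibniz-rule count using $v^{\la}\in\mc C^{k+2+\mu,j-1}$ and the hypotheses on the coefficients shows its right-hand side lies in $\mc C^{(k-1)+\mu,j-1}(\ov\Om)$ and its boundary datum in $\mc C^{(k-1)+1+\mu,j-1}(b\Om)$; applying the theorem for the pair $(k-1,j-1)$ gives $\pal v^{\la}\in\mc C^{k+1+\mu,j-1}(\ov\Om)$, which together with $v^{\la}\in\mc C^{k+2+\mu,j-1}(\ov\Om)$ is exactly $v^{\la}\in\mc C^{k+2+\mu,j}(\ov\Om)$. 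Since $u^{\la}\circ\Gm^{\la}=v^{\la}$, this proves the theorem.

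\textbf{Main obstacle.} The heart of the matter is the two middle steps: pushing the reduction to a singular integral equation through uniformly in $\la$ --- showing $\la\mapsto\mc K^{\la}$ is continuous (and, for the inductive step, of class $\mc C^{\cdot,j}$) into the bounded operators on the H\"older spaces on $b\Om$, so that invertibility and the norm of $(\id+\mc K^{\la})^{-1}$ are uniform over $\la\in[0,1]$ --- and, if the quasiconformal normalization of the principal part is used, controlling the $\la$-dependence of that change of variables, which is itself a Beltrami equation with parameter. Once these are in place the difference-quotient step is routine, provided one checks that the loss of exactly one space derivative per differentiation in $\la$ is consistent with the target class $\mc C^{k+2+\mu,j}$.
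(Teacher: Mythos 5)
There is a genuine gap at the center of your argument: the claim that after pulling back to $\Om$ and normalizing the principal part, ``$b\Om$ and all the kernels are now $\la$-independent.'' The quasiconformal (isothermal) normalization is a map $\var^{\la}$ whose \emph{image} $D^{\la}=\var^{\la}(\Om)$ is again a $\la$-dependent domain; you cannot have both a fixed domain and a $\dbar$-principal part simultaneously unless you can follow the normalization with a conformal map back to a fixed domain. This is possible only for simply-connected $\Om$ (via the parametrized Riemann mapping theorem of Bertrand--Gong, which is what the paper uses in Section 4); for multiply-connected $\Om$ no such conformal normalization exists, and the paper is forced to work on variable domains with $\la$-dependent fundamental kernels $G_i(z,t,\la)$ --- establishing their continuity and uniform H\"older bounds in $\la$ is the bulk of Sections 5 and 6 and cannot be bypassed. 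Your proposal silently assumes away precisely the difficulty that the paper identifies in the introduction as the reason the multiply-connected case is hard.

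Two further points would need repair even in the simply-connected setting. First, $\id+\mc K^{\la}$ is \emph{not} invertible: for index $n\ge 0$ the homogeneous Riemann--Hilbert problem has a $(2n+1)$-dimensional (real) solution space, and uniqueness in \rp{c1ut} comes from the additional point conditions on a normally distributed set, not from invertibility of the integral operator. The compactness/contradiction argument must therefore be run on the full system (integral equation plus the point conditions determining the coefficients $d_s(\la)$), as the paper does; and for negative index one must instead verify the orthogonality conditions \re{scnnsc}. Second, in the multiply-connected case the boundary integral equation obtained from Vekua's representation is genuinely singular (Cauchy-type), not compact; one must first compose with a reducing operator $\mc M^{\la}$ to obtain a Fredholm equation before any compactness argument applies. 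Your induction on $j$ via difference quotients is in the same spirit as the paper's Sections 4(ii) and 6.4 and is fine in outline, but note that differentiating in $\la$ after the pullback produces a first-order term $e^{\la}\pa_{\zeta}u$ in the principal part, so the normalization step must be redone for the linearized problem as well.
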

	
	The existence and uniqueness theory for fixed $\la$ are classical results and we shall review them in Section 3. 
	
	In Bertrand-Gong \cite{B-G14} the analogous results were proved for the Dirichlet and Neumann problems for the Laplacian equation. As a corollary, they proved a parameter version of the Riemann mapping theorem, namely, for any family of simply connected domains $\Om^{\la}$ given by the embeddings $\Gm^{\la}: \ov{\Om} \to \ov{\Om^{\la}}$ with $\Gm^{\la} \in \mc{C}^{k+1+\mu,j}(\ov{\Om})$, there exists a family of Riemann mappings $R^{\la}: \Om^{\la} \to \D$ such that $R^{\la} \circ \Gm^{\la} \in \mc{C}^{k+1+\mu,j} (\ov{\Om})$, where $\D$ is the unit disk. 
	We shall use this result in our proof of \rt{mtintro} when the domains are simply-connected. 
	
	For fixed $\la$, we apply the classical theory of Vekua to solve equation \re{odpintro}. 
	The idea is to first find an isothermal coordinates in which the problem takes the form  
	\eq{lapeqnintro}
	\Del U + a(x,y)  U_x + b(x,y)U_y = f(x,y), \quad 	\all U_{x} - \beta U_{y} =\gm. 
	\eeq
	By setting $w = u+iv$ for $u =  U_x$ and $v = - U_y$, \re{lapeqnintro} takes the form
	\begin{gather} \label{pAeqnintro}
	\pa_{\ov z} w + Aw + B \ov{w} = F, \quad \text{in $\Om$,}  
	\\ \label{pAbcintro} 
	Re[\ov{l} w] = \gm,  \quad \text{on $b \Om$,}
	\end{gather} 
	where we assume $l = \all + i \beta$ is nowhere vanishing on $b \Om$. Problem \re{pAeqnintro}-\re{pAbcintro} is called a \emph{generalized Riemann-Hilbert problem}. (The classical Riemann-Hilbert problem is the special case when $A,B,F \equiv 0$.) 
	Vekua tackled this problem by introducing the theory of generalized analytic functions. His idea is to find an integral representation formula for a solution $w$ satisfying equation \re{pAeqnintro}. The boundary condition \re{pAbcintro} then translates into a singular integral equation on curves for which there is well-known theory. 
	
	For our problem on domains with parameter, we will first reduce the problem to the $\dbar$ form \re{pAeqnintro}-\re{pAbcintro}. If the domains are simply-connected, we can pull back the problem onto the fixed unit disk $\D$ by using the parameter version of the Riemann mapping theorem proved in \cite{B-G14}. The solution is shown to satisfy a Fredholm integral equation on $\D$, and the smoothness in parameter follows from a standard compactness argument. 
	
	The problem becomes more difficult for families of multiply-connected domains. In this case one may also pull back the equations onto some fixed domain. However, now the pull back maps are no longer biholomorphic, so this procedure will destroy the $\dbar$ form \re{pAeqnintro}. Thus we are forced to deal with variable domains in this case. We use Vekua's idea to reduce the problem to a singular integral equation on $b \Om^{\la}$, and then reduce it into a Fredholm equation in order to use compactness argument.  
	
We mention that one could certainly obtain an estimate of the form 
		\[
		\| w^{\la}  \|_{C^{k+1+\mu} (\ov{\Om^{\la}})} \leq C_{\la} \| f^{\la} \|_{C^{k+\mu} (\ov{\Om^{\la}})},
		\]
		for each fixed $\la$, by solving the problem on each fixed domain. However this estimate is not useful since in general we do not have any information as to how the the constant $C_{\la}$ depends on $\la$.  
	
	Following Vekua, we will henceforth call \re{pAeqnintro}-\re{pAbcintro} Problem A for $\dbar$ or simply Problem A. 
	There is a vast literature on the oblique derivative boundary value problem. See for example Miranda \cite[Chapter 3]{MA70} and  H\"ormander \cite[Chapter 10]{Hor63}.
	
	\medskip
	The paper is organized as follows: 
	
	In Section 2, we introduce the notations used in our paper. We review the definition of H\"older spaces with parameter as were used in \cite{N-W63} and \cite{B-G14}, and we show some basic properties of these spaces. In particular, we prove a result (\rp{cinhd}) which will be used repeatedly throughout the paper to simplify our arguments. 
	Section 3 contains an exposition of Vekua's theory on the solvability and regularity of Problem A without parameter. 
	The solvability properties are determined by the index of the problem (see \rd{pA_index}) and the number of connected components of the boundary $b \Om$.  
	All the results in this section can be found in \cite{VA62}. 
	In Section 4, we prove \rt{mtintro} for families of simply-connected domains $\Om^{\la}$. 
	In Section 5 we examine how the kernel in Vekua's integral solution formula depends on $ \la$. These results will be used in Section 6 to prove \rt{mtintro} for families of multiply-connected domains $\Om^{\la}$. For our proof we are able to avoid taking derivatives on the kernel as was done in \cite{B-G14}. 
	
	In the end we attach an appendix, where we provide either complete proofs or precise references to the various claims used in earlier proofs. For the reader's convenience we also include a few of Vekua's results that we used frequently in the paper. 
	For the constants appearing in our estimate, we use the notation $C_{1,0}$ (resp. $C_{k+\mu, j}$) to indicate that it depends on  $\| \Gm^{\la} \|_{\mc{C}^{1,0}(\Om)}$ (resp. $\| \Gm^{\la} \|_{\mc{C}^{k+\mu,j} (\Om )}$). For a family of functions $u^{\la}$ defined on $\Om^{\la}$, we shall write either $u^{\la} \in \mc{C}^{k+\mu, j} (\ov{\Om^{\la}} )$ or $u^{\la} \circ \Gm^{\la} \in \mc{C}^{k+\mu, j}(\ov \Om)$. We shall write $|f|_{k+\mu}$ instead of $|f|_{C^{k+\mu}(\ov \Om)}$ whenever the domain $\Om$ is clear from the context. 
	
	\ac
	The author would like to express his deep gratitude to Professor Xianghong Gong and Professor Xiaojun Huang for providing innumerable inspiration, encouragement and guidance through many years, without which this project would never be completed. The author would also like to thank the anonymous referee for reading the paper carefully and for many valuable suggestions. 
	\eac
	
	\section{H\"{o}lder spaces with parameter} 
	
	In this section we introduce H\"{o}lder spaces with parameter as in \cite{B-G14}, which can be used to study boundary value problems on families of domains. 
	
	We denote by $C^{r}(\ov{\Om})$, $r \geq 0$ the space of  complex-valued functions which are H\"older continuous with exponent $r$ on a bounded domain $\Om \subseteq \C$. 
	When the domain $\Om$ in question is clear, we also write $| f |_{r}$ in place of $| f |_{C^{r}(\ov{\Om})}$.  
	The space $C^{r-}(\ov \Om)$ consists of those functions that are in $C^{r-\ve}(\ov \Om)$, for all $\ve>0$. 
	We write $\D$ for the unit disk in $\C$, and we write $d A(z) = \frac{i}{2} dz \we d \ov{z}$ for the area element in $\C$.  
	The Cauchy-Green or $\dbar$ solution operator on a domain $\Om \subseteq \C$ is defined by 
	\begin{equation} \label{Topt}
	T_{\Om}f(z) := - \frac{1}{\pi} \iint_{\Om}  \frac{f(\zeta)}{\zeta -z} \, dA(\zeta), \quad z \in \Om. 
	\end{equation} 
	For $1 \leq p \leq \infty$ and $\nu \in \R$, we denote by $L_{p, \nu}(\C)$ the space of functions $f$ satisfying
	\eq{Lpqdef} 
	f  \in L^{p} (\D), \quad \quad  f_{\nu} (z) := |z|^{-\nu} f \left( \frac{1}{z} \right)  \in L^{p}(\D),  \quad 
	\eeq
	with the norm $| f |_{L_{p,\nu}(\C)} = | f |_{L^{p} (\D)} + | f_{\nu} |_{L^{p} (\D)} $. 
	The space $L_{p,\nu} (\C)$ is a Banach space with this norm. Intuitively the lower index $\nu$ describes the rate of growth of $f$ at $\infty$, and we have $L_{p, \nu_{1}}(\C) \subset L_{p, \nu_2}(\C)$, if $\nu_1 > \nu_2$.  It is easy to see that $f_{\nu} \in L^{p} (\D)$ if and only if 
	\[ 
	\int_{\C \sm \D} | u |^{\nu p - 4} | f(u) |^{p} \, dA(u) < \infty. 
	\]		
	For $0 < \mu < 1$, we let $L_\mu^p (\Om)$ denote the set of functions $f$ that satisfy $f \in L^{p} (\Om)$, $f \equiv 0$ outside $\Om$, and 
	\[
	B(f, \Om, \mu, p) = \sup_{h \in \C} \frac{  \left( \iint_{\Om} \left| f(z + h) - f(z) \right|^{p}   \, d A(z) \right) ^{\frac{1}{p}}}{|h|^\mu} < \infty. 
	\]
	Then $L_\mu^{p} (\Om)$ is a Banach space with the norm:
	\eq{Lpall}  
	|f|_{L_\mu^{p} (\Om)} = |f|_{L^{p}(\Om)} + B(f, \Om, \mu, p). 
	\eeq
	We say that $w$ is a solution to the equation 
	\begin{equation} \label{Sec2weq} 
	\pa_{\ov{z}} w +  Aw + B\ov{w} = F, \quad \text{in $\Om$}, 
	\end{equation}
	if $w, \pa_{\ov{z}} w \in L^1 (\Om)$, and the above equation holds in the sense of distributions in $\Om$. We denote by $\mc{L}_{p}(A,B,F, \Om)$ (resp. $\mc{L}_{p,2}(A,B,F)$) the set of $w \in L^1(\Om)$ that satisfy \re{Sec2weq} with $A,B,F \in L^{p}(\Om)$ (resp. $L_{p,2} (\C )$)  If $F \equiv 0$, we write $\mc{L}_{p}(A,B, \Om)$ (resp. $\mc{L}_{p,2}(A,B)$) for the spaces defined as above. 
	
	We adopt the following notations for differences of functions: 
	\eq{del_note}
	\del_{z,\zeta} (f (\cdot, \la)) := f(z, \la) - f(\zeta, \la), \quad \quad \del^{\la_{1}, \la_{2}} f (z,  \cdot) := f(z, \la_{1}) - f(z, \la_{2}).
	\eeq
	We denote the difference quotient with respect to the parameter by
	\eq{diff_quot_not}   
	D^{\la, \la_{0}} f (\cdot) := \frac{f(\cdot, \la) - f(\cdot, \la_{0})}{\la - \la_{0}} . 
	\eeq
	
	To study boundary value problems on families of domains, we use the H\"older space with parameter $\mc{C}^{k+\mu,j}(\ov{\Om_{\Gm}})$ from \cite{B-G14}(our space is the same as the $\mc{B}$ space in \cite{B-G14}.) First we define the spaces for a fixed domain: 
	\df{fdspacedef}
 Let $k, j$ be in $\mathbb N\cup\{\infty\}$ with $k \geq j$, and let $0 < \mu < 1$. Let $\Om$ be a bounded domain in $\C$ with $C^{k+\mu}$ boundary, and let $\{ u^{\la} \}_{\la \in [0,1]}$ be a family of functions on $\Om$. We say that $\{ u^{\la} \}_{\la \in [0,1]}$ is in the class $\mc{C}^{k+\mu,j}_{\ast} (\ov{\Om})$, if for all $ 0 \leq i \leq j$, $|\pa_{\la}^{i}u^{\la}|_{k+\mu}$ are uniformly bounded in $\la$, and $\pa_{\la}^{i}u^{\la} $ is continuous in $\la$ in the $C^k(\ov{\Om})$ norm. The norm on $\mc{C}^{k+\mu,j}_{\ast} (\ov{\Om})$ is defined to be
	\eq{fdnorm} 
	|u^\la|_{k+\mu,j} := \sup_{{0 \leq i \leq j, \la \in [0,1]}}  \{ |\pa_{\la}^{i}u^{\la}|_{k+\mu} \}. 
	\eeq 
	Similarly we define $\mc{C}^{k+\mu,j}_{\ast} (b \Om)$ by replacing $\Om$ with $b \Om$ in the above expressions. 
	\edf
	
	For our problem with parameter, we consider functions defined on a family of domains $\Om^{\la}$, for $\la \in [0,1]$. We assume there is an embedding $\Gm^{\la}: \Om \to \Om^{\la}$, where $\Gm^0$ is the identity map from $\Om$ to itself.

	Now we make the following observation. Suppose for each $\la \in [0,1]$, $u^{\la}$ is defined on an open set $V$ with $\cup_{\la} \Om^{\la} \subset \subset V$. Let $\Gm^{\la} \in \mc{C}^{k+\mu,j}_{\ast}( \ov{\Om} )$, and $u^{\la} \in \mc{C}^{k+\mu,j}_{\ast}(V)$. But then  $\wti{u}^{\la} (z) := u^{\la}(\Gm^{\la}(z))$ is not in the space $\mc{C}^{k+\mu,j}_{\ast}(\ov{\Om} )$, since when one takes derivative in $\lambda$, by the chain rule one needs to take the space derivative of $u^{\la}$. 
	Hence instead of $\mc{C}^{k+\mu,j}_{\ast}(\ov{\Om})$, we shall adopt the following definition in \cite{B-G14}  for a family of functions defined on a family of domains:
	\begin{defn}
	 Let $k, j,\mu$ be as in \rt{mtintro}. Let $\{\Gm^{\la}: \ov{\Om} \to \ov{\Om^{\la}}\}_{\la \in [0,1]}$ be in the class $ \mc{ C}^{k+\mu,j}_{\ast} (\ov{\Om})$. Let $u^{\la}$ be a family of functions defined on $\Om^{\la}$, and put $\wti{u} (z, \la) := u^{\la}(\Gm^{\la}(z))$. We say that 
		\[ 
		u^{\la} \in \mc{C}_{\ast}^{k+\mu,j}(\ov{\Om_{\Gm}}) \   \text{ if $\wti{u} (z, \la) \in \mc{C}_{\ast}^{k+\mu,j}(\ov{\Om})$},\quad 
		u^{\la} \in \mc{C}_{\ast}^{k+\mu,j}(b \Om_{\Gm})\   \text{if $\wti{u} (z, \la) \in C_{\ast}^{k+\mu,j}(b \Om)$}. 
		\]
		For $0 \leq j \leq k$, define the spaces 
		\[ \mc{C}^{k+1+\mu,j}(\ov{\Om}_{\Gm}) := \bigcap_{i=0}^{j} \mc{C}_{\ast}^{k-i+\mu,i}(\ov{\Om}_{\Gm}), \quad  
		\mc{C}^{k+1+\mu,j}(b \Om_{\Gm}) := \bigcap_{i=0}^{j} \mc{C}_{\ast}^{k-i+\mu,i}(b \Om _{\Gm}).  \]
		The space $\mc{C}^{k+1+\mu,j}(\ov{\Om}_{\Gm})$ is a Banach space with the norm defined by 
		$
		\| u^\la \|_{k+\mu,j} :=
		\max_{0 \leq i \leq j}$ $\{ | \wti{u} (z, \la)|_{k-i+\mu,i} \}  
		$. In other words, we say that $u^
		\la \in \mc{C}^{k+1+\mu,j}(\ov{\Om}_{\Gm})$ if $|\pa_{\la}^{i} \wti{u} |_{k-i+\mu}$ is uniformly bounded in $\la$, and $\pa_{\la}^{i}\wti{u}$ is continuous in the $C^{k-i} (\ov{\Om})$ norm, for each $i$ with $0 \leq i \leq j$.  
		Since $| \pa_{\la}^{i'} u |_{k-i + \mu} \leq | \pa_{\la}^{i'} u |_{k-i'+\mu}$, for any $i'$, $0 \leq i' \leq i$, we have
		\[ 
		\| u^\la \|_{k+\mu,j} = \max_{0 \leq i \leq j} \{ |\pa_{\la}^{i} \wti{u} (z, \la)|_{k-i+\mu} \}, \quad \wti u (z,\la) := u^\la(\Gm^\la(z)). 
		\]
	\end{defn}
	
	We denote the restriction of $\Gm^{\la}$ to $b \Om$ by $\rho^{\la}(s)$, where $s$ is the arclength element on $b \Om$. Then $dt^{\la} = (\rho^{\la})'(s) ds$.   
	\pr{cinhd}
	Let $k$ be a non-negative integer and $0 < \mu < 1$. Let $\Om$ be a bounded domain in $\R^{n}$.  \\
	(i) For any $0< \mu' < \mu$, $C^{k+\mu'} (\ov{\Om})$ is relatively compact in $C^{k+\mu} (\ov{\Om})$. 
	\\
	(ii) Let $\{f^{\la} \}$ be a family of functions in the class $C^{k+\mu}(\ov{\Om})$ such that $|f^{\la}|_{k+\mu}$ is uniformly bounded in $\la$, and $ f^{\la}$ is continuous in $\la$ in the $| \cdot |_{0}$ -norm. Then $f^{\la}$ is continuous in $\la$ in the $| \cdot |_{k+\mu'}$-norm, for any $0 < \mu' < \mu$. 
	\epr
	\begin{proof} 
		(i) Taking $\ve < \frac{\mu - \mu'}{\mu}$ (so that $ \mu(1- \ve) - \mu' >0$), we have
		\begin{align*}
		\| g_n \|_{k+\mu'} &= \sup_{x,y \in \Om} \frac{|D^k g_{n}(x) - D^k g_{n}(y)|}{|x-y|^{\mu'}} 
		\\ &\leq  \sup_{x,y \in \Om} \frac{| D^k g_{n}(x) - D^k g_{n}(y)|^{1-\ve}}{|x-y|^{\mu'}} \sup_{x,y \in \Om} | D^k g_{n}(x) - D^k g_{n}(y)|^{\ve} \\
		&\leq C \left( \| g_n \|_{k+\mu, 0} \right) \sup_{x,y \in \Om} |x-y|^{\mu(1-\ve) - \mu'}  
		| g_{n}|_k^{\ve} 
		\leq C \left( \| g_n \|_{k+\mu, 0} \right) |g_{n}|_k^{\ve}, 
		\end{align*}
		where the constant $C$ depends on $\Om$, $\mu, \mu', \ve$. By the Ascoli-Arzela theorem, there exists a subsequence $\{g_{n_j}\}$ which converges uniformly to some function $g_\ast$ in the $|\cdot|_k$ norm. Using the above estimates, the sequence $\{g_{n_j}\}$ is Cauchy in the $| \cdot|_{k+\mu'}$ norm, hence must converge to some $C^{k+\mu'}(\ov \Om)$ function, which is just $g_\ast$. 
		
		\medskip
		\noindent
		(ii) Seeking contradiction, suppose there exists some $\la_0$ and a sequence $\la_{j} \to \la_0$ such that $|f^{\la_{j}} - f^{\la_0} |_{k + \mu'} \geq \del >0$. By part (i) and passing to a subsequence if necessary, $f^{\la_{j}}$ converges to some function $\hht{f}$ in the $| \cdot|_{k + \mu'}$ -norm, and thus 
		\eq{k+all}
		|\hht{f} - f^{\la_0}|_{k+ \mu'} \geq \del > 0.   
		\eeq
		On the other hand, we have
		$
		| \hht{f} - f^{\la_0} |_0 \leq | \hht{f} - f^{\la_j} |_0  + | f^{\la_j} - f^{\la_0} |_0. 
		$
		The right-hand side converges to $0$ by assumption, so $\hht{f} \equiv f^{\la_0}$, which contradicts \re{k+all}. Hence $f^{\la}$ is continuous in $\la$ in the $| \cdot |_{k+\mu'}$ norm.  
	\end{proof}	  
	The following result shows that the definition $\mc{C}^{k+\mu,j}  (\ov{\Om}_{\Gm})$ is independent of the embedding $\Gm^{\la}$. For this reason we shall from now on write $\mc{C}^{k+\mu,j}  (\ov{\Om^{\la}})$ instead of $\mc{C}^{k+\mu,j}  (\ov{\Om}_{\Gm})$.      
	\begin{prop}[{\cite[Lemma 2.2]{B-G14}}] \label{embindle}
	Let $k, j,\mu$ be as in \rt{mtintro}.
	Let $\ov{\Om}$ be a bounded domain with $b \Om \in C^{k+\mu,j} \cap C^{1}$. Let $\Gm^{\la}_1$ and $\Gm^{\la}_2$ be two embeddings from $\ov{\Om}$ to $\ov{\Om^{\la}}$. 
	If $\Gm_i^{\la} \in  \mc{C}^{k+ \mu,j} (\ov{\Om }) \cap \mc{C}^{1,0} (\ov{\Om})$. Then
	$\mc{C}^{k+\mu,j} (\ov{\Om}_{\Gm_1}) = \mc{C}^{k+\mu,j} (\ov{\Om}_{\Gm_2}) $. 
	\end{prop} 
	
	\begin{lemma} \label{iftp}
	Let $k, j,\mu$ be as in \rt{mtintro}. Let $\Om, \Om' \subset \R^{2}$, and $F: \ov{\Om} \times [0,1] \to \ov{\Om'} \times [0,1] $ be a $\mc{C}^{k+1+\mu, j}$-mapping, defined by
	\[ 
	F(x,y,\la) = (f_{1}(x,y,\la), f_{2}(x,y,\la), \la) := (\xi, \eta, \tau). 
	\]
	Suppose that $ f(\cdot, \cdot, \la) =(f_{1}(\cdot, \cdot, \la),f_{2}(\cdot, \cdot, \la))$ are bijections from $ \ov{\Om}$ to $\ov{\Om'}$ for each fixed $\la$, and the Jacobian matrix $DF$ has non-vanishing determinant in $\ov{\Om} \times [0,1] $. Then $F^{-1} \in \mc{C}^{k+1+\mu,j} (\ov{\Om'} \times [0,1] )$.
	\end{lemma} 
	The proof can be done by induction on $k,j$. The details are left to the reader. 
	
	\begin{prop}[{\cite[Corollary 9.4]{B-G14}}]  \label{Rmp}
	Let $k, j,\mu$ be as in \rt{mtintro}. Let $ \Om $ be a simply-connected domain in $\C$ with $b \Om \in C^{k+1+\mu}$, and let $\Gm^{\la}: \ov{\Om} \to \ov{\Om^{\la}}$ be in the class $\mc{C}^{k+1+\mu,j}(\ov{\Om})$. Then there exists a family of Riemann mappings $R^{\la}: \Om^{\la} \to \D$ such that $\wti{R} (z, \la) := R^{\la} \circ \Gm^{\la} (z) \in \mc{C}^{k+1+\mu,j}(\ov{\Om}).$
	\end{prop}

	\pr{Ri}
	Keep the assumptions in the last theorem. Let $R^{\la}$ be the family of Riemann mappings in \rp{Rmp}. For each $\la \in [0,1]$, let $(R^{\la})^{-1}: \D \to \Om^{\la}$ be the inverse of $R^{\la}$. Then $(R^{\la})^{-1}  \in \mc{C}^{k+1+\mu,j}(\ov{\D} )$. 
	\epr
	\begin{proof}
		Define $F \in \mc{C}^{k+1+\mu,j}(\ov{\Om} \times [0,1] ): \ov{\Om} \times [0,1] \to \ov{\D} \times [0,1]$ by 
		\[ 
		F(z,\la) = (\wti{R} (z, \la), \la) := (R(\Gm(z, \la), \la), \la). 
		\]
		Since $\Gamma(\cdot, \la)$ is an embedding from $\Om$ to $\Om^{\la}$, and $R(\cdot, \la)$ is a Riemann mapping, the map $\wti{R}$ is a bijection for each fixed $\la$, and its Jacobian is invertible. It follows that the Jacobian of $F$ is invertible (when $j\geq 0$.) By Lemma \re{iftp}, $F$ is invertible and the inverse $F^{-1}$ is in $\mc{C}^{k+1+\mu,j}(\ov{\D} \times [0,1 ] )$. $F^{-1}$ is given by
		\[ F^{-1}(w,\tau) = ( \wti{R}^{-1} (w, \tau), \tau). \]
		Write $ R^{-1} (z, \la) = \Gm( \wti{R}^{-1} (z, \la), \la)$. 
		Since $\Gamma \in \mc{C}^{k+1+\mu,j}(\ov{\Om})$, and $\wti{R}^{-1} \in \mc{C}^{k+1+\mu, j}(\ov{\D})$, we obtain $R^{-1} \in \mc{C}^{k+1+\mu,j}(\ov{\D})$.
	\end{proof}
	
	In order to reduce our oblique derivative boundary value problem to Problem A for $\dbar$, we need to change coordinates to reduce the second-order terms in $\mc{L} u$ in the elliptic operator to Laplacian. This is the classical problem of finding isothermal coordinate on the plane. We state some of the results which are well-known.  
	\pr{isocord} 
	Let $\Om$ be a bounded domain in $\C$, and consider the following elliptic equation defined on $\Om$: 
	\eq{soee}
	a(x,y) u_{xx} + 2 b (x,y) u_{xy} + c (x,y) u_{yy} + F \left( x,y,u, u_x, u_y \right) = 0,
	\eeq  
	where the derivatives are taken in the sense of distributions. We assume the following uniform ellipticity condition:  
	\[
	d = a c - b^{2} \geq d_{0} > 0 \quad \text{a.e. in $\Om$. } 
	\]  
	(i) Suppose $ \Om$ has Lipschitz boundary, and $a, b, c \in W^{1,p} (\Om)$, for $2< p < \infty$. Then there exists a 1-1 map $\psi: z = x+iy \mapsto w = \xi + i \eta$ on $\Om$ such that $\psi \in W^{2,p} ( \Om) \subset C^{1+\all_p} (\ov{\Om})$, for $\all_p := \frac{p-2}{p}$. Moreover, if we set $u(x,y) = \wti{u} (\psi(x,y)) $, then $u$ is a solution to equation \re{soee} if and only if $\wti{u}$ is a solution to the following equation
    \[ 
	\wti u_{\xi \xi} + \wti{u}_{\eta \eta} + F_{1} \left( \xi, \eta, \wti{u} , \wti u_\xi, \wti u_\eta \right) = 0 
	\]  
	on the domain $\psi (\Om) $. 
	\\
	(ii) For $k \geq 0$ and $0 < \mu < 1$, let $\Om$ be a bounded domain in $\C$ with $C^{k+1+\mu}$ boundary. Suppose $a, b, c \in C^{k+1+\mu}(\ov{\Om})$. Then the coordinate map $\psi$ constructed in $(i)$ belongs to the class $C^{k+2 + \mu}(\ov{\Om})$. 
	\\
	(iii)  The Jacobian of the map $\psi$ is positive at each point of $\Om$: 
	\[
	J(z) = \left| \pa_z w  \right|^2  - \left| \pa_{\ov z} w \right|^2 > 0, \quad z \in \Om.
	\] 
	\epr
	The reader can refer to \cite{VA62} Theorems 2.5, 2.9 and 2.12 for the proofs of above statements. 
	
	\begin{lemma} \label{indle}
	Let $\Om$ be a bounded domain in $\C$ with $C^1$ boundary, and let $\psi $ be the map from \rp{isocord}. In particular $ \psi \in C^1 (\ov{\Om})$. Then $\psi$ defines a one-to-one onto homeomorphism from $\Om$ to $\Om'$. Let $I$ be a non-vanishing vector field along $b \Om$, and $\psi_{\ast} (I)$ be the push-forward of $I$ by $\psi$, so that $\psi_{\ast}$ is a non-vanishing vector field along $b \Om'$. Then
	\[
	[ I ]_{b \Om} = [ \psi_{\ast} I ]_{b \Om' },  
	\]
	where $[I]_{b \Om}$ denotes the change in the argument of $I(z)$ as $z$ moves along $b \Om$ once in the positive direction.  
	\end{lemma} 
	
	The proof is left to the reader. 
	We now prove a result on the parameter version of isothermal coordinates:  
	\pr{isop}
	Let $k, j,\mu$ be as in \rt{mtintro}.
		Let $ \Om$ be a bounded domain in $\C$ with $C^{k+2+\mu}$ boundary, and $\Gm^{\la}:\Om \to \Om^{\la}$ be a family of embeddings in the class $\mc{C}^{k+2+\mu,j} (\ov{\Om})$, with $\Gm^0$ being the identity map. 
	Keep the assumptions in \rt{mtintro}.  Consider the second-order elliptic equation with oblique derivative boundary condition: 
	\begin{equation} \label{isoodpeqn}   
	\begin{gathered}  
	a^{\la} (x,y) u^{\la}_{xx}+ 2b^{\la} (x,y) u^{\la}_{xy} + c^{\la} (x, y) u^{\la}_{yy} + 
	d^{\la}(x,y) u^{\la}_x + e^{\la}(x,y) u^{\la}_y = f^{\la} (x,y),  \text{in $\Om^{\la}$.} \\ 
	\all^{\la} u_x^{\la} + \beta^{\la} u_y^{\la} = \gm^{\la}, \quad \text{on $b \Om^{\la}$.} 
	\end{gathered} 
	\end{equation}
	Suppose $a^{\la}, b^{\la}, c^{\la} \in \mc{C}^{k+1+\mu,j} (\ov{\Om^{\la}})$, $d^{\la}, e^{\la}, f^{\la} \in \mc{C}^{k+\mu,j} (\ov{\Om^{\la}})$, $\all^{\la}, \beta^{\la}, \gm^{\la} \in \mc{C}^{k+1+\mu,j} (\ov{\Om^{\la}})$. Then for all $| \la| < \ve_0$ for some small $\ve_0$, there exists a family of coordinate maps $\var^{\la}: \Om^{\la} \to D^{\la}$ with $\var^{\la} \circ \Gm^{\la} \in \mc{C}^{k+1+\mu,j}(\ov{\Om})$, and the functions  $\mc{U}^{\la} (\tau^{\la}) := u^{\la}( (\var^{\la})^{-1} (\tau^{\la}) )$ ($\tau^{\la} = s^{\la} + it^{\la} $) satisfies: 
	\begin{gather*} 
	\Del \, \mc{U}^{\la} + p^{\la} (s^{\la}, t^{\la}) \, \mc{U}^{\la}_{s^{\la}} + q^{\la} (s^{\la}, t^{\la}) \, \mc{U}^{\la}_{t^{\la}} = h^{\la} ( s^{\la}, t^{\la}), \quad \text{in $D^{\la}$, } \\   
	\nu_1^{\la} \, \mc{U}^{\la}_{s} + \nu_2^{\la} \, \mc{U}^{\la}_{t} =  g^{\la} (s^{\la}, t^{\la}), \quad \text{on $bD^{\la}$, } 
	\end{gather*}
	where $p^{\la}, q^{\la}, h^{\la} \in \mc{C}^{k+\mu,j} (\ov{D^{\la}})$, $\nu_1^{\la}, \nu_2^{\la}, g^{\la} \in \mc{C}^{k+1+\mu,j} (\ov{D^{\la}})$.  
	Moreover, the winding number of the vector field $\U^{\la} = (\all^{\la},  \beta^{\la} )$ along $b \Om^{\la}$ is the same as the winding number of $\U_1^{\la} = (\nu_1^{ \la }, \nu_2^{\la} )$ along $b D^{\la}$. 
	
	\epr   
	\begin{proof}
		First we pull back \re{isoodpeqn} by the maps $\Gm^{\la}$ to the fixed domain $\Om$. Let $v^{\la} (\xi, \eta) := u^{\la} \circ \Gm^{\la} (\xi, \eta)$. Then $v^{\la}$ satisfies  
		\begin{gather} \label{odppbeqn}     
		\wti{a}^{\la} (\xi, \eta) v^{\la}_{\xi \xi}+ 2\wti{b}^{\la} (\xi, \eta)  v^{\la}_{\xi \eta} + \wti{c}^{\la} (\xi, \eta)  v^{\la}_{\eta \eta} + \wti{d}^{\la} (\xi, \eta)  v^{\la}_{\xi} + \wti{e}^{\la}(\xi, \eta) v^{\la}_{\eta} = \wti{f}^{\la} (\xi, \eta) , \; \text{in $\Om$.} \\ \label{odppbbc} 
		\wti{\all}^{\la} v_{\xi}^{\la} + \wti{\beta}^{\la} v_{\eta}^{\la} = \wti{\gm}^{\la}, \quad \text{on $b \Om$.} 
		\end{gather} 
		The second-order coefficients $\wti{a}^{\la}, \wti{b}^{\la}, \wti{c}^{\la}$ are given by 
		\begin{equation} \label{abc_ti} 
		\begin{gathered}  
		\wti{a}^{\la} (\xi,\eta)= (a^{\la} \circ \Gm^{\la}) (\xi^{\la}_{x})^2 + 2 (b^{\la} \circ \Gm^{\la} ) \xi^{\la}_x \xi^{\la}_y + (c^{\la} \circ \Gm^{\la}) (\xi^{\la}_{y})^2,  \\ 
		\wti{b}^{\la} (\xi, \eta) = (a^{\la} \circ \Gm^{\la}) \xi^{\la}_x \eta^{\la}_x + (b^{\la} \circ \Gm^{\la}) \left( \xi^{\la}_x \eta^{\la}_y + \xi^{\la}_y \eta^{\la}_x \right) + (c^{\la} \circ \Gm^{\la} )(\xi^{\la}_y \eta^{\la}_y), 
		\\ 
		\wti{c}^{\la} (\xi,\eta) = (a^{\la} \circ \Gm^{\la}) (\eta^{\la}_{x})^2 + 2 (b^{\la} \circ \Gm^{\la} ) \eta^{\la}_x \eta^{\la}_y + (c^{\la} \circ \Gm^{\la}) (\eta^{\la}_{y})^2. 
		\end{gathered} 
		\end{equation}
		Hence $\wti{a}^{\la}, \wti{b}^{\la}, \wti{c}^{\la} \in \mc{C}^{k+1+\mu,j}(\ov{\Om})$. The first-order coefficients $\wti{d}^{\la}, \wti{e}^{\la}$ are linear combinations of the  products of $a^{\la}, b^{\la}, c^{\la}, d^{\la}, e^{\la}$ and the first and second derivatives of $(\Gm^{\la})^{-1}$, and hence $\wti{d}^{\la}, \wti{e}^{\la} \in \mc{C}^{k+\mu,j} (\ov{\Om})$. Since $\wti{f}^{\la} = f^{\la} \circ \Gm^{\la}$, we have $\wti{f}^{\la} \in \mc{C}^{k+\mu,j}(\ov{\Om})$. 
		
		For the boundary coefficients, we have  
		\[
		\begin{pmatrix}
		\wti{\all}^{\la} (\xi, \eta) \\ \wti{\beta}^{\la} (\xi, \eta) 
		\end{pmatrix}
		= \begin{pmatrix}
		\DD{ ( \Gm^{\la})^{-1}  } {x} (\Gm^{\la} (\xi, \eta) ) & \DD{ (\Gm^{\la})^{-1} }{y} (\Gm^{\la} (\xi, \eta) ) \\
		\DD{ (\Gm^{\la})^{-1} }{x} (\Gm^{\la} (\xi, \eta) )& \DD{  (\Gm^{\la})^{-1} }{y} (\Gm^{\la} (\xi, \eta) )
		\end{pmatrix}  
		\begin{pmatrix} 
		\all^{\la}  (\Gm^{\la} (\xi, \eta) ) \\ \beta^{\la} (\Gm^{\la} (\xi, \eta) ) 
		\end{pmatrix},  
		\]
		or put it differently, $\wti{\U}^{\la} = (\Gm^{\la})^{-1}_{\ast}  (\U^{\la})$.  
		It follows that $\wti{\all}^{\la}, \wti{\beta}^{\la} \in \mc{C}^{k+1+\mu,j}(\ov{\Om})$, and $\wti{\gm}^{\la} = \gm^{\la} \circ  \Gm^{\la} \in \mc{C}^{k+ 1+ \mu,j} (\ov{\Om})$. 
		Since $\Gm^{0}$ is the identity map, the above matrix is close to the identity matrix (in matrix norm) when $\la$ is close to $0$. Consequently we observe that for $| \la| $ small, the winding number of the vector field $\wti{\U}^{\la}(\wti{\all}^{\la}, \wti{\beta}^{\la} )$ along $b \Om$ is the same as that of $(\all^{\la}, \beta^{\la} )$ along $b \Om^{\la}$: 
		$
		\frac{1}{2 \pi} [ \wti{\U} ^{\la} ]_{b \Om} = \frac{1}{2 \pi} [ \U^{\la} ]_{b  \Om^{\la}} = \chi_0.     
		$
		Since $\Gm^0$ is the identity map, by \re{abc_ti} we have $\wti{a}^{0} = a^0$, $\wti{b}^{0} = b^0$ and $\wti{c}^{0} = c^0 $. Therefore for $|\la| < \ve$, we have $\wti{d}^{\la} := \wti{a}^{\la} \wti{c}^{\la} - (\wti{b}^{\la})^2 \geq \ve_0 > 0$, in other words equation \re{odppbeqn} is uniformly elliptic for $\la$ close to $0$.  
		Applying \rp{isocord} for $\la=0$, we get a map $\Psi_0: \Om=\Om^0 \to D^0$, such that $ \Psi_0 \in C^{k+2+\mu}(\ov{\Om_0})$. Writing $\Psi_0 (\xi, \eta) = (\xi', \eta')$, one can transform equations \re{odppbeqn} and \re{odppbbc} to 
		\begin{gather} \label{mcVeqn}  
		\Del \mc{V}^{\la} + a_{0}^{\la} \mc{V}_{\xi' \xi '}^{\la} + 2b_{0}^{\la} \mc{V}_{\xi' \eta '}^{\la} + c_{0}^{\la} \mc{V}_{\eta' \eta '}^{\la} + d_0^{\la} \mc{V}^{\la}_{\xi'} +  e_0^{\la} \mc{V}^{\la}_{\eta'} = f_0^{\la}, \quad  \text{in $D^{0}$; } \\ \label{mcVbc} 
		\all_0^{\la} \mc{V}^{\la}_{\xi'} + \beta_0^{\la} \mc{V}^{\la}_{\xi'} = \gm_0^{\la}, \quad \text{on $bD^0$.}  
		\end{gather} 
		Here  $\mc{V} (\xi', \eta') = v \circ \Psi_0^{-1} (\xi', \eta')$, $a_0^{0} = b_0^{0} = c_0^{0} = 0$, and $|a_0^{\la}|, |b_0^{\la}|, |c_0^{\la}| < \ve$ if $| \la| < \del_0$ for some small positive $\del_0$. 
		By expressions similar to \re{abc_ti}, we see that the second-order coefficients $a_0^{\la}, b_0^{\la}, c_0^{\la}$ are linear combinations of the products of $\wti{a}^{\la} \circ \Psi^{-1}_0, \wti{b}^{\la}\circ \Psi^{-1}_0, \wti{c}^{\la}\circ \Psi^{-1}_0$ and the first space derivatives of $\Psi_0$, and thus they are in the class $\mc{C}^{k+1+\mu, j} (\ov{D_0})$. The first-order coefficients $d_0^{\la}, e_0^{\la}$ are linear combinations of products of $\wti{a}^{\la} \circ \Psi^{-1}_0, \wti{b}^{\la} \circ \Psi^{-1}_0, \wti{c}^{\la} \circ \Psi^{-1}_0, \wti{d}^{\la} \circ \Psi^{-1}_0, \wti{e}^{\la} \circ \Psi^{-1}_0$ and the first and second derivatives of $\Psi_0$, so $d_0^{\la}, e_0^{\la} \in \mc{C}^{k+\mu,j} (\ov{D_0})$. 
		In a similar way $f^{\la}_0 \in \mc{C}^{k+\mu,j} (\ov{D^0 })$ and $\all_0^{\la}, \beta_0^{\la}, \gm_0^{\la} \in \mc{C}^{k+1+\mu, j} (\ov{D^0})$.  
		
		Let $\U_0^{\la} = (\all^{\la}_0, \beta^{\la}_0 )$. Then $\U_0^{\la} = ( \Psi_0)_{\ast} \wti{\U }^{\la}$.   
		By \rl{indle}, the winding number of $\U_0^{\la} = (\all^{\la}_0, \beta^{\la}_0 )$ is the same as the winding number of $\wti{\U}^{\la} $: $\frac{1}{2 \pi} [\U_0^{\la} ]_{ b D^0} = \frac{1}{2 \pi} [\wti{\U}^{\la} ]_{b \Om} = \chi_0 $.  
		Rewrite equation \re{mcVeqn} as 
		\eq{mcVeqn'}
		a_{1}^{\la} \mc{V}_{\xi' \xi '}^{\la} + 2b_{0}^{\la} \mc{V}_{\xi' \eta '}^{\la} + c_{1}^{\la} \mc{V}_{\eta' \eta '}^{\la} + d_0^{\la} \mc{V}^{\la}_{\xi'} +  e_0^{\la} \mc{V}^{\la}_{\eta'} = f_0^{\la}, \quad  \text{in $D^{0}$,} 
		\eeq
		where $a_1^{\la} = 1+ a_0^{\la}$ and $c_1^{\la} = 1+ c_0^{\la}$.  For $| \la| < \del_1$, the determinant for the above equation satisfies
	    $
		d_0^{\la} = a_1^{\la} c_1^{\la} - 2b_0^{\la} \geq \ve_1 >0. 
		$
		By above we have that $a_1^{\la}, b_0^{\la}, c_1^{\la}, d^{\la}_0 \in \mc{C}^{k+1+\mu,j} (D^0)$.  
		Let $\psi^{\la}: \zeta' = \xi ' + i \eta' \mapsto \tau^{\la} = s^{\la} + i t^{\la}$ be a family of coordinate systems that satisfies:  
		\[
		\tau^{\la}_{\ov{\zeta'}} + q^{\la} (\zeta') \tau^{\la}_{\zeta'}  = 0, \quad 
		q^{\la} = \frac{a_1^{\la} - \sqrt{d_0^{\la}} - ib_0^{\la} }{ a_1^{\la}  +\sqrt{d_0^{\la}} + ib_0^{\la} }. 
		\]
		Note that for $\la=0$, the map $\psi^0$ is the identity map, $q^{0} \equiv 0$, and the above equation is trivially satisfied. 
		
		By \rl{indle}, $\psi^{\la} \in \mc{C}^{k+2+\mu,j}(\ov{D^0})$. Let $\psi^{\la}: D^0 \to D^{\la}$. 
		Then  \re{mcVeqn'} and \re{mcVbc} are transformed into
		\begin{gather*} 
		\Del \, \mc{U}^{\la} + p^{\la} (s^{\la}, t^{\la}) \, \mc{U}^{\la}_{s^{\la}} + q^{\la} (s^{\la}, t^{\la}) \, \mc{U}^{\la}_{t^{\la}} = h^{\la} ( s^{\la}, t^{\la}), \quad \text{in $D^{\la}$;} \\   
		\nu_1^{\la} \, \mc{U}^{\la}_{s} + \nu_2^{\la} \, \mc{U}^{\la}_{t} =  g^{\la} (s^{\la}, t^{\la}), \quad \text{on $bD^{\la}$, } 
		\end{gather*}  
		where $h^{\la} (s^{\la}, t^{\la}) = \frac{J^{\la} f^{\la}_0 }{4 \sqrt{d_0^{\la}}} $, and
		\begin{gather*}
		p^{\la} = \frac{J^{\la}}{4 \sqrt{d_0^{\la}}} \left( 1 + a_1^{\la} s^{\la}_{\xi' \xi'}  + 2 b_0^{\la} s^{\la}_{\xi' \eta'} + c_1^{\la} s^{\la}_{\eta' \eta'} + d_0^{\la} s^{\la}_{\xi'} +e^{\la}_0 s^{\la}_{\eta'} \right), 
		\\  
		q^{\la} = \frac{J^{\la}}{4 \sqrt{d_0^{\la}}} \left( 1 + a_1^{\la} t^{\la}_{\xi' \xi'}  + 2 b_0^{\la} t^{\la}_{\xi' \eta'} + c_1^{\la} t^{\la}_{\eta' \eta'} + d_0^{\la} t^{\la}_{\xi'} +e^{\la}_0 t^{\la}_{\eta'} \right). 
		\end{gather*} 
		Hence $p^{\la}, q^{\la} \in \mc{C}^{k+\mu,j} ( \ov{D^{\la} })$. Since $f^{\la}_0 \in \mc{C}^{k+\mu,j} (\ov{D^0})$, we have $h^{\la} \in \mc{C}^{k+\mu,j} (\ov{D^{\la}})$. Similarly it is easy to see $\nu_1^{\la}, \nu_2^{\la}, g^{\la} \in \mc{C}^{k+1+\mu,j} (\ov{D^{\la}})$. 
		Let $\U_1^{ \la} = (\nu_1^{\la}, \nu_2^{\la} )$. Then $\U_1^{\la} = \psi^{\la}_{\ast} (\U_0^{\la}) $. Since $\psi^{0}_{\ast}$ is the identity matrix, we see that the winding number of $\U_1^{\la}$ is the same as that of $ \U_0^{\la}$ for $|\la| $ small: 
		$
		\frac{1}{2 \pi} [\U^{\la} ]_{ b \Om^{\la} } = \frac{1}{2 \pi} [ \U_1^{\la} ]_{b D^{\la} } = \chi_0.
		$
		
		We now put together the series of transformation from earlier 
		\[
		\var^{\la} = \psi^{\la} \circ \Psi^0 \circ (\Gm^{\la})^{-1}: \Om^{\la} \ra D^{\la}. 
		\]  
		Since $ \Gm^{\la} \in \mc{C}^{k+2+\mu,j} (\ov{\Om})$, $\Psi^{0} \in \mc{C}^{k+2+\mu, j} (\ov{\Om_0})$, and $\psi^{\la} \in \mc{C}^{k+2+\mu,j}(\ov{D_0})$, we have $\var^{\la} \circ \Gm^{\la}: \Om \to D^{\la} \in \mc{C}^{k+2+\mu,j} (\ov{\Om})$. 
		\hfill \qed 
		\section{A review of Vekua's theory on fixed domains} 
		In this section we review Vekua's theory for solving oblique derivative boundary value problem. First we introduce Problem A (as in \cite{VA62}) on a fixed domain. 
		\\
		\noindent
		\textbf{Problem A} (Generalized Riemann-Hilbert problem). 
		Let $\Om$ be a bounded domain in $\C$. Given functions $A,B,F$ defined in $\Om$ and $\all,\beta,\gm$ defined on $b\Om$. Find a solution $w(z) = u(z) + iv(z)$ of the equation 
		\begin{equation}  \label{probA} 
		\begin{cases}  
		\mc{L}_{A,B} (w) :\equiv \pa_{\ov{z}} w  + A(z) w(z)  + B(z) \ov{w(z)} = F(z) \quad \text{in $\Om$;}  \\
		\all u + \beta v  \equiv  Re[\ov{l(z)} w] = \gamma \quad \text{on $b \Om$, }
		\end{cases} 
		\end{equation}
		where $ l:= \all + i \beta$ and is assumed to have unit length. 
		\begin{defn}   \label{pA_index} 
			The \emph{index of Problem A}, denoted by $n$, is the winding number of $l$ along $b \Om$: 
			\[ 
			n :=  \frac{1}{2 \pi} [l]_{b\Om}. 
			\] 
			Here we denote by $[l]_{b\Om}$ the change in the argument of $l(t)$ as $t$ loops around the boundary $b \Om$ once counterclockwise. 
		\end{defn}
		For now let us assume that 
		\begin{gather}
		\label{ABF}
		b \Om \in C^{1+\mu}, \quad A, B, F \in L^{p}(\Om), \quad 2 < p < \infty; 
		\quad 
		l, \gm \in C^{\mu}(\ov{\Om}), \quad   0<\mu<1.
		\end{gather}
		A priori, we seek a solution $w$ in the class $C^0 (\ov{\Om})$.  
		
		First let us consider the case when $\Om$ is simply-connected. By the Riemann-mapping theorem it suffices to assume $\Om = \D$. In this case $n = \frac{1}{2 \pi}  [l ]_{S^1}$, where $S^1$ is the boundary circle. 
		We now reduce the  problem to a simpler form. 
		Set $q(t) = - \arg( l(t)) + n \, \arg(t) $. Then $q$ is a well-defined function on $S^1$ and $q \in   C^{\mu}(S^1)$. We have
		\begin{align*}
		\ov{l (z)} &= e^{-i \arg(l (z))} 
		= e^{p(z) + i(- \arg (l (z) ) + n \arg z)} e^{-p(z)} e^{-i ( n\arg(z) ) } 
		= e^{\chi(z)} e^{-p(z)} z^{-n}. 
		\end{align*}
		Here $\chi $ is the holomorphic function in $\D$ taking the value $p + iq$ on $S^1$. It is defined by the Schwarz integral:
		\eq{chi}
		\chi(z) := \frac{1}{2 \pi } \int_{S^1} q (t) \frac{t+z}{t-z} \frac{dt}{t}, \quad \quad  z \in \D. 
		\eeq
		By \rp{Cint}, $\chi \in C^{\mu}  (\ov{\Om})$. 
		Using the substitutions
		\eq{pArbc1}
		\ov{l(z)} =z^{-n}e^{\chi(z)} e^{-p(z)}, \quad w_{\ast}(z) =e^{\chi(z)} w(z), 
		\eeq
		$w_{\ast}$ is then a solution to the problem
		\eq{pAbcrf} 
		\begin{cases}
			\pa_{\ov{z}} w_{\ast} + A_{\ast} w_{\ast} + B_{\ast} \ov{w_{\ast}} = F_{\ast},  \quad \text{in $\D$; }
			\\	
			Re[ z^{-n} w_{\ast} ] = \gm_{\ast}, \quad \text{on $S^1$,}
		\end{cases}
		\eeq
		where
		\eq{pAbcrfcf}
		A_{\ast} = A, \quad B_{\ast} = B e^{2i Im \, \chi(z)},  \quad F_{\ast} = Fe^{\chi(z)}, \quad \gm_{\ast}(z) = \gm(z) e^{p(z)} \quad \text{in $\D$.}
		\eeq
		Conversely, given any solution $w_{\ast}$ to the problem \re{pAbcrf}, a solution to the original problem \re{probA} is given by 
		$w(z) = e^{-\chi(z)} w_{\ast} (z)$. From now on we can assume the problem takes the form \re{pAbcrf}, and we write $w, A, B, F, \gm$ without the $\ast$. 
		
		By assumption $\pa_{\ov{z}} w \in L^{1}(\ov{\Om})$ and \rp{dbarie}, $w$ satisfies the integral equation
		\eq{gCG}
		w + T_{\D} (Aw + B \ov{w}) = \Phi + T_{\D}F, \quad \text{in $\D$,}
		\eeq
		where $\Phi$ is some function holomorphic in $\D$. 
		Now in view of \rp{dbar_opt_dist}, any continuous solution $w$ of the above equation satisfies 
		$
		\mc L_{A,B} (w) = F(z).  
		$
		We would like to choose $\Phi$ in such a way that the solution to equation \re{gCG} also satisfies the boundary condition $Re[ z^{-n} w ] = \gm$ on $S^1$, where $n$ is the index of \re{probA}. Then such solution will solve the original Problem A.
		
		First suppose $n\geq 0$. We write $\Phi$ in the form
		\begin{align} \label{Phi0}
		\Phi(z) &= \Phi_{0}(z) + \frac{z^{2n+1}}{\pi} \iint_{\D} \frac{ \ov{A(\zeta) w(\zeta)} + \ov{B(\zeta)} w(\zeta)}{1-\ov{\zeta}z} \, d A(\zeta)   - \frac{z^{2n+1}}{\pi} \iint_{\D } \frac{\ov{F(\zeta)}}{1 - \ov{\zeta}z} \, d A(\zeta) , 
		\end{align}
		where $\Phi_{0}$ is some function to be determined, and it is holomorphic in $\D$. Substituting \re{Phi0} into equation \re{gCG}, we obtain the following integral equation for $w$: 
		\eq{pAie1}
		w(z) + P_{n}(Aw + B \ov{w}) = \Phi_{0}(z) + P_{n}F, 
		\eeq
		where $P_{n}$ is the operator defined by:
		\eq{Pnf1}
		P_{n}\var (z):= - \frac{1}{\pi} \iint_{\D} \left( \frac{\var(\zeta)}{\zeta -z} +  \frac{z^{2n+1}\ov{\var(\zeta)}}{1- \ov{\zeta}z } \right) \, d A(\zeta).  
		\eeq
		The integral equation \re{pAie1} is of Fredholm type, which we now describe. 
		Denoting by $\D^{c}$ the exterior of the unit disk, we can write $P_{n} \var$ in the form
		\eq{Pnf2}
		P_{n} \var (z) = T_{\D} \var (z) + z^{2n+1}T_{\D^{c}}\var_{1} (z), \quad \var_{1}(\zeta) = \zeta^{-1} (\ov{\zeta})^{-2} \ov{\var(\ov \zeta^{-1})}. 
		\eeq
		If $\var \in L^{p}(\D), 2 < p \leq \infty$, $\var_1$ satisfies 
		\[
		\int_{\D} \left| \var_1 \left( \zeta^{-1} \right) \right|^{p} |\zeta|^{-2p} dA(\zeta) 
		= \int_{\D} \left| \var \left( \zeta \right) \right|^{p} |\zeta|^{3p-2p} dA(\zeta) 
		< \infty. 
		\]
		According to \rp{dbarubd}, $T_{\D^{c}} \var_1$ is $\all_p$-H\"older continuous in the entire plane, for $\all_p = \frac{p-2}{p}$. By \rp{dbarot} the function $T_{\D} \var$ is $\all_p$-H\"older continuous in the plane. Hence from \re{Pnf2} $P_n \var$ is $\all_p$-H\"older continuous. It is easy to check that
		$Re[z^{-n} P_{n} \var] = 0 $ on $S^1$. 
		By assumption, $F \in L^{p} (\D)$ for $2 < p < \infty$, so by letting $F$ be the $\var$ above, 
		we see that $P_n F$ is H\"older continuous in the entire plane, and $Re[z^{-n} P_n F (z) ] = 0$ on $S^1$. As we will see below, the solution $w$ is H\"older continuous in $\ov{\D}$, and thus $Aw + B \ov{w} \in L^{p} (\D)$, for $p >2$ and $Re[z^{-n} P_n (Aw + B \ov{w}) ] = 0$ on $S^1$.  
		
		Now by \re{pAie1}, if the holomorphic function $\Phi_{0}$ satisfies the boundary condition
		\eq{bcPhi0}
		Re[z^{-n}\Phi_{0}(z)] = \gm \quad \quad \text{on $S^1$,} 
		\eeq
		then the solution of the integral equation \re{pAie1} is a solution of the boundary value problem \re{pAbcrf}.   
		
		By \rp{RHpind}, the general solution of the problem \re{bcPhi0} is given by 
		\[\Phi_{0}(z) = \frac{z^{n}}{2 \pi i} \int_{S^1} \gm(t) \frac{t+z}{t-z} \frac{dt}{t} + \sum_{k=0}^{2n} c_{k}z^{k}, \]
		where the $c_{k}$'s are complex numbers satisfying the relations
		\eq{ckr}
		c_{2n-k} = - \ov{c_{k}}, \quad k=0,1, \dots, n.
		\eeq
		Thus for $n \geq 0$, the problem is reduced to the   equivalent integral equation of Fredholm type:
		\eq{pAie2}
		w + Q_{n} w = P_{n}F + \frac{z^{n}}{2 \pi i} \int_{S^1} \gm(t) \frac{t+z}{t-z} \frac{dt}{t} + \sum_{k=0}^{2n} c_{k}z^{k}, 
		\eeq 
		(see~\cite[p.  225, (1.10)]{VA62}), where operator $Q_{n}$ and $c_k$ are defined by
		\begin{gather}\label{Qn}
		Q_{n} w := P_{n}(Aw + B \ov{w}), \\
		\label{ckzkRe}
		\sum_{k=0}^{2n} c_{k}z^{k} = \sum_{k=0}^{n-1} a_{k} (z^{k} - z^{2n-k}) + i b_{k} (z^{k} + z^{2n-k}) + ic_{0}  z^{n}, 
		\end{gather}
		with $a_k, b_k, c_0$ being arbitrary real constants. 
		As shown earlier, $P_{n} F \in C^{\all_p} (\ov{\D})$ for $\all_p = \frac{p-2}{p}$. The second term on the right-hand side of equation \re{pAie2} can be written as $z^{n} \mc{S} \gm$, where $\mc{S}$ is the Schwarz integral operator.  
		By \rp{Cint} applied to $\gm \in C^{\mu} (S^1)$, one has $\mc{S} \gm \in C^{\mu}(\ov{\D})$. Hence the right-hand side of the equation lies in the class $C^{\nu} (\ov{\D})$, for $\nu:= \min(\all_p, \mu)$. 
		
		It is shown in \cite[p.~296]{VA62} that equation \re{pAie2} has a unique solution $w$ in the class $L^{q}(\D)$, $ \frac{p}{p-1} < q < \infty$, for any right-hand side function in the class $L^q(\D)$.   
		By Theorem~1.25 in \cite[p.~50]{VA62}, one has $Q_n w \in C^{0}(\ov{\D})$, and so it follows from equation \re{pAie2} that $w \in C^{0}(\ov{\D})$. 
		By \rp{dbarot} and \rp{dbarubd} applied to expression \re{Pnf2} where we replace $\var$ by $w$, we have $Q_n w \in C^{\all_p}(\ov{\D})$. By equation \re{pAie2} again we obtain that $w \in C^{\nu} (\ov{\D})$, $\nu:= \min(\all_p, \mu)$. 	
		
		Therefore when the index $n \geq 0$, the solution to the non-homogeneous Problem A exists for any boundary data, and the homogeneous Problem~A ($F \equiv 0$, $\gm \equiv 0$) admits $2n+1$ $\R$-linearly independent solutions. 
		
		If $n<0$. Referring to \cite[p.~302]{VA62}, the solution to Problem A (see \re{probA}) exists if and only if the following conditions are satisfied:
		\eq{scnnsc}
		\frac{1}{2 i} \int_{b \Om} l(t) w'_{i} (t) \gm(t) \, dt - 
		Re \iint_{\Om} w'_{i} (z) F (z) \, dx \, dy = 0, 
		\eeq
		for $i=1, \dots, 2k-1$, $k = -n$, and $(w'_{1}), \dots,(w'_{2k-1})$ are linearly-independent solutions of the homogeneous adjoint problem $A'$:
		\begin{equation} \label{ahpAeqn} 
		\begin{gathered} 
		\zbd w - A w - \ov{B w} = 0, \quad \text{in $\Om$; } 
		\quad 
		Re[l(z) z'(s) w(z)] = 0, \quad \text{ on $b \Om$. }  
		\end{gathered} 
		\end{equation} 
		Here we write $z'(s) = \frac{dz(s)}{ds}$. In this case if the solution for Problem A \re{probA} exists, then it is unique, and the solution can be expressed as the unique solution to the  integral equation (see \cite[p. 300]{VA62}): 
		\eq{pAnnie}
		w + (Q_{k}^{\ast}) w = P_{k}^{\ast} F + \frac{1}{\pi i} \int_{b \Om} \frac{\gm(t) \, dt}{t^{k}(t-z)}, \quad k=-n, 
		\eeq
		where $P_{k}^{\ast}$ and $Q_{k}^{\ast}$ are defined to be
		\begin{gather} \label{Past} 
		P_{k}^{\ast}f := -\frac{1}{\pi} \iint_{\Om} \left( \frac{f(\zeta)}{\zeta -z} + \frac{\ov{\zeta}^{2k-1} \ov{f(\zeta)}}{1 - z \ov{\zeta}} \right) \, d A(\zeta),  
		\quad 
		(Q_{k}^{\ast}) f := P_{k}^{\ast}(B \ov{f}). 
		\end{gather}

		The above method of reducing the problem to a Fredholm integral equation on the domain no longer applies for multiply-connected domains. The geometry of the circle allows for the vanishing of the term $Re[z^{-n}P_{n} \var]$ on $S^1$. To deal with the general case, Vekua introduced the theory of generalized analytic functions. We state here a few of his results without much elaborations. The reader can refer to \cite{VA62} for details. 
		
		We say a function $w$ belongs to the class $\mc{L}_{p,2}(A,B, \C)$ if $w$ satisfies
		$ \mc L_{A,B}(w) = 0$ 
		in the sense of distributions, for $A, B \in \mc{L}_{p,2}(\C)$. (The latter is defined in \re{Lpqdef}.) 
		In our application, $A,B$ are defined only on $\Om$ in which case we extend $A,B$ trivially to $\C$ by setting them to be $0$ on $\C\setminus\overline\Om$.

		Following Vekua, we call such $w$ a \emph{generalized analytic function} of the class $\mc{L}_{p,2}(A,B, \C)$. 
		For fixed $t \in \C$, and  $A,B \in  L_{p,2}(\C)$, consider the following integral equations (\cite[p~167]{VA62}):
		\eq{X1inteq}
		X_i(z,t) - \frac{1}{\pi} \iint_{\C} \frac{A(\zeta)X_i(\zeta,t) + B(\zeta)\ov{X_i(\zeta,t)}}{\zeta - z} \, d A(\zeta) = g_i(z), \quad i = 1,2, 
		\eeq
		where $g_1= \yh(t-z)^{-1} , g_2=  \frac{1}{2i} (t-z)^{-1} $. 
		It is proved in \cite[p.~156]{VA62} that equation \re{X1inteq} admits a unique solution in the class $L_{q,0}(\C)$ if $A,B \in L_{p,2} (\C)$ and the right-hand side lies in the class $L_{q,0} (\C)$, for $q \geq \frac{p}{p-1} = p'$. 
		Since $2 < p < \infty$, we have $1 < p' < 2$, and $\frac{1}{t-z} \in L_{q,0} (\C)$, for any $ p' \leq q < 2$. Equation \re{X1inteq} admits solution $X_i \in L_{q,0}(\C)$ for any $q \in [ p', 2 ) $. For $z \neq t$, $X_{i}$ satisfies  
		\[ 
		\pa_{\ov z} X_{i} (z,t)+ A(z) X_{i}(z,t) + B(z) \ov{X_{i}(z,t)} = 0, \quad i = 1,2. 
		\] 
		Furthermore, we have the following representation for $X_{i}$: 
		\eq{Xjrep}
		X_{1}(z,t) = \frac{e^{\om_{1}(z,t)}}{2(t-z)}, \quad \quad X_{2}(z,t) = \frac{e^{\om_{2}(z,t)}}{2i(t-z)},
		\eeq
	where 
		\eq{omi} 
		\om_{i}(z,t) = \frac{t-z}{\pi} \iint_{\C} \frac{1}{(\zeta-z)(t - \zeta)} \left[ A(\zeta) + B(\zeta) \frac{\ov{X_{i}(\zeta,t)}}{X_{i}(\zeta,t)}\right] \, dA (\zeta) ; 
		\eeq 
		\eq{omiHne}
		|\om_{i}(z_{1},t) - \om_{i}(z_{2}, t)| \leq C_{p} |z_{1} - z_{2}|^{\all_p}, \quad \all_p = \frac{p-2}{p};
		\eeq
		\eq{omde}
		|\om_{i} (z,t) | \leq C_{p} |z-t|^{\all_p}, \quad i= 1,2. 
		\eeq 
		Hence by \re{Xjrep}, $X_j (\cdot ,t)$ is 
		H\"older continuous everywhere in the plane except at $z = t$.

		Define $G_{i}$, the \emph{fundamental kernels of the class} $\mc{L}_{p,2}(A,B, \C)$ as follows:
		\begin{gather} \label{Gidef} 
		G_{1}(z,\zeta) = X_{1}(z,\zeta) + iX_{2}(z,\zeta), \quad G_{2}(z,\zeta) = X_{1}(z,\zeta) - iX_{2}(z,\zeta). 
		\end{gather} 
		By \re{Xjrep} we have
		\begin{gather} \label{Gi}
		G_{1}(z,\zeta) = \frac{e^{\om_1 (z, \zeta)} + e^{\om_2 (z, \zeta)} }{2 (\zeta - z)}, \qquad 
		G_{2}(z,\zeta) =  \frac{e^{\om_1 (z, \zeta)} - e^{\om_2 (z, \zeta)} }{2 (\zeta - z)}. 
		\end{gather} 
		
		The $G_{i}'s$ satisfy the relation: 
		\eq{Gieq}
			\pa_{\ov z}  G_i(z, \zeta) + A(z) G_i (z, \zeta) + B(z) \ov{G_i (z, \zeta)} = 0, \quad i=1,2, 
		\eeq 
		for any $z, \zeta \in \C$, $z \neq \zeta$. In addition we have the estimates
		\eq{fke}
		G_{1}(z, \zeta) = \frac{1}{\zeta -z} + O(|z - \zeta|^{-\frac{2}{p}}), \quad  G_{2}(z,\zeta) = O(|z - \zeta|^{-\frac{2}{p}}). 
		\eeq
		Hence $G_1$ behaves like the Cauchy kernel, modulo a mild singularity.
		
		Denote by $G_{1}'(z,\zeta)$ and $G_{2}'(z, \zeta)$ the adjoint fundamental kernel of the adjoint equation 
		$\mc{L}'(w') = \pa_{\ov z}  w' - A w' - \ov{B} \ov{w}' = F' $. 
		The following relations hold: (\cite[p.174]{VA62})
		\eq{fkr}
		G_{1}(z, \zeta) = - G_{1}' (\zeta,z), \quad \quad G_{2}(z, \zeta) = - \ov{G_{2}' (\zeta,z)}. 
		\eeq
		We will denote by $G_{1}(z,t, \Om)$ and $G_{2}(z,t,\Om)$ the fundamental kernels of the class $\mc{L}_{p,2}(A,B, \C)$, if $A \equiv B \equiv 0$ in $\Om^{c} = \C \setminus \ov{\Om}$. Given $A,B \in L^{p} (\Om)$, we let $\wti{A}, \wti{B}$ be the trivial extensions which vanish outside $\Om$. 
		
		For 
		the parameter problem, we need the following representation formula.
		\begin{prop}{\cite[p. 176]{VA62}} \label{nhgaerfe}
		Suppose $w$ is continuous in $\ov{\Om}$ and satisfies the equation $\mc L_{A,B}(w) = F$ in $\Om$,
		where $A,B,F \in L^{p}(\Om), p>2$. Then $w$ has the representation formula:
		\begin{align} \label{nhgaerf}
		w(z) &= \frac{1}{2 \pi i} \int_{b \Om} G_{1} (z,\zeta) w(\zeta) \, d\zeta 
		- G_{2}(z, \zeta) \ov{w(\zeta)} \, \ov{d\zeta}
		\\ \nonumber &\quad - \frac{1}{\pi} \iint_{\Om} G_{1}(z,\zeta) F(\zeta) + G_{2}(z,\zeta) \ov{F(\zeta)} \, d A(\zeta),  \quad 
		z \in \Om. 
		\end{align}
		\end{prop} 
		
		\mk{}
		When $A \equiv  B \equiv 0$, by formula \re{omi} we have $\om_1 (z,t) \equiv \om_2(z,t) \equiv 0$. Hence from \re{Gi} we get $G_1 = \frac{1}{\zeta -z}$ and $G_2 \equiv 0$, and \re{nhgaerf} reduces to the Cauchy-Green formula.  
		\emk
		Next we state some existence and uniqueness results for Problem A. For homogeneous boundary value problems, we call a finite set of linearly independent solutions which spans the solution space a \emph{complete system}. 
		\begin{prop}[{\cite[p. 253]{VA62} Theorem 4.10, Theorem 4.12.}] \label{euhh't}
		Consider Problem A on a multiply-connected domain $\Om$ whose boundary $b \Om$ has $m+1$ connected components. Let $h, h'$ be the numbers of linearly-independent solutions to the homogeneous Problem A and the homogeneous adjoint Problem $A'$ \re{ahpAeqn}, respectively. Let $n$ be the index of Problem A. 
	\begin{enumerate}[(i)] 
	   \item 
	   Suppose that $n< 0$. Then
		\[ 
		h = 0, \quad  h'= m-2n-1.
		\]   
		The non-homogeneous Problem A has a (unique) solution if and only if the following relations are satisfied:
		\eq{pAnnsc}
		\frac{1}{2 i} \int_{b \Om} l(t) w'_{i} (t) \gm(t) \, dt - 
		Re \iint_{\Om} w'_{i} F (z) \, dx dy = 0,
		\eeq
		for $i=1,...,m-2n-1$, and $w_{1}',..., w'_{m-2n-1}$ is a complete system of solutions of the adjoint  homogeneous Problem $A'$ \re{ahpAeqn}. 
	\item
		Suppose that $n > m-1$. Then
		\[ 
		h = 2n + 1 - m,  \quad h' = 0. 
		\]  
		The non-homogeneous Problem A is always solvable and its general solution is given by: 
		\eq{pAposindgs}
		w(z) = w_{0}(z) + \sum_{j=1}^{2n+1-m} c_{j} w_{j}, \quad  \quad \text{$c_{j}$ are real constant, }
		\eeq
		where $\{ w_{1},..., w _{2n+1-m}\} $ is a complete system of solutions of the homogeneous problem A, and $w_{0}$ is a particular solution of the non-homogeneous problem A. 
	\end{enumerate}
		\end{prop} 

		\mk{}
		When the domain is simply-connected, we have $m=0$, and the above theorem reduces to the results we obtained earlier. When $0 \leq n \leq m-1$ ($m \geq 1$), the existence and uniqueness of solutions to Problem A are more subtle, and we refer the reader to \cite[chapter 4, \S 5]{VA62}. 
		\emk
		
		From now on we only consider the case $n > m-1$. We can impose some additional conditions on the solution of Problem A to make it unique. This will be necessary when later on we add parameter since regularity in parameter depends on the uniqueness of these solutions.  
		\begin{defn} \label{ndsd}
			Let $\Om$ be a bounded domain in $\C$ whose boundary $b \Om$ contains $m+1$ connected components, for $m$ a non-negative integer. Let $z_{1},...,z_{N_0}$ and $z_{1}',...,z_{N_1}'$ be fixed points of $\Om$ and its boundary $b \Om$, respectively, satisfying the following conditions: \\ 
			(i) The numbers $N_0$ and $N_1$ satisfy the relation:
			\[  
			2N_0 + N_1 = 2n + 1 - m. 
			\]
			(ii) There are $m$ curves, e.g. $\Gm_{i_{1}},..., \Gm_{i_{m}}$, among the $m+1$ boundary curves $b \Om_{0},..., b \Om_{m}$, on each of which an odd number of $z_s'$ are situated. 
			\\
			Then we call $ \{ \{z_{r} \}_{r=1}^{N_0}$, $\{z_{s}' \}_{s=1}^{N_1} \}$  a \emph{normally distributed set}, henceforth denoted by $(N_0,N_1,\ov{\Om})$.
		\end{defn}
		\mk{}
		In our choice of $N_0$ and $N_1$, two extreme cases are possible: 
		\begin{enumerate}
		    \item $N_0 = 0$ and $N_1 = 2n + 1 -m$. 
		    \item $N_0 =n-m$ and $N_1 =m+1$. 
		\end{enumerate}
		Consequently, we have $0 \leq N_0 \leq n-m$, $m+1 \leq N_1 \leq 2n+1-m$. 
		If $m= 0$, i.e. the domain is simply-connected,  condition (i) becomes $2N_0 + N_1 = 2n+1$, and thus in this case $N_1$ needs to be odd.   
		\emk
		
		We can specify on $ (N_0,N_1, \ov{\Om})$ the values of the unknown solution of Problem A:
		\eq{ndsc}
		\begin{gathered}
			w(z_{r}) = a_{r} + ib_{r}, \quad r=1,..., N_0; \quad 
			w(z_{s}') = l(z_{s}') (\gm(z_{s}') + ic_{s}), \quad s= 1,...,N_1.  
		\end{gathered}  
		\eeq
		\pr{c1ut}
		Let $n$ be the index of Problem A on a domain $\Om$ in $\C$, and let $m$ be the number of connected components of $b \Om$.  Assume that on a normally distributed point set $(N_0,N_1,\ov{\Om})$ conditions of the form \re{ndsc} are given. Then for $n > m-1$, Problem A has always a solution satisfying these conditions, and this solution is uniquely determined. 
		\epr
		The reader can refer to Vekua \cite[p. 285 - 287]{VA62} for the proof.
		
		The following regularity result for simply-connected domains follows from the above construction of the solution by Vekua, though it is not explicitly stated in his book. For the reader's convenience we provide details of the proof here. Furthermore, the same estimates hold for multiply-connected domains and one can prove it using a trick of Vekua to reduce the simply-connected case (See \cite[p. 228, p. 336]{VA62}.)
		
		\pr{pArt} 
		(i) Let $\Om \subset \C$ be a domain with $C^{1+ \mu}$ boundary, for $0 < \mu  < 1$. Let $2 < p < \infty$. Suppose $A,B,F \in L^{p}(\Om)$, $l, \gm \in C^{\mu}(b \Om)$. Then the solution to Problem A \re{probA}, if it exists, belongs to the class $C^{\nu}(\ov{\Om})$, for $\nu:= \text{min} \{\all_p, \mu\}$, $\all_p = \frac{p-2}{p}$, and there exists a constant $C$ depending on the coefficients $A, B, l, p$, such that 
		\[ 
		| w |_{\nu} \leq C(A,B,l,p) \left( |w|_0 + |F|_{L^p (\Om) } + | \gm|_{\mu} \right) . 
		\] 
		
		\nid
		(ii) Let $k \geq 0$, and $0 < \mu < 1$. Let $\Om$ be a domain in $\C$ with $C^{k+1+\mu}$ boundary. Suppose $A, B, F \in C^{k+\mu}(\ov{\Om})$, $l, \gm \in C^{k+1+\mu}(b \Om)$. Then the solution $w$ of Problem A, if it exists, belongs to the class $C^{k+1+\mu}(\ov{\Om})$, and there exists a constant $C_k$ depending on the coefficients $A, B, l$ such that
		\[ 
		| w |_{k+1+ \mu} \leq C_k (A, B, l) \left( |w|_0 + | F |_{k+\mu} + | \gm |_{k+1+\mu} \right). 
		\] 
		\epr
		We remark that the regularity results are in \cite[p. 228  and p.~336]{VA62}. 
		The main purpose of the following proof is to derive the above two estimates.
		The constants cannot be controlled since the proof uses the open mapping theorem.
		
		\nid \tit{Proof.}  
		We treat only the non-negative index case. The proof is similar if the index is negative. \\
		(i) By the Riemann mapping theorem it suffices to take $\Om = \D$. Moreover by the earlier remarks it suffices to assume that the Problem A has the form \re{pAbcrf}. 
		Let $\nu = \min (\all_p, \mu)$, $\all_p = \frac{p-2}{p}$. In view of equation \re{pAie2}, any solution $w$ has the form 
		\eq{wastws}  
		w = w_{\ast} + \sum_{j=0}^{2n}  d_j w_j 
		\eeq
		where $d_j$ are complex constants and $w_{\ast}$ is the (unique) solution to the integral equation 
		\eq{wastinteqn}  
		w_{\ast} + Q_n w_{\ast} = P_n F + \mc{S} (\gm),  
		\eeq
		and for each $0 \leq j \leq 2n$, $w_j$ is the (unique) solution to the integral equation  
		\eq{wseqn}
		w_j + Q_n w_j = g_j, \quad 0 \leq j \leq 2n,  
		\eeq  
		where the $g_j$-s are the set of $2n+1$ functions $\{ z^k - z^{2n-k}, z^k + z^{2n - k}, z^n \}_{k=0}^{n-1}$ from expression \re{ckzkRe}.

		First we estimate $w_{\ast}$. Since the operator $I + Q_n$ is a bijective bounded linear operator from the space $C^0 (\ov{\Om})$ to itself, by the open mapping theorem it has a bounded inverse  $(I+ Q_n)^{-1}$, whose operator norm depends only on $A$ and $B$. Hence in view of \rp{Pnhest1} and \rp{Cint} we have for $C_i=C_i(A,B)$
		\begin{align*}  
		|w_{\ast} |_0 
		\leq C_1  |P_n F  + \mc{S} (\gm) |_0  
	    \leq C_2 \left( |P_n F|_0  + |\mc{S} (\gm) |_0  \right)  
		\leq C _3  \left(  |F|_{L^p(\D)} + | \gm |_{\mu} \right) . 
		\end{align*} 
		
		Next we estimate the H\"older norm of $w_{\ast}$. From \re{wastinteqn} we have 
		\begin{align*}
		| w_{\ast} |_{\nu} 
		&\leq |Q_n w_{\ast}  |_{\all_p} + |P_n F|_{\all_p} + |\mc{S} \gm |_{\mu} . 
		\end{align*}
		By \rp{Pnhest1} we have
		\begin{align*}
		|Q_n w_{\ast} |_{\all_p}   
		= | P_n (Aw_{\ast} + B \ov{w_{\ast}}) |_0  
		\leq C(p) | Aw_{\ast} + B \ov{w_{\ast}} |_{L^p (\D)} 
		\leq C(A,B,p)| w_{\ast} |_0, 
		\end{align*}
		and 
		$
		| P_n F|_{\all_p}  \leq C(p) | F |_{L^p (\D)}. 
		$ 
		By \rp{Cint} we have
		$
		|S \gm |_{\mu} \leq C | \gm |_{\mu}.   
		$
		Hence
		\begin{align} \label{wasthest}  
		|w_{\ast}|_{\nu} 
		&\leq C_0 (A,B,p) \left(  | w_{\ast} |_0 + | F |_{L^p (\D)} +  | \gm |_{\mu} \right) 
		\leq C_1 (A,B,p)  \left( | F |_{L^p (\D)} +  | \gm |_{\mu} \right). 
		\end{align} 
		
		We now estimate $w_j$.  
		Applying open mapping theorem to \re{wseqn} we have
		\begin{gather*} 
		|w_j|_{0} \leq C_j (A,B),  \quad 0 \leq j \leq 2n; 
		\\ 
		|w_j|_{\all_p} \leq | Q_n w_j |_{\all_p} + |g_j |_{\all_p}  \leq C (A,B,p) ( |w_j|_0 + 1 ) \leq C_j( A,B, p).  
		\end{gather*}
		It remains to estimate the coefficients $d_j$ in \re{wastws}. By the remarks in \cite[p.~286-288, equation (6.10)]{VA62} (see also \rp{pAscfdthm1} below), they can be expressed as 
		in the form $\frac{P}{Q}$, where $Q \neq 0$ and $P$ and $Q$ are linear combinations of the products of $w_j(z_r), \:  w_j (z_s')$ with terms like $ a_r , \: b_r, \: c_s, \: \gm(z_s'), \: w_{\ast} (z_r ), \: w_{\ast} (z_s' )$. Since $ | ( a_r, b_r, c_s ) | \leq |w|_0$, we have 
		\begin{align}  \label{dsest} 
		| d_j| &\leq C_j(A,B)  \left( |w|_0 + |\gm|_0 + |w_\ast|_0 \right)  
		\leq C_j(A,B) \left( |w|_0 + |F|_{L^p (\D)} + |\gm|_{\mu} \right). 
		\end{align} 
		Hence by 
		\re{wastws} and combining the above estimates we get
		\begin{align*}  
		|w|_{\nu} &\leq |w_{\ast}|_{\nu}  + \sum_{j=0}^{2n} |d_j| |w_j|_{\all_p}  
		\leq M(A,B, p) \left( |w|_0 + |F|_{L^p (\D)} + |\gm|_{\mu}  \right). 
		\end{align*}
		
		Finally, let $\wti{w}$ be a solution to the original Problem A \re{probA} on $\D$: 
		\eq{probAwti} 
		\begin{cases}
			\mc{L}_{\wti A, \wti B} (\wti{w}) \equiv \pa_{\ov{z}} \wti{w}  + \wti{A}(z) \wti{w}(z)  + \wti{B}(z) \ov{\wti{w}(z)} = \wti{F}(z) \quad \text{in $\Om$;}\\
			\all u + \beta v  \equiv  Re[\ov{l(z)} \wti{w}] = \wti{\gm}  \quad \text{on $b \Om$. }
		\end{cases} 
		\eeq
		Then we have the following relations: 
		\begin{equation} \label{redrel}
		\begin{gathered} 
		A  (z) = \wti{A} (z),  \quad B(z) =\wti{B} e^{2i \chi(z)},  \quad  F (z) = \wti{F}(z) e^{\chi(z )}, \: z \in \D;   
		\\
		\wti{w} (z) = e^{- \chi (z)} w(z), \: z \in D, \quad  \gm(\zeta) = \wti{\gm} (\zeta) e^{p(\zeta)}, \: \zeta  \in b S^1, 
		\end{gathered}
		\end{equation}
		where $\chi$ and $p = Re (\chi)$ are related to $l$ by formula \re{chi}. Hence 
		\[
		|\wti{w} |_{\nu} \leq C(\wti{A}, \wti{B}, l, p)  \left( |\wti{w}|_0 + |\wti{F}|_{L^p (\D) } + |\wti{\gm}|_{\mu} \right). 
		\]
		\medskip \nid 
		(ii) Let us first consider Problem A in the reduced form \re{pAbcrf}.  
		Write \re{wastinteqn} as
		\eq{pAie2'} 
		w_{\ast} = - Q_n w_{\ast} + P_n F + \mc{S} (\gm) 
		\eeq
		First let us prove the statement for $k=0$. We have
		\begin{align*}
		|w_{\ast}|_{1+\mu} 
		&\leq C \left( |Q_n w_{\ast}|_{1 + \mu} + |P_n F|_{1+ \mu} + |\mc{S} \gm |_{1+\mu} \right)  \\
		&= \left( |P_n (Aw_{\ast} + B\ov{w_{\ast}}) |_{1+ \mu} + |P_n F|_{1+\mu} + |\mc{S} \gm |_{1+\mu} \right)  \\
		&\leq C \left( |Aw_{\ast} + B\ov{w_{\ast}} |_{\mu} + |F|_{ \mu} + |\gm|_{1+\mu}  \right) \\
		&\leq C \left\{ \left( |A|_{\mu} + |B|_{\mu}  \right) | w_{\ast} |_{\mu} + |F|_{\mu} + |\gm|_{1+ \mu}  \right\} .  
		\end{align*}
	By 
		\re{wasthest}, we get 
		$
		|w_{\ast} |_{\mu} \leq C (A,B) \left( | F |_{\mu} +  |\gm |_{\mu} \right).   
		$
		Hence 
		$
		|w_{\ast} |_{1+\mu} \leq C_0(A,B) \left( |F|_{\mu} +  | \gm |_{1+ \mu} \right). 
		$ 
		
		For $k > 0$ we apply induction. By Propositions \ref{Pnhest2} and 
		\ref{Cint} applied to equation \re{pAie2'} one gets 
		\begin{align*}
		|w_{\ast}|_{k + 1+\mu} 
		&\leq C \left( |Q_n w_{\ast}|_{k + 1 + \mu} + |P_n F|_{k + 1+ \mu} + |\mc{S} \gm |_{k + 1+\mu} \right)  \\
		&= \left( |P_n (Aw_{\ast} + B\ov{w_{\ast}}) |_{k + 1+ \mu} + |P_n F|_{k + 1+\mu} + |\mc{S} \gm |_{k+1+\mu} \right)  \\
		&\leq C' \left( |Aw_{\ast} + B\ov{w_{\ast}} |_{k+\mu} + |F|_{k + \mu} + |\gm|_{k+ 1+ \mu} \right) \\
		&\leq C' \left\{ \left( |A|_{k+\mu} + |B|_{k+\mu}  \right) | w_{\ast} |_{k+ \mu} + |F|_{k+\mu} + |\gm|_{k+1+ \mu}  \right\}. 
		\end{align*}
		By the induction hypothesis, we have
		$
		| w_{\ast}|_{k+\mu} \leq C_{k-1} (A,B) \left( |F|_{k-1+\mu} + |\gm |_{k+\mu} \right) . 
		$ 
		Hence
		$
		|w_{\ast}|_{k+1+\mu} \leq C_k (A,B) \left( | F|_{k+\mu} + |  \gm|_{k+1+ \mu} \right). 
		$
		
		Next we estimate $w_j$, $0 \leq j \leq 2n$. From equation \re{wseqn} we have
		\eq{wseqn'}
		w_j =  - Q_n w_j + g_j.  
		\eeq
		First we prove for $k = 0$, so that $A, B \in C^{\mu}(\ov{\D})$. By part (i), we know that $w_s \in C^{\frac{p-2}{p}} (\ov{D})$ for any $2 < p < \infty$. Hence $w_s  \in C^{1-}(\ov{\Om})$. By \rp{Pnhest2} we have 
		\begin{align*}
		|w_j|_{1 + \mu} 
		&\leq C( |Q_n w_j|_{1 + \mu} + 1)   
		= C \left( |P_n (A w_j + B \ov{w_j} )|_{1+\mu}  + 1  \right) \\ 
		&\leq C\{ ( |A|_{\mu} + |B|_{\mu} ) |w_j |_{\mu}  +  1\}  
		\leq C(A,B).  
		\end{align*}
		By \re{wseqn'} again and iterate the process we get $w_j \in C^{k+1+\mu}$ and
		$
		| w_j |_{k+1+ \mu} \leq C_k(A,B) . 
		$
		Now for the coefficients $d_j$ we have estimate \re{dsest}, 
		$
		|d_j | \leq C_j(A,B)  \left(  |w|_0 + |F|_{\mu} + | \gm|_{\mu} \right) . 
		$
		Finally putting together the estimates we get
		\begin{align*} 
		| w |_{k+1+\mu} &\leq | w_{\ast} |_{k+1+ \mu} + \sum_{j=0}^{2n} |d_j| |w_j|_{k+1+\mu} \leq C_{k} (A,B) \left( |w|_0  + | F|_{k+\mu} + |  \gm|_{k+1+ \mu} \right). 
		\end{align*}

		Finally we let $\wti{w}$ be the solution to the original Problem A \re{probAwti} on $\D$. In view of the relations \re{redrel} we easily get 
		$
		|\wti{w} |_{k+1+\mu} \leq C_k (\wti{A}, \wti{B}, l) \left( | \wti{w} |_0 + |\wti{ F} |_{k+\mu} + |  \wti{\gm}|_{k+1+ \mu} \right).  
		$
		\hfill \qed

		\mk{}
		In our study of the boundary value problems with parameter, the above estimates imply that for each fixed $\la$, the solution $w^{\la}$ satisfies 
		\[
		| w^{\la} |_{k+1+ \mu} \leq C (A, B, l, \la)  \left(  |w^{\la}|_0 + |F^{\la}|_{k+ \mu} + | \gm^{\la} |_{k+1+\mu} \right) 
		\]
		on $b  \Om^{\la}$. However, this estimate is not useful since we do not know how the constant depends on $\la$.  
		\emk

		We end this section by connecting the oblique derivative boundary value problem for the Laplacian elliptic operator (hereafter called Problem B) to Problem A. \\
		\textbf{Problem B} \label{probB}   
		Consider the following boundary value problem: 
		\begin{equation} \label{pBe} 
	    \begin{gathered} 
		\Del U + a(x,y) U_{x} + b(x,y)U_{y} = f(x,y) ; 
		\\ 
		\all U_{x} + \beta U_{y} = \gm.   
		\end{gathered}   
		\end{equation}
		To solve this problem we follow Vekua and reduce it to Problem A. 
		Let 
		\[ 
		u = U_{x}, \quad v = -U_{y}, \quad w= u + iv. 
		\] 
		The equation \re{pBe} reduces to the following specialized Problem $A$:
		\begin{equation} \label{pBrpAe} 
		\begin{gathered} 
		\zbd w + \frac{1}{4}(a+ib) w + \frac{1}{4}(a-ib) \ov{w} =\yh f, \quad \text{in $\Om$;} \\ 
		Re[\ov{l} w] = \gm, \quad l = \all - i \beta, \quad \text{on $b \Om$. }
		\end{gathered}
		\end{equation} 
		If $w$ is the solution to \re{pBrpAe}, then the solution to Problem \re{pBe} is given by 
		\begin{align} \label{pBU} 
		U(x,y) &= c_{0} + Re \int_{z_{0}}^{z} w(\zeta) \, d\zeta 
		= c_{0} + \int_{z_{0}}^{z} U_{x}(x,y) \,dx + U_{y}(x,y) \, dy. 
		\end{align}
		Now, $U$ is globally well-defined only if the closed form $U_{x} \,dx + U_{y} \,   dy$ is exact. This occurs when the domain is simply-connected. Thus in the case of simply-connected domain, Problem $B$ is completely equivalent to Problem $A$. 
		
		In the case $b \Om$ has $m+1$ connected components, for $m>0$, in order to guarantee the right-hand side of \re{pBU} is single-valued, it is necessary and sufficient to add $m$ solubility conditions:
		\eq{odpsolc}  
		Re \int_{\Gm_{j}} w \, d\zeta 
		= \int_{\Gm_{j}} u \,dx - v\, dy
		=0,  \quad (j=1,2,\dots,m)  
		\eeq
		where $\Gm_0, \Gm_{1}, \dots, \Gm_{m}$ are simple closed contours bounding the domain $\Om$, $\Gm_{1}, \dots, \Gm_{m}$ being situated inside $\Gm_{0}$. Hence a solution $w$ to \re{pBrpAe} is a solution to the original Problem \re{pBe} if and only if it satisfies conditions \re{odpsolc}.

		\section{Families of simply-connected domains}
		In this section we prove \rt{mtintro} for families of simply-connected domains. First we need to define the index for oblique derivative boundary value problem:   
		
		Given the oblique derivative boundary value problem  \re{odpintro}, 
		we define its \emph{index $\varkappa$} as the winding number of $ \ov{ \U^{\la} } $: 
		\eq{odpind} 
		\varkappa: = \frac{1}{2 \pi} \left[  \ov{\Upsilon^{\la}} \right]_{b \Om^{\la} } = \frac{1}{2 \pi} \left[ \xi^{\la} - i \eta^{\la}  \right].    
		\eeq
		Note that since $\varkappa$ is an integer, it is stable under small perturbation of $\la$. 
		
		We formulate our results in two parts depending on the sign of $\varkappa$. First we consider the case $\varkappa \geq 0$. 
		\begin{thm}[Simply-connected, non-negative index case] \label{scposind} \ \\
		 Let $k, j,\mu$ be as in \rt{mtintro}. Let $\Om$ be a bounded, simply-connected domain in $\C$ with $\mc{C}^{k+2+\mu}$ boundary. Let $\Gm^{\la}$ be a family of maps that embed $\Om$ to $\Om^{\la}$, with $\Gm^{\la} \in \mc{C}^{k+2+\mu,j}(\ov{\Om})$. Consider problem \re{odpintro} with $\varkappa \geq 0$, and where $L^\la$ is given by \re{Llambda}.
			Suppose $a^{\la}, b^{\la}, c^{\la}, d^{\la}, e^{\la}, f^{\la}$ are functions in $\Om^{\la}$ such that $a^{\la} , b^{\la}, c^{\la} \in \mc{C}^{k+1+\mu,j}(\ov{\Om^{\la}})$, and $d^{\la}, e^{\la}, f^{\la} \in \mc{C}^{k+\mu,j} (\ov{\Om^{\la}})$. 
			Suppose that $\Upsilon^{\la}, \gm^{\la}$ are functions in $b \Om^{\la}$ such that $\Upsilon^{\la}, \gm^{\la} \in \mc{C}^{k+1+\mu,j} (b \Om^{\la})$. For each $\la \in [0,1]$, let $ \{ \{z_{r}^{\la} \}_{r=1}^{N_0}$, $(z_{s}^{\la})' \}_{s=1}^{N_1} \}$ be a normally distributed set for $\Om^{\la}$, and let $u^{\la}$ be the unique solution to \re{odpintro} on $\Om^{\la}$ satisfying 
			\begin{gather*} \label{scnds}  
			u^{\la}_x(z_r^{\la}) - i u^{\la}_y (z_r^{\la}) = a_r(\la) + i b_r(\la) \quad \quad (r =1,...,N_0); 
			\\  
			u^{\la}_x((z_s')^{\la}) - i u^{\la}_y ((z_s')^{\la}) = l^{\la}((z_{s}')^{\la}) (\gm^{\la}((z_{s}')^{\la}) + ic_{s}(\la)),  \quad \quad (s = 1,...,N_1), 
			\\  
			u^{\la} (z_0^{\la}) = g(\la), \quad z_0^{\la} \in \Om^{\la}; 
			\end{gather*} 
			for some functions $a_r, b_r, c_s, g \in C^j ([0,1])$. Then $u^{\la} \in \mc{C}^{k+2+\mu,j} (\ov{\Om^{\la}})$.  
		\end{thm} 
		
		By \rp{isop}, we can reduce the problem to the following``canonical" form (Problem B) with the same index: 
		\begin{gather*}  
		\Del U^{\la} +  a^{\la} (x,y) U^{\la}_x +  b^{\la} (x,y) U^{\la}_y = f^{\la}  (x,y), \quad  \text{in $\Om^{\la}$; }
		\\  
		\all^{\la} U_x^{\la}  + \beta^{\la} U^{\la}_y = \gm^{\la}, \quad \text{on $b \Om^{\la}$,}
		\end{gather*} 
		where $a^{\la}, b^{\la}, f^{\la} \in \mc{C}^{k+\mu,j} ( \ov{\Om^{\la}} )$, $\all^{\la}, \beta^{\la}, \gm^{\la} \in \mc{C}^{k+1+ \mu, j} (\ov{\Om^{\la}})$. Furthermore, by the remark in the last section (see \re{probB}), if one sets $w^{\la} = U_x^{\la} - i U_y^{\la} $ then
		\eq{wlaint} 
		U^{\la} (x,y) = c_0 (\la) + Re \int_{z_0}^z w^{\la} (\zeta) \, d \zeta.  
		\eeq
		Hence it suffices to consider the corresponding Problem A for $w^{\la}$: 
		\begin{equation} \label{pAsc} 
	      \begin{gathered}  
	      \pa_{\ov{z}} w^{\la} +  A^{\la}  w^{\la} + B^{\la} \ov{w^{\la}}= F ^{\la} , \quad  \text{in $\Om^{\la}$; }
	      \\  
	      Re [\ov{l^{\la}} w^{\la} ] = \gm^{\la} , \quad l^{\la} = \all^{\la} - i \beta^{\la}, \quad \text{on $b \Om^{\la}$,}
	      \end{gathered}  
		\end{equation} 
		with the corresponding index $n = \frac{1}{2 \pi} [l^{\la}]_{b \Om^{\la}} = \frac{1 }{2 \pi} [\ov{\U^{\la}} ]_{b \Om^{\la}} = \varkappa \geq 0 $, $A^{\la}, B^{\la}, F^{\la} \in  \mc{C}^{k+\mu,j} ( \ov{\Om^{\la}} )$, and $l^{\la}, \gm^{\la} \in  \mc{C}^{k+ 1 + \mu,j} ( \ov{\Om^{\la}} )$.  
		
		Our strategy is to use a smooth family of Riemann mappings proved in \cite{B-G14} to reduce the problem to one on a fixed domain, where only the coefficients and the right-hand side of the equation depend on the parameter. 
		
		Let us first consider the following homogeneous Problem A on a fixed domain:
		\eq{pAhomfd}
		\begin{cases}
			\mc{L} (w^{\la}) \equiv \pa_{\ov z} w^{\la}  + A^{\la}(z) w^{\la} + B^{\la}(z) \ov{w^{\la}} = 0 \quad \text{in $\Om$; }\\
			Re[\ov{l^{\la}(z)} w^{\la}] = 0 \quad \text{on $b \Om$. }
		\end{cases} 
		\eeq
		Let $w_s^{\la}$, $1 \leq s \leq 2n+1$ be the solutions to the integral equation 
		$
		w^{\la} + Q_{n}^{\la} w^{\la} = g_s, 
		$
		where $g_s$ is one of the functions:
		\eq{pAscgell}
		z^\ell - z^{2n-\ell}, \quad i(z^\ell + z^{2n - \ell}), \quad iz^{n}, \quad \ell = 0, \dots, n-1. 
		\eeq
	By 
		\re{pAie2} and \re{ckzkRe}, $\{ w_s^{\la} \}_{s=1}^{2n+1}$ forms a basis of $\R$ linearly-independent solutions to Problem \re{pAhomfd}. We now prove regularity of $w^{\la}_s$ with respect to $\la$.   
		
		\pr{pAfdhsct}
		Let $k$ be a non-negative integer and let $0 < \mu <  1$. Let $\Om \subset \C$ be a simply connected domain with $C^{k+1+\mu}$ boundary. Let $w_{1}^{\la},\cdots,w_{2n+1}^{\la} $ be linearly-independent solutions to the homogeneous Problem A \re{pAhomfd} with parameter as given above, with index $n \geq 0$. Suppose $A^{\la}, B^{\la} \in \mc{C}^{k+\mu,0}(\ov{\Om} )$, $l^{\la} \in \mc{C}^{k+1+\mu,0}(\ov{\Om} )$. Then $w_s^{\la} \in \mc{C}^{k+1+\mu,0}(\ov{\Om})$, for $s = 1, \dots, 2n+1$. 
		\epr
		\begin{proof}
			First we reduce the problem to one on the unit disk. Since $\Om$ is a simply-connected domain, there exists a biholomorphic map $\Gm: \Om \to \D$ which is $C^{k+1+\mu}$ up to the boundary. Define $\wti{w}$ on $\D$ by $ w(z) = \wti{w} \circ \Gm(z)$. Then $\wti{w}(\zeta)$ is the solution to the problem: 
			\eq{pAhomfd'}   
			\begin{cases}
				\pa_{\ov \zeta} \wti{w}^{\la}  +\wti{A}^{\la}(\zeta) \wti{w}^{\la} + \wti{B}^{\la}(\zeta) \ov{\wti{w}^{\la}} = 0 \quad \text{in $\D$;}\\
				Re[\ov{\wti{l}^{\la}(\zeta)} \wti{w}^{\la}] = 0  \quad \text{on $S^1$, }
			\end{cases} 
			\eeq
			where $
			\wti{A}^{\la} = A^{\la} D^{-1}, \wti{B}^{\la} = B^{\la} D^{-1}, 
			D (\zeta) = \ov{\pa_z \Gm} \circ \Gm^{-1}(\zeta) \neq 0.  
		    $
			Since $\Gm \in C^{k+1+\mu} (\ov{\Om})$ and $\Gm^{-1} \in C^{k+1+\mu} (\ov{\D})$, we have $A^{\la}, B^{\la}, \in \mc{C}^{k+\mu,0}(\ov{\D})$, and $\wti{l}^{\la} \in \mc{C}^{k+1+\mu, 0} (\ov{\D}).$ Thus the problem is reduced to $\Om = \D$. 
			
	By 
			\re{pAbcrf}, it suffices to consider the reduced boundary value problem
		\eq{pA_unit_disc} 
			\begin{cases}
				\pa_{\ov{z}} w^{\la} + A^{\la} w^{\la} + B^{\la} \ov{w^{\la}} = 0,  \quad \text{in $\D$; }
				\\	
				Re[ z^{-n} w^{\la} ] = 0, \quad \text{on $S^1$, }
			\end{cases}
	    \eeq
			where 
			$
			w^{\la}(z) =e^{\chi^{\la}(z)} \wti{w}^{\la}(z),  A^{\la} = \wti{A}^{\la}, B^{\la} = \wti{B}^{\la} e^{2i Im \, \chi^{\la}(z)}, 
			$
			and $\chi^{\la}(z)$ is defined in \re{chi}. 
	By 
			\rp{Cint},
			we have $\chi^{\la} \in \mc{C}^{k+1+\mu,0}(\ov{\Om})$, and hence $A^{\la} , B^{\la} \in \mc{C}^{k+\mu,0}( \ov{\Om})$. 
			By 
			\re{pAie2}, for $s = 1, 2, \dots, 2n+1$, $w_s^{\la}$ is the unique solution satisfying the integral equation
			\eq{dcie}
			w_s^{\la} + Q_{n}^{\la} w_s^{\la} = g_s,  
			\eeq
			where $Q_{n}^{\la} w^{\la}_s := P_{n} (A^{\la} w^{\la} + B^{\la} \ov{w^{\la}}) $, and $g_s$ are given by \re{pAscgell}. 
			
			Let us first prove the statement for $k=0$. We claim $|w_s^{\la}|_0$ is bounded uniformly in $\la$.  Suppose this is not the case, that there exists some sequence $\la_{j} \in [0,1]$ with $N_{j} = |w_s^{\la_{j}}|_0 \to \infty$. Since $[0,1]$ is compact, passing to a subsequence if necessary we can assume that $\la_j$ converges to some $\la_0$. Dividing each side of the integral equation \re{dcie} by $N_{j}$, and writing $\hht{w}_s^{\la_{j}} = w_s^{\la_{j}} / N_{j}$, we obtain
			\eq{dcnie}   
			\hht{w_s}^{\la_{j}} + Q_{n}^{\la_{j}} \hht{w_s}^{\la_{j}} = N_j^{-1} g_s, \quad \quad |\hht{w_s}^{\la_{j}}|_0 = 1.
			\eeq
			By \rp{Pnhest1}, we have 		for any $0 < \tau < 1$ 
		\[ 
		|Q_{n}^{\la} \hht{w_{i}}^{\la_{j}}|_{\tau} \leq C \left( |A^{\la}|_{0} + |B^{\la}|_{0} \right) |\hht{w_{i}}^{\la_{j}}|_0 \leq C'.
		\] 
			By Arzela-Ascoli theorem, and passing to a subsequence if necessary, we have $Q_{n}^{\la_{j}} \hht{w_s}^{\la_{j}}$ converges to some function uniformly. It follows from equation \re{dcnie} that $ \hht{w_s}^{\la_{j}}$ converges to some function $\hht{w}_s$ uniformly. We show that $Q_{n}^{\la_{j}} \hht{w_s}^{\la_{j}}$ converges uniformly to $Q_{n}^{\la_0} \hht{w}$. Recall that 
			\[
			Q_n^{\la} w^{\la} = P_n  (A^{\la} w^{\la} + B^{\la} \ov{w^{\la}} ), 
			\]
			where $P_n$ is given by formula \re{Pnf1}.  Hence 
			\begin{align*}
			Q_n^{\la_1} w^{\la_1} - Q_n^{\la_2} w^{\la_2}   
			&= \left( Q_n^{\la_1} - Q_n^{\la_2} \right) w^{\la_1} + Q_n^{\la_2} (w^{\la_1} - w^{\la_2}) \\
			&= P_n \left(  (A^{\la_1} - A^{\la_2}) w^{\la_1} \right) +  P_n \left( (B^{\la_1} - B^{\la_2} )  \ov{w^{\la_1}} \right) \\
			&\quad+ P_n \left( A^{\la_2} (w^{\la_1} - w^{\la_2}) \right) +  P_n \left( B^{\la_2} (w^{\la_1} - w^{\la_2}) \right). 
			\end{align*} 
			By \rp{Pnhest1}, we have 
			for any $0 < \tau  < 1$
			\begin{align*} 
			\left| Q_{n}^{\la_{j}} \hht{w_s}^{\la_{j}} - Q_{n}^{\la_0} \hht{w} \right|_{\tau} 
			&\leq \left( \left| A^{\la_j} - A^{\la_0} \right|_{0} + \left| B^{\la_j} - B^{\la_0} \right|_{0} \right) | \hht{w_s}^{\la_j} |_{0} \\ 
			&\quad + \left( |A^{\la_0}|_0 + |B^{\la_0}|_0 \right) \left|  \hht{w_s}^{\la_{j}} - \hht{w_s} \right|_0.
			\end{align*}
			Since $A^{\la}, B^{\la} \in \mc{C}^{\mu,0} (\ov{\D})$, we have $A, B$ are  continuous in $\la$ in the sup norm. Hence by taking $\tau = 0$ in the above expression we get $ \left| Q_{n}^{\la_{j}} \hht{w_s}^{\la_{j}} - Q_{n}^{\la_0} \hht{w} \right|_0 \ra 0$. Letting $\la_j \to \la_0$ in   \re{dcnie} we get
			$
			\hht{w}_s + Q_{n}^{\la_0} \hht{w}_s = 0. 
			$
			By Vekua \cite[p.~296-298]{VA62}, there does not exist non-trivial continuous solutions for the above equation, thus $\hht{w}_s = 0$. 
			But we have $|\hht{w}_s|_0 = \lim_{j \to \infty} |\hht{w_s }^{\la_{j}}|_0 = 1$, which is a contradiction. Thus $|w_s ^{\la}|_0$ is bounded by some constant $C_0$ uniform in $\la$. It follows then 
			\[
			| Q_n^{\la} w_s^{\la} |_{\tau} \leq C (|A^{\la}|_0 + | B^{\la} |_0  )  |w_s^{\la}|_ 0  \leq C_0', 
			\]
			for any $0 < \tau < 1$. 
			By  \re{dcie} we obtain $|w^{\la}|_{\tau}$ is bounded uniformly in $\la$ by some constant $C_0''$.
			
			For $k > 0$ we apply induction. By   \re{dcie} and \rp{Pnhest2} one has 
			\begin{align*}
			| w_s^{\la} |_{k+1+ \mu} 
			&\leq |Q_n^{\la} w_s^{\la} |_{k+1+ \mu} + C_k 
			\\ &= |P_n (A^{\la} w_s^{\la} + B^{\la} \ov{w_s^{\la}} ) |_{k+1+ \mu} + C'_k \\
			&\leq C''_k \left(  | A^{\la} |_{k+\mu} + | B^{\la} |_{k+\mu} \right)  |w_s^{\la}|_{k+\mu} + C'''_k 
			\leq C''''_k. 
			\end{align*}
			Hence $|w_s^{\la} |_{k+1+ \mu}$ is bounded uniformly in $\la$. 
			
			Finally we show that $w_s^{\la}$ is continuous in the $C^{0}$-norm. Suppose this is not the case, we can then find a sequence $\la_{j} \to \la_0$, $|w_s^{\la_{j}} - w_s^{\la_0}|_{0} \geq \del > 0$. Since $|w_s^{\la_{j}}|_{k+1+\mu}$ is bounded, passing to a subsequence if necessary, $w_s^{\la_{j}}$ converges to some function $\hht{w_s}$ in the $C^{k+1}$-norm. We have shown above that $Q_{n}^{\la_{j}} w_s^{\la_{j}}$ converges uniformly to $Q_{n}^{\la_0} \hht{w_s}$. Letting $\la_j \to \infty$ in equation \re{dcie} we have
			\[ 
			\hht{w_s} + Q_{n}^{\la_0} \hht{w_s} = g_s .  
			\]
			This equation has a unique solution $w^{\la_0}_s$, thus $\hht{w_s} = w^{\la_0}_s$. But this contradicts the fact that $|\hht{w_s} - w^{\la_0}_s|_{0} \geq \del > 0$. Thus we have proved that $w_s^{\la}$ is continuous in $\la$ in the $C^{0}(\ov{\D})$-norm. The continuity of $w_s^{\la}$ in the $C^{k+1}(\ov{\D})$-norm then follows from \rp{cinhd}.   
		\end{proof}
		Next we show the smooth dependence in parameter for a particular solution of the non-homogenous Problem A. 
		\pr{pAfdpsct} 
		Let $k$ be a non-negative integer and let $0 < \mu <  1$. Consider Problem A with parameter on a simply-connected domain $\Om$:
		\eq{pAfd}
		\begin{cases}
			\pa_{\ov z} w^{\la}  + A^{\la}(z) w^{\la} + B^{\la}(z) \ov{w^{\la}} = F^{\la} \quad \text{in $\Om$;}\\
			Re[\ov{l^{\la}(z)} w^{\la}] = \gm^{\la} \quad \text{on $b \Om$, }
		\end{cases} 
		\eeq 
		where the index $n \geq 0$.  Suppose $A^{\la}, B^{\la}, F^{\la} \in \mc{C}^{k+\mu,0} (\ov{\Om})$, $l^{\la}, \gm^{\la} \in \mc{C}^{k+1+\mu,0}(b \Om )$. Then there exists a solution $w_0$ in the class $\mc{C}^{k+1+\mu,0} (\ov{\Om})$. Moreover there exists some constant $C$ independent of $\la$ such that
		\begin{gather} 
		\label{fdpsc0est}   
		\| w_0^{\la} \|_{0,0} := \sup_{\la \in [0,1] } | w_0^{\la} |_0 \leq C \left( \| F^{\la} \|_{0,0} + \| \gm^{\la} \|_{\mu,0} \right),  
		\\ \label{fdpshdest}   
		\| w_0^{\la} \|_{k+1+\mu,0} \leq C \left( \| F^{\la} \|_{k+\mu,0} + \| \gm^{\la} \|_{k+1+\mu,0} \right).  
		\end{gather}
		\epr
		\begin{proof}
			To prove estimate \re{fdpsc0est} (resp.\re{fdpshdest}), it suffices to show that $ \| w_0 \|_{0,0} \leq C$ (resp. $\| w_0 \|_{k+1+\mu,0} \leq C$ ) if each norm on the right-hand side is bounded by $1$, since the equations are linear. 
			As before we can assume the problem takes the form \re{pA_unit_disc}. 
		In view of equation \re{pAie2}, we can take $w_{0}^{\la}$ to be the unique solution in $C^0 (\ov{\D})$ to the integral equation in $\D$: 
			\eq{pAfdpsinteq} 
			w^{\la} + Q_{n}^{\la} w^{\la} =  P_{n} F^{\la} + \frac{z^{n}}{2 \pi i} \int_{S^1} \gm^{\la}(t) \frac{t+z}{t-z} \frac{dt}{t}, 
			\eeq 
			where 
			$
			P_{n} F^{\la} = T_{\D} F^{\la} + z^{2n+1} T_{\D^{c}} F_1^{\la},  Q_{n}^{\la} w^{\la} = P_{n} (A^{\la} w^{\la} + B^{\la} \ov{w^{\la}}), 
			$
			and $F_1^{\la}$ and $F^{\la}$ are related by the second expression in \re{Pnf2}.  
			
			The proof is identical to that of the previous theorem, once we show that the right-hand side of equation \re{pAfdpsinteq} belongs to   $\mc{C}^{k+1+\mu,0}(\ov{\D})$. By \rp{Pnhest2},  	\[ 
			|P_{n} F^{\la}|_{k+1+\mu} \leq C |F^{\la}|_{k+\mu}. 
			\] 
			The second term is $z^{n}$ times $\mc{S} \gm^{\la}$, the Schwarz transform of $\gm^{\la}$ on $\D$.  We have
			\[ 
			| \mc{S} \gm^{\la} |_{k+1+\mu} \leq C |\gm^{\la} |_{k+1+\mu} 
			\]
		by \rp{Cint}.	Thus $ |P_{n} F^{\la} + z^{n} \mc{S} \gm^{\la}|_{k+1+\mu}$ is bounded. Fix $\tau \in (0,1)$ we have
			\begin{gather*}
			|P_{n} F^{\la_{1}} - P_{n} F^{\la_{2}}|_{0} 
			\leq | P_n F^{\la_1} - P_n F^{\la_2} |_{\tau} 
			\leq C|F^{\la_{1}} - F^{\la_{2}}|_{0}, 
			\\
			|\mc{S} \gm^{\la_{1}} - \mc{S} \gm^{\la_{2}} |_{0} 
			\leq 
			|\mc{S} \gm^{\la_{1}} - \mc{S} \gm^{\la_{2}} |_{\tau}  
			\leq C|\gm^{\la_{1}} - \gm ^{\la_{2}}|_{\tau}. 
			\end{gather*}
			By the above two inequalities and the assumptions, we see that $P_{n} F^{\la} + z^{n} \mc{S} \gm^{\la}$ is continuous in the $C^{0}$-norm. By \rp{cinhd}, $P_{n}F^{\la} + S\gm^{\la} \in \mc{C}^{k+1+\mu,0}(\ov{\D})$. 
		\end{proof}
		We can now prove the following result for Problem A on fixed domain: 
		\pr{pAscfdthm1} 
		Let $k$ be a non-negative integer and let $0 < \mu <  1$. Consider the non-homogeneous Problem A \re{pAfd} on a simply-connected domain $\Om$, with index $n \geq 0$. Suppose $A,B,F \in \mc{C}^{k+\mu,0}(\ov{\Om})$, $l, \gm \in \mc{C}^{k+1+\mu,0}(b \Om)$, $a_r, b_r, c_s \in C^{0}( [0,1] )$. Then for each $\la$ there exists a unique solution $w^{\la}$ satisfying the following conditions on a normally distributed set (see \re{ndsd}).
		\eq{scfdnds}
		\begin{gathered}
			w^{\la}(z_r) = a_r(\la) + ib_r(\la), \quad \quad r=1,...,N_0; \\
			w^{\la}(z_s') = l^{\la}(z_s') (\gm^{\la}(z_s') + ic_s(\la)), \quad \quad s =1,...,N_1 
		\end{gathered} 
		\eeq
		where $2N_0 + N_1 = 2n+1$. The family $w^{\la}$ belongs to the class $ \mc{C}^{k+1+\mu,0}(\ov{\Om})$. Moreover there exists some constant $C$ independent of $\la$ so that 
		\begin{gather} \label{pAscfdsupe}    
		\| w^{\la} \|_{0,0} \leq  C \left( \| F^{\la} \|_{0,0} + \| \gm^{\la} \|_{\mu,0} + \sum_{r=1}^{N_0}  \left(|a_r |_0 + |b_r|_0\right) + \sum_{s=1}^{N_1} |c_s|_0 \right), 
		\\ \label{pAscfdhde}        
		\| w^{\la} \|_{k+1+\mu,0} \leq C \left( \| F^{\la} \|_{k+\mu,0} + \| \gm^{\la} \|_{k+1+\mu,0} + \sum_{r=1}^{N_0} \left( |a_r |_0 + |b_r|_0 \right)+ \sum_{s=1}^{N_1} |c_s|_0 \right), 
		\end{gather} 
		where we denote $|a_r|_0 := \sup_{\la \in [0,1]} | a_r(\la) |$ and similarly for $|b_r|_0$ and $|c_s|_0$. 
		\epr
		\nid \tit{Proof.}
		Let $w_{0}^{\la}$ be the particular solution to the non-homogeneous Problem A given in \rp{pAfdpsct}, and let $w^{\la}_\ell$-s be the $2n+1$ $\R$-linearly independent solution to the homogeneous Problem A given in \rp{pAfdhsct}. Then $w^{\la}$ is given by the formula:
		\begin{gather} \label{pAscfdc1wr}
		w^{\la}(z)= w_{0}^{\la}(z) + \sum_{\ell =1}^{2n+1} d_\ell(\la) w_\ell^{\la}(z), 
		\end{gather} 
		where $d_\ell$ are real-valued functions on $[0,1]$. 
		
		From \re{scfdnds}, we have the following linear system of equations for the determination of the functions $d_s (\la)$: 
		\begin{gather} \label{ipeq} 
		\sum_{j=1}^{2n+1} d_\ell (\la) w_\ell^{\la} (z_r) = a_r(\la) + i b_r (\la) - w^{\la}_0 (z_r)  \quad \quad r =1,...,N_0, 
		\\ \label{bpeq}
		\sum_{\ell= 1}^{2n+1} d_\ell (\la) w_\ell^{\la} (z_s') = l^{\la} (z_s') (\gm^{\la} (z_s') + i c_s (\la)) - w^{\la}_0 (z_s') \qquad s = 1,...,N_1. 
		\end{gather} 
		We can write the above as $2N_0 + 2N_1$ real equations. However, there are in fact only $2N_0+ N_1 = 2n+1$ equations, since the imaginary parts of equations \re{bpeq} are obtained from the real parts of equation \re{bpeq}, by multiplying $- \frac{\all^{\la} (z_s')}{\beta^{\la}(z_s')}$ (assume that $\beta^{\la}(z_s') \neq 0$.) Indeed, we have for $z_s' \in b \Om$, 
		\gan
		0 = Re[ \ov{l^{\la}} w_\ell^{\la} (z_s') ] = \all^{\la} (z_s') Re[w_\ell^{\la} (z_s')] + \beta^{\la} (z_s') Im[w_\ell^{\la} (z_s')] , \quad 1 \leq \ell \leq 2n+1, 
		\\
		\gm^{\la}(z_s') = Re[ \ov{l^{\la}} w_{0}^{\la} (z_s') ] = \all^{\la} (z_s') Re[w_{0}^{\la} (z_s')] + \beta^{\la} (z_s') Im[w_{0}^{\la} (z_s')], 
	\end{gather*}
	where we set $l^{\la}(z_s') = \all^{\la}(z_s')+ i \beta^{\la}(z_s')$. Thus we can view the linear system \re{ipeq}-\re{bpeq} as:
	\gan
	\begin{pmatrix}
		Re[w^{\la}_{1}(z_1)] & \hdots & Re[w^{\la}_{2n+1}(z_1)]  \\
		Im[w^{\la}_{1}(z_1)] & \hdots & Im[w^{\la}_{2n+1}(z_1)] \\
		\vdots & \vdots & \vdots \\
		Re[w^{\la}_{1}(z_{N_0})] & \hdots & Re[w^{\la}_{2n+1}(z_{N_0})] \\
		Im[w^{\la}_{1}(z_{N_0})] & \hdots & Im[w^{\la}_{2n+1}(z_{N_0})] \\ 
		Re[w^{\la}_{1}(z_1')] & \hdots & Re[w^{\la}_{2n+1}(z_1')] \\
		\vdots & \vdots & \vdots \\
		Re[w^{\la}_{1}(z_{N_1}')] & \hdots & Re[w^{\la}_{2n+1}(z_{N_1}')]
	\end{pmatrix}
	\times 
	\begin{pmatrix}
		d_{1} (\la) \\ 
		\vdots \\
		d_{2n+1} (\la) 
	\end{pmatrix}
	= 
	\begin{pmatrix}
		Re(h^{\la}(z_1)) \\
		Im(h^{\la}(z_1)) \\
		\vdots \\
		Re(h^{\la}(z_{N_0})) \\
		Im(h^{\la}(z_{N_0})) \\
		\vdots \\
		Re(e^{\la}(z_{N_1}')) \\
		\vdots \\
		Re(e^{\la}(z_{N_1}'))
	\end{pmatrix}
\end{gather*}
where 
$h^{\la} (z_r)$ and $e^{\la} (z_s')$ 
denote the right-hand side of \re{ipeq} and \re{bpeq}, respectively. 
Write the above matrix equation as $M x = \textbf{v}$. By
Vekua \cite[p.~286-287]{VA62}, for fixed $\la$ and a normally distributed set $\{N_0, N_1, \ov{\Om} \}$, $M$ is invertible, thus we can write $d_s (\la)$ as $\frac{P}{Q}$, $Q \neq 0$, and $P$ and $Q$ are linear combination of products of 
\[ 
Re[w_\ell^{\la} (z_r)], \quad Im[w_\ell^{\la} (z_r)], \quad Re[w_\ell^{\la} (z_s')], \quad Im[w_\ell^{\la} (z_s')], 
\] 
\[ 
(Re, Im) [a_r(\la) + i b_r(\la) - w^{\la}_0 (z_r)], \quad Re [l^{\la}(z_s') (\gm^{\la}(z_s') + ic_{j}(\la)) - w^{\la}_0(z_s')]. 
\] 
By 
the assumption, Proposition \ref{pAfdhsct} and \ref{pAfdpsct}, for fixed $z_r$ and $z_s'$ the above functions are continuous in $\la$, thus $d_\ell$-s are continuous functions of $\la$, and moreover
by 
\re{fdpsc0est} we have 
\eq{fddsest}
| d_\ell|_0 \leq C \left(  \sum_{r=1}^{N_0} \left(|a_r|_0 + |b_r|_0 \right) + \left( \sum_{s=1}^{N_1} |c_s|_0 \right) +  \| F \|_{0,0} + \| \gm \|_{\mu, 0} \right). 
\eeq
Since $w_{0}^{\la}, w_\ell^{\la} \in \mc{C}^{k+1+\mu,0}(\ov{\Om^{\la}} )$, it follow from \re{pAscfdc1wr} that $w^{\la} \in \mc{C}^{k+1+\mu,0}(\ov{\Om})$. Putting together estimates \re{fdpsc0est}, \re{fdpshdest} and \re{fddsest} we get \re{pAscfdsupe} and \re{pAscfdhde}.
\end{proof}

\mk{} 
Since our proof uses a contradiction argument, the constant $C$ in \re{pAscfdsupe} and \re{pAscfdhde} is not explicit. In particular we do not know how it depends on the coefficient functions $A^{\la}$, $B^{\la}$ and $l^{\la}$.
\emk

We now prove the analog of \rp{pAscfdthm1} when the index is negative. 
\pr{pAscfdc2t}
Let $k$ be a non-negative integer, and let $0 < \mu <  1$. 
Consider Problem A \re{pAfd} with negative index. Suppose for each $\la$, conditions \re{scnnsc} is fulfilled. Suppose $A^{\la},B^{\la} \in \mc{C}^{k+\mu,0} (\ov{\Om})$, $l^{\la}, \gm^{\la} \in \mc{C}^{k+1+\mu,0}(b \Om)$. Then for each $\la$ there exists a unique solution $w^{\la}$ on $\Om$. The family $w^{\la} $ lies in the class $\mc{C}^{k+1+\mu,0} (\ov{\Om})$, and the following estimate hold
\[  
 \| w \|_{k+1+\mu,0} \leq C(A,B,l) \left( \| F \|_{k+\mu,0} + \| \gm \|_{k+1+\mu,0} \right). 
\] 
\epr
\begin{proof}
Reducing the problem to the canonical form \re{pA_unit_disc}. By Vekua 
\cite[p.~300, (7.32), (7.33)]{VA62}, $w^{\la}$ satisfies the integral equation \re{pAnnie} 
\eq{pAnnie2}
w^{\la} + (Q_{n'}^{\ast})^{\la} w^{\la} = P_{k}^{\ast} F^{\la} + \frac{1}{\pi i} \int_{S^1} \frac{\gm^{\la}(t) \, dt}{t^{k}(t-z)}, \quad n'=-n, 
\eeq
where $Q^{\ast}_{n'} w = P^{\ast}_{n'} (A^{\la} w^{\la} + B^{\la} \ov{w^{\la}})$ and  $P^{\ast}_{n'}$ is given by \re{Past}. 
Vekua proved that for fixed $\la$, the above equation is uniquely soluble for $w^{\la}$ given any right-hand side in $L^{q}(\ov{\D})$, for $q \geq \frac{p}{p-1}$, and $Q^{\ast}_{n'}$ is a compact operator on $L^{q}(\D)$ satisfying the estimate
\[  
|(Q_{n'}^{\ast})^{\la} f |_{k+1+\mu} \leq C |B^{\la}|_{k+\mu} |f|_{k+1+\mu}. 
\] 
Following the same argument as for the positive index case, it suffices to show the right-hand side of \re{pAnnie2} lies in the space $\mc{C}^{k+1+\mu,0}(\ov{\D} \times [0,1] )$. For the first term we have the estimate
\[ 
|P_{k}^{\ast} F^{\la} |_{k+1+\mu}  \leq C |F^{\la}|_{k+\mu} , 
\quad
|P_{k}^{\ast} F^{\la_{1}} - P_{k}^{\ast} F^{\la_{2}}|_{ } 
\leq C |F^{\la_{1}} - F^{\la_{2}} |_{0}.   
\]
The second term is the Cauchy transform of the function $g^{\la} = \gm^{\la}/ t^{k}$, and we have the estimate by \rp{Cint}:   
\[ 
|\mf{C} g^{\la}|_{k+1+\mu} \leq C |\gm^{\la}|_{k+1+\mu}, 
\quad 
|\mf{C} g^{\la_{1}} - \mf{C} g^{\la_{2}} |_{0} \leq C |\gm^{\la_{1}} - \gm^{\la_{2}}|_{1}.  
\]
Since $F^{\la} \in \mc{C}^{k+\mu,0}(\D)$ and $\gm^{\la} \in \mc{C}^{k+1+\mu,0}(S^1)$, then $P^{\ast}_{n'} F^{\la}$ and $\mf{C} (g^{\la})$ are in 
$\mc{C}^{k+1+\mu,0}(\ov{\D})$. 
\end{proof}
We now combine \rp{pAscfdthm1} and \rp{Rmp} to prove the following result for Problem A on families of simply-connected domains. We state and prove the theorem only for $n \geq 0$. When $n<0$, the same conclusion holds if we assume conditions \re{scnnsc} hold. The proof can be done similarly as in \rt{pAscvdt} by using \rp{pAscfdc2t} and we leave the details to the reader. 

\th{pAscvdt}
 Let $k, j,\mu$ be as in \rt{mtintro}. Let $\Om^{\la}$ be a $\mc{C}^{k+1+\mu,j}$ family of bounded simply-connected domains. That is, there exists a family of maps $\Gm^{\la}: \ov{\Om} \to \ov{\Om^{\la}}$, with $\Gm^{\la} \in \mc{C}^{k+1+\mu,j}(\ov{\Om})$. 
 Suppose the index $n \geq 0$. Let $ \{ z_r \in \Om, z_s' \in b \Om \}$ be a normally distributed set in $\ov{\Om}$ (\rd{ndsd}). Set $z^{\la}_r = \Gm^{\la} (z_r)$, $(z'_s)^{\la} = \Gm^{\la} (z'_s)$. Then $\{ z^{\la}_r, (z'_s)^{\la} \}$ is a normally distributed set in $\ov{\Om^{\la}}$.   
Let $w^{\la}$ be the unique solution to Problem \re{pAsc} satisfying the following: 
\begin{equation} \label{pAscvdnds1}
\begin{gathered}  
w^{\la}(z_r^{\la}) = a_r(\la) + ib_r(\la) , \quad \quad (r =1,..., N_0); \\ 
w^{\la}((z_s')^{\la}) = l^{\la}( (z_s')^{\la})[\gm^{\la}( (z_s')^{\la}) + i c_s(\la) ], \quad \quad ( s = 1,...,N_1) 
\end{gathered} 
\end{equation}
where $2N_0 + N_1 = 2n+1$. 
\\ 
(i)
Suppose $A^{\la}, B^{\la}, F^{\la} \in \mc{C}^{k + \mu, 0}(\ov{\Om^{\la}})$, and $l^{\la}, \gm^{\la} \in \mc{C}^{k+1+\mu, 0}(b \Om^{\la})$, and $a_{r} , b_{r}, c_{s} \in C^{0}([0,1])$. Then $w^{\la} \in \mc{C}^{k+1+\mu,0}(\ov{\Om^{\la}})$, and there exists some constant $C$ independent of $\la$ such that
\begin{gather} \label{pAscsupest}    
\| w^{\la} \|_{0,0} \leq C \left( \| F^{\la} \|_{0,0} + \| \gm^{\la} \|_{\mu,0} + \sum_{r=1}^{N_0} \left( |a_r |_0 + |b_r|_0 \right) + \sum_{s=1}^{N_1} |c_s|_0 \right); 
\\ 
\label{pAschdest}   
\| w^{\la} \|_{k+1+\mu,0} \leq C \left( \| F^{\la} \|_{k+\mu,0} + \| \gm^{\la} \|_{k+1+\mu,0} + \sum_{r=1}^{N_0} \left( |a_r |_0 + |b_r|_0 \right) + \sum_{s=1}^{N_1} |c_s|_0 \right). 
\end{gather} 

\medskip
\noindent
(ii) Suppose $A^{\la}, B^{\la}, F^{\la} \in \mc{C}^{k +\mu, j}(\ov{\Om^{\la}})$, and $l^{\la}, \gm^{\la} \in \mc{C}^{k+1+\mu, j}(\ov{b \Om^{\la}})$. 
Let $w^{\la}$ be the solution to Problem \re{pAsc} satisfying the condition \re{pAscvdnds1}, where $a_{r} , b_{r}, c_{s} \in C^{j}([0,1])$. Then $w \in \mc{C}^{k+1+\mu,j}(\ov{\Om^{\la}})$.  
\eth 
\begin{proof}
(i)
By \rp{Rmp}, there exists a family of Riemann mappings $R^{\la}: \Om^{\la} \to \D$ with $\wti{R}^{\la} (z) := R^{\la} (\Gm^{\la} (z) ) \in \mc{C}^{k+1+\mu, j} (\ov{\Om})$. 
By \rp{Ri}, $(R^{\la})^{-1} \in \mc{C}^{k+1+\mu, j} (\ov{\D}) $. i.e. $(R^{\la})^{-1} : \D \to \ov{\Om^{\la}}$ is also a $\mc{C}^{k+1+\mu,j}$ family of embeddings. 
Define $v^{\la} (\zeta)=  w^{\la} \circ (R^{\la})^{-1}(\zeta)$, $\zeta \in \D$, and write $z^{\la} = (R^{\la})^{-1}(\zeta)$. Since $(R^{\la})^{-1}$ is holomorphic, we have the boundary value problem for $v^{\la}$ on $\D$:
\eq{pAscvdtbvp} 
\begin{cases} 
	\pa_{\ov \zeta} v^{\la}(\zeta) + A^{\la}_1 (\zeta) v^{\la} (\zeta) + B^{\la}_1 (\zeta) \ov{v^{\la}(\zeta)}  = F^{\la}_1 (\zeta) & \text{in $\D$;}  \\
	Re [ \ov{l_1^{\la} } (\zeta) v^{\la}(\zeta) ]  = \gm_1^{\la} (\zeta)   & \text{on $S^1$, } 
\end{cases}
\eeq
where $l^{\la}_1 := l^{\la} \circ (R^{\la})^{-1}$, $\gm^{\la}_1 := \gm^{\la} \circ (R^{\la})^{-1}$, and 
\begin{gather*}
A_1^{\la} = A^{\la} \circ (R^{\la})^{-1} (\zeta) D^{\la} (\zeta), \quad  B_1^{\la} = B^{\la} \circ (R^{\la})^{-1} (\zeta) D^{\la} (\zeta), \quad
\\ \nonumber F_1^{\la} = F^{\la} \circ (R^{\la})^{-1} (\zeta) D^{\la} (\zeta), \quad D^{\la} = \pa_{\ov \zeta} \ov{(R^{\la})^{-1}}. 
\end{gather*} 
By \rp{embindle}, $ A^{\la}_1, B^{\la}_1, F^{\la}_1 \in \mc{C}^{k+\mu,j} (\ov{\D})$, and $\ov{ l_1^{\la}}$, $\gm_1^{\la} \in \mc{C}^{k+1 +\mu,j}(\ov{\D})$. Also $D^{\la} \in \mc{C}^{k+\mu,j} (\ov{\D})$. Hence $A_1^{\la}$, $B_1^{\la}, F_1^{\la} \in \mc{C}^{k+ \mu,j} (\ov{\D})$. 
The index $n = \frac{1}{2\pi} [l^{\la}]_{b \Om}$ is an integer independent  of $\la$. 
Let
\[
\zeta_r^{\la} := R^{\la} \circ \Gm^{\la} (z_r), \quad  (\zeta_s')^{\la} :=  R^{\la} \circ \Gm^{\la} (z_s). 
\]
Then for each $\la$, $\{ \zeta^{\la}_r, (\zeta'_s)^{\la}  \} $ is a normally distributed set in $\Om^{\la}$. $v^{\la}$ satisfies the condition
\begin{gather*}
v^{\la} (\zeta_r^{\la}) = w^{\la} (z_r^{\la}) = a_r (\la) + i b_r(\la), \quad r =1,..., N_0;
\\
v^{\la}( (\zeta_s' )^{\la}) = w^{\la} ( (z_s')^{\la}) 
= l^{\la}( (z_s')^{\la})[\gm^{\la}( (z_s')^{\la}) + ic_s(\la) ], \quad s = 1,...,N_1. 
\end{gather*}
Since $v^{\la} = v_0^{\la} + \sum_{\ell} d_{\ell} (\la) v_{\ell}^{\la}$, we have
\begin{gather*}
\sum_{\ell=1}^{2n+1} d_{\ell}(\la) v_{\ell}^{\la}(\zeta_{r}^{\la}) =  a_{r}(\la) + i b_{r}(\la) - v_{0}^{\la}(\zeta_{r}^{\la} ), \\ 
\sum_{\ell=1}^{2n+1} d_{\ell}(\la) v_{\ell}^{\la}( (\zeta_{s}')^{\la} ) = l^{\la}( (z_{s}')^{\la})[\gm^{\la}( (z_{s}')^{\la}) + ic_{s}(\la) ]- v_{0}^{\la}( (\zeta_{s}')^{\la}), 
\end{gather*}
where $v_{0}^{\la}$ is a particular solution to the non-homogeneous problem \re{pAscvdtbvp} as given in \rp{pAfdpsct}, and $v_{1}^{\la},...,v_{2n+1}^{\la}$ are linearly independent solutions to the corresponding homogeneous problem as given in \rp{pAfdhsct}. We have $v^{\la}_{0}, v^{\la}_{1}$, $ \dots, v^{\la}_{2n+1} \in \mc{C}^{k+1+\mu,0}(\ov{\D})$. 
Since $\zeta^{\la}_r$ and $ (\zeta'_s)^{\la}$ depend on $\la$ continuously, $v_{0}^{\la}(\zeta_{r}^{\la})$, $v_{\ell}^{\la}(\zeta_{r}^{\la})$, $v_{0}^{\la}( (\zeta_{s}')^{\la})$, $v_{\ell}^{\la}( (\zeta_{s}')^{\la})$ depend on $\la$ continuously. 
Same reasoning as in the proof of \rp{pAscfdthm1} then shows that $d_{l} \in C^{0}( [0,1] )$, and consequently $v^{\la} \in \mc{C}^{k+1+\mu,0}(\ov{\D})$, and we have
\[
\| v^{\la} \|_{k+1+\mu,0} \leq C \left( \| F_1^{\la} \|_{k+\mu,0} + \| \gm_1^{\la} \|_{k+1+\mu,0} + \sum_{r=1}^{N_0} \left( |a_r |_0 + |b_r|_0 \right) + \sum_{s=1}^{N_1} |c_s|_0 \right). 
\] 

Now $w^{\la} := v^{\la} \circ R^{\la}$ is the unique solution to Problem \re{pAsc}, $w^{\la} \circ (R^{\la})^{-1} \in \mc{C}^{k+1+\mu,0}(\ov{\D})$, and $w^{\la}$ satisfies the estimate \re{pAschdest}. 
By \rl{embindle}, $w^{\la} \circ \Gm^{\la} \in \mc{C}^{k+1+\mu,0}(\ov{\Om})$, or $w^{\la} \in \mc{C}^{k+1+\mu, 0} (\ov{\Om^{\la}} )$. 

\medskip \noindent  
(ii) The $j=0$ case is done in (i). Assume now that $j \geq 1$, and we use induction. We use the notation $D^{\la, \la_{0}} v(\zeta) $ from \re{diff_quot_not}.
In view of \re{pAscvdtbvp} $D^{\la, \la_{0}} v (\zeta)$ is a solution to 
\eq{pAscdqbvp}
\begin{cases}
	\pa_{\ov \zeta} u + A^{\la_0}_1 u + B_1^{\la_0} \ov{u} = F_{\ast}^{\la, \la_0} \quad & \text{in $\D$; }
	\\ Re[ \ov{\wti{l}^{\la_0}} u] = \gm_{\ast}^{\la, \la_0}
	& \text{on $S^1$,}
\end{cases}
\eeq
where 
$
F_{\ast}^{\la, \la_0}:=  D^{\la, \la_0} F_1 - (D^{\la, \la_0} A_1) v^{\la} - (D^{\la, \la_0} B_1) \ov{v^{\la}}$ and $
\gm_{\ast}^{\la, \la_0} := D^{\la, \la_{0}} \wti{\gm} - Re[ (D^{\la, \la_{0}} \ov{\wti{l}}) v^{\la}]. 
$ 
We also have
\begin{equation} \label{hr_gs_la}   
\begin{gathered} 
D^{\la, \la_0} v (\zeta_r) = h_r^{\la, \la_0} := D^{\la, \la_0} a _r + i D^{\la, \la_0} b_r, 
\\  
D^{\la, \la_0} v ( \zeta_s' )= g_s^{\la, \la_0} := D^{\la, \la_0} \left\{ l^{\bullet}( (\zeta_{s}')^{\bullet})[\gm^{\bullet}( (\zeta_{s}')^{\bullet}) + ic_{s}(\bullet) ]
\right\}. 
\end{gathered}
\end{equation} 

For every $\la_0$, denote by $\pa_{\la} v (\cdot, \la_0)$ the solution to the problem:
\eq{pAscladbvp}
\begin{cases}
\pa_{\ov \zeta} u + A^{\la_0}_1 u + B_1^{\la_0} \ov{u} = F_{\ast}^{\la_0} \quad & \text{in $\D$; }
\\ Re[ \ov{\wti{l}^{\la_0}} u] = \gm_{\ast}^{\la_0}
& \text{on $S^1$,}
\end{cases}
\eeq
satisfying the condition
\eq{pAscladuc} 
u^{\la_0}(\zeta_r) = h_{r} (\la_0),     \quad  u(\zeta'_{s}, \la_0) = g_s (\la_0),  
\eeq
where 
\begin{gather} \label{pAscFast}
F_{\ast}^{\la_0} :=  \pa_{\la} F_1(\cdot, \la_0) -  \pa_{\la} A_1 (\cdot, \la_0) v^{\la_0} - \pa_{\la} B_1(\cdot, \la_0) \ov{v^{\la_0}}, 
\\ \label{pAscgmast}
\gm_{\ast}^{\la_0} := \pa_{\la} \wti{\gm} (\cdot, \la_0) - Re[( \pa_{\la} \ov{\wti{l}} (\cdot, \la_0 ) v^{\la_0}], 
\\ \label{pAscaast} 
h_r(\la_0) = a_{r}'(\la_0) + i b_{r}'(\la_0), 
\quad 
g_s (\la_0) = \pa_{\la_0} \{ \wti{l^{\la}} (\zeta_{s}') [ \wti{\gm^{\la}}( \zeta_{s}') + i c_{s}(\la) ]  \}.  
\end{gather}
Note that the solution to \re{pAscladbvp} exists since the index $\frac{1}{2 \pi} [\wti{l}^{\la_0}]$  is non-negative.  

By the induction hypothesis, we have $w^{\la} \in \mc{C}^{k+\mu,j-1}(\ov{\Om^{\la}})$, and therefore $v^{\la} = w^{\la} \circ (R^{\la})^{-1} (\zeta) \in \mc{C}^{k+\mu, j-1}(\ov{\D})$. As shown in (i), $A_1, B_1, F_1 \in \mc{C}^{k+\mu, j} (\ov{\D})$, thus $\pa_{\la} A_1$, $\pa_{\la}B_1$, $\pa_{\la} F_1 \in \mc{C}^{k-1+\mu, j-1} (\ov{\D})$. 
From \re{pAscFast} we have $F_{\ast}^{\la_0} \in \mc{C}^{k-1+\mu, j-1} (\ov{\D} ) $. On the other hand we have $\pa_{\la} \wti{\gm}, \pa_{\la} \ov{\wti{l}} \in \mc{C}^{k + \mu, j-1} (\ov{\D})$, thus by \re{pAscgmast}, $\gm_{\ast}^{\la_0} \in \mc{C}^{k+\mu, j-1} (\ov{\D} ) $. 

Since $a_r', b_r', c_s' \in C^{j-1}( [0,1] )$, by \re{pAscaast} and \re{pAscaast}, $h_r, g_s \in \mc{C}^{j-1}([0,1] )$. Applying (i) to problem \re{pAscladbvp}-\re{pAscladuc}, we obtain $ \pa_{\la} v(\cdot, \la_0) \in \mc{C}^{k+\mu,j-1}(\ov{\D})$. 

We now show that $\pa_{\la} v$ is indeed the $\la$-derivative of $v^{\la}$, namely $D^{\la, \la_{0}} v$ converges to $\pa_{\la} v(\cdot, \la_0)$ uniformly on $\D$, as $\la \to \la_{0}$. 

Now $u^{\la} := D^{\la, \la_{0}} v - \pa_{\la} v(\cdot, \la_0) $ is the unique solution to
\eq{pAscgbvp}
\begin{cases}
\pa_{\ov \zeta} u^{\la} + A_1^{\la_0} u^{\la} + B_1^{\la_0} \ov{u^{\la}} = F^{\la, \la_0}_{\ast} - F^{\la_0}_{\ast} & \text{in $\D$;}
\\
Re[\ov{\wti{l^{\la_{0}}}} u^{\la}] = \gm_{\ast}^{\la, \la_0} - \gm^{\la_0}_{\ast} \quad \quad \text{on $S^1$,}
\end{cases}
\eeq
which satisfies the condition: for $r=1,\dots, N_0$ and $s=1,\dots, N_1,$
\eq{pAscguc}
\begin{split}
u^{\la}(\zeta_{r}) = h_r^{\la, \la_0} - h_r (\la_0),
\quad 
u^{\la}(\zeta_s') = g_{s}^{\la, \la_0} - g_s (\la_0).
\end{split} 
\eeq
where the right-hand sides are defined by \re{hr_gs_la}. 
We now show that $ | u^{\la} |_{0}$ converges to $0$ as  $\la \to \la_{0}$.  Reducing the problem as before to the form
\eq{pAscgrbvp}
\begin{cases}
\pa_{\ov \zeta} \hht{u}^{\la}  + A^{\la_0}_{0} \hht{u}^{\la} + B^{\la_0}_{0} \ov{\hht{u}^{\la}} = F^{\la, \la_0}_0 & \text{in $\D$;}
\\
Re[\zeta^{-n} \hht{u}^{\la}] = \gm_0^{\la, \la_0} & \text{on $S^1$}
\end{cases}
\eeq
satisfying the conditions:  
\eq{pAscgruc}
\begin{split}
\hht{u}^{\la} (\zeta_{r}) = \hht{h}_r(\la), \quad (r =1 ,...,N_0); \quad 
\hht{u}^{\la} (\zeta_s') = \hht{g}_s (\la), \quad (s=1,...,N_1) 
\end{split} 
\eeq
where
\begin{gather} \nn
\hht{u}^{\la} = e^{\chi^{\la_0}}u^{\la}, \quad A_{0}^{\la_0} = A_1^{\la_0}, \qquad B_0^{\la_0} = B_1^{\la_0} e^{2i Im(\chi^{\la_0})}, 
\\ \label{Fgm0la}
F_{0}^{\la, \la_0} = \left(  F^{\la, \la_0}_{\ast} - F^{\la_0}_{\ast} \right) e^{\chi^{\la_0}}, \qquad
\gm_0^{\la, \la_0} = \left( \gm_{\ast}^{\la, \la_0} - \gm^{\la_0}_{\ast} \right) e^{p^{\la_0}}, 
\\ \label{hrhat}  
\hht{h}_r (\la) = h_r^{\la, \la_0} - h_r (\la_0), \qquad 
\hht{g}_s (\la) = g_s^{\la, \la_0} - g_s (\la_0), 
\end{gather}
where as before $\chi^{\la_0}$ is the holomorphic function in $\D$ defined by \re{chi}. 
The solution to \re{pAscgrbvp}-\re{pAscgruc} is given by  
\eq{pAscggs}
\hht{u}^{\la} = u_0^{\la} + \sum_{\ell =1}^{2n+1} d_{\ell}(\la) u_{\ell}^{\la}, 
\eeq
where $u_{0}^{\la}$ is a particular solution which uniquely solves the integral equation: 
\[ 
u_0^{\la} + Q_{n}^{\la_{0}} u_0^{\la} = P_{n}(F_0^{\la}) + \frac{z^{n}}{2 \pi i} \int_{b
\D} \gm_0^{\la} (t) \frac{t+z}{t-z} \frac{dt}{t}.
\] 
Here $Q_{n}^{\la_0}$ and $P_{n}^{\la_0}$ are the compact operators in the space $C^{0}(\ov{\D})$ defined by \re{Qn} and \re{Pnf2}, respectively. 
Since $I + Q^{\la_{0}}_{n}$ is an injective (Vekua \cite[p.~268]{VA62}), by Fredholm alternative, it is also surjective. By the open mapping theorem, the inverse $(I - Q^{\la_{0}}_{n})^{-1}$ is continuous. Denoting its operator norm by $M_0$, we get 
\[
| u_0^{\la} |_{0} \leq M_0 \left( | F_0^{\la, \la_0} |_{0} + | \gm_0^{\la, \la_0} |_{0} \right).  
\]
Since $v \in C^{0,0}(\ov{\D})$ from part (i), we obtain from expressions \re{Fgm0la} that the right-hand side of the above equation converges to $0$ as $\la \to \la_0$. Hence $|u_0^{\la}|_0 \to 0$. 

Finally we show $d_{\ell}(\la)$ converges to $0$. We have
\begin{gather*}
\hht{h}_r (\la) = \hht{u}^{\la} (\zeta_{r}) = u_0^{\la} (\zeta_{r}) + \sum_{\ell=1}^{2n+1} d_{\ell}(\la) u_{\ell}^{\la}(\zeta_{r}), \quad r = 1, ..., N_0; \\
\hht{g}_s (\la) = \hht{u}^{\la}(\zeta_{s}') = u_0^{\la} (\zeta_{s}') + \sum_{\ell=1}^{2n+1} d_{\ell}(\la) u_{\ell}^{\la}(\zeta_{s}'), \quad  s = 1, ..., N_1.
\end{gather*}
By 
\re{hrhat}, $\hht{h}_r(\la)$, $\hht{g}_s(\la)$ converge to $0$ as $\la \to \la_0$. Also $u_0^{\la} (\zeta_r)$ and $u_0^{\la} (\zeta_s')$ all converge to $0$ as $\la \to \la_{0}$, and therefore same reasoning as before shows that $d_{\ell}(\la)$ converges to $0$. By \re{pAscggs}, $|\hht{u}^{\la}|_0$ converges to $0$, which implies $D^{\la,\la_{0}} v$ converges to $\pa_{\la} v(\cdot, \la_0) \in \mc{C}^{k+\mu, j-1}(\ov{\D})$ in the sup-norm. This completes the induction and we obtain $v \in \mc{C}^{k+1+\mu, j} (\ov{\D})$. Since $w^{\la} = v^{\la} \circ R^{\la}$, $w \in \mc{C}^{k+1+\mu,j} (\ov{\Om^{\la}})$. 
\end{proof}

\nid \tit{Proof of \rt{scposind}.} \rp{pAscvdt} gives a solution $w^{\la} \in  \mc{C}^{k+1+\mu, j} (\ov{\Om^{\la}})$. To finish the proof we need to integrate $w^{\la}$ (as in \re{wlaint}) to get a solution $U^{\la}$ for the original elliptic boundary value problem. Denote the resulting integration constant by $c_0 (\la)$. Then by combining $w^{\la} \in  \mc{C}^{k+1+\mu, j} (\ov{\Om^{\la}})$ and the condition $u^{\la} (z_0^{\la}) \in C^{j}([0,1])$ in the statement of the theorem, we get $c_0 \in C^j [0,1]$, and the proof of \rt{scposind} is complete.   \hfill \qed 

\section{Kernels with parameter} 
Starting in this section we study the parameter problem on families of multiply-connected domains. As explained in the introduction, we can no longer fix the domain while preserving $\dbar$ in the highest order term, since there is no Riemann mapping theorem in this case. More significantly, unlike in the simply-connected case where the solutions satisfy certain Fredholm equation, the solutions on multiply-connected domains in general cannot be expressed as solutions of integral equations directly. Instead we will follow Vekua to represent solutions by integral formulas \re{nhgaerf}. In this section we show how the kernels $G_1$ and $G_2$ depend on $\la$. 

For a function $f^{\la}$ defined on the domain $ \Om^{\la}$ we will adopt the following notation: 
\[
f(z, \la) := f^{\la} \circ \Gm^{\la} (z), \quad z\in \Om,  \quad \la \in [0,1].  
\]
\begin{lemma} \label{HeinLp}
Let $\Om \subset \C$ be a bounded domain with $C^1$ boundary, and $\Gm^{\la}: \ov{\Om} \to \ov{\Om^{\la}}  $ be a $C^{1,0}$-embedding. Let $1 \leq p \leq 2$. For each $\la$, suppose $f^{\la} \in L^p (\Om^{\la}) $ and $| f (\cdot, \la) |_{L^p (\Om)}$ is uniformly bounded in $\la$.  Define 
\[
T_{\Om^{\la}} f(\cdot,\la) := \iint_{\Om} \frac{f(\zeta, \la)}{\zeta^{\la} - z^{\la}} \, d A (\zeta^{\la}).  
\]
Then the following inequality hold:
\eq{Lgmb}
| T_{\Om^{\la}} f(\cdot,\la)|_{L^{\gm}_{\beta}(\Om)} \leq C_{1,0} C(r,\gm,\Om) |f(\cdot,\la)|_{L^p(\Om)}, 
\eeq
where $\gm$ is an arbitrary number satisfying the inequality
$ 1 < \gm < \frac{2p}{2-p}, $
and
$
 \beta = \frac{1}{\gm} - \frac{2-p}{2p} > 0.
$
The constant $C(p, \gm, \Om)$ depends only on $p, \gm$ and $\Om$ and does not depend on $\la$.   
\end{lemma}
\begin{proof}  
Recall definition \re{Lpall} for the norm $| \cdot |_{L^{\gm}_{\beta}}$, so we have
\begin{align*}
&
| T_{\Om^{\la}} f^{\la}(\cdot)|_{L^{\gm}_{\beta}(\Om^{\la})} 
= \left( \int_{\Om} \left| \frac{f(\zeta, \la)}{\zeta^{\la} - z^{\la}} \right|^{\gm} a^{\la}(\zeta) \, d A (\zeta) \right)^{\frac{1}{\gm}}   \\ 
&\quad  + \sup_{h \in \C} \frac{1}{|h|^{\beta}} \left( \int_{\Om} |f(\zeta + h, \la) - f(\zeta, \la) |^{\gm}  a^{\la} (\zeta) \, d A (\zeta) \right)^{\frac{1}{\gm}}
\\
& \leq C_{1,0} \left\{ \left( \int_{\Om} \left| \frac{ f(\zeta,  \la)}{\zeta- z} \right|^{\gm}  \, d | \zeta | \right)^{\frac{1}{\gm}}
+ \sup_{h \in \C} \frac{1}{|h|^{\beta}} \left( \int_{\Om} |f(\zeta + h, \la) - f(\zeta, \la) |^{\gm}  \, d A (\zeta) \right)^{\frac{1}{\gm}} \right\}  \\
&= C_{1,0}  |T_{\Om} f (\cdot, \la) |_{L^{\gm}_{\beta} (\Om) } , 
\end{align*}
where $a^{\la} (\zeta) = | \det  (\Gm^{\la}) |$. 
By the result in \cite[p.~47]{VA62}, for $\gm, \beta, p$ satisfying the above conditions, we have
$
| T_{\Om } f^{\la}(\cdot, \la)|_{L^{\gm}_{\beta}(\Om)} 
\leq C (p,\gm,\Om) |f (\cdot, \la) |_{L^p(\Om) },  
$
from which estimate \re{Lgmb} then follows.  
\end{proof}

\begin{lemma} \label{ombdd} 
Let $2 < p < \infty$. Suppose $\la \mapsto A(\cdot, \la), B(\cdot, \la)$ are continuous maps from $[0,1]$ to $L^{p}(\C)$, where for each $\la \in [0,1 ]$, $A(\cdot, \la), B(\cdot, \la) \equiv  0$ outside $\Om$. For $j =1,2$, let 
\eq{omiexp} 
\om_{j}(z,t,\la) = \frac{t^{\la}-z^{\la}}{\pi} \iint_{\Om} \frac{A(\zeta, \la) X_j(\zeta, t, \la) + B(\zeta, \la) \ov{X_{j}(\zeta,t,\la)}}{(\zeta^{\la}-z^{\la})(t^{\la} - \zeta^{\la}) X_{j}(\zeta,t, \la)} \, d A (\zeta^{\la}). 
\eeq
Then we have
\eq{omjhnbdd}
| \om_j (\cdot, t, \la)|_{\all_p} \leq C_{1,0} C(p, \Om) \left( |A(\cdot, \la)|_{L^{p}(\Om)}  + |B(\cdot, \la)|_{L^{p}(\Om)} \right), \quad \all_p = \frac{p-2}{p}, 
\eeq
where the constants is uniform in $\la$ and $t \in K$, for any compact subset $K$ of $\C$. 
In particular, if $A^{\la} , B^{\la} \in \mc{C}^{0,0}(\ov{\Om^{\la}} )$, then
\[ 
|\om_{j}(\cdot, t, \la)|_{\tau} \leq C_{1,0} C (\tau, \Om) \left( \| A^{\la} \|_{0,0} + \| B^{\la} \|_{0,0} \right), 
\quad \quad \text{for any $0 < \tau < 1$}. 
\] 
\end{lemma}
\nid \tit{Proof.}
Let $a^{\la}(\zeta) = | \det \Gm^{\la} (\zeta)|$. We can rewrite $\om_j$ as 
\begin{align*}
\om_{j}(z, t,\la) 
&= \frac{1}{\pi} \int_{\Om} \left[ \frac{1}{\zeta^{\la} -z^{\la} } - \frac{1}{\zeta^{\la} - t^{\la} } \right]  
\left[ A(\zeta, \la) + B(\zeta,\la) \frac{\ov{X_{j}(\zeta,t,\la)}}{X_{j}(\zeta,t,\la)}\right] a^{\la} (\zeta) \, d A(\zeta). 
\end{align*}  
By H\"older's inequality and \rp{dbarot}, we have for any $z, t \in \C$, 
\eq{cinoC0e}
|\om_{j}(z,t,\la)| \leq C_{1,0} C(p,\Om) \left( |A(\cdot,\la)|_{L^{p}(\Om)} + |B(\cdot,\la)|_{L^{p}(\Om)}  \right); 
\eeq
\eq{cinoC0e2}
|\om_{j}(z,t,\la)| \leq C_{1,0} C_p \left( |A(\cdot,\la)|_{L^{p}(\Om)}  + |B(\cdot,\la)|_{L^{p}(\Om)}  \right) |z-t|^{\all_p};
\eeq 
\begin{align} \label{cinoCae}
|\om_{j}(z_1, t, \la) - \om_{j}(z_2, t, \la)|   
\leq C_{1,0}C_p \left( |A(\cdot,\la)|_{L^{p}(\Om)}  + |B(\cdot,\la)|_{L^{p}(\Om)} \right) | z_1- z_2|^{\all_p} 
\end{align}
for any $z_1, z_2, t \in \C$. \hfill \qed 
\begin{lemma} \label{cinLqfs}
Let $\Om$ be a bounded domain in $\C$ with $C^{1}$ boundary, and $\Gm: \Om \to \Om^{\la} $ be a $\mc{C}^{1,0}$ embedding. Let $2 < p < \infty$, and $p' = \frac{p}{p-1}$ be the conjugate of $p$, with $1 < p' < 2$. Suppose $\la \mapsto A(\cdot, \la), B(\cdot, \la) $ are continuous maps from $[0,1]$ to $L^{p}(\C)$, where for each $\la \in [0,1]$, $A(\cdot, \la)$, $B(\cdot, \la)$ $\equiv 0$ outside $\Om$ (In particular $A,B \in L_{p,2}(\C)$, $p>2$.) Let $K$ be a compact subset of $\C$. For any $q$ with $ 1 < p' \leq q < 2$, and for each $\la \in [0,1] $, let $X_{1}(z,t,\la)$ and $X_{2}(z,t,\la)$ be the unique solution  in $L_{q,0}(\C)$ to the integral equations (see \cite[p.~156]{VA62}): 
for $i=1,2$, 
\eq{cinLqie1}
X_j (z,t, \la) - \frac{1}{\pi} \iint_{\Om} \frac{A(\zeta,\la) X_j (\zeta,t,\la) + B(\zeta,\la) \ov{X_j(\zeta,t,\la)} }{\zeta^{\la}- z^{\la}} \, d A (\zeta^{\la})
= g_i(z,t,\la),   
\eeq where $g_1 (z,t,\la)=  \frac{1}{2(t^{\la}-z^{\la})}$ and $g_2 (z,t,\la) =  \frac{1}{2i(t^{\la}-z^{\la})}$.
Then the following statements hold:\\
(i) For each $t \in \C$, $\la \mapsto X_{j}(\cdot,t,\la)$ is a continuous map from $[0,1]$ to $L^{q}(\Om)$. Moreover, for all $t \in K$ we have
\eq{Xlqest} 
\left| X_j (\cdot, t, \la) \right|_{L^{q}(\Om)} 
\leq C_{1,0} C(p, \Om) \left( \left| A(\cdot, \la) \right|_{L^{p}(\Om)}  + \left| B(\cdot, \la) \right|_{L^{p}(\Om)} \right), 
\eeq
where the constants on the right-hand side is uniform in $\la$ and $ t \in K$.  
\\
(ii) 
\[
\sup_{t \in K} |X_j (\cdot, t, \la_1) - X_j ( \cdot, t, \la_2) |_{L^q (\Om) } \to 0  \quad \text{as $|\la_1 - \la_2| \to 0$.} 
\]  
(iii) For each $\ve > 0$, let $E_{\ve} (t) = \{z \in \Om: |z-t|> \ve \}$. Then we have 
\[
\sup_{t \in K} |X_{j}(\cdot, t, \la_{1}) - X_{j} (\cdot, t,\la_{2})|_{C^{0}(E_{\ve}(t))} \to 0 \quad \text{as $|\la_{1} - \la_{2}| \to 0$.} 
\]

\end{lemma} 
\begin{proof}
(i) We shall only prove the lemma for $X_{1}$, as the proof for $X_{2}$ is identical. First, we show the $L^q$ norm of $X_j$ is uniformly bounded in $\la$ and $t \in K$. Seeking a contradiction, suppose there exists a sequence $\la_{j} \in [0,1]$ such that $N_{j} := |X_{1}(\cdot, t, \la_{j})|_{L^{q}(\Om)} \to \infty$ as $j \to \infty$. Since $[0,1]$ is compact we can assume that $\la_{j} \to \la_0 \in [0,1]$ for some $\la_0$. Dividing both sides of equation \re{cinLqie1} by $N_{j}$, and writing $\hht{X_{1}}(\cdot, t, \la_{j}) = \frac{1}{N_{j}} X_{1}(\cdot, t, \la_{j})$, we obtain 
\eq{cinLqnie}
\hht{X_{1}}(\cdot, t, \la_{j}) - P^{\la_{j}} \hht{X_{1}}(\cdot, t, \la_{j})  =  \frac{1}{2(t^{\la_{j}}-z^{\la_{j}})N_{j}},  
\eeq 
where $|\hht{X_{1}}(\cdot, t, \la_{j})|_{L^{q}(\Om)} = 1$ and we set
\gan
P^{\la} f (z, t, \la) = T_{\C} [A(\cdot, \la)f(\cdot, t, \la) + B(\cdot, \la)\ov{f(\cdot, t, \la)}] (z, t, \la). 
\end{gather*}
Setting $r=1$ in \rl{HeinLp} and applying H\"older's inequality, we have
\begin{align*}
&\left| P^{\la_{j}} \hht{X_{1}}(\cdot,t,\la_{j}) \right|_{L^{\gm}_{\all}(\Om)}  
\leq C(p, \gm, \Om) \left|  (A+B)(\cdot,\la_{j})\hht{X}_{1}(\cdot,t,\la_{j}) \right|_{L^{1}(\Om)}
\\ & \qquad \leq C(p, \gm, \Om) \left( |A(\cdot,\la_{j})|_{L^{p}(\Om)} + |B(\cdot,\la_{j})|_{L^{p}(\Om)} \right)  |\hht{X}_{1}(\cdot,t,\la_{j})|_{L^{p'}(\Om)} 
\end{align*}
for $1 < \gm < \frac{2 \cdot 1 }{2 - 1}= 2$ and $\all = \frac{1}{\gm} - \yh >0$.
In particular we can choose $\gm = q$, since $1 < p' \leq q < 2$.
By \cite[Theorem 1.3]{VA62} applied to the sequence $P^{\la_{j}} \hht{X_{1}}(\cdot,  t, \la_{j})$, and passing to a subsequence if necessary, we can assume that $P^{\la_{j}} \hht{X_{1}}(\cdot,  t, \la_{j})$ converges to some limit in $L^{q}(\Om)$.  
Meanwhile the right-hand side of 
\re{cinLqnie} converges to $0$ in $L^q(\Om)$. Therefore by \re{cinLqnie} $\hht{X_{1}}(\cdot, t, \la_{j})$ converges to some $\hht{X_{1}}(\cdot, t)$ in $L^{q}(\Om) $. 

We show that $| P^{\la_{j}} \hht{X_{1}}(\cdot,t, \la_{j}) - P^{\la_0} \hht{X_{1}}(\cdot, t) |_{L^{q}(\Om)}\to 0$ as $\la_{j} \to \la_0$. 
In what follows we write $a^{\la} (\zeta)$ for $| \det D\Gm^{\la}(\zeta) |$. 
Writing $ D^{\la_j} \hht{X_{1}} (\zeta,t) = \hht{X_{1}} (\zeta, t, \la_{j}) - \hht{X_{1}} (\zeta,t)$, we have
\eq{plajcvg}   
|P^{\la_{j}} \hht{X_{1}}(\cdot, t, \la_{j}) - P^{\la_0} \hht{X_{1}}(\cdot, t)|_{L^{q}(\Om)}
\leq D_1 + D_2 + D_3 + D_4, 
\eeq 
where 
\gan
D_1 = \left| \int_{\Om} \frac{(\del^{\la_{j}, \la_0 } A) \hht{X_{1}}(\zeta, t, \la_{j}) + A(\zeta, \la_0) (D^{\la_j} \hht{X_{1}}(\zeta,t))}{\zeta^{\la_j} - z^{\la_j} }  \, d A (\zeta^{\la_j})  |  \right|_{L^{q}(\Om)}, 
\\ 
D_2 = \left| \int_{\Om} \frac{(\del^{\la_{j}, \la_0 } B) \hht{X_{1}}(\zeta, t, \la_{j}) + B(\zeta, \la_0) \ov{D^{\la_j} \hht{X_{1}}(\zeta,t)}}{\zeta^{\la_j} - z^{\la_j} }  \, d A (\zeta^{\la_j}) | \right|_{L^{q}(\Om)}, 
\\
D_3 = \left| \int_{\Om} \frac{A(\zeta,\la_0) \hht{X_{1}} (\zeta,t,\la_0) + B(\zeta,\la_0) \ov{\hht{X_{1}}(\zeta,t,\la_0)} }{\zeta^{\la_j} - z^{\la_j} }  \left\{ a^{\la_j} (\zeta) - a^{\la_0}(\zeta) \right\} \,   d  A(\zeta) \right|_{L^{q}(\Om)}, \\
D_4 = 
\left| \int_{\Om} \left( A(\zeta,\la_0) \hht{X_{1}} (\zeta,t,\la_0) + B(\zeta,\la_0) \ov{\hht{X_{1}}(\zeta,t,\la_0)} \right) \left( \frac{1}{\zeta^{\la_j} - z^{\la_j} } - \frac{1}{\zeta^{\la_0} - z^{\la_0} }  \right) \, d A(\zeta^{\la_0}) \right|_{L^{q} (\Om) }.
\end{gather*}
By estimate \re{Lgmb} applied to $\gm = q$ and $r =1$ and H\"older's inequality, we have
\begin{align*}
D_1 &\leq C_{1,0} C(q,\Om)  \left(  | (\del^{\la_{j}, \la_0} A) \hht{X_{1}} (\cdot,t,\la) |_{L^{1}(\Om)}  + |A(\cdot, \la_0)(D^{\la_j} \hht{X_{1}}(\cdot,t))|_{L^{1}(\Om)} \right)
\\ & \leq C_{1,0} C(q,\Om) \left( | \del^{\la_{j}, \la_0} A |_{L^{p}(\Om)} |\hht{X_{1}} (\cdot,t,\la)|_{L^{q}(\Om)} + |A(\zeta, \la_0)|_{L^{p}(\Om)} |D^{\la_j} \hht{X_{1}}|_{L^{q}(\Om)} \right), 
\end{align*}
where $ 1 < p'  \leq  q < 2$. By assumption, this expression goes to $0$. Similarly we can show that $D_2$ converges to $0$, and also
$$
D_3 \leq C_{1,0} C(q,\Om) \left| \Gm^{\la_j} - \Gm^{\la_0} \right|_1 \left\{ \left| A(\cdot, \la_0) \right|_{L^{p}(\Om)} + 
\left| B(\cdot, \la_0) \right|_{L^{p}(\Om)} \right\}  |\hht{X_{1}} (\cdot,t,\la_0)|_{L^{q}(\Om)},
$$
\begin{align*}
D_4 &\leq  \left| \int_{\Om} \left( A(\zeta,\la_0) \hht{X_{1}} (\zeta,t,\la_0) + B(\zeta,\la_0) \ov{\hht{X_{1}}(\zeta,t,\la_0)} \right) \frac{\del_{\zeta, z} (\Gm^{\la_0} - \Gm^{\la_j}) \, d A (\zeta^{\la_0}) }{(\zeta^{\la_j} - z^{\la_j})(\zeta^{\la_0} - z^{\la_0})} 
 \right|_{L^{q}_{z}(\Om)} \\
&\leq C_{1,0} C(q,\Om)\left| \Gm^{\la_0} - \Gm^{\la_j} \right|_{1} \left\{ \left| A(\cdot, \la_0) \right|_{L^{p}(\Om)} + 
\left| B(\cdot, \la_0) \right|_{L^{p}(\Om)} \right\}  |\hht{X_{1}}  (\cdot,t,\la_0)|_{L^{q}(\Om)}.  
\end{align*}
Hence $D_3, D_4 \to 0$ as $|\la_j - \la_0| \to 0$. By \re{plajcvg}, $|P^{\la_{j}} \hht{X_{1}}(\cdot, t, \la_{j}) - P^{\la_0} \hht{X_{1}}(\cdot, t)|_{L^{q}(\Om)}$ tends to $0$ as $| \la_j - \la_0 | \to 0$.  

Letting $j \to \infty$ in equation \re{cinLqnie}, we have
\begin{align*}
0 &= \hht{X_{1}}(z,t) - T_{\C} [A(\cdot, \la_0) \hht{X_{1}}(\cdot,t) + B(\cdot, \la_0) \ov{ \hht{X_{1}}(\cdot,t)} ] (z,t) 
\end{align*}
holds in $L^q(\Om)$. 
By the result in  \cite[p.~156]{VA62}, the equation has only the trivial solution, so $\hht{X_{1}} \equiv 0$. But $|\hht{X_{1}}(\cdot, t)|_{L^{q}(\Om)} = \lim_{j\to \infty} |\hht{X_{1}} (\cdot, t, \la_{j})|_{L^{q}(\Om)} = 1$, which is a contradiction. Thus $|X_{1}(\cdot,t, \la)|_{L^{q}(\Om)}$ is bounded, for any $q$ with $1< p' \leq q < 2$. 

Next, we show that $X_{1}(\cdot, t, \la)$ is continuous in the $L^{q}(\Om)$-norm, for any $q$ with $1< p' \leq q < 2$. Otherwise there exists some sequence $\la_{j} \to \la_0$, such that $|X_{1}(\cdot,t,\la_{j}) - X_{1}(\cdot, t, \la_0) |_{L^{q}(\Om)} \geq \del >0$. 
Since we have already shown that $|X_{1}(\cdot,t,\la)|_{L^{q}(\Om)}$ is bounded uniformly in $\la$, by the computation above, $|P^{\la_{j}} X_{1}(\cdot,t,\la_{j})|_{L^{q}_{\all}(\Om)}$ is bounded uniformly in $\la$. 
Passing to a subsequence if necessary, $P^{\la_{j}}X_{1}(\cdot, t,\la_{j})$ converges to some function in $L^{q}(\Om)$. The right-hand side of the integral equation \re{cinLqie1} converges to $\frac{1}{t^{\la_0} - z^{\la_0} }$ in $L^{q}( {\Om})$. Indeed, we have
\[ 
\left( \iint_{\Om} \left| \frac{1}{t^{\la_{j}}  - z^{\la_j} } - \frac{1}{t^{\la_0} - z^{\la_0} } \right|^{q} \, d A(z)  \right)^{\frac{1}{q}} 
\leq C_{1,0} |\Gm^{\la_{j}} - \Gm^{\la_0}|_{1} \left( \iint_{\Om} \frac{1}{|t-z|^{q}} \, d A(z) \right)^{\frac{1}{q}}, 
\]
which converges to $0$ as $| \la_j - \la_0 | \to 0$, since $ \Gm \in C^{1,0} (\ov{\Om})$.  
It follows from equation \re{cinLqie1} that $X_{1}(z,  t,\la_{j})$ converges to some $X^{\ast}_{1}(z,t)$ in $L^{q}(\Om)$. As before we can show that $|P^{\la_{j}} X_{1}(\cdot,t,\la_{j}) - P^{\la_0} X_{1}^{\ast}(\cdot, t)|_{L^{q}(\Om)} \to 0$.  Taking $j \to \infty$ in equation \re{cinLqie1} we obtain 
\[
X_{1}^{\ast}(z,t) - P^{\la_0} X_{1}^{\ast}(z,t) = \frac{1}{2(t^{\la_0}-z^{\la_0})}, \quad \quad \text{in $L^{q}(\Om)$}. 
\]
On the other hand, since $X_1(z,t, \la_0)$ satisfies the same equation. By uniqueness of solution  we have
$X_{1}(\cdot, t, \la_0) = X_{1}^{\ast}(\cdot,t)$ in $L^{q}(\Om)$. But $|X_{1}^{\ast}(\cdot,t) - X_{1}(\cdot,t, \la_0)|_{L^{q}(\ov{\Om})} \geq \del > 0$, which is a contradiction. This shows that of $X_{1}(\cdot, t, \la)$ in continuous in $\la$ in the $L^{q}(\Om)$- norm.  

Finally to obtain estimate \re{Xlqest} we write $X_1$ in the form \re{Xjrep}: 
\eq{X1rep}
X_1 (z,t, \la) = \frac{e^{\om_{1}(z,t,\la)}}{t^{\la}-z^{\la}}.  
\eeq
By   \re{omjhnbdd}, we have
$ 
|\om_{j}(\cdot, t, \la)|_{C^{\all_p}(\ov{\Om})}
\leq C_{1,0} C(p, \Om) \left( |A(\cdot, \la)|_{L^{p}(\Om)}  + |B(\cdot, \la)|_{L^{p}(\Om)} \right),
$ 
where $ \all_p = \frac{p-2}{p}$, and the constants are independent of $\la$ and $t \in K$. Estimate \re{Xlqest} then readily follows. 
\hfill \qed
\\ \\
(ii)
As in part (i) we only need to prove the statements for $X_1$. Let $D^{\la_1, \la_2}X_1 = X_1(z,t,\la_1) - X_1(z,t,\la_2)$. Then by 
\re{cinLqfs},  we see that $D^{\la_1, \la_2} X_1$ satisfies the following integral equation
\[
D^{\la_1, \la_2} X_1(z,t) + P^{\la_2} (D^{\la_1, \la_2} X_1 ) (z,t) = \sum_{i=1}^{4} R_i (z,t, \la_1), 
\] 
where 
$
P^{\la_2} f (z,t) = T_{\C} \left[ A(\cdot, \la_2) f(\cdot, t) + B(\cdot, \la_2) \ov{f(\cdot, t)} \right] $ and
\gan
R_1 (z,t, \la_1) = \iint_{\Om} \frac{\left( A(\cdot, \la_1) - A(\cdot, \la_2) \right) X_1(\cdot, t, \la_1) }{\zeta^{\la_1} - z^{\la_1} } \, dA (\zeta^{\la_1}), 
\\
R_2 (z,t, \la_1) = \iint_{\Om} \frac{A(\cdot, \la_2)X(\cdot, t, \la_1)(a^{\la_1} - a^{\la_2}) }{\zeta^{\la_1} - z^{\la_1}} \, d A(\zeta), 
\\ 
R_3 (z,t, \la_1) = \iint_{\Om} A(\cdot, \la_2) X(\cdot, t, \la_1) \left( \frac{1}{\zeta^{\la_1} - z^{\la_1} } - \frac{1}{\zeta^{\la_2} - z^{\la_2} } \right) \, d A (\zeta^{\la_2}), 
\\
R_4 (z,t, \la_1) = \frac{1}{t^{\la_1} - z^{\la_1}} - \frac{1}{t^{\la_2} - z^{\la_2}}. 
\end{gather*}  
Since $|X_1 (\cdot, t, \la) |_{L^{q}(\Om)}$ is bounded, we can show by the same proof as in (i) that $\sup_{t \in K}$ $|R_i (z,t, \la_1) |_{L^{q}(\Om)}$ converges to $0$ as $|\la_1 - \la_2| \to 0$, $1 \leq i \leq 4$. Let $(I + P^{\la_2})^{-1}: L^{q} (\Om) \to L^{q} (\Om)$ be the inverse operator of $I + P^{\la_2}$. By the open mapping theorem, $(I + P^{\la_2})^{-1}$ is continuous. Denoting its operator norm by $M_{\la_2}$, we have
\[
\sup_{t \in K} |D^{\la_1, \la_2} X_1 (\cdot, t) |_{L^{q}(\Om)} 
\leq M_{\la_2}  \sum_{i=1}^{4} \sup_{t \in K} \left| R_i (z,t, \la_1) \right|_{L^{q}(\Om)},
\]
which converges to $0$ as $|\la_1 - \la_2 | \to 0$. \hfill \qed
\\ \\ 
(iii) We write $X_1$ in the form \re{X1rep}, 
where $\om_1$ is given by expression \re{omiexp}. 
By \rl{ombdd}, if $|A(\cdot,\la)|_{L^{p}(\Om)}$, $|B(\cdot,\la)|_{L^{p}(\Om)}$ are bounded uniformly in $\la$, then $|\om_{1}(\cdot,t, \la)|_{C^{\all_p}(\ov{\Om})}$ is bounded uniformly in $\la \in [0,1]$ and $t \in K$, for 
$\all_p = \frac{p-2}{p}$. In view of \re{X1rep},  $|X_{1}(\cdot,t,\la)|_{C^{\all_p}(E_{\ve})}$ is bounded uniformly in  $\la$ and $t$. By part (ii), $\sup_{t \in K} |X_{1}(\cdot, t, \la_{1}) - X_{1}(\cdot, t, \la_{2}) |_{L^{1}(\Om)}$ converges to $0$ as $| \la_1 - \la_2| \to 0$. Applying \rp{L1HnC0} we get
\[ 
\sup_{t \in K} |X_{1}(\cdot, t, \la_{1}) - X_{1}(\cdot, t, \la_{2}) |_{C^{0}(\Om \cap E_{\ve})} \ra 0. \qquad \qed 
\]

\begin{lemma} \label{cino}
Let $2 < p < \infty$. Suppose $\la \mapsto A(\cdot, \la), B(\cdot, \la)$ are continuous maps from $[0,1]$ to $L^{p}(\C)$, where for each $\la \in [0,1 ]$, $A(\cdot, \la), B(\cdot, \la) \equiv  0$ outside $\Om$. For $j =1,2$, let 
\begin{equation*} 
\om_{j}(z,t,\la) = \frac{t^{\la}-z^{\la}}{\pi} \iint_{\Om^{\la}} \frac{A(\zeta, \la) X_j(\zeta, t, \la) + B(\zeta, \la) \ov{X_{j}(\zeta,t,\la)}}{(\zeta^{\la}-z^{\la})(t^{\la} - \zeta^{\la}) X_{j}(\zeta,t, \la)}\, d A (\zeta^{\la}). 
\end{equation*} 
Then for each $t$, $\la \mapsto \om_{j}(\cdot, t,\la) \in C^{0}(\ov{\Om})$ is a continuous map from $[0,1]$ to $C^0 (\ov{\Om})$. Moreover for any compact subset $K$ in $\C$, we have 
\eq{omjlacont}
\sup_{t \in K } |\om_{j}(\cdot, t, \la_{1}) - \om_{j}(\cdot,t, \la_{2})|_{C^{0}(\ov{\Om})} \to 0 \quad \text{as $| \la_1- \la_2| \to 0$.}
\eeq
\end{lemma} 
\nid \tit{Proof.} 
Rewrite $\om_j$ as 
\[
\om_{j}(z,t,\la) 
= \frac{1}{\pi} \iint_{\Om} \left[ \frac{1}{\zeta^{\la} -z^{\la} } - \frac{1}{\zeta^{\la} - t^{\la} } \right]  \\ 
\left[ A(\zeta, \la) + B(\zeta,\la) \frac{\ov{X_{j}(\zeta,t,\la)}}{X_{j}(\zeta,t,\la)}\right] a(\zeta, \la) \, dA(\zeta), 
\]
where $a(\zeta, \la) = | (\det \Gm^{\la}) (\zeta ) | $. We have
\eq{cinoe}
|\om_{j}(z,t, \la_{1})- \om_{j}(z,t, \la_{2})| \leq D_1  + D_2 + D_3, 
\eeq
where
\begin{align*} 
D_1 &= \iint_{\C} \left\{  \left| \frac{1}{\zeta^{\la_1} - z^{\la_{1}}} - \frac{1}{\zeta^{\la_2} - z^{\la_{2}}} \right|  + \left| \frac{1}{\zeta^{\la_1} - t^{\la_{1}}} - \frac{1}{\zeta^{\la_2} - t^{\la_{2}}} \right|  \right\} 
\\ &\qquad \times  \left| A(\zeta,\la_{1}) + B(\zeta,\la_{1}) \frac{\ov{X_{j}(\zeta,t,\la_{1})}}{X_{j}(\zeta,t,\la_{1})} \right| \, d A (\zeta^{\la_1});  
\end{align*}
\begin{align*} 
D_2 = \iint_{\C} \left| \frac{1}{\zeta^{\la_{2}}-z^{\la_{2}}} + \frac{1}{t^{\la_{2}} - \zeta^{\la_{2}}} \right| |(Aa)(\zeta, \la_{1}) - (Aa)(\zeta, \la_{2})| \, d A(\zeta); 
\end{align*}
\begin{align*} 
D_3 &= \iint_{\C} \left| \frac{1}{\zeta^{\la_{2}}-z^{\la_{2}}} + \frac{1}{t^{\la_{2}} - \zeta^{\la_{2}}} \right| \left| (Ba)(\zeta, \la_{1}) \frac{\ov{X_{j}(\zeta,t,\la_{1}) }}{X_{j}(\zeta,t,\la_{1})} - (Ba)(\zeta,\la_{2}) \frac{\ov{X_{j}(\zeta,t,\la_{2})} }{X_{j}(\zeta,t,\la_{2})}
\right| \, d A(\zeta) . 
\end{align*}
Let $p'$ be the conjugate of $p$, with $1< p' < 2$. By H\"older's inequality we have
\begin{align*} 
D_1 & \leq C_{1,0} |\Gm^{\la_{1}} - \Gm^{\la_{2}} |_{1} 
\int_{\Om} \left( \frac{1}{|\zeta -z|} + \frac{1}{|\zeta -t|} \right) 
\left| A(\zeta,\la_{1}) + B(\zeta,\la_{1}) \frac{\ov{X_{j}(\zeta,t,\la_{1})}}{X_{j}(\zeta,t,\la_{1})} \right| \, d A (\zeta^{\la_1})  \\
&\leq C_{1,0} |\Gm^{\la_{1}} - \Gm^{\la_{2}} |_{1} \left(  \| (\zeta-z)^{-1} \|_{L^{p'} (\Om)} + \| (\zeta-t )^{-1} \|_{L^{p'} (\Om) } \right) \left( |A(\cdot, \la)|_{L^{p}(\Om)} + |B(\cdot, \la)|_{L^{p}(\Om)} \right) 
\\ &\leq C_{1,0} C_p |\Gm^{\la_{1}} - \Gm^{\la_{2}} |_{1}  \left( |A(\cdot, \la)|_{L^{p}(\Om)} + |B(\cdot, \la)|_{L^{p}(\Om)} \right), 
\end{align*} 
and similarly for $D_2$:
\[  
D_2 \leq C_{1,0} C_p \left( |\del^{\la_{1}, \la_{2}} A|_{L^{p} (\Om)} + |A(\cdot, \la)|_{L^{p} (\Om)}  |\del^{\la_{1}, \la_{2}}  \Gm|_{1} \right). 
\] 
Hence by our assumption, $D_1$ and $D_2$ converge to $0$ and the convergence is uniform in $t \in K$.
For $D_3$ we have $D_3 \leq D_{31} + D_{32}$, 
where 
\begin{gather*}
D_{31} = 
\int_{\C} \left| \frac{1}{\zeta^{\la_{2}}-z^{\la_{2}} } + \frac{1}{t^{\la_{2}}-\zeta^{\la_{2}} } \right| \left| \del^{\la_1,\la_2} (Ba) \right| \left| \frac{\ov{X_{j}(\zeta,t, \la_{1}) }}{X_{j}(\zeta,t,\la_{1})} \right| \, d A(\zeta), 
\\
D_{32} =\int_{\C} \left| \frac{(Ba)(\zeta,\la_{2})}{\zeta^{\la_{2}}-z^{\la_{2}}} + \frac{(Ba)(\zeta,\la_{2})}{t^{\la_{2}}-\zeta^{\la_{2}} } \right|  \left|  \frac{ \ov{X_{j}(\zeta,t,\la_{1})} }{X_{j}(\zeta,t,\la_{1})} - \frac{\ov{X_{j}(\zeta,t,\la_{2}) }}{X_{j}(\zeta,t,\la_{2})} \right| \, d A(\zeta). 
\end{gather*}
By H\"older's inequality, 
$
D_{31} \leq C_{1,0} C_p |(Ba)(\cdot,\la_{1}) - (Ba)(\cdot,\la_{2}) |_{L^{p}(\Om)}, 
$ 
which by assumption goes to $0$ as $|\la_{1} - \la_{2}| \to 0$. On the other hand, 
\begin{align} \label{cinoie}
& D_{32} \leq 2  \int_{\C} \left| \frac{(Ba)(\zeta,\la_{2})}{\zeta^{\la_{2}}-z^{\la_{2}}} + \frac{(Ba)(\zeta,\la_{2})}{t^{\la_{2}}-\zeta^{\la_{2}}}  \right| \left| \frac{ X_{j}(\zeta,t,\la_{1}) - X_{j}(\zeta,t,\la_{2})}{X_{j}(\zeta,t,\la_{1})} \right| \, d A(\zeta).  
\end{align} 
By \re{cinoC0e}, $|\om_{j}(\zeta,t,\la)|$ is bounded uniformly for any $\zeta, t \in \C $ and $\la \in [0,1] $, and 
\[
X_{1}(\zeta,t,\la) = \frac{e^{\om_{1}(\zeta,t,\la)}}{2(t^{\la}-\zeta^{\la})}, \quad X_{2}(\zeta,t,\la) = \frac{e^{\om_{2}(\zeta,t,\la)}}{2i (t^{\la}-\zeta^{\la})}. 
\]
Hence $ 
\left| X_{j}(\zeta,t,\la) \right| \geq \frac{C_{1,0}}{|\zeta -t|}. 
$ 
In what follows we write $L^1_{\zeta} (\Om)$ to indicate the integral is with respect to $\zeta$. Using the above estimate in \re{cinoie} we have
\begin{align*} 
D_{32} &\leq C_{1,0} \left( \left| \frac{B(\zeta,\la_{2})}{\zeta^{\la_{2}} -z^{\la_{2}} } \left[ \del^{\la_{1}, \la_{2}} X_{j}(\zeta,t)  \right] \right|_{L^{1}_{\zeta}(\Om)} + \left| B(\zeta, \la_{2}) \left[ \del^{\la_{1}, \la_{2}} X_{j}(\zeta,t)  \right] \right|_{L^{1}_{\zeta}(\Om)} \right) \\ \nn 
& \leq C_{1,0} \left| B(\cdot, \la_{2}) \right|_{L^{p}(\ov{\Om})} \left( \left| \frac{\del^{\la_{1}, \la_{2}} X_{j}(\zeta,t) }{\zeta^{\la_{2}} - z^{\la_{2}}} \right|_{L^{p'}_{\zeta}(\ov{\Om})} + \left| \del^{\la_{1}, \la_{2}} X_{j}(\zeta,t)  \right|_{L^{p'}_{\zeta}(\ov{\Om})} \right) \\ 
& \leq C_{1,0} \left| B (\cdot,\la_{2}) \right|_{L^{p}(\ov{\Om})} \left\{  \left( \int_{ \{\zeta : |\zeta- z|< \del' \}} \left| \frac{\del^{\la_{1}, \la_{2}}X_{j}(\zeta,t) }{\zeta-z} \right|^{p'} \, d A(\zeta) \right)^{\frac{1}{p'}} \right. \\ \nn
& \quad + \left. \left( \int_{ \{\zeta : |\zeta- z| \geq \del' \}} \left| \frac{\del^{\la_{1}, \la_{2}}X_{j}(\zeta,t) }{\zeta-z} \right|^{p'} \, d A(\zeta) \right)^{\frac{1}{p'}} + \left| \del^{\la_{1}, \la_{2}}X_{j}(\zeta,t) \right|_{L^{p'}_{\zeta}(\ov{\Om})} \right\}, 
\end{align*}
where $\del' > 0$ is to be determined. 
By \rl{cinLqfs} (ii), $\sup_{t \in K} | \del^{\la_{1}, \la_{2}}X_{j}(\cdot,t)  |_{L^{p'}(\Om)} \to 0$ as $|\la_{1} - \la_{2}| \to 0$. So for fixed $\del'$ the last two terms in the above bracket converge to $0$.
By estimate \re{cinoC0e2},
\begin{align*}
|\om_{j}(z,t, \la_{1}) - \om_{j}(z,t, \la_{2}) | 
&\leq  |\om_{j}(z,t, \la_{1}) |  + | \om_{j}(z,t, \la_{2}) | \\ 
&\leq C_{1,0} C_p \left( |A(\cdot,\la)|_{L^{p}(\ov{\Om})}  + |B(\cdot,\la)|_{L^{p}(\ov{\Om})}  \right) |z-t|^{\all_p}. 
\end{align*}
For any $\ve>0$, we can pick $r>0$ such that for any $z \in \ov{\Om}$ and $t \in K$ with $|z-t|<r$, one has  $|\om_{j}(z,t, \la_{1}) - \om_{j}(z,t, \la_{2})| < \ve$. Then it suffices to consider the points $z$ for which $|z-t| \geq r$. 

Now for this fixed $r$, we can let $\del' = \frac{r}{2}$ be such that for any $z$ with $|z-t|\geq r$ and  $|\zeta-z| < \del'$, we have $|\zeta-t| \geq |t-z| - |z-\zeta| \geq \del'= \frac{r}{2}$, and so  
\[ 
\{ \zeta: | \zeta-z | < \del'  \}  \subset \{  \zeta: |\zeta  - t| \geq \del' \}.
\]
Hence 
\begin{align*} 
\left( \int_{ \{\zeta : |\zeta- z|< \del' \}} \left| \frac{\del^{\la_{1}, \la_{2}}X_{j}(\zeta,t) }{\zeta-z} \right|^{p'} \, d A(\zeta) \right)^{\frac{1}{p'}} 
&\leq \left( \int_{ \{\zeta : |\zeta- t| \geq \del' \}} \left| \frac{\del^{\la_{1},  \la_{2}}X_{j}(\zeta,t) }{\zeta-z} \right|^{p'} \, d A(\zeta) \right)^{\frac{1}{p'}} \\  
&\leq C_{p'} \sup_{t \in K } |\del^{\la_{1},  \la_{2}}X_{j}(\zeta,t) |_{C^{0}(\{ \zeta: |\zeta-t|\geq \del'\} )}. 
\end{align*}
The last expression goes to $0$ as $|\la_{1} - \la_{2}| \to 0$, by \rl{cinLqfs} (iii). This shows that $D_{32}$ converges to $0$ as $|\la_1 - \la_2| \to 0$ and the convergence is uniform in $t \in K$. The proof of \re{omjlacont} is now complete. 
\end{proof} 

\section{Families of multiply-connected domains}  

In this section we prove \rt{mtintro} for families of bounded multiply-connected domains. We will first follow Vekua to reduce the problem to a singular integral equation. We then obtain regularities in all variables by transforming it into an equivalent Fredholm equation, so that a compactness argument can be used.     

Just like in the simply-connected case, it suffices to consider the corresponding problem in the form \re{pAsc}. 

\begin{thm}[Problem A on multiply-connected domains] \label{RHmt}  
 Let $k, j,\mu$ be as in \rt{mtintro}.
Let $\Om$ be a bounded domain in $\C$ whose boundary is $C^{1+\mu}$ and which has $m+1$ connected components. 
Let $\Gm^{\la}: \Om \to \Om^{\la}$ be a family of embeddings of class $\mc{C}^{1,0}(\Om)$. 
Fix a normally distributed set (\rd{ndsd}) $ \{ \{z_{r}^{\la} \}_{r=1}^{N_0}$, $\{z_{s}^{\la} \}_{s=1}^{N_1} \}$ in $\Om$ so that $ \{ z_r^{\la} = \Gm^{\la} (z_r ), z_a^{\la} = \Gm^{\la} (z_s) \}$ forms a normally distributed set in $\Om^{\la}$.  For each $\la$, let $w^{\la}$ be the unique solution to 
\begin{equation} \label{pAmc} 
\begin{cases}
\pa_{\ov{z}} w^{\la}(z^{\la}) + A^{\la}(z^{\la}) w^{\la}(z^{\la}) + B^{\la}(z^{\la}) \ov{w^{\la}(z^{\la})} = F^{\la}(z^{\la}) & \text {in $\Om^{\la}$;} \\ 
Re[\ov{l^{\la}(z^{\la})} w^{\la}(z^{\la})] = \gm^{\la}(z^{\la}) & \text {on $b \Om^{\la}$, }
\end{cases}
\end{equation}
where the index $n$ (\rd{pA_index}) satisfies $n > m-1$ and $w^{\la}$ satisfies the additional conditions
\begin{equation} \label{mcwucip}
\begin{gathered} 
w^{\la} (z_r^{\la}) = a_r(\la) + ib_r(\la), \quad \quad (r =1,...,N_0); 
\\ 
w^{\la} ((z_s')^{\la}) = l_{j}(z_s', \la) (\gm_{j}(z_s', \la) + ic_s (\la)),  \quad \quad ( s= 1,...,N_1). 
\end{gathered} 
\end{equation} 
(i) Suppose for $2<p<\infty$, $\la \to A^{\la}, B^{\la}, F^{\la}$ are continuous maps from $[0,1]$ to $L^p (\Om)$, and that $l^{\la}, \gm^{\la} \in \mc{C}^{\mu,0}(b \Om^{\la})$, $a_r, b_r, c_s \in C^0 ([0,1])$.  Then $w^{\la} \in \mc{C}^{\nu,0}(\ov{\Om^{\la}})$, where $\nu = min(\all_p, \mu )$ and $\all_p = \frac{p-2}{p}$. 
\\ 
(ii) If $b \Om \in C^{k+1+\mu}$, $\Gm^{\la} \in \mc{C}^{k+1+\mu,0}(\ov{\Om})$, $A^{\la}, B^{\la}, F^{\la} \in \mc{C}^{k+\mu,0}(\ov{\Om^{\la}})$, $l^{\la}, \gm^{\la} \in \mc{C}^{k+1+\mu,0}(b \Om^{\la})$, then $w^{\la} \in \mc{C}^{k+1+\mu,0}(\ov{\Om^{\la}})$, and there exists some constant $C$ independent of $\la$ such that
\begin{gather*} 
\| w^{\la} \|_{0,0} \leq C \left( \| F^{\la} \|_{0,0} + \| \gm^{\la} \|_{\mu,0} + \sum_{r=1}^{N_0} \left( |a_r |_0 + |b_r|_0 \right) + \sum_{s=1}^{N_1} |c_s|_0 \right), 
\\  \nn 
\| w^{\la} \|_{k+1+\mu,0} \leq C \left( \| F^{\la} \|_{k+\mu,0} + \| \gm^{\la} \|_{k+1+\mu,0} + \sum_{r=1}^{N_0} (|a_r |_0 + |b_r|_0) + \sum_{s=1}^{N_1} |c_s|_0 \right). 
\end{gather*}  
\\
(iii) If $b \Om \in C^{k+1+\mu}$, $\Gm^{\la} \in \mc{C}^{k+1+\mu,j}(\ov{\Om})$, $A^{\la}, B^{\la}, F^{\la} \in \mc{C}^{k+\mu,j}(\ov{\Om^{\la}})$, $l^{\la}, \gm^{\la} \in \mc{C}^{k+1+\mu,j}(b \Om^{\la} )$, then $w^{\la} \in \mc{C}^{k+1+\mu,j}(\ov{\Om^{\la}} )$, and there exists some constant $C$ independent of $\la$ such that 
\[
\| w^{\la} \|_{k+1+\mu, j} \leq C \left( \| F^{\la} \|_{k+\mu,j} + \| \gm^{\la} \|_{k+1+\mu,j} + \sum_{r=1}^{N_0} \left( |a_r |_j + |b_r|_j \right) + \sum_{s=1}^{N_1}|c_s|_j \right).  
\]

\end{thm}

As an immediate consequence of \rt{RHmt}, we can now prove \rt{mtintro} for the case $\varkappa > m-1$, where $\varkappa$ is the index for the oblique derivative boundary value problem. (See Definition \ref{odpind}).   
\begin{thm}[Multiply-connected domains, $\varkappa > m-1$ ] \label{odthm} \ \\
 Let $k, j,\mu$ be as in \rt{mtintro}. Let $\Om$ be a bounded domain in $\C$ with $\mc{C}^{k+2+\mu}$ boundary. Let $\Gm^{\la}$ be a family of maps that embed $\Om$ to $\Om^{\la}$, with $\Gm^{\la} \in \mc{C}^{k+2+\mu,j}(\ov{\Om})$. Consider the following oblique derivative problem:
\begin{equation}  \label{mteqn}
\begin{gathered} 
a^{\la} (x,y) u^{\la}_{xx}+ 2b^{\la} (x,y) u^{\la}_{xy} + c^{\la} (x, y) u^{\la}_{yy} +
d^{\la}(x,y) u^{\la}_x + e^{\la}(x,y) u^{\la}_y = f^{\la},  \quad \text{in $ \Om^{\la}$; }   \\	
\frac{du^{\la}}{d \Upsilon^{\la}} = \gm^{\la} \quad \text{on $b \Om^{\la}$, } 
\end{gathered} 
\end{equation} 
with index $\varkappa > m-1$. Here $\Upsilon^{\la}= \xi^{\la} + i\eta^{\la}$ is a non-vanishing vector field along $b \Om^{\la}$. Suppose $a^{\la}, b^{\la}, c^{\la}, d^{\la}, e^{\la}, f^{\la}$ are functions in $\Om^{\la}$ such that $a^{\la}, b^{\la}, c^{\la} \in \mc{C}^{k+1+\mu,j}(\ov{\Om^{\la}})$, and $d^{\la} , e^{\la}, f^{\la} \in \mc{C}^{k+\mu,j} (\ov{\Om^{\la}})$. 
Suppose further that $\Upsilon^{\la}, \gm^{\la}$ are functions in $b \Om^{\la}$ such that $\Upsilon^{\la}, \gm^{\la} \in \mc{C}^{k+1+\mu,j} (b \Om^{\la})$. 
For each $\la \in [0,1]$, let $ \{ \{z_{r}^{\la} \}_{r=1}^{N_0}$, $\{z_{s}^{\la} \}_{s=1}^{N_1} \}$ be a normally distributed set for $\Om^{\la}$, and let $u^{\la}$ be the unique solution to \re{mteqn} on $\Om^{\la}$ satisfying the conditions (see \cite[Theorem 4.13]{VA62} )  
\begin{gather} \nn 
u^{\la}_x(z_r^{\la}) - i u^{\la}_y (z_r^{\la}) = a_r(\la) + i b_r(\la) \quad \quad (r=1,...,N_0); 
\\   \nn 
u^{\la}_x((z_s')^{\la}) - i u^{\la}_y ((z_s')^{\la}) = l^{\la}((z_{s}')^{\la}) (\gm^{\la}((z_{s}')^{\la}) + ic_{s}(\la)),  \quad \quad (s = 1,...,N_1);
\\ \label{obbvpuc3} 
u^{\la} (z_0^{\la}) = e (\la), \quad z_0^{\la} \in \Om^{\la}, 
\end{gather}
where $a_r, b_r, c_s, e \in C^j ([0,1])$. Then $u^{\la} \in \mc{C}^{k+2+\mu,j} (\ov{\Om^{\la}} )$.  
\end{thm}  
\begin{proof} 
Let $\var^{\la}$ be the maps defined as in \rp{isop}. Then $\mc{U}^{\la} := u^{\la} \circ (\var^{\la})^{-1}$ is a solution to the following problem  
\begin{gather*} 
\Del \, \mc{U}^{\la} + p^{\la} (\xi, \eta) \, \mc{U}^{\la}_{\xi} + q^{\la} (\xi, \eta) \, \mc{U}^{\la}_{\eta} = h^{\la} (\xi, \eta), \quad \text{in $D^{\la}$;} \\   
\nu_1^{\la} \, \mc{U}^{\la}_{\xi} + \nu_2^{\la} \, \mc{U}^{\la}_{\eta} =  g^{\la} (\xi, \eta), \quad \text{on $bD^{\la}$,} 
\end{gather*}  
where $p^{\la}, q^{\la}, h^{\la} \in \mc{C}^{k+\mu,j} (\ov{D^{\la}})$ and $\nu_1^{\la}, \nu_2^{\la}, g^{\la} \in \mc{C}^{k+1+\mu,j} (\ov{D^{\la}})$. Furthermore $\mc{U}^{\la}$ satisfies 
\begin{gather*} \label{Lapuc1}  
\mc{U}^{\la}_{\xi}(\zeta_r^{\la}) - i \, \mc{U}^{\la}_{\eta} (\zeta_r^{\la}) = \wti{a_r}(\la) + i \wti{b_r}(\la) \quad \quad (r=1,...,N_0); 
\\  \label{Lapuc2}  
\mc{U}^{\la}_{\xi}((\zeta_s')^{\la}) - i \, \mc{U}^{\la}_{\eta} ((\zeta_s')^{\la}) 
= \wti{c}_s (\la) + i \wti{d}_{s} (\la),  \quad \quad (s = 1,...,N_1); 
\\  \label{Lapuc3}   
\mc{U}^{\la} (\zeta_0^{\la}) = \wti{e}(\la), \quad z_0^{\la} \in \Om^{\la},    
\end{gather*} 
where $\{ \zeta_r^{\la}, (\zeta'_s)^{\la} \}$ is the image of $\{z^{\la}_r, (z'_s)^{\la} \}$ under the map $\var^{\la}$, and $\wti{a_r}, \wti{b_r}, \wti{c_s}, \wti{d_s}, \wti{e} \in C^j([0,1])$.  

Let $w^{\la} = \mc{U}^{\la}_{\xi} - i \, \mc{U}_{\eta}^{\la}$ be the unique solution in $\mc{C}^{k+1+\mu,j}(\ov{\Om})$ to problem \re{pAmc} as given in \rt{RHmt},  where we take
\begin{gather*}  
A^{\la} = \yf \left( p^{\la} + iq^{\la} \right) , \   B^{\la} = \yf \left( p^{\la} - iq^{\la} \right), 
\quad F^{\la} = \yh h^{\la}, 
\quad
l^{\la} = \nu_1^{\la} - i \nu_2^{\la}, \quad \gm^{\la} = g^{\la}.  
\end{gather*}   
and $w^{\la}$ satisfies
\begin{gather*}
w^{\la} (\zeta_{r}^{\la}) = \wti{a_r}(\la) + i \wti{b_r}(\la), 
\quad 
w^{\la} ( (\zeta'_s)^{\la}) = \wti{c}_s (\la) + i \wti{d}_{s} (\la). 
\end{gather*} 
for $r=1,\dots, N_0$ and $s=1,\dots, N_1$. 
Set 
\begin{gather*}
\mc{U}^{\la}( \zeta^{\la}) = c_0(\la) + Re \int_{\zeta_0^{\la}}^{z^{\la}} w^{\la}(\zeta) \, d \zeta^{\la}, \quad z^{\la} = x^{\la} +iy^{\la} \in \Om^{\la}. 
\\
u^{\la} (z^{\la}) = \mc{U}^{\la} (\var^{\la} (z^{\la})) 
= c_0 (\la)+ Re \int_{\zeta_0^{\la}}^{  \var^{\la} (z^{\la}) } w^{\la}(\zeta) \, d \zeta^{\la}. 
\end{gather*} 
By \re{obbvpuc3} and $e \in C^j([0,1])$, we have $c_0 \in C^j([0,1])$. Hence $u^{\la} \in \mc{C}^{k+2+\mu,j} (\ov{\Om^{\la}})$.   
\end{proof}

For the rest of the section we prove \rt{RHmt}. 
In \S 6.1 and \S 6.2 we prove part (i). We reduce problem A \re{pAmc} to a singular integral equation with parameter. We then transform the singular integral equation into a Fredholm equation and derive all the relevant estimates. In the end we apply a standard compactness argument to show that the solution $w^{\la} \in \mc{C}^{\nu',0}(b \Om^{\la})$, for any $0 < \nu' < \nu $, where $\nu = \min(\all_p, \mu)$, $\all_p = \frac{p-2}{p}$. We then use the $\mc{C}^{\nu-, 0 } (b \Om^{\la})$ estimate to prove the $\mc{C}^{\nu, 0} (b \Om^{\la})$ estimate. For our proof we apply a trick of Vekua to reduce the problem to the case for families of simply-connected domains.  
In \S 6.3, we derive the $\mc{C}^{k+1+\mu,0}(b  \Om^{\la})$ estimate for $w^{\la}$. 
In \S 6.4, we take the derivatives in $\la$ and derive the $\mc{C}^{k+1+\mu, j}(b  \Om^{\la})$ estimate for $w^{\la}$. This completes the proof. 

\subsection{The $\mc{C}^{\nu^{-},0}$ estimate} 
\ \\ \\
Recall the notation: $f(z, \la): = f^{\la}(z^{\la})$, for $(z, \la) \in \ov{\Om} \times [0,1]$.  
\\ \\
\textbf{Step 1: Setup of the singular integral equation.}  

For each $\la$, let $w^{\la}$ be the unique solution to the boundary value problem \re{pAmc} satisfying condition \re{mcwucip}. 
Since $Re[\ov{l(t,\la)} w(t,\la)] = \gm(t,\la)$ on $b \Om$, there exists a real-valued function $\eta$ such that 
\eq{wbdyvalue}
w(t, \la) = l(t, \la) \gm(t,\la) + i l(t,\la) \eta(t, \la),  \quad (t,\la)  \in b \Om \times [0,1]. 
\eeq   
Then the solution $w^{\la}$ can be represented by formula \re{nhgaerf}: 
\eq{pAcpwrf}  
w(z,\la) = w_{\gm}(z,\la) + w_{\eta}(z,\la) + w_{F}(z,\la), \quad z \in \Om,
\eeq 
where
\begin{align} \label{pAcpsrf1}
w_{\gm} (z,\la) &:= \frac{1}{2 \pi i} \int_{b \Om} G_{1}(z,t,\la) l (t,\la) \gm(t, \la) \, dt^{\la}
-\frac{1}{2 \pi i}  \int_{b \Om} G_{2}(z,t,\la) \ov{l(t,\la)} \gm(t,\la) \, \ov{dt^{\la}}, 
\end{align}
\begin{align}  \label{pAcpsrf2}
w_{\eta} (z,\la) &:= \frac{1}{2 \pi} \int_{b \Om} G_{1}(z,t,\la) l(t,\la) \eta(t, \la) \, dt^{\la} 
- \frac{1}{2 \pi} \int_{b \Om} G_{2}(z,t,\la) \ov{l (t,\la)} \eta(t,\la) \, \ov{dt^{\la}}, 
\end{align}
\begin{align} \label{pAcpsrf3}
w_{F}(z,\la) &:= - \frac{1}{\pi} \iint_{\Om} G_{1}(z, \zeta,\la) F(\zeta,\la) \, d A(\zeta^{\la}) 
- \frac{1}{\pi} \iint_{\Om} G_{2}(z, \zeta, \la) \ov{F(\zeta,\la)} \, d A(\zeta^{\la}).
\end{align}
Here $G_i (z, t, \la)$ is the fundamental kernel of the equation: 
\[
\pa_{\ov{z}} w + A(z, \la) w + B(z, \la) \ov{w} = 0, \quad z \in \C,  
\]
where for $z \notin \Om$, we take $A(z, \la) \equiv B (z, \la) \equiv 0$.  

According to \re{Gi}, $G_i(z,t,\la)$ has the representations: 
\[
G_1(z, t, \la) = \frac{e^{\om_1(z,t,\la)} + e^{\om_2(z,t,\la)} }{t^{\la} - z^{\la}}, 
\quad 
G_2(z, t, \la) = \frac{e^{\om_1(z,t,\la)} - e^{\om_2(z,t,\la)} }{t^{\la} - z^{\la}}, 
\]
where $\om_i(z,t, \la)-s$ are given by formula \re{omiexp}.

By 
\re{fke}, it can be shown easily that for fixed $\la$ and $F(\cdot, \la) \in L^{p}(\Om)$, $w_{F}(\cdot, \la)$ is continuous in $\mathbb C$. 

Referring to \cite[p.~234]{VA62}, $\eta$ satisfies the singular integral equation
\eq{pAfdsie}
\mc{K}^{\la} \eta (\zeta, \la) = \int_{b \Om} K (\zeta,t,\la)  \eta(t,\la) \,ds  = \gm_{0}(\zeta, \la), 
\eeq
where 
\begin{gather}\label{pAfdsieRHS} 
\gm_{0} (\zeta, \la) = \gm(\zeta, \la) - Re [\ov{l(\zeta,\la)}  w_{\gm}^{+}(\zeta, \la)] - Re[ \ov{l(\zeta, \la)} w_{F}(\zeta, \la)], 
\\
\nonumber
K(\zeta, t, \la) =  - \frac{1}{2 \pi} Re \left[ l(t, \la) (\rho^{\la})'(s)   
\left( G_{1}(\zeta,t,\la) \ov{l(\zeta,\la)} + \ov{G_{2}(\zeta,t,\la)} l(\zeta,\la) \right) \right]. 
\end{gather}
In addition, $\eta$ satisfies the following  conditions:
\begin{equation} \label{c1muucip}
\begin{gathered}  
w_{\eta}(z_r, \la) = a_r(\la) + i b_r(\la) - w_{\gm}(z_r, \la) - w_{F}(z_r, \la)  
\\  
w_{\eta}^{+} (z_s', \la) = l(z_s', \la)( \gm(z_s', \la) + i c_s(\la)) - w_{\gm}^{+} (z_s', \la) -   w_{F}(z_s', \la) 
\end{gathered}
\end{equation}
where $\{ z_r, z_s'\}$ for $ r =1, \dots, N_0$ and $ s= 1, \dots, N_1$ is a normally distributed set on $\Om$. Here we use $w_{\gm}^{+}$ (resp.$w_{\eta}^{+}$) to denote the limiting value of $w_{\gm}(z, \la)$ (resp.$w_{\eta}^{+} (z,\la)$) as $z$ approaches to $z_s' \in b \Om$ from the interior of $\Om$. For fixed $\la$,  $\eta(\cdot, \la) \in C^{\nu}(\ov{\Om})$ by \rp{pArt} (i), and thus according to \rp{gCijft}, $w_{\eta}^{+}$ is given by   \re{gCijf1}. 

Referring to \cite[p.~235]{VA62}, the homogeneous integral equation 
$
 \mc{K}^{\la} \eta (\zeta, \la) = 0
$
has $2n+1$ linearly independent solutions. 
We apply the theory of one dimensional singular integrals. The reader can refer to the classic book of Muskhelishvili~\cite{MI92}.
\\ \\
\nid \textbf{Step 2: Estimates for $w_{\gm}^{\la}$ and $w_F^{\la}$. }  
\ \\ 
We first show that the right-hand side of \re{pAfdsie} is in $C^{\nu,0}(\ov{\Om^{\la}})$. 
\begin{lemma} \label{wgmle} 
Let $\gm^{\la} \in \mc{C}^{\mu,0}(b \Om^{\la})$, and let $w_{\gm}$ be defined by formula \re{pAcpsrf1}, 
then $w_{\gm}^{\la} \in \mc{C}^{\mu, 0} (b \Om^{\la})$.  
\end{lemma}
\nid \tit{Proof.} 
By equations \re{Gieq} $w_{\gm}^{\la}$ satisfies the equation
\[ 
\pa_{\ov{z^{\la}}} w_{\gm}  + A(z, \la) w_{\gm}(z, \la)  + B(z, \la) \ov{w_{\gm}(z, \la)} = 0. 
\]
By \rp{gCith}, $w_{\gm}^{\la}$ is the unique solution to the integral equation: 
\eq{wgmie}
w_{\gm}(z, \la) + P^{\la} w_{\gm} (z, \la) = \Phi(z, \la), \quad z \in \Om, 
\eeq
where
\begin{gather*}  
P^{\la} w_{\gm} (z, \la) := T_ {\Om^{\la}}(A^{\la}w^{\la}_{\gm}+B^{\la}\ov{w^{\la}_{\gm}})(z^{\la}), \quad 
\Phi(z, \la) := \frac{1}{2 \pi i} \int_{b \Om} \frac{(l \gm) (t, \la) }{t^{\la} -z^{\la}} \, dt^{\la}. 
\end{gather*} 
By \rp{gCiHne} we know for fixed $\la$, $w_{\gm}(\cdot, \la) \in \mc{C}^{\nu}(\ov{\Om})$, where $\nu = \min (\all_p, \mu)$, $\all_p =\frac{p-2}{p}$.    

We make a few observations: \\
1) By \rp{dbarot}, we have
\begin{align} \label{PwgmHe}
\left| P^{\la} w_{\gm} (\cdot,\la) \right|_{C^{\all_p}(\ov{\Om})}  
&\leq C_{1,0} C_{p} (|A(\cdot, \la)|_{L^{p}(\ov{\Om})} + |B(\cdot, \la)|_{L^{p}(\ov{\Om})} ) |w_{\gm}(\cdot, \la)|_{C^{0}(\ov{\Om})}. 
\end{align}
In other words, the operator $P^{\la}$ is a compact operator for each fixed $\la$.  
\\ \\
2) By standard estimates for Cauchy integral (or see \cite[p.~176]{B-G14}), we have
\eq{wgmPhie}
\| \Phi \|_{0,0} \leq  C_{1,0}  \| l \gm \|_{\mu,0} , \quad 
\| \Phi \|_{\mu,0} \leq C_{1+\mu,0} \| l \gm \|_{\mu,0}. 
\eeq
3) The homogeneous integral equation 
\[ 
w_{\gm}(z, \la) + T_{\Om^{\la}}(A^{\la}w^{\la}_{\gm}+B^{\la}\ov{w^{\la}_{\gm}})(z^{\la}) = 0, \quad z \in \Om  
\]
has only the trivial solution for each $\la$. 
For the existence and uniqueness, see the argument in the proof of \rp{gCith}.

First, we show that $|w_{\gm}(\cdot, \la)|_0$ is bounded uniformly in $\la$. Seeking contradiction, suppose that for some sequence $\{\la_{j}\}$ converging to $\la_0$, one has $N_{j} := |\gm (\cdot,\la_{j})|_{0} \to \infty$. Dividing both sides of \re{wgmie} by $N_{j}$, we have  
\eq{pAnhomie}  
\hht{w_{\gm}}(z, \la_j) + P^{\la_j} \hht{w_{\gm}} (z, \la_j) = \frac{1}{N_j} \Phi(z, \la_j), \quad z \in \Om, 
\eeq 
where $ \hht{w_{\gm}}(z, \la_j) = \frac{1}{N_j} \wti{w_{\gm}} (z, \la_j)$. 
By estimate \re{PwgmHe}, $| P^{\la} \hht{w_{\gm} } (\cdot, \la)|_{\all_p}$ is bounded by a constant uniform in $\la$. 
By Arzela-Ascoli theorem and passing to a subsequence if necessary, $P^{\la_j} \hht{w_{\gm} }$ converges uniformly to some function. The right-hand side of the above equation converges to $0$ by \re{wgmPhie}, so in view of \re{pAnhomie} $\hht{w_{\gm}}$ converges uniformly to some function $\wti{w}$. One can check that $P^{\la_j} \hht{w_{\gm}}$ converges uniformly to $P^{\la_0} \wti{w}$, thus passing to the limit as $j \to \infty$ we get
$
\wti{w} + P^{\la_0} \wti{w} = 0. 
$  
By the above observation 3), $\wti{w}\equiv 0$ which contradicts the fact that $| \wti{w}|_0 =$ $\lim_{j \to \infty}$  $|\hht{w_{\gm}} (\cdot, \la)|_0 =  1$. Hence $|w_{\gm}(\cdot, \la)|_0$ is bounded uniformly in $\la$.
Applying estimate \re{PwgmHe} we get
\begin{align*}
\| w_{\gm}^{\la} \|_{\nu,0} 
\leq \| P^{\la} w_{\gm}^{\la} \|_{\all_p,0} + \| \Phi^{\la}  \|_{\mu,0}  
\leq C (\| w_{\gm}^{\la} \|_{0,0} + \| l^{\la} \gm^{\la} \|_{\mu,0}), 
\end{align*}
where $C$ is some constant independent of $\la$.

Next we show $w_{\gm}( \cdot, \la)$ is continuous in $\la$ in the $C^{0}(b \Om)$-norm. Suppose this is not the case, then there exists some $\la_0$ and a sequence $\la_{j} \to \la_0$, such that $|w_{\gm} (\cdot,\la_{j}) - w_{\gm}  (\cdot, \la_0)|_{0} \geq \ve > 0$. 
Since we have shown that $|w_{\gm}^{\la}|_{\nu}$ is bounded uniformly in $\la$. Passing to a subsequence if necessary, we have that $w_{\gm}^{\la_j} $ converges to some $w_{\ast}$ in the $C^{\nu'}(b \Om)$-norm, for $0 < \nu' < \nu$. 
As before we can show that $|P^{\la_j}w_{\gm}^{\la_j} - P^{\la_0}w_{\ast} |_{0} \to 0 $. 
Letting $| \la_j - \la_0 | \to 0$ in \re{wgmie} we get
$
w_{\ast}(z) + P^{\la_0} w_{\ast} (z, \la) = \Phi(z, \la_0)$ for $z \in \Om$. 
Since we know that $w^{\la_0}_{\gm}$ satisfies the same equation, again by the above observation 3) we must have 
$
 w_{\ast} = w_{\gm} (\cdot,  \la_0 ). 
$
But this contradicts the fact that $| w_{\gm} (\cdot, \la_j) - w_{\gm} (\cdot, \la_0) | \geq \ve > 0$. Hence $w_{\gm} (\cdot, \la)$ is continuous in $\la$ in the $C^0 (b \Om)$ norm. By \rp{cinhd}, the results imply 
$w_{\gm}^{\la} \in \mc{C}^{\nu,0} (b \Om^{\la})$.
\hfill \qed

Next we show that $w_F^{\la} \in \mc{C}^{\all_p,0}(b \Om^{\la})$.  
\begin{lemma} \label{wFle}
Let $2 < p < \infty$ and $\all_p = \frac{p-2}{p}$. Suppose $\la \mapsto A(\cdot, \la), B (\cdot, \la), F (\cdot, \la)$ are continuous maps from $[0,1]$ to $L^p(\Om)$. For $z \in \Om$, let $w^{\la}_F$ be given by formula \re{pAcpsrf3}. 
Then for each $\la$, $w_F^{\la}$ can be extended to a function in $C^{\all_p}(\ov{\Om})$, and $w^{\la}_F \in \mc{C}^{\all_p, 0} (\ov{\Om^{\la}})$. Moreover we have the following estimate
$$
\| w_F^{\la} \|_{\all_p,0} \leq  C(p, \Om) 
(\sup_{\la \in [0,1] }  |A (\cdot, \la) |_{L^p (\Om) }  
+ \sup_{\la \in [0,1] }  |B (\cdot, \la) |_{L^p (\Om) } ) , 
$$ 
          where the constant depends only on $p$ and $\Om$ and is independent of $\la$.  
\end{lemma}
\begin{proof} 
By H\"older's inequality we have
\begin{equation*}
\begin{aligned}
|w_{F}(z,\la)|
&\leq C \left( |G_{1}(\zeta, \cdot, \la) |_{L^{p'}(\Om)} + |G_{2}(\zeta, \cdot, \la) |_{L^{p'}(\Om)} \right) |F(\cdot, \la) |_{L^{p}(\Om)}  \\ 
&\leq C_{1,0} C_{p} \left( \Si_{i=1,2} \| \om_{i} \|_{0,0} \right)
|F(\cdot, \la)|_{L^{p}(\Om)} ,  \quad 2 < p < \infty, 
\end{aligned}
\end{equation*} 
where $\| \om_i \|_{0,0} = \sup_{\zeta \in \ov{\Om}, \la \in [0,1] } |\om_i(\cdot, t, \la)|_0$.  We have
\begin{equation*} 
\begin{aligned} 
&|w_{F}(z, \la) - w_{F}(z', \la)| 
\\  & \quad \leq C \sum_{i=1,2} \iint_{\Om} \left| G_{i}(z, \zeta, \la) - G_{i} (z', \zeta, \la) \right| |F(\zeta,\la)| \, d A(\zeta^{\la})  \\ 
& \quad \leq C \iint_{\Om} \frac{  \Si_{i=1,2} \left| e^{\om_{i}(z, \zeta,\la)} - e^{\om_{i}(z',\zeta,\la)} \right|}{|\zeta^{\la} - z^{\la}|}  |F(\zeta, \la)| \, d A(\zeta^{\la}) \\ 
& \qquad + C \iint_{\Om} \left( \Si_{i=1,2} \left| e^{\om_{i}(z', \zeta,\la)} \right| \right) \left| \frac{1}{\zeta^{\la}   - z^{\la}} - \frac{1}{\zeta^{\la}-(z')^{\la}} \right| |F(t,\la)| \, d A(\zeta^{\la}) .   
\end{aligned}
\end{equation*} 
Denote the first integral above by $I$ and the second integral by $II$. Applying H\"older's inequality for $p> 2$ we have
\begin{align*}
I &\leq C_{1,0} (\Si_{i=1,2} \|\om_{i} \|_{\all_p,0}) |z - z'|^{\all_p} \iint_{\Om} 
\frac{|F(\zeta,\la)|}{|z - \zeta|} \, dA(\zeta) 
\\& \leq C_{1,0} C_{p} (\Si_{i=1,2}  \| \om_{i} \|_{\all_p,0} ) |F(\cdot, \la)|_{L^{p}(\Om)} |z - z'|^{\all_p} , 
\end{align*}
where $\| \om_i \|_{\all_p,0} := \sup_{\zeta \in \ov{\Om}, \la \in [0,1] } | \om_i (\cdot , t, \la) |_{\all_p}$. Also
\begin{align*} 
II &= \left\{ \iint_{|z- \zeta| < 2|z - z'|} +  \iint_{| z - \zeta | \geq |z - z'|}  \right\} 
\left( \Si_{i=1,2} \left| e^{\om_{i}(z', \zeta,\la)} \right| \right) \left| \frac{1}{z^{\la} - \zeta^{\la}} - \frac{1}{(z')^{\la} - \zeta^{\la}} \right| |F(\zeta,\la)| \, d A(\zeta^{\la})  
\\ & \leq C_{1,0} (\Si_{i=1,2} \| \om_{i} \|_{0,0} ) \left\{ \iint_{|z - \zeta| < 2|z - z'|} \frac{|F(\zeta,\la)|}{|z - \zeta|} \, dA(\zeta) + \iint_{|z' - \zeta| < 3|z - z'|} \frac{|F(\zeta,\la)|}{|z' -  \zeta|} \, d A(\zeta) \right. 
\\ & \qquad \left. + \iint_{|z - \zeta| \geq 2| z - z'|} \left| \frac{1}{z^{\la} - \zeta^{\la}} - \frac{1}{(z')^{\la} - \zeta^{\la}} \right| |F(\zeta,\la)|  \,  d A (\zeta) \right\}  
\\ &\leq C_{1,0} (\Si_{i=1,2} \| \om_{i} \|_{0,0} ) \left\{  | F(\cdot, \la) |_{L^{p}(\ov{\Om})} |z - z'|^{\all_p} + |z - z'| \iint_{|z - \zeta| \geq 2|z - z'|}  \frac{|F(\zeta,\la)|}{|z- \zeta | |z' - \zeta |}  \, d A(\zeta)  \right\} 
\\ &\leq C_{1,0} C_{p} (\Si_{i=1,2} \| \om_{i} \|_{0,0} ) | F(\cdot, \la) |_{L^{p}(\ov{\Om})} \left\{ |z - z'|^{\all_p} + |z - z'| (C + |z - z'|^{-\frac{2}{p}})  \right\} 
\\ &\leq C_{1,0} C_{p} (\Si_{i=1,2} \| \om_{i} \|_{0,0} ) | F(\cdot, \la) |_{L^{p}(\ov{\Om})} | z - z'|^{\all_p}. 
\end{align*} 
Thus we obtain 
\begin{equation} \label{wflahest}
\begin{aligned}  
|w_F(\cdot, \la)|_{\all_p} 
&\leq C_{1,0} C_{p}  (\Si_{i=1,2} \| \om_{i} \|_{\all_p,0} ) | F(\cdot, \la) |_{L^{p} (\Om)}
\leq C  \left( |A(\cdot, \la)|_{L^{p}(\Om)}  + |B(\cdot, \la)|_{L^{p}(\Om)} \right) , 
\end{aligned}  
\end{equation} 
where we used \re{omjhnbdd}, and the constant $C$ depends only on $p$ and $\Om$ and is independent of $\la$. 

Next we show $w_{F}^{\la}$ is continuous in $C^{0}$-norm. For simplicity we omit the constant $- \frac{1}{\pi}$. Set
\[  
d A (\zeta^{\la}) = a(\zeta, \la) d A(\zeta),  \quad a (\zeta, \la) := |det(D \Gm^{\la})|. 
\] 
We have, using notation \re{del_note} for $\del^{\la_1,\la_2}$. 
\begin{align*}  
& |w_{F} (z, \la_{1}) -  w_{F} (z, \la_{2}) | 
\leq \sum_{i=1,2} \left| \iint_{\Om}  \del^{\la_{1}, \la_{2}} [G_{i}(z, \zeta) F(t) a (\zeta)] \, d A(\zeta) \right| \\ 
& \quad \leq \sum_{i=1,2} \iint_{\Om}  [\del^{\la_{1}, \la_{2}} G_{i}(z, t)] (Fa)(\zeta, \la_{1})  \, d A(\zeta) + \iint_{\Om} G_{i}(z, \zeta,\la_{2}) [\del^{\la_{1}, \la_{2}}(Fa)(t)] \, d A(\zeta) \\ 
& \quad \leq \iint_{\Om} \frac{ \Si_{i=1,2} |e^{\om_{i}(z, \zeta,\la_{1})} - e^{ \om_{i}(z, \zeta,\la_{2})}|}{|\zeta^{\la_{1}} - z^{\la_{1}} |} |(Fa)(\zeta, \la_{1})| + \iint_{\Om} \Si_{i} |e^{\om_{i}(z ,\zeta,\la_{2})}| |(Fa)(\zeta, \la_{1})| \\
& \qquad \left| \frac{1}{\zeta^{\la_{1}} - z^{\la_{1}} } - \frac{1}{ \zeta^{\la_{2}} - z^{\la_{2}} }\right| \, d A(\zeta) +  \iint_{\Om} \frac{ \Si_{i} |e^{\om_{i}(z, \zeta,\la_{2})}|}{|\zeta^{\la_{2}} - z^{\la_{2}}|} |\del^{\la_{1}, \la_{2}}(Fa)(t)| \, d A(\zeta). 
\end{align*} 
By H\"older's inequality, this is bounded by 
\[ 
C_{1,0}C_{p} \left\{ \Si_{i} \sup_{\zeta \in \ov{\Om}}|\del^{\la_{1}, \la_{2}} \om_{i}(\cdot, \zeta)|_{0} |F|_{L^{p}(\Om)} + 
(\Si_{i} \| \om_{i} \|_{0,0}) \left( |\Gm^{\la_{1}} - \Gm^{\la_{2}}|_{1} |F|_{L^{p}(\Om)}  + |  \del^{\la_{1},  \la_{2}}F|_{L^{p}(\Om)}  \right) \right\}, 
\] 
where the constants are independent of $\la$. 
By \rl{cino} (iii), $ \sup_{\zeta  \in \ov{\Om}} |\om_{i}(z, \zeta, \la_{1}) - \om_{i}(z, \zeta, \la_{2})|_{0}  \to 0$ , so the first term above goes to $0$. Since $\Gm^{\la} \in \mc{C}^{1,0}$, and $F(\cdot,\la)$ is continuous in $\la$ in the $L^{p}(\Om)$-norm, the second and third terms above converge to $0$ as $|\la_{1} - \la_{2}| \to 0$. 
\end{proof}
\textbf{Step 3: Reduction of the singular integral operator. }  
\\ \\
To estimate $w_{\eta}^{\la}$ (\re{pAcpsrf2}), we first need to estimate $\eta(\cdot, \la)$. 
Write 
\[ 
\mc{K}^{\la} \eta (\zeta, \la) = \mc{K}_1^{\la}\eta (\zeta, \la) + \mc{N}_1^{\la} \eta (\zeta, \la),   
\]
where
$ 
\mc{K}_{1}^{\la} \eta (\zeta, \la) := - \frac{1}{2 \pi} \int_{b \Om} Re \left[ l(t, \la) \ov{l(\zeta, \la)}  (\rho^{\la})'(s)  
G_{1}(\zeta,t, \la)  \right] \eta(t, \la) \, ds, 
$ and 
\begin{gather} \label{N1opt}  
\mc{N}_1^{\la} \eta_{1}(\zeta, \la) := - \frac{1}{2 \pi} \int_{b \Om} Re \left[ l(t, \la) l(\zeta, \la) (\rho^{\la})'(s)   \ov{G_{2}(\zeta,t,\la)}  \right]  \eta (t, \la) \, ds. 
\end{gather}   
The operator $\mc{N}_{1}^{\la}$ is compact, since by estimate \re{cinoC0e2}, 
\begin{align*} 
&|G_{2}(\zeta,t, \la) |
= \left| \frac{e^{\om_{1} (\zeta,t, \la)} - e^{\om_{2} (\zeta,t, \la)}}{2 (t^{\la} - \zeta^{\la})}  \right|  
\leq C \frac{| \om_{1} (\zeta,t, \la) - \om_{2} (\zeta,t, \la) | }{| t^{\la} - \zeta^{\la} | } \\
&\leq C \frac{| \om_{1} (\zeta,t, \la)| + |\om_{2} (\zeta,t, \la)| }{t- \zeta}  \leq  C_{1,0} C_p \left( |A(\cdot,\la)|_{L^{p}(\Om)}  + |B(\cdot,\la)|_{L^{p}(\Om)}  \right)  |t - \zeta|^{- \frac{2}{p}} , 
\end{align*} 
where $2 < p < \infty$. 
We shall call $\mc{N}_{1}^{\la}$ a \emph{Fredholm operator of the first kind}, using the same terminologies as in \cite{MI92}. 

On the other hand, using that $w_j (t,t, \la) = 0$ for all $t$ and estimate \re{cinoCae}, we have
\begin{align*}
G_{1}(\zeta,t, \la)  -  \frac{1}{t^{\la} - \zeta^{\la} }
&= \frac{e^{\om_{1} (\zeta,t, \la)} + e^{\om_{2} (\zeta,t,\la)}}{2 (t^{\la} - \zeta^{\la}) } - \frac{2}{2( t^{\la}- \zeta^{\la}) } \\ 
&= \frac{e^{\om_{1} (\zeta,t, \la)} - e^{\om_1(t, t, \la) }}{2 (t^{\la} - \zeta^{\la})} + \frac{e^{\om_{2} (\zeta,t, \la)} - e^{\om_2 (t, t, \la)}}{2 (t^{\la} - \zeta^{\la})} \\
&\leq  C_{1,0} C_p \left( |A(\cdot,\la)|_{L^{p}(\Om)}  + |B(\cdot,\la)|_{L^{p}(\Om)}  \right)|\zeta - t|^{- \frac{2}{p}}. 
\end{align*}
It follows that 
$ 
\mc{K}^{\la}_1 \eta (\zeta, \la)  = \mc{K}_{2}^{\la} \eta(\zeta, \la) + \mc{N}_{2}^{\la} \eta(\zeta, \la),  
$ 
where 
\begin{gather}
\mc{K}^{\la}_2 \eta(\zeta, \la) 
:= - \frac{1}{2 \pi} \int_{b \Om} Re\left[ \frac{l(t, \la) (\rho^{\la}) ' (s) \ov{l(\zeta, \la)} }{t^{\la}-\zeta^{\la} } \right] \eta(t, \la) \, ds; \nonumber\\
\label{N2opt}   
\mc{N}_2^{\la} \eta (\zeta, \la) 
:= - \frac{1}{2 \pi} \int_{b \Om} Re \left[ l(t, \la) \ov{l(\zeta, \la)} (\rho^{\la}) ' (s)   \sum_{i=1,2} \frac{e^{\om_{i} (\zeta,t, \la)} - e^{\om_i(t,t,\la)}}{2 (t^{\la} - \zeta^{\la} )} \right] \eta(t, \la) \, ds. 
\end{gather}
The operator $\mc{N}_{2}^{\la}$ is compact, and
\[  
\mc{K}_{2}^{\la} \eta(\zeta, \la) 
= - \frac{1}{4 \pi} \int_{b \Om} \left[ \frac{l(t, \la) (\rho^{\la}) '(s)  \ov{l(\zeta, \la)} }{t^{\la}-\zeta^{\la} } + 
\frac{\ov{l(t, \la)} \, \ov{(\rho^{\la}) ' (s) }l(\zeta, \la)}{\ov{t^{\la} - \zeta^{\la}}} \right] \eta(t, \la) \, ds. 
\] 
Since
\begin{align*}
\frac{ds}{\ov{t^{\la} - \zeta^{\la}} } 
= \frac{ ( \rho^{\la}) '(s) \, d \ov{t^{\la} } }{\ov{t^{\la} - \zeta^{\la} }} 
&=  (\rho^{\la} ) '(s) \, d \log(\ov{t} - \ov{\zeta^{\la} }) \\
&= (\rho^{\la} ) '(s) \, d \log(t^{\la} - \zeta^{\la} ) + (\rho^{\la} ) '(s) \, d \log \frac{\ov{t^{\la}- \zeta^{\la}  }}{t^{\la} -\zeta^{\la} } \\ 
&= \frac{ (\rho^{\la} ) '(s)  \, dt^{\la} }{t^{\la} - \zeta^{\la} } + ( \rho^{\la} ) '(s) \, d \log \frac{\ov{t^{\la} } - \ov{\zeta^{\la}} }{t^{\la}  -\zeta^{\la} }, 
\end{align*}
we can rewrite $\mc{K}_2^{\la} \eta$ as 
$ 
\mc{K}_{2}^{\la} \eta(\zeta, \la) = \mc{K}^{\la}_{3} \eta(\zeta, \la) + \mc{N}_{3}^{\la} \eta(\zeta, \la), $ with \begin{gather}
\nn
\mc{K}^{\la}_3 \eta(\zeta, \la) :=  - \frac{1}{4 \pi} \int_{b \Om} \frac{ \left( l(t, \la) \ov{l(\zeta, \la)} + \ov{l (t, \la)} l (\zeta, \la) \right) \eta(t, \la) }{t^{\la}-\zeta^{\la} }\,dt^{\la}
\\ \label{N3}
\mc{N}_{3}^{\la} \eta(\zeta, \la) :=  - \frac{1}{4 \pi} \int_{b \Om} \ov{l(t, \la)}l(\zeta, \la) \eta(t, \la ) \, d \, log \frac{\ov{t^{\la} - \zeta^{\la} }}{t^{\la}-\zeta^{\la}}. 
\end{gather} 
Now, $\mc{N}_{3}^{\la}$ is a compact operator, and we can write $\mc{K}_3^{\la} \eta^{\la}$ in the form
\[ 
\mc{K}_3^{\la} \eta(\zeta, \la) = A(\zeta, \la)  \eta(\zeta, \la) + \frac{1}{\pi i} \int_{b \Om}  
\frac{S(\zeta,t, \la) \eta(t, \la)}{t^{\la}-\zeta^{\la}}  \, dt^{\la}, 
\] 
where $A(\zeta, \la) \equiv 0$, and
\[ 
 S (\zeta, t, \la) := - \frac{i}{4} \,  \left(  l(t, \la) \ov{l(\zeta, \la)} + \ov{l (t, \la) } l (\zeta, \la)   \right). 
\] 
In particular we have $S(t,t, \la) = -\frac{i}{2}$.   
Now, the integral equation \re{pAfdsie} can be written as 
\eq{pAmuesie}
\mc{K}^{\la} \eta (\zeta,\la) = \mc{K}_3^{\la} \eta (\zeta,\la) + \mc{N}_{1}^{\la} \eta (\zeta,\la) + \mc{N}_{2}^{\la} \eta (\zeta,\la) + \mc{N}_{3}^{\la} \eta (\zeta,\la)  = \gm_{0}(\zeta, \la), 
\eeq
where $\mc{K}^{\la}_3$ is a singular integral operator and $N^{\la}_j$-s are compact operators. 
Following \cite[P.~118, (45.4)]{MI92}, the index of the singular integral operator $\mc{K}^{\la}_3$ is defined to be
\[ 
\frac{1}{2 \pi} \left[ \arg \frac{A(t, \la) - S(t, t, \la)}{A(t, \la) + S(t, t, \la)} \right]_{b \Om} 
= \frac{1}{2 \pi} [\arg (-1) ] = 0.  
\]
One can find a `` reducing operator" $\mc{M}^{\la}$ so that the composition $ \mc{F}^{\la} = \mc{M}^{\la} \circ \mc{K}^{\la}$ is a Fredholm operator.
Moreover by Vekua's equivalence theorem~\cite[p.~149, case 1]{MI92}, if the index of $\mc{K}_0^{\la}$ is nonnegative, one can choose $\mc{M}^{\la}$ so that the equation 
$
\mc{M}^{\la} \var = 0
$
has only the trivial solution $\var \equiv 0$. Therefore $\eta^{\la}$ is a solution to the Fredholm equation $\mc{F}^{\la} \eta^{\la} = \mc{M}^{\la} \gm_0$ if and only if $\eta^{\la}$ is a solution to the singular integral equation $ \mc{K}^{\la} \eta^{\la} = \gm_0$, in other words it suffices to consider the equation 
\[
\mc{F}^{\la} \eta^{\la} = \mc{M}^{\la} \gm_0^{\la}. 
\] 

There are many choices for such $\mc{M}^{\la}$, for example, one can take $\mc{M}^{\la}$ to be the dominant part of the adjoint operator of $\mc{K}_3^{\la}$ (See the remark after \rd{redoptdf}). For an operator $\mc{T}$, we use the notation 
\[
\mc{T} \eta (\zeta, \la) := \mc{T}^{\la} \eta^{\la} (\zeta^{\la}). 
\]
We have
\eq{pAro} 
\mc{M} \eta (\zeta, \la) 
:= - \frac{S(\zeta, \zeta, \la)}{\pi i} P.V. \int_{b \Om} \frac{\eta(t, \la) \, dt^{\la}}{ t^{\la} - \zeta^{\la}}  
= \frac{1}{2 \pi} P.V. \int_{b \Om} \frac{\eta(t, \la)\, dt^{\la}}{t^{\la}- \zeta^{\la}}. 
\eeq
Since $S$ is H\"older continuous in both $\zeta$ and $t$, we can apply formula \re{soprod} to get
\begin{align*}
&\mc{F}^{\la}\eta^{\la} (\zeta^{\la})= \mc{M} \mc{K}_3 \eta(\zeta, \la) \\
& \quad = - [S(\zeta,\zeta, \la)]^{2} \eta(\zeta, \la)
- \frac{S(\zeta,\zeta, \la)}{(\pi i)^{2}} \int_{b \Om} \left[ \int_{b \Om} \frac{S(t_{1},t, \la)}{(t_{1}^{\la}-\zeta^{\la})(t^{\la}-t_{1}^{\la})}  \, dt_{1}^{\la}  \right] \eta(t, \la) \, dt^{\la} \\
& \quad = \frac{1}{4} \eta(\zeta, \la) - \frac{i}{2 \pi^{2}} \int_{b \Om} 
\left[ \int_{b \Om} \frac{l(t, \la) \ov{l (t_{1}, \la)} + \ov{l(t, \la)} l (t_1, \la) }{(t_{1}^{\la} - \zeta^{\la})(t^{\la}-t_{1}^{\la})} \, dt_{1}^{\la} \right] \eta(t, \la) \, dt^{\la}, 
\end{align*} 
where we used that $S(t,t, \la) = -\frac{i}{2}$.   
We have
\begin{align} \label{Flait}
\int_{b \Om} \frac{l(t, \la) \ov{l(t_1, \la)} }{(t_1^{\la} - \zeta^{\la} ) (t^{\la} - t_1^{\la}) } 
= \frac{l(t, \la)}{t^{\la} - \zeta^{\la}}  [v_0(\zeta, {\la}) - v_0(t, \la)],   
\end{align} 
where we set 
\begin{align*} 
v_0 (\zeta,\la) 
&= P.V. \int_{b \Om} \frac{ \ov{l(\tau,\la)}}{ \tau^{\la}- \zeta^{\la}} \, d \tau^{\la} 
= \int_{b \Om} \frac{ \ov{l( \tau,\la)} - \ov{l( \zeta,\la)} }{ \tau^{\la}- \zeta^{\la}} \, d \tau^{\la} + (\pi  i) \ov{l(\zeta,\la)}. 
\end{align*}     
By \rl{pvintest}, $v_0 \in C^{\mu, 0} (b \Om^{\la})$ and 
\eq{v0Hest} 
\| v_0 \|_{\mu,0} \leq C_{1,0} \| l \|_{\mu,0}. 
\eeq  
In view of expression \re{Flait}, we have
\eq{pAroF}
\mc{F} \eta(\zeta, \la) = \frac{1}{4} \eta(\zeta, \la) + \int_{b \Om} \frac{k_0(\zeta,t, \la)}{ \zeta^{\la} -t^{\la}  } \eta(t, \la) \, dt^{\la}, 
\eeq
where $k_0(\zeta,t, \la)$ is given by 
\eq{k0}
k_0 (\zeta,t, \la) = l(t, \la) \left( v_0 ( \zeta, \la) - v_0 (t, \la) \right). 
\eeq
In view of \re{v0Hest} we have
\eq{k0est} 
| k_0 (\zeta, t, \la) | \leq A_0 | \zeta - t|^{\mu},   
\eeq 
where $A_0$ is some constant independent of $\la \in [0,1]$ and $t \in b \Om$.  

Next, we compute the composition operators $\mc{M^{\la}} \mc{N}_{1}^{\la} \eta^{\la}$, $\mc{M^{\la}} \mc{N}_{2}^{\la} \eta^{\la}$, $\mc{M^{\la}} \mc{N}_{3}^{\la} \eta^{\la}$. All of them turn out to be Fredholm operators of the first kind. For simplicity, we will omit all the non-zero constants in the following computations. In view of \re{N1opt}, we have
\begin{align*}
&\mc{M} \mc{N}_{1} \eta(\zeta, \la ) \\  
& \quad = \int_{b \Om} \frac{1}{t^{\la}-\zeta^{\la}}
\left[ \int_{b \Om} Re \left[ l(t_{1}, \la) l(t, \la) \ov{G_{2}(t,t_{1}, \la)} (\rho^{\la})'(s_{1}) \right] \eta(t_{1}, \la) \, ds_{1} \right] \, dt^{\la} \\  
& \quad = \int_{b \Om} 
\left[ \int_{b \Om} \frac{1}{t^{\la}- \zeta^{\la}}  Re \left(  l(t_{1}, \la)  l(t, \la) \ov{G_{2}(t,t_{1}, \la)} (\rho^{\la})' (s_{1}) \right) \, dt^{\la} \right]
\eta(t_{1}, \la) \, ds_{1}  \\
& \quad = \int_{b \Om} \left[ \int_{b \Om} \frac{1}{t^{\la}-\zeta^{\la}} \left(
l(t_{1}, \la) l(t, \la) \ov{G_{2}(t,t_{1}, \la)} (\rho^{\la})'(s_{1}) \right.  \right. 
\\ & \qquad +  \ov{l(t_{1}, \la)} \, \ov{l(t, \la)}  \, \ov{(\rho^{\la})'(s_{1})} G_{2}(t,t_{1}, \la) \left. \right)  \,dt^{\la} \left. \right] \eta (t_{1}, \la) \, ds_{1} ,  
\end{align*}
where we can easily justify the interchange of integrals. 
The inner integral can be expanded as 
\begin{align} \label{MN1init} 
&l(t_{1}, \la) (t_{1}^{\la})'(s_{1}) \int_{b \Om} \frac{ l(t, \la) \left( \ov{e^{\om_{1} (t,t_{1}, \la)}} - \ov{e^{\om_{2} (t,t_{1}, \la)}} \right) }{(t^{\la}-\zeta^{\la})(\ov{t_{1}^{\la}-t^{\la}})}  \, dt^{\la} 
\\  &\qquad \nn + \, \ov{l(t_{1}, \la)} \, \ov{(t_{1}^{\la})'(s_{1})} \int_{b \Om} \frac{\ov{l(t, \la)}\left( e^{\om_{1}(t,t_{1}, \la)} - e^{\om_{2} (t,t_{1}, \la)} \right)}{(t^{\la}-\zeta^{\la})(t_{1}^{\la}-t^{\la})}  
\, dt^{\la}. 
\end{align} 
Denoting the first and second  integrals in the above expression by $I_1$ and $I_2$ respectively, we have
\begin{align} \label{MN1I1} 
& I_{1}(\zeta, t_{1}, \la) = 
\int_{b \Om} \frac{l(t, \la) e^{2iarg(t_{1}^{\la}-t^{\la})}}{(t^{\la}-\zeta^{\la})(t_{1}^{\la}-t^{\la})} \left[ \ov{e^{\om_{1} (t,t_{1}, \la)}} - \ov{e^{\om_{2}(t,t_{1}, \la)}} \right] \, dt^{\la}
\\ \nn & \quad = \frac{1}{t_{1}^{\la} -\zeta^{\la}} \left\{ P.V. \int_{b \Om} \frac{ 
F(t, t_1, \la)  }{t^{\la}-\zeta^{\la}} dt^{\la} \right. 
-  P.V. \left.  \int_{b \Om} \frac{ F(t, t_1, \la)}{t^{\la} -t_{1}^{\la} } \, dt^{\la} \right\}.   
\\ \nn & \quad = \frac{1}{t_{1}^{\la} -\zeta^{\la} }[v'_1(\zeta, t_1, \la) - v'_1(t_{1},t_{1}, \la) ], 
\end{align}
where we set
\gan
v'_1(\zeta, t_1, \la):= \int_{b \Om} \frac{F(t, t_1, \la) - F(\zeta, t_1, \la)}{t^{\la} -\zeta^{\la} } \, dt^{\la}  + \pi i F (\zeta, t_1, \la)  , 
\\ 
v'_1(t_1, t_1, \la)
:= \int_{b \Om} \frac{F(t, t_1, \la) - F(t_1, t_1, \la)}{t^{\la} -t_1^{\la} } \, dt^{\la}  + \pi i F (t_1, t_1, \la) , 
\\
F(t, t_1, \la) := l(t, \la) e^{2iarg(t_{1}^{\la} -t^{\la} )} \left[ \ov{e^{\om_{1} (t,t_{1}, \la)}} - \ov{e^{\om_{2} (t,t_{1}, \la)}} \right]. 
\end{gather*}
Note that since $\om_1 (t_1, t_1) = \om_2 (t_1, t_1) = 0$, we have $F(t_1, t_1, \la) \equiv 0$ on $b \Om$. 
We show that $v_1'(\cdot, t_1, \la) \in C^{\nu,0} (b\Om^{\la})$. By \rl{pvintest}, it suffices to see that $F( \cdot, t, \la)$ is $\nu$-H\"older continuous in the first argument, and this follows from \rl{argfnt}, and estimate \re{omjhnbdd}. In fact we have 
\[  
|F(\cdot, t_1, \la) |_{\nu}  
\leq C_{1+\mu,0} | l (\cdot, \la)  |_{\mu} (\Si_{i=1,2} |\om_{i} (\cdot, t_{1}, \la) |_{\all_p}), \quad \nu = min(\all_p,\mu), 
\] 
where the constant is uniform in $\la$ and $t_1 \in b \Om$.   
Hence by \rl{pvintest}, 
\[ 
\|v'_1(\cdot, t_1, \la) \|_{\nu,0} \leq C_{1,0} \|F(\cdot, t_1, \la) \|_{\nu,0} \leq C_{1+\mu,0} \| l \|_{\mu,0} (\Si_{i=1,2} \| \om_{i} \|_{\all_p,0}). 
\] 
In a similar way, we can show that 
\eq{MN1I2} 
I_2 (\zeta, t_1, \la) = \frac{1}{t_{1}^{\la} -\zeta^{\la} }[  v''_1(t_{1}, \zeta, \la) - v''_1(t_{1},t_{1}, \la) ],  
\eeq 
where $v''_1$ satisfies  
$
\|v''_1(\cdot, t_1, \la) \|_{\nu,0} \leq C_{1+\mu,0} \| l \|_{\mu,0} (\Si_{i=1,2} \| \om_{i} \|_{\all_p,0}). 
$ 

Putting \re{MN1I1} and \re{MN1I2} in \re{MN1init}, we have
\eq{MNo1}
\mc{M} \mc{N}_{1} \eta(\zeta, \la) 
= \int_{b \Om} \frac{k_{1}(\zeta,t_{1},\la)}{ t_{1}^{\la} - \zeta^{\la} } \eta(t_{1}, \la) \, ds_{1}, \quad \quad \nu = min(\all_p, \mu)
\eeq
where $k_{1}(\zeta,t_{1}, \la)$ is given by
\begin{equation} \label{k1} 
 \begin{aligned} 
k_{1}(\zeta,t_{1}, \la)  &:= l(t_{1}, \la)  (\rho^{\la})'(s_{1}) [v_1'(\zeta, t_1, \la) - v'(t_{1},t_{1}, \la) ]
\\ &\quad + \ov{l(t_{1}, \la)} \ov{(\rho^{\la})'(s_{1})} [ v_1''(\zeta, t_1, \la) - v''(t_{1},t_{1}, \la) ], 
\end{aligned} 
\end{equation} 
and $k_1$ satisfies 
\eq{k1est} 
| k_1 (\zeta, t_1, \la) | \leq A_1 | \zeta - t_1|^{\nu}. 
\eeq
Here $A_1$ is some constant independent of $\la \in [0,1]$ and $t_1 \in b \Om$.  
By similar procedures applied to \re{N2opt}, we can show that the following holds (up to a nonzero constant) 
\eq{MNo2}
\mc{M} \mc{N}_{2} \eta(\zeta, \la) 
= \int_{b \Om} \frac{k_{2}(\zeta,t_{1},\la)}{ t_{1}^{\la} - \zeta^{\la} } \eta(t_{1}, \la) \, ds_{1}, \quad \quad \nu = min(\all_p, \mu)
\eeq
where $k_2 (\zeta, t_1, \la)$ is given by 
\begin{equation} \label{k2}
\begin{aligned}    
k_{2}(\zeta,t_{1}, \la)  &:= l(t_{1}, \la)  (\rho^{\la})'(s_{1}) [v_2'(\zeta, t_1, \la) - v'_2(t_{1},t_{1}, \la) ]
\\  
&\quad + \ov{l(t_{1}, \la)} \ov{(\rho^{\la})'(s_{1})} [ v_2''(\zeta, t_1, \la) - v''_2(t_{1},t_{1}, \la) ], 
\end{aligned}
\end{equation}
where $v_2'$ and $v_2''$ satisfy the estimates
\begin{gather*}
\|v'_2(\cdot, t_1, \la) \|_{\nu,0}, \: \|v''_2(\cdot, t_1, \la) \|_{\nu,0} \leq C_{1+\mu,0} \| l \|_{\mu,0} (\Si_{i=1,2} \| \om_{i} \|_{\all_p,0}). 
\end{gather*}
Hence 
\eq{k2est} 
| k_2 (\zeta, t_1, \la) | \leq A_2 | \zeta - t_1|^{\nu},   
\eeq
where $A_2$ is some constant independent of $\la \in [0,1]$ and $t_1 \in b \Om$.  
It remains to compute $\mc{M}^{\la} \mc{N}_{3}^{\la} \eta^{\la}$. From \re{N3}, we have
\begin{align*}
\mc{N} _{3} \eta(\zeta, \la)
&= - \frac{1}{4 \pi} \int_{b \Om} \ov{l(t, \la)}l(\zeta, \la) \eta(t, \la) \, d \, log \frac{\ov{t^{\la} - \zeta^{\la}}}{t^{\la}-\zeta^{\la}} \\ 
&= - \frac{(-2i)}{4 \pi} \int_{b \Om} \ov{l(t, \la)}l(\zeta, \la) \eta(t, \la)
\, d_{t^{\la}} \arg (t^{\la} - \zeta^{\la}) \\
&=  \frac{i}{2 \pi} \int_{b \Om} \ov{l(t, \la)}l(\zeta, \la) \eta(t, \la)
\pa_{\tau^{\la} (s)} \arg (t^{\la} - \zeta^{\la} ) \, ds, 
\end{align*}
where $\tau^{\la}$ is the unit tangent vector of $b \Om^{\la}$ at $t^{\la}$. 
Omitting the constant in the front, we have
\begin{align*}
\mc{M}\mc{N}_{3} \eta(\zeta, \la)
&= \int_{b \Om} \frac{1}{t^{\la}-\zeta^{\la}} \left[ \int_{b \Om} 
\ov{l(t_1, \la)} l(t, \la) [\pa_{\tau^{\la} (s_1)} \arg (t_1^{\la} - t^{\la})] \eta(t_{1}, \la) \, ds_{1}  \right] \, dt^{\la} 
\\ & = \int_{b \Om} \ov{l(t_{1}, \la)} \left[ \int_{b \Om} \frac{l(t,\la) [\pa_{\tau^{\la} (s_1)} \arg (t_1^{\la} - t^{\la})] (t_1 ^{\la} - t^{\la}) }{(t^{\la}-\zeta^{\la})(t_{1}^{\la}-t^{\la})} \, dt^{\la} \right] \eta(t_{1}, \la) \, ds_{1}. 
\end{align*}
Denote 
$
F(t, t_1, \la) = l (t, \la) (t_1^{\la} - t^{\la}) [\pa_{\tau^{\la} (s_1)} \arg (t_1^{\la} - t^{\la})] . 
$
As before we show that $| F(\cdot, t_1, \la)|_{\mu}$ is bounded uniformly in $\la \in [0,1]$ and $t_1 \in b \Om$. Since $l^{\la} \in \mc{C}^{\mu, 0} (b\Om^{\la})$, we only need to show the function 
\[  
[\pa_{\tau^{\la}(s_1) } \arg (t_1^{\la} - t^{\la} ) ] (t_1^{ \la} - t^{\la})
\] 
is H\"older $\mu$ continuous in $t$, and the H\"older norm is uniformly bounded in  $\la$ and $t_1$.  

If $| t - t_1| < 2 | t - \wti{t} | $, then $|\wti{t}- t_1| \leq |\wti{t} - t| + |t-t_1| \leq 3 | t- \wti{t}|$, and we have
\begin{align*} 
&\left| [\pa_{\tau^{\la} (s_1)} \arg (t_1^{\la} - t^{\la}) ] (t_1^{\la} - t^{\la} )  
- \pa_{\tau^{\la} (s_1)} \arg (t_1^{\la} - \wti{t}^{\la})]  (t_1^{\la} - \wti{t}^{\la} ) \right| \\
&\quad =  \left| \pa_{s_1} \arg (\rho^{\la}  (s_1) - t^{\la}) (t_1^{\la} - t^{\la} )   - [\pa_{s_1} \arg (\rho^{\la}  (s_1) - \wti{t}^{\la})](t_1^{\la} - \wti{t}^{\la} )   \right| 
\\
& \quad \leq \left| \pa_{s_1} \arg (\rho^{\la}  (s_1) - t^{\la})  \right| |t_1^{\la} - t^{ \la}| 
+ \left| \pa_{s_1} \arg (\rho^{\la}  (s_1) - \wti{t}^{\la})  \right| | t_1^{\la} - \wti{t}^{\la} |. 
\end{align*}   
By \rl{argfnt} (i), the above is bounded by 
\begin{align*}  
& C_{1+ \mu,0} \left( |t_1 - t|^{\mu-1} | t_1 - t| + |t_1 -  \wti{t}|^{\mu -1 } |t_1 - \wti{t}|  \right)  \\
&\quad \leq C_{1+ \mu,0} ( |t_1 - t|^{\mu} + | t_1 - \wti{t} |^{\mu} ) 
= C_{1+ \mu,0}' | t - \wti{t} |^{\mu}. 
\end{align*}  
On the other hand if $| t - t_1 | \geq 2 | t - \wti{t} | $, we have
\begin{align*} 
&\left| [\pa_{\tau^{\la} (s_1)} \arg (t_1^{\la} - t^{\la})] (t_1^{\la} - t^{\la}) 
- \pa_{\tau^{\la} (s_1)} \arg (t_1^{\la} - \wti{t}^{\la})] (t_1^{\la} - \wti{t}^{\la}) \right| \\
& \quad \leq \left| \left\{ \pa_{\tau^{\la} (s_1)} \arg (t_1^{\la} - t^{\la}) - \pa_{\tau^{\la} (s_1)} \arg (t_1^{\la} - \wti{t}^{\la})  \right\} (t_1^{\la} - t^{\la})\right| 
\\ & \qquad + \left| \pa_{\tau^{\la} (s_1)} \arg (t_1^{\la} - \wti{t}^{\la})] (t^{\la} - \wti{t}^{\la}) \right| . 
\end{align*}  
By \rl{argfnt} (ii), since $|t - t_1| \geq 2 | t - \wti{t} | $, the above is bounded by 
\begin{align*} 
C_{1+ \mu,0} \left( \frac{| t - \wti{t}|}{|t -t_1|^{2 - \mu} } |t_1 - t| + |t_1 - t|^{\mu-1} | t - \wti{t} | \right) 
\leq C_{1+\mu, 0}' | t- \wti{t} |^{\mu}. 
\end{align*}  

Now, by the same proof as that for $\mc{M}^{\la} \mc{N}_{1}^{\la}$ and $\mc{M}^{\la} \mc{N}_{2}^{\la}$, we can show that
\eq{MNo3}
\mc{M}\mc{N}_{3} \eta(\zeta, \la) 
= \int_{b \Om} \frac{k_{3}(\zeta, t_1, \la)}{t_1^{\la} - \zeta^{\la}} \eta(t_1, \la) \, ds_1, 
\eeq
where $k_{3}(\zeta, t_1, \la)$ is of the form
\eq{k3}
k_3(\zeta, t_1, \la) := \ov{l(t_1, \la) } \left( v_3 (\zeta,t_1, \la) - v_3 (t_1, t_1, \la)  \right),
\eeq 
and $v_3$ satisfies the estimate  
$
\| v_3 (\cdot, t_1, \la ) \|_{\mu, 0} \leq C_{1+\mu, 0} \| l \|_{\mu, 0}.  
$
Hence 
\eq{k3est} 
| k_3 (\zeta, t_1, \la) | \leq A_3 | \zeta - t_1|^{\mu},   
\eeq
where $A_3$ is some constant independent of $\la \in [0,1]$ and $t_1 \in b \Om$.  

We now apply $\mc{M}$ to both sides of integral equation \re{pAmuesie}. By \re{pAroF}, \re{MNo1}, \re{MNo2}, \re{MNo3} we have
\eq{pArie}
(I + \mc{N} ) \eta(\zeta, \la) :=  \eta(\zeta, \la) + \int_{b \Om} \frac{k(\zeta,t, \la)}{\zeta^{\la}-t^{\la}} \eta(t, \la) \, dt^{\la}
= \mc{M} \gm_{0} (\zeta, \la), 
\eeq
where $k(\zeta,t, \la) := \sum_{i=0}^3 k_i(\zeta,t, \la) $. By 
\re{k0est}, \re{k1est}, \re{k2est} and \re{k3est}, we have
\[ 
|k (\zeta, t, \la) | \leq A | \zeta - t|^{\nu}, \qquad \nu = \min \left( \mu, \: \all_p \right), 
\] 
for some constant $A$ independent of $\la \in [0,1]$ and $t \in b \Om$.  
\\ \\  
\nid 
\textbf{Step 4: Estimates for the right-hand side of the reduced equation. } 
\\  \\ 
We now prove $\mc{M}^{\la} \gm_{0}^{\la} \in \mc{C}^{\nu',0}(\ov{\Om^{\la}} )$, for any $0 < \nu' < \nu$ and $\nu = \min (\mu, \all_p )$. Recall that $\gm_{0}$ is given by \re{pAfdsieRHS}:
\[ 
\gm_{0} (\zeta, \la) = \gm(\zeta, \la) - Re [\ov{l(\zeta, \la)}  w_{\gm}^{+}(\zeta, \la)] - Re[ \ov{l(\zeta,\la)} w_{F}(\zeta,\la)]. 
\] 

Since for each $\la$, $l (\cdot, \la)$ and $\gm (\cdot, \la)$ are H\"older continuous, we can apply the jump formula for generalized Cauchy integrals (\rp{gCijft}) to get    
\begin{equation} \label{wgm+}
  \begin{aligned} 
w_{\gm}^{+}(\zeta, \la) 
&= \yh \,l(\zeta,\la) \gm(\zeta,\la) 
+ \frac{1}{2 \pi i} P.V. \int_{b \Om}  G_{1}(\zeta,t, \la) l(t, \la) \gm(t, \la) \, dt^{\la} \\  \nn
& \quad - \frac{1}{2 \pi i} \int_{b \Om} G_{2}(\zeta,t, \la) \ov{l(t, \la)} \gm(t, \la) \, \ov{dt^{\la}} 
\\ \nn & := P_{1}(\zeta, \la) + P_{2}(\zeta, \la) + P_{3}(\zeta, \la) + P_{4}(\zeta, \la),     
\end{aligned}    
\end{equation}
where we set
\begin{gather} \label{pAP1P2}
P_{1}(\zeta, \la) := \yh \, l(\zeta, \la) \gm(\zeta, \la),  \quad \quad P_{2}(\zeta,\la) := \pi i [l(\zeta, \la) \gm(\zeta, \la)], 
\end{gather} 
\begin{align} \label{pAP3}
P_{3}(\zeta, \la) :=&  \int_{b \Om} \frac{1}{2 (t^{\la}-\zeta^{\la})} 
\left\{ \left( \sum_{i=1}^{2} e^{\om_{i} (\zeta,t,\la)} \right) [l(t,\la) \gm(t,\la)] \right.
\\
&- \left. \left( \sum_{i=1}^{2} e^{\om_{i} (t,t,\la)} \right) [l(\zeta, \la) \gm(\zeta, \la)] \right\} \, dt^{\la}, 
\nonumber 
\end{align}
\begin{gather*}
P_{4}(\zeta, \la) :=  - \frac{1}{2 \pi i} \int_{b \Om} \frac{e^{\om_{1}(\zeta,t,\la)} - e^{\om_{2}(\zeta,t,\la)}}{2 (t^{\la}-\zeta^{\la})} \ov{l(t,\la)} \gm(t,\la) \, \ov{dt^{\la}}. 
\end{gather*} 
By assumption, $P_{1}^{\la}, P_2^{\la} \in C^{\mu,0}(b \Om^{\la} )$. Using \re{pAro}, we have 
\begin{align*}
\mc{M} (\ov{l} P_{1}) (\zeta, \la)
&= \frac{1}{2 \pi} P.V. \int_{b \Om} \frac{\ov{l} (l \gm) (t, \la)}{t^{\la}-\zeta^{\la}} \, dt^{\la}
= \frac{1}{2 \pi} P.V. \int_{b \Om} \frac{\gm(t, \la)}{t^{\la} - \zeta^{\la}} \,dt^{\la} \\
&= \frac{1}{2 \pi} \left\{ \int_{b \Om} \frac{\gm(t, \la) - \gm(\zeta, \la)}{t^{\la} - \zeta^{\la}} \, dt^{\la} +  \pi i  \gm(\zeta, \la) \right\}. 
\end{align*}
By \rl{pvintest}, we have
\eq{MP12laest}   
\| \mc{M}(\ov{l} P_{1}) \|_{\mu,0} \leq C_{1,0} \| \gm \| _{\mu}, \quad  \| \mc{M} (\ov{l} P_{2}) \|_{\mu,0} \leq C_{1,0} \| \gm \|_{\mu}. 
\eeq
For $P_{3}$, since $\om_i (t,t,\la) = 0$, we have
\begin{align} \label{pAP3'}
P_{3}(\zeta, \la) &= \sum_{i=1,2} \int_{b \Om} \frac{l(t,\la) \gm(t,\la) }{2 (t^{\la}-\zeta^{\la})} \left[ e^{\om_{i}(\zeta,t, \la)}-e^{\om_{i}(t,t,\la)} \right] \, dt^{\la}
\\ \nn &\quad + \int_{b \Om} \frac{1}{t^{\la}-\zeta^{\la}} [l(t, \la)\gm(t,\la) - l(\zeta,\la)\gm(\zeta, \la)] \, dt^{\la}, 
\end{align}
and 
\begin{align*}
&\mc{M} (\ov{l} P_{3}) (\zeta, \la)
= \sum_{i=1,2}  \int_{b \Om} \frac{\ov{l(t, \la)} }{t^{\la}-\zeta^{\la}} \left[ 
\int_{b \Om} \frac{l(t_{1}, \la) \gm(t_{1}, \la) }{2 (t_{1}^{\la}-t^{\la})} \left[ e^{\om_{i}(t,t_{1},\la)}-e^{\om_{i}(t_{1},t_{1},\la)}  \right] \, dt_{1}^{\la} \right] \, dt^{\la}
\\ & \qquad + \int_{b \Om} \frac{\ov{l(t, \la)} }{t^{\la}-\zeta^{\la}} \left[ \int_{b \Om} \frac{1}{t_{1}^{\la}-t^{\la}} [l(t_{1},\la)\gm(t_{1},\la) - l(t,\la)\gm(t,\la)] \, dt_{1}^{\la} 
\right] \, dt^{\la} 
\\& \quad = \sum_{i=1,2} \int_{b \Om} l(t_{1},\la) \gm(t_{1},\la) \left[ \int_{b \Om} \frac{ f_i(t_1, t, \la)  }{(t^{\la}-\zeta^{\la})(t_{1}^{\la}-t^{\la})} \, dt^{\la} \right] \, dt_{1}^{\la}
\\& \qquad + \int_{b \Om} \left[ \int_{b \Om} \frac{g(t_1, t, \la)}{(t^{\la} - \zeta^{\la})(t_{1}^{\la} - t^{\la})} \, dt^{\la}  \right] dt_{1}^{\la} 
\\ & \quad = \sum_{ i=1,2} \int_{b \Om} \frac{l(t_{1}, \la) \gm(t_{1},\la)}{t_{1}^{\la} -\zeta^{\la}} 
\left[ P.V.\int_{b \Om} \frac{f_{i}(t_{1},t,\la)}{t^{\la}-\zeta^{\la}}  \, dt^{\la} - \int_{b \Om} \frac{f_{i}(t_{1},t,\la)}{t^{\la}-t_{1}^{\la}} \, dt^{\la} \right] \, dt_{1}^{\la}
\\ &\qquad + \int_{b \Om} \frac{1}{t_1^{\la} - \zeta^{\la} } \left[ P.V. \int_{b \Om} \frac{g(t_1, t, \la)}{t^{\la} - \zeta^{\la}} \, dt^{\la} - \int_{b \Om} \frac{g(t_1, t, \la)}{t^{\la} - t_1^{\la}} \, dt^{\la} \right] \, dt^{\la}_1, 
\end{align*}
where we denote
$ 
f _i (t_1, t, \la) := \ov{l(t, \la)} \left( e^{\om_{i}(t,t_{1},\la)}-e^{\om_{i}(t_{1},t_{1},\la)} \right)$  and 
\gan g(t_{1}, t, \la) := \ov{l(t, \la)} l(t_{1}, \la) \gm(t_{1}, \la)  - \gm(t,\la). 
\end{gather*} 
We can write $\mc{M} (\ov{l}P_3)$ as 
\eq{MP3exp} 
\mc{M} (\ov{l}P_{3})(\zeta, \la) = \int_{b \Om} \frac{k (\zeta, t_1, \la)}{t_1^{\la} - \zeta^{\la}} \, dt_1^{\la}, 
\eeq
where
\begin{align*} 
k (\zeta, t_1, \la) &:= l(t_1, \la) \gm(t_1, \la) \sum_{i=1}^{2} \left( v_i(t_1, \zeta, \la) - v_i(t_1, t_1, \la) \right) 
+ \left( w(t_1, \zeta, \la) - w(t_1, t_1, \la) \right),  
\\
v_{i}(t_{1},\zeta,\la): 
&= P.V.\int_{b \Om} \frac{f_{i}(t_{1},t,\la)}{t^{\la}-\zeta^{\la}}  \, dt^{\la} 
= \int_{b \Om} \frac{f_i(t_1, t, \la) - f_i(t_1, \zeta, \la)}{t^{\la} - \zeta^{\la} } \, dt^{\la} + \pi i  f_i (t_1, \zeta, \la),\\  
w(t_{1},\zeta,\la) 
&:= P.V. \int_{b \Om} \frac{g(t_1, t, \la)}{t^{\la} - \zeta^{\la}} \, dt^{\la}  
= \int_{b \Om} \frac{g(t_1, t, \la) - g(t_1, \zeta, \la)}{t^{\la} - \zeta^{\la} } \, dt^{\la} + \pi i  g(t_1, \zeta, \la). 
\end{align*}  
Let $\| \om_i \|_{\all,0} := \sup_{t_1 \in b \Om, \la \in [0,1] } 
|\om_i (\cdot, t_1, \la) |_{\all_p}$. By \rl{pvintest} we have
\gan
\| v_i( t_1, \cdot, \la) \|_{\nu,0} \leq C_{1,0} \| f_i(t_1, \cdot, \la) \|_{\nu,0} \leq C_{1,0} \| \om_i \|_{\all_p, 0} | l(\cdot, \la) |_{\mu}. 
\\
\| w(t_{1},\cdot,\la) \|_{\mu,0} \leq C_{1,0} | g(t_1, \cdot, \la) |_{\mu} \leq C_{1,0}  \left( |\gm(\cdot, \la) |_{0} | l(\cdot, \la) |_{\mu} + | \gm(\cdot, \la) |_{\mu} \right), 
\end{gather*} 
 for $\nu = \min (\all_p, \mu)$, where the constants are uniform in $t_1 \in b \Om$. It follows from \re{MP3exp} and \rl{redopt} that given any $ 0 < \nu' < \nu$, one has
\begin{align} \label{MP3Hne}
\| \mc{M} (\ov{l} P_{3}) \|_{\nu',0} 
&\leq M_0, 
\end{align}
for some $M_0$ independent of $\la$.  
Similarly we can show 
\begin{align} \label{MP4Hne}
\| \mc{M} (\ov{l} P_{4}) \|_{\nu', 0}   
\leq M_1, 
\end{align} 
for some $M_1$ independent of $\la$.  
Combining \re{MP12laest}, \re{MP3Hne} and \re{MP4Hne}, we obtain 
\eq{mwgmlaest}
\| \mc{M} ( Re [ \ov{l^{\la}} (w_{\gm}^{\la})^{+} ] ) \|_{\nu',0} 
\leq M, \quad 0 < \nu ' < \nu,  
\eeq
where $M$ depends only on $\nu'$, $\|\om_i \|_{\all_p, 0}$, $ \| l \|_{\mu,0}$ and $ \| \gm \|_{\nu, 0}$.

Next, we show $\la \mapsto \left| \mc{M} \left( Re [ \ov{l^{\la}} (w_{\gm}^{\la})^{+} ]\right) \right|$ is a continuous map from $[0,1]$ to $C^{0}(b \Om)$. For simplicity, we omit the constant $\frac{1}{2 \pi}$ in the operator $\mc{M}$.
\begin{align*}
& \left| \mc{M} \left( Re \left[ \ov{l^{\la_{1}}} (w_{\gm}^{\la_{1}})^{+} \right]\right)
- \mc{M} \left( Re \left[ \ov{l^{\la_{2}}} (w_{\gm}^{\la_{2}})^{+} \right]\right) \right| \\ 
& \quad = \left| P.V. \int_{b \Om}  \frac{Re[ (\ov{l} w_{\gm}^{+})(t,\la_{1})]}{t^{\la_{1}}  -  \zeta^{\la_{1}}} \, dt^{\la_1}   
- P.V. \int_{b \Om}  \frac{Re[ (\ov{l} w_{\gm}^{+})(t,\la_{2})]}{t^{\la_{2}}  - \zeta^{\la_{2}}} \,dt^{\la_2} \right|. 
\end{align*}
We parametrize $b\Om^{\la}$ by $\rho^{\la}(s)$ and set $ \zeta^{\la} = \rho^{\la}(s_0)$. The above is bounded by  
\begin{equation} \label{Mwgmcinla} 
\begin{aligned} 
 &\left| (\pi i) \{ Re[\ov{l} w_{\gm}^{+}] (\zeta, \la_1) - Re[\ov{l} w_{\gm}^{+}] (\zeta, \la_2) \} \right|  
+  \left| \int_{b \Om} \frac{Re[ (\ov{l} w_{\gm}^{+})(t,\la_{1})] -  Re[ (\ov{l} w_{\gm}^{+})(\zeta,\la_{1})] }{t^{\la_1} - \zeta^{\la_1} } dt^{\la_1} \right. \\
&\quad - \left. \int_{b \Om} \frac{Re[ (\ov{l} w_{\gm}^{+})(t,\la_{2})] -  Re[ (\ov{l} w_{\gm}^{+})(\zeta,\la_2)] }{t^{\la_2} - \zeta^{\la_2} }  dt^{\la_2}  \right|.
\end{aligned}
\end{equation}

We can rewrite the above as 
\begin{equation} \label{Mwgmcinla'} 
\begin{aligned}  
& \pi \left|\left\{ Re[\ov{l} w_{\gm}^{+}] (\zeta, \la_1) - Re[\ov{l} w_{\gm}^{+}] (\zeta, \la_2) \right\} \right| 
+ \left| \int_{b \Om}\frac{F^{\la_1, \la_2} (s) - F^{\la_1, \la_2} (s_0) }{t^{\la_1}  - \zeta^{\la_1}} \, ds \right|   \\
& \quad + \left.\int_{b \Om} \left\{  Re[ (\ov{l} w_{\gm}^{+})(t,\la_{2})] -  Re[ (\ov{l} w_{\gm}^{+})(\zeta,\la_2)] \right\} 
\left( \frac{1}{t^{\la_1} - \zeta^{\la_1}} - \frac{1}{t^{\la_2} - \zeta^{\la_2}} \right) dt^{\la_2} \right|, 
\end{aligned}
\end{equation} 
where 
\eq{Fla1la2}  
F^{\la_{1}, \la_{2}} (s)
= Re[\ov{l} w_{\gm}^{+}] (s, \la_1) (\rho^{\la_1})'(s) - Re[\ov{l} w_{\gm}^{+}] (s, \la_2) (\rho^{\la_2})'(s), 
\eeq
and we write 
$
Re[\ov{l} w_{\gm}^{+}] (s, \la) := Re[\ov{l^{\la}} (w_{\gm}^{\la})^{+}] (t^{\la} ) 
= Re[\ov{l^{\la}} (w_{\gm}^{\la})^{+}] (\rho^{\la} (s)). 
$

We claim that: 
\begin{enumerate}
\item $w_{\gm}^+ \in \mc{C}^{\nu,0} (b \Om^{\la})$, that is,
\begin{enumerate}
\item $|w_{\gm}^+ ( \cdot, \la) |_{\nu} $ is uniformly bounded in $\la$.
\item
$|w_{\gm}^{+} (\cdot, \la_1) - w_{\gm}^{+} (\cdot, \la_2) |_0 \to 0$, as $| \la_1 - \la_2| \to 0$.\end{enumerate}
\item 
For any $0 < \nu' < \nu$, we have $|F^{\la_1, \la_2}|_{\nu'} \to 0$ as $|\la_1 - \la_2| \to 0$.
\end{enumerate}
Assuming the above for now, the expression \re{Mwgmcinla'} is then bounded by  
\[ 
\pi \left| \del^{\la_1, \la_2} Re[\ov{l} w^+_{\gm} ] \right|_0 
+ C_{1,0} |F^{\la_1, \la_2} |_{\nu'} \int_{b \Om} \frac{ds}{|t - \zeta|^{1- \nu'}}   
+ C_{1,0} \| l w_{\gm} \|_{\nu, 0} |\Gm^{\la_1} - \Gm^{\la_2} |_1 \int_{b \Om} \frac{dt}{|t- \zeta|^{1- \nu}} 
\]
which converges to $0$ uniformly in $\zeta$ as $|\la_1 - \la_2| \to 0$. 

We now verify the claims. In view of \re{Fla1la2} and our assumptions on $l^{\la}$ and $\rho^{\la}$, to prove (2) it suffices to show that $|w_{\gm}^+ |_{\nu'} \to 0$. 

By \rp{cinhd}, it suffices to verify claim (1). By \re{wgm+} we have
\[ 
w_{\gm}^{+}(\zeta, \la) := P_{1}(\zeta, \la) + P_{2}(\zeta, \la) + P_{3}(\zeta, \la) + P_{4}(\zeta, \la).  
\]
By assumption $P_{1}, P_{2} \in  C^{\nu,0}(b \Om^{\la})$, and 
\eq{P1P2Hne} 
\|P_{1} \|_{\mu,0} \leq \| l\gm \|_{\mu,0},  \quad \|P_{2} \|_{\mu,0} \leq \| l\gm \|_{\mu,0}. 
\eeq
Applying \rl{redopt} to the expression \re{pAP3'} for $P_3$, we have for any $0 < \nu' < \nu$, 
\eq{P3Hne} 
\| P_3 (\cdot, \la) \|_{\nu',0} \leq C_{\nu'} C_{1,0} \| l \gm \|_{\mu,0} , 
\eeq
where the constants are independent of $\la$. In the same way we have
\eq{P4Hne}
\| P_{4} \|_{\all',0 } \leq C_{\all'} C_{1,0}  \| l \gm \|_{0,0}  
\eeq
for any $0< \all_p' < \all_p$. Putting together \re{P1P2Hne}, \re{P3Hne} and \re{P4Hne}, we obtain 
$
\| w_{\gm}^{+} \|_{\nu',0} \leq A_{\nu'} C_{1,0} \| l \gm \|_{\mu,0} 
$
for any $0 < \nu' < \nu < 1$. This proves claim (1).

Next we show the continuity of $w_{\gm}^{+}$ in the $C^{0}$-norm. 
\eq{omglp}
\left| w_{\gm}^{+} (\zeta, \la_{1}) - w_{\gm}^{+} (\zeta, \la_{2}) \right|
\leq \sum_{i=1}^{4}  \left| P_{i}(\zeta, \la_{1}) - P_{i}(\zeta, \la_{2}) \right|. 
\eeq
In view of \re{pAP1P2}, we have
$
| P_{i} (\zeta, \la_{1}) - P_{i} (\zeta, \la_{2}) | \leq C |l(\zeta,\la_{1}) \gm(\zeta,\la_{1}) -  l(\zeta,\la_{2}) \gm(\zeta,\la_{2})|, 
$
 for $i = 1,2$, 
which converges to $0$ as $|\la_1 - \la_2| \to 0$, since $l, \gm \in C^{\mu, 0}(b \Om^{\la})$. 
By 
expression \re{pAP3'} for $P_3$, we have
\begin{align*}
|P_{3}(\zeta, \la_{1}) - P_{3}(\zeta,\la_{2})| 
&\leq \sum_{i=1,2} \int_{b \Om} \frac{(l \gm)(t, \la_1)}{2(t^{\la_1} - \zeta^{\la_1} )} \left( 
e^{\om_i (\zeta,t, \la_1) } - e^{\om_i (t ,t, \la_1) } \right)   (\rho^{\la_1})'(s)  \, ds \\
& \qquad - \int_{b \Om} \frac{(l \gm)(t, \la_2)}{2(t^{\la_2} - \zeta^{\la_2} )}  \left( e^{\om_i (\zeta,t, \la_2) } - e^{\om_i (t, t, \la_2) } \right) (\rho^{\la_2})'(s) \, ds \\
& \qquad + \int_{b \Om} \frac{1}{t^{\la_1} - \zeta^{\la_1} }  \left\{ (l \gm) (t, \la_1) - (l \gm) (\zeta, \la_1) \right\} (\rho^{\la_1})'(s)  \, ds \\
& \qquad - \int_{b \Om} \frac{1}{t^{\la_2} - \zeta^{\la_2}}  \left\{ (l \gm) (t, \la_2) - (l \gm) (\zeta, \la_2) \right\} (\rho^{\la_2 })'(s)  \, ds. 
\end{align*}
The above expression can be bounded by a sum of the following terms:
\eq{P3ladiff1}  
\sum_{i=1,2} \int_{b \Om} \left\{ (l \gm \rho')  (s,\la_1)  - (l \gm \rho') (s, \la_2) \right\} \frac{e^{\om_i (\zeta,t, \la_1) }  - e^{\om_i (t,t, \la_1) } }{t^{\la_1} - \zeta^{\la_1}} \, ds,   
\eeq
\eq{P3ladiff2} 
\sum_{i=1,2}\int_{b \Om} (l \gm) (s, \la_2) \frac{F_i^{\la_1, \la_2} (\zeta) - F_i^{\la_1, \la_2} (t) }{t^{\la_1} - \zeta^{\la_1}} \, dt^{\la_2}, 
\eeq
\eq{P3ladiff3}  
\sum_{i=1,2} \int_{b \Om}  (l \gm ) (s, \la_2) \left( e^{\om_i (\zeta,t, \la_2) }  - e^{\om_i (t,t, \la_2)} \right) \left( \frac{1}{t^{\la_1} - \zeta^{\la_1} } - \frac{1}{t^{\la_2} - \zeta^{\la_2} } \right) \, dt^{\la_2}, 
\eeq
\eq{P3ladiff4} 
\int_{b \Om} \frac{\wti{F}^{\la_1, \la_2} (t) - \wti{F}^{\la_1, \la_2} (\zeta) }{t^{\la_1} - \zeta^{\la_1}} \, ds, 
\eeq 
\eq{P3ladiff5}  
\int_{b \Om} \left\{ (l \gm )  (t, \la_2) - (l \gm)  (\zeta, \la_2) \right\}  \left( \frac{1}{t^{\la_1} - \zeta^{\la_1} } - \frac{1}{t^{\la_2} - \zeta^{\la_2} } \right) \, dt^{\la_2}, 
\eeq 
where we set
\begin{gather*}
F_i^{\la_1, \la_2} (\zeta) = e^{\om_i(\zeta, t, \la_1)} - e^{\om_i(\zeta, t, \la_2)}, 
\quad 
\wti{F}^{\la_1, \la_2} (\zeta) = (l \gm \rho') (\zeta, \la_1)  - (l \gm \rho') (\zeta, \la_2) . 
\end{gather*}
As before we let $\| \om_i \|_{\all_p, 0} = \sup_{\la \in [0,1], t\in b \Om} | \om_i (\cdot, t, \la) |_{\all_p}$. Then expression \re{P3ladiff1} is bounded by 
\[ 
C_{1,0}  ( \Si_{i=1,2} \| \om_i \|_{\all_p, 0})  | \del^{\la_1, \la_2} (l \gm \rho') |_0 \int_{b \Om} \frac{1}{|\zeta -t|^{1- \all_p}} \, ds, \quad \all_p = \frac{p-2}{p},
\] 
which converges to $0$ as $|\la_1 - \la_2| \to 0$ by assumption.  
Also expression \re{P3ladiff3} is bounded by 
\[ 
C_{1,0} ( \Si_{i=1,2} \| \om_i \|_{\all_p, 0}) | \Gm^{\la_1} - \Gm^{\la_2} |_1 \int_{b \Om} \frac{1}{|t - \zeta|^{1-\all_p} }   \, dt, 
\] 
which converges to $0$ as $|\la_1 - \la_2| \to 0$, and expression \re{P3ladiff5} is bounded by  
\[
C_{1,0} \| l \gm \|_{\mu, 0}  |\Gm^{\la_1} - \Gm^{\la_2} |_1 \int_{b \Om} \frac{1}{|t- \zeta|^{ 1-\mu}} \, dt 
\]
which converges to $0$ as $|\la_1 - \la_2| \to 0$. Finally in order to show \re{P3ladiff2} and \re{P3ladiff4} converges to $0$ it suffices to see that some H\"older norms of $F_i^{\la_1, \la_2}$ and $\wti{F}^{\la_1, \la_2}$ converge to $0$. This follows from \rl{cinhd} and the following facts: 
\begin{enumerate}
\item $|F_i^{\la_1, \la_2} |_{\all_p}  \leq C \| \om_i \|_{\all_p,0}$; 
\item $| F_i^{\la_1, \la_2} |_{0} \to 0 $ as $| \la_1 - \la_2 | \to 0$, by \rl{cino};   
\item $\Gm^{\la} \in \mc{C}^{1+\mu,0}(\ov{\Om})$, $l^{\la}, \gm^{\la} \in C^{\mu,0}(b \Om^{\la})$. 
\end{enumerate}
Hence we have shown that $\la \mapsto P_{3}(\cdot, \la)$ is a continuous map from $[0,1]$ to $C^0 (b \Om)$. Same can be said for $P_4(\cdot, \la)$, of which the proof is similar and we leave it to the reader. 

Now claim (1) (b) follows from \re{omglp}. By earlier remarks this proves that the map 
$\la \mapsto \left| \mc{M}^{\la} \left( Re [ \ov{l^{\la}} (w_{\gm}^{\la})^{+} ]\right) \right|$ is continuous from $[0,1]$ to $C^{0}(b \Om)$. Combining this with \re{mwgmlaest}, we have shown that for any $0 < \nu' < \nu$,  $
\mc{M} \left( Re [ \ov{l^{\la}} (w_{\gm}^{\la})^{+} ]\right) \in C^{\nu',0}(b \Om^{\la}). 
$
Next, we show that $\mc{M} \left( Re[ \ov{l}  w_{F}] \right) \in C^{\nu,0}(b \Om^{\la})$. We have
\begin{align*}
& \mc{M} ( Re[ \ov{l}  w_{F}] ) (\zeta, \la)
= \frac{1}{2 \pi} P.V. \int_{b \Om} \frac{Re[ \ov{l}  w_{F}] }{t^{\la}-\zeta^{\la}} \, dt^{\la} \\
& \qquad = \frac{1}{2 \pi} \left\{ \int_{b \Om} \frac{ Re[ \ov{l}  w_{F}](t, \la) - Re[ \ov{l}  w_{F}](\zeta, \la)}{t^{\la} - \zeta^{\la}} \, dt^{\la} +  \pi i Re[ \ov{l}  w_{F}] (\zeta, \la) \right\}. 
\end{align*}
By \rl{pvintest}, we get 
\begin{align} \label{mrewfest} 
\| \mc{M} ( Re[ \ov{l}  w_{F}] )  \|_{\nu,0} 
\leq C_{1,0} \| l w_F \|_{\nu,0} 
\leq C_{1,0} \| l \|_{\mu,0} \| w_F \|_{\all_p,0}. 
\end{align}
Using estimate \re{wflahest} in \re{mrewfest} we get 
\eq{mrewfest'}
\| \mc{M} ( Re[ \ov{l}  w_{F}] ) \|_{\nu,0}  
\leq C_{1,0} C_p \| l \|_{\mu,0} (\Si_{i=1,2} \| \om_i  \|_{\all_p,0} ) \sup_{\la \in [0,1]} | F( \cdot, \la) |_{L^p (\Om)}. 
\eeq  
On the other hand, as we have seen, in order to show that $\mc{M} ( Re[ \ov{l}  w_{F}] )$ is continuous in $\la$ in the sup norm, it is suffcient to show $w_{F}^{\la} \in \mc{C}^{\all_p, 0}(b \Om^{\la})$, which is given by \rl{wFle}. Thus we have proved that
$\mc{M} \left( Re[ \ov{l}  w_{F}] \right) \in \mc{C}^{\nu,0}(b \Om^{\la})$, and it satisfies estimate \re{mrewfest'}. Combining with estimates \re{mwgmlaest} 
we obtain that for any $0 < \nu' < \nu$, 
$
\| M^{\la} \gm_0 (\cdot, \la) \|_{\nu', 0 }  \leq M_1,
$
where $M_1$ is some positive constant depending on $\nu'$, $\| \om_i  \|_{\all_p,0}$, $\| l \|_{\mu,0}$, $\| \gm \|_{\mu, 0}$, and $\sup_{\la \in [0,1]} | F(\cdot, \la)| _{L^p(\Om)}$. 
\\ \\ 
\textbf{Step 5: Estimates for $\eta^{\la}$}  

We are now ready to show that $\eta^{\la} \in \mc{C}^{\nu,0}(b \Om^{\la})$. Recall from \re{pArie} that  $\eta(\cdot,\la)$ satisfies the reduced Fredholm integral equation: 
\eq{pArie2}
(I + \mc{N}) \eta (\zeta, \la) :=  \eta(\zeta,\la) + \int_{b \Om} \frac{k(\zeta, t, \la)}{t^{\la } - \zeta^{\la} } \, \eta(t, \la) \, dt^{\la} = \mc{M} \gm_{0} (\zeta, \la),  
\eeq
where $k = \Si_{i=1}^4 k_i$, and the $k_i$-s are given by 
\re{k0}, \re{k1}, \re{k2} and \re{k3}. 

Let $w = w_{\eta}(z, \la) + w_{\gm}(z, \la) + w_F(z, \la) $ where $w_{\eta}^{\la}$, $w_{\gm}^{\la}$ and $w_F^{\la}$ are given by formulas \re{pAcpsrf1} \re{pAcpsrf2} and \re{pAcpsrf3} respectively. For $(\zeta, \la) \in b \Om \times [0,1]$, we have $w (\zeta, \la) = l (\zeta, \la)  \gm (\zeta, \la)+ i l (\zeta, \la) \eta (\zeta, \la)$. Hence we have the relation: for $  ( \zeta, \la) \in b \Om \times [0,1]$
\eq{wetabv} 
w_{\eta} (\zeta, \la) = l (\zeta, \la)  \gm (\zeta, \la)+ i l (\zeta, \la) \eta (\zeta, \la) - w_{\gm} (\zeta, \la) - w_F (\zeta, \la). 
\eeq 

Now, for each fixed $\la$ we know by \rp{pArt} that $\eta^{\la} \in C^{\nu} (b \Om^{\la})$, and we know that $w_{\eta}(\cdot, \la)$ is the solution to the following Problem A:
\begin{gather} \nn
\pa_{\ov{z}} w_{\eta}^{\la}(z) + A^{\la}(z) w_{\eta}(z) + B^{\la}(z) \ov{w_{\eta}^{\la}(z)} = 0 \qquad \text {in $\Om^{\la}$;} 
\\ \label{wetabc} 
Re[\ov{l^{\la}(\zeta)} w_{\eta}^{\la}(\zeta)] = \gm^{\la}(\zeta) - Re[ \ov{l^{\la}} w^{\la}_{\gm}] (\zeta) - Re[ \ov{l^{\la}} w_F^{\la} ] (\zeta) \qquad \text {on $b \Om^{\la}$}. 
\end{gather} 
In addition $w_{\eta}^{\la}$ satisfies conditions \re{c1muucip}. 

We now prove that $\eta^{\la} \in \mc{C}^{\nu,0}(b \Om^{\la})$.  First, we show that $|\eta|_0$ is bounded in $\la$. Seeking contradiction, suppose that for some sequence $\{\la_{j}\}$ converging to $\la_0$, such that $N_{j} := |\eta (\cdot,\la_{j})|_{0} \to \infty$.  
Let $\hht{\eta} (\cdot, \la_j)= \frac{1}{N_j} \eta(\cdot, \la)$. Then $|\hht{\eta}(\cdot,\la)|_{0} = 1$. Dividing both sides of \re{pArie2} by $N_{j}$, we have  
\eq{etahatie}  
(I + \mc{N} ) \hht{\eta}(\zeta, \la_j) 
= N_j^{-1} \mc{M} \gm_0. 
\eeq
The corresponding $w_{\hht{\eta}} (\cdot, \la_j)$ is the solution to the following boundary value problem: 
\begin{gather} \nn 
\pa_{\ov{z}} w_{\hht{\eta} }^{\la_j}(z) + A(z, \la_j) w_{\hht{\eta}}(z, \la_j) + B(z, \la_j) \ov{w_{\hht{\eta} }(z, \la_j)} = 0 \quad \text {in $\Om$;}  
\\  \label{wetahatbc} 
Re[\ov{l(\zeta, \la_j)} w_{\hht{\eta}}(\zeta, \la_j)] = \frac{1}{N_j} \left\{  \gm(\zeta, \la_j) - Re[ \ov{l} w_{\gm}] (\zeta, \la_j) - Re[ \ov{l} w_F ] (\zeta, \la_j) \right\} \quad
\text {on $b \Om$}, 
\end{gather}
and in addition $w_{\hht{\eta}}^{\la_j}$ satisfies the conditions: for $r=1,\dots, N_0$ and $s=1, \dots, N_1$,
\begin{gather} \label{wetahatipc}
w_{\hht{\eta}}(z_r, \la_j) = \frac{1}{N_j} \left\{ a_r(\la_j) + i b_r(\la_j) - w_{\gm} (z_r, \la_j) - w_F (z_r, \la_j) \right\},\\ 
\label{wetahatbpc}  
w_{\eta}^{+} (z_s', \la_j) =  \frac{1}{N_j}  \left\{ l(z_s', \la_j)( \gm(z_s', \la_j) + i c_s(\la_j)) - w_{\gm}^{+} (z_s', \la_j) -   w_{F}(z_s', \la_j)\right\}.
\end{gather}

By \rl{redoptest} and our earlier estimate for $k$, we have for any $0 < \nu' < \nu$, $|\mc{N} \hht{\eta}(\cdot, \la_{j}) |_{\nu'}$ is uniformly bounded in $\la$.  By Arzela-Ascoli theorem, and passing to subsequences if necessary, $\mc{N} \hht{\eta}(\cdot, \la_{j})$ converges to some function in the $C^{\tau}(b \Om)$ norm, for any $0 < \tau < \nu$. It follows from equation \re{etahatie} that $\hht{\eta}(\cdot, \la_{j})$ converges to some function $\wti{\eta}$ in $C^{\tau}(b \Om)$.  

Let 
\begin{align*} 
&2\pi w_{\hht{\eta}}(z, \la_j) =  \int_{b \Om}\left\{ G_{1}(z,t,\la_j) l(t,\la_j) \hht{\eta}(t, \la_j) \, dt^{\la_j} 
-    G_{2}(z,t,\la_j) \ov{l(t,\la_j)} \hht{\eta}(t, \la_j)
\, \ov{dt^{\la_j}}\right\}, 
\\
&w_{\wti{\eta}}^{\la_0}(z) = \frac{1}{2 \pi} \int_{b \Om} G_{1}(z,t,\la_0) l(t,\la_0) \wti{\eta}(t) \, dt^{\la_0}
- \frac{1}{2 \pi} \int_{b \Om} G_{2}(z,t,\la_0) \ov{l(t,\la_0)} \wti{\eta}(t) \, \ov{dt^{\la_0}}. 
\end{align*}   
From \re{etahatie} we have 
$
\hht{\eta}(\zeta,\la_{j}) = - \mc{N}  \hht{\eta} (\zeta, \la_j) + \frac{1}{N_j} \mc{M} \gm_0.  
$
By \rl{redoptest}, we have for any $0 < \nu' < \nu$ and some constants $C$  independent of $\la$, 
\eq{etarhatHe}
| \hht{\eta} (\cdot, \la_j)| _{\nu'} \leq C_{\nu'} C_{1,0} | \hht{\eta} (\cdot, \la_j) |_0 
+ \frac{1}{N_j} \| \mc{M} \gm_0 \|_{\nu'}  
= C.
\eeq

Let $\{ \{ z_r \}_{r=1}^{N_0}, \{ z_s \}_{s=1}^{N_1} \}$ be a normally distributed set in $\Om$. Then it is easy to see that $w_{\hht{\eta}}(z_r, \la_i)$ converges to $w_{\wti{\eta}}^{\la_0} (z_r )$, since for $z_r \in \Om$, the kernels $G_1$ and $G_2$ are uniformly bounded in absolute value by a constant. 

On the other hand, for $z_s' \in b \Om$, we apply the jump formula \re{gCijf1} 
to get
\begin{align*}
w_{\hht{\eta}}^{+} (z_s', \la)
&= \yh \, l ( z_s',  \la) \hht{\eta} (z_s') 
+ P.V. \frac{1}{2 \pi  i} \int_{b \Om} G_1 (z_s', t, \la)  l(t,  \la) \hht{\eta} (t, \la) \, dt^{\la}  \\ 
&\quad- \frac{1}{2 \pi  i}  \int_{b \Om} G_2 (z_s', t, \la)  l(t,  \la) \hht{\eta} (t, \la) \, dt^{\la}. 
\end{align*} 
In the same way as we did earlier for $w^+_{\gm}$, together with \re{etarhatHe}, one can prove that 
\[
| w_{\hht{\eta}}^{+} (z_s', \la_j) - (w_{\wti{\eta}}^{\la_0 } )^{+} (z_s ')| \ra 0,  \quad s = 1, \dots, N_2 
\]
as $|\la_j - \la_0 | \to 0$. 
In fact same proof shows that 
$
| w_{\hht{\eta}} (\cdot, \la_j) - w_{\wti{\eta}}^{\la_0 } (\cdot)|_{C^0(\ov{\Om}) } \ra 0.  
$
Gathering the results and letting $|\la_j - \la_0 | \to 0$ in \re{wetahatbc}, \re{wetahatipc} and \re{wetahatbpc}, we see that $w_{\wti{\eta}}$ is the solution to
\[ 
\begin{cases}
\pa_{\ov{z}} w_{\wti{\eta}}(z) + A^{\la_0}(z) w_{\wti{\eta}}(z) + B^{\la_0}(z) \ov{w_{\wti{\eta}}(z)} = 0 & \text {in $\Om^{\la_0}$;} \\ 
Re[\ov{l^{\la_0}(\zeta)} w_{\wti{\eta}}(\zeta)] = 0 & \text {on $b \Om^{\la_0}$}, 
\end{cases}
\] 
satisfying the conditions
\[ 
w_{\wti{\eta}}^{\la_0} (z_r) = 0, \quad 
(w_{\wti{\eta}}^{\la_0})^{+} ( z_s' ) = 0, \quad r = 1,...,N_1,  \quad s= 1,..., N_2.
\] 
By \rp{c1ut}, we have $ w_{\wti{\eta}}^{\la_0} \equiv 0$ in $\Om^{\la_0}$. By \re{wetabv}, we have
\[
w_{\eta}(t, \la_j) + w_{\gm} (t, \la_j) + w_F (t, \la_j) =  l (t, \la_j) \gm(t, \la_j) + i l (t, \la_j) \eta (t, \la_j), \quad t \in b \Om. 
\]
Dividing on both sides by $N_j$ we  get
\[
w_{\hht{\eta} }(t, \la_j) + \frac{1 }{N_j} w_{\gm} (t, \la_j) +  \frac{1 }{N_j} w_F (t, \la_j)   =  \frac{1}{N_j} l (t, \la_j) \gm(t, \la_j)  + i l (t, \la_j) \hht{\eta} (t, \la_j)  , \quad t \in b \Om. 
\]
Letting $|\la_j - \la_0 | \to 0$, the left-hand side converges uniformly to $w_{\wti{\eta}}^{\la_0} \equiv 0$. Hence the right-hand side converges uniformly to $0$. Since $|l (t, \la_j)|$ is bounded below by a positive constant uniform in $t$ and $\la$, and $\hht{\eta} (t, \la_j)$ converges uniformly to $\wti{\eta}$, we conclude that  $\wti{\eta} \equiv 0$. 

On the other hand we have $|\wti{\eta} |_{0} = \lim_{j \to \infty} |\hht{\eta}(\cdot, \la_{j})|_{0} = 1$, which is a contradiction. This shows that $|\eta(\cdot, \la)|_{0}$ is bounded in $\la$. It then follows by \re{pArie2}, which can be written as $ \eta^{\la} = - \mc{N}^{\la} \eta^{\la} + \mc{M}^{\la} \gm_0^{\la}$, and \rl{redopt} that
for any $0 < \nu' < \nu$,
\begin{align*}
\| \eta \|_{\nu',0} 
&= \| \mc{N}  \eta \|_{\nu', 0} + \| \mc{M} \gm_0 \|_{\nu,0} \leq C_{\nu'} C_{1,0} \| \eta \|_{0,0}. 
\end{align*}

Next we show $\eta(\cdot,\la)$ is continuous in $\la$ in the $C^{0}(b \Om)$-norm. Suppose this is not the case, then there exists some $\la_0$ and a sequence $\la_{j} \to \la_0$, such that $|\eta(\cdot,\la_{j}) - \eta(\cdot, \la_0)|_{0} \geq \ve > 0$. 
Since we have shown that $|\eta(\cdot,\la)|_{\nu'}$ is bounded for any $0 < \nu' < \nu$, passing to a subsequence if necessary, we have that $\eta(\cdot,\la_{j})$ converges to some $\eta^{\ast}$ in the $C^{\nu'}(b \Om)$-norm. 
As before we can show that $w_{\eta}(\zeta, \la_0)$ converges to $w_{\eta_{\ast}} (\zeta)$ uniformly on $b \Om$. Letting $| \la_j - \la_0 | \to 0$ in \re{wetabc} and \re{c1muucip}, we see that $w_{\eta_{\ast}}$ is a solution to the following Problem A: 
\eq{wetastarbvp}
\begin{cases}
\pa_{\ov{z}} w_{\eta_{\ast}}(z) + A^{\la_0}(z) w_{\eta_{\ast} }(z) + B^{\la_0}(z) \ov{w_{\eta_{\ast}}(z)} = 0  , \quad \text{in $\Omega$}; \\ 
Re[\ov{l^{\la_0}(\zeta)} w_{\eta_{\ast}}(\zeta)] =  \gm(\zeta, \la_0 ) - Re[\ov{l^{\la_0}(\zeta)} w^{\la_0}_{\gm}(\zeta)] - Re[\ov{l^{\la_0}(\zeta)} w^{\la_0}_F(\zeta)], \quad \text{on $b\Omega$}, 
\end{cases}
\eeq
and in addition $w_{\eta_{\ast}}$ satisfies the condition:  
\begin{gather*}
w_{\eta_{\ast}}(z_r^{\la_0} ) = a_r(\la_0) + i b_r(\la) - w_{\gm} (z_r, \la_0) - w_F (z_r, \la_0) ,  \quad r = 1,...,N_1; 
\\ 
w_{\eta_{\ast}}^{+} (z_s^{\la_0}) =  l(z_{s}', \la_0)( \gm(z_{s}', \la_0) + i c_{s}(\la_0)) - w_{\gm}^{+} (z_{s}', \la_0) -   w_{F}(z_{s}', \la_0), \quad s = 1,...,N_2. 
\end{gather*}

On the other hand, we have assumed that $\eta(\cdot, \la_0)$ is also a solution to \re{wetastarbvp} satisfying the above conditions. Hence by \rp{c1ut} we have $w_{\eta} (\cdot, \la_0) = w_{\eta_{\ast}} (\cdot)$. Now by \re{wetabv}, we have for $\zeta\in b\Omega$
\begin{gather} \label{wetarla0} 
w_{\eta} (\zeta, \la_0)  = l (\zeta, \la_0) \gm (\zeta, \la_0) + il (\zeta, \la_0) \eta (\zeta, \la_0) - w_{\gm} (\zeta, \la_0) - w_F  (\zeta, \la_0); 
\\
\label{wetarlaj} 
w_{\eta} (\zeta, \la_j)  = l (\zeta, \la_j) \gm (\zeta, \la_j) + il (\zeta, \la_j) \eta (\zeta, \la_j) - w_{\gm} (\zeta, \la_j) - w_F  (\zeta, \la_j).
\end{gather}
By using the fact that $|\eta^{\la} |_{\nu'}$ ($\nu' > 0$) is uniformly bounded in $\la$, we see by the same arguement as before that $w_{\eta}(\cdot, \la_j)$ converges uniformly to $w_{\eta^{\ast}}$ on $b \Om$, as $|\la_j - \la_0 | \to 0$. Passing the limit in equation \re{wetarlaj} we get 
\[ 
w_{\eta} (\zeta, \la_0) = w_{\eta_{\ast}} (\zeta) = l (\zeta, \la_0) \gm (\zeta, \la_0) + il (\zeta, \la_0) \eta_{\ast} (\zeta) - w_{\gm} (\zeta, \la_0) - w_F  (\zeta, \la_0) , \  \zeta \in b \Om. 
\]
Comparing this with equation \re{wetarla0}, we get $\eta_{\ast} (\zeta) = \eta (\zeta, \la_0)$. But
we also have $|\eta^{\ast} (\cdot) - \eta (\cdot, \la_0)|_0 \geq \ve >0$, which is impossible. Hence we conclude that $\eta (\cdot, \la)$ is continuous in $\la$ in the $C^0(b \Om)$ norm, and $\eta^{\la} \in \mc{C}^{\nu', 0} (b\Om^{\la})$ for any $0 < \nu' < \nu $. 

Now by the proof of \rl{wgmle} in Step 2 one can show that $w^{\la}_{\eta} \in \mc{C}^{\nu',0} (b \Om^{\la})$.    

\subsection{The $\mc{C}^{\nu, 0}$ estimate} 
In this section we prove part (ii) of \rt{RHmt}, namely we show that the solution $w^{\la} \in \mc{C}^{\nu-,0}(\ov{\Om^{\la}})$ found in part (i) is in fact in the class $\mc{C}^{\nu, 0} (\ov{\Om^{\la}} )$.  

First we make a reduction to the case $F^{\la} \equiv 0$. Recall that $A^{\la}$, $B^{\la}$ and $F^{\la}$ are taken to be zero outside $\Om^{\la}$. So we can write 
\eq{w_wtiF}
w_{F} (z, \la) = - \frac{1}{\pi} \iint_{\C} G_1(z, \zeta, \la)  F(\zeta, \la) \, dA(\zeta^{\la}) 
- \frac{1}{\pi} \iint_{\C}  G_2(z, \zeta, \la) \ov{F(\zeta, \la)} 
\, dA(\zeta^{\la}). 
\eeq 
By \rl{wFle}, $w_{F}^{\la} \in\mc{C}^{\all_p, 0} (\C)$. Furthermore, $w_F^{\la}$ satisfies the equation 
\[
\pa_{\ov{z^{\la}}} w_F^{\la} (z^{\la}) + A^{\la}(z^{\la}) w_F^{\la}(z^{\la}) + B^{\la} (z^{\la}) \ov{w_F^{\la} (z^{\la})} = F^{\la} (z^{\la}), \quad z \in \Om. 
\]

Now let $w_\ast^{\la}$ be the solution to the following boundary value problem 
\[
\begin{cases}
\pa_{\ov{z^{\la}}} w_\ast^{\la} (z^{\la}) + A^{\la}(z^{\la}) w_\ast^{\la}(z^{\la}) + B^{\la} (z^{\la}) \ov{w_\ast^{\la} (z^{\la})} = 0 & \quad z \in \Om; 
\\ 
Re[\ov{l^{\la}} w_\ast^{\la} ] (\zeta^{\la})  = \gm^{\la}(\zeta^{\la}) - Re[\ov{l^{\la}} w^{\la}_{F} ] (\zeta^{\la}) &\quad \zeta \in b \Om, 
\end{cases} 
\]
and in addition $w_\ast^{\la}$ satisfies the conditions: 
\begin{gather*} 
w_\ast(z_r, \la) = a_r(\la) + i b_r(\la) - w_{F}(z_r, \la)  
\\
w_\ast^{+} (z_s', \la) = l(z_s', \la)( \gm(z_s', \la) + i c_s(\la)) - w_{F}(z_s', \la) 
\end{gather*}
on the normally distributed set $\{ \{ z_r\}_{r=1}^{N_0}, \{ z_s'\}_{s=1}^{N_1} \}$. Then any solution to the original boundary value problem \re{pAmc} satisfying condition \re{mcwucip} equal to $w_\ast^{\la} + w_{F}^{\la}$. Since $w^{\la}_{F} \in \mc{C}^{\all_p}  (\ov{ \Om^{\la}})$, it suffices to consider the following boundary value problem for $w_\ast^{\la}$: 
\eq{wrbvp}
\begin{cases}
\pa_{\ov{z^{\la}}} w_\ast^{\la} (z^{\la}) + A (z, \la) w_\ast^{\la}(z^{\la}) 
+ B (z, \la)  \ov{w_\ast^{\la} (z^{\la})} = 0 & \quad z \in \Om; 
\\ 
Re[\ov{l^{\la}} w_\ast^{\la} ] (\zeta^{\la})  = \gm^{\la}(\zeta^{\la}) &\quad \zeta \in b \Om, 
\end{cases} 
\eeq
where $\gm^{\la} \in \mc{C}^{\nu,0} (b \Om^{\la})$, for $\nu = \min (\all_p, \mu)$. 

We now use an idea of Vekua to reduce problem \re{wrbvp} to one on simply-connected domain. First we introduce some notations. For a bounded domain $\Om$ whose boundary consists of $m+1$ connected component, we denote by $b \Om_0$ the outer boundary of $\Om$, and we let $\Om_0$ be the bounded simply-connected domain interior to the contour $b \Om_0$. We denote the boundary of the holes in the interior of $\Om_0$ by $b \Om_j$, for $1 \leq j \leq m$, and we let $\Om_j$ be the unbounded domain exterior to the contour $b\Om_j$. 

By 
\re{nhgaerf}, we can write the solution of problem \re{wrbvp} in the form
\eq{pAwmcr}
w_\ast(z, \la) = w_{0}(z,\la) + \cdots + w_{m}(z,\la),  \quad \quad z \in \Om, 
\eeq
where for $0  \leq j \leq m$, we set 
\eq{pAwjmcr} 
w_{j}(z,\la) = \frac{1}{2 \pi i} \int_{b \Om_{j}} G_{1}(z, t, \la) w_\ast(t,\la) \, dt^{\la} + G_{2}(z, t, \la) \ov{w_\ast(t, \la)} \, \ov{dt^{\la}}. 
\eeq

Fix $j_0$ with $0 \leq j_0 \leq m$, we claim that for all $j \neq j_0$, $w_{j}^{\la} \in \mc{C}^{\nu,0}(b \Om_{j_0}^{\la})$. 
Assuming this for now, then $w_{j_0}^{\la}$ is a solution to the boundary value problem
\begin{equation} \label{pAwjfde} 
 \begin{gathered} 
\pa_{\ov{z^{\la}} } w^{\la}_{ j_{0}} (z^{\la} ) +  A (z, \la) w^{\la}_{j_0}(z^{\la} ) + B(z, \la) \ov{w_{j_0}^{\la}(z^{\la})} = 0  \quad \text {in $\Om_{j_0}^{\la}$;} 
\\ 
Re[\ov{l(z, \la)} w_{j_0}^{\la}(z^{\la})] = \gm_{j_0}(z, \la)  \quad \text {on $b \Om_{j_0}^{\la}$}, 
\end{gathered} 
\end{equation} 
where 
 $\gm_{j_0}^{\la} \in \mc{C}^{\nu,0}(b \Om_j^{\la})$ with
$ 
\gm_{j_0}(z, \la) := \gm(z, \la) - \sum_{j\neq j_0} Re[\ov{l(z, \la)}w_{j}^{\la}(z^{\la})].
$ 

If $j_0 = 0$, the above problem reduces to the   earlier case on bounded simply-connected domain. For $j_0 \geq 1$, we need to invert the unbounded domain $\Om_{j_0}$. 
We fix some point $a_{j_0}$ in $\C \sm \Om_{j_0}$ such that $\dist(a_{j_0 }, b \Om^{\la}_{j_0}) \geq c > 0$ for all $\la$. 
By a substitution $z^{\la} = \var(\zeta^{\la}) = \frac{1}{\zeta^{ \la} - a_{j_0}}$, we have for each fixed $\la \in [0,1]$, 
\begin{align*} 
\pa_{ \ov{\zeta^{\la}} } w_{j_0}^{\la} \left( \var(\zeta^{\la})) \right) 
&= \pa_{\ov{z^{\la}} } w^{\la}_{j_0} (z^{\la}) \cdot \pa_{\ov{\zeta^{\la}}} \ov{\var} \\ 
&= - \ov{ \pa_{\zeta^{\la}} \var } A^{\la}(z^{\la}) w^{\la}_{j_0} (z^{\la}) - \ov{\pa_{\zeta^{\la}} \var } B^{\la}(z^{\la} ) \ov{w^{\la}_{j_0} (z^{\la})}. 
\end{align*}
Set $\wti w_{j_0} (\zeta, \la) = w^{\la}_{j_0} (\var(\zeta^{\la}))$, 
$ \wti A(\zeta, \la) = (\ov{\pa_{\zeta^{\la}} \var }) A^{\la}(\var^{-1} (\zeta^{\la})) $, 
$\wti B (\zeta, \la)= 
(\ov{\pa_{\zeta^{\la}} \var }) B^{\la}(\var^{-1} (\zeta^{\la})$, $\wti l (\zeta, \la) = l^{\la}(\var(\zeta^{\la}))$ and  $\wti \gm_j (\zeta, \la) = \gm^{\la}_j (\var(\zeta^{\la}))$, where $\Gm^{\la} \in \mc{C}^{1+ \mu ,0}(\C) $ is the extension of $\Gm^{\la}$. 
Problem \re{pAwjfde} then becomes 
\gan
\pa_{\ov{\zeta^{\la}}} \wti w_{j_0} (\zeta, \la) +  \wti A(\zeta, \la) \wti w_{j_0} (\zeta, \la)  + \wti B(\zeta, \la) \ov{\wti w_{j_0} (\zeta, \la)} = 0, \quad \text{in $D^{\la}_{j_0}$};   
\\ 
Re \left[ \ov{\wti l(\zeta, \la)} \wti w_{j_0} (\zeta) \right]  = \wti \gm_{j_0} (\zeta, \la), \quad \text{on $b D^{\la}_{j_0}$, } 
\end{gather*}
where $D^{\la}_{j_0}$ (resp. $bD^{\la}_{j_0}$) is the image of $\Om^{\la}_{j_0}$ (resp. $b \Om^{\la}_{j_0}$), under the map $\var^{-1}: z^{\la} \mapsto \zeta^{\la} = \frac{1}{z^{\la}} + a$. 
Now, $\{D^{\la}_{j_0}\}$ is a family of bounded simply-connected domains in the class $\mc{C}^{1+\mu, 0}$.  
Note that since $A^{\la}, B^{\la}$ vanish outside $\Om^{\la}$ and in particular near infinity, we have $\wti A^{\la} \equiv \wti B^{\la} \equiv 0$ when $ \zeta^{\la}$ is near $a_{j_0}$, and $\wti A(\cdot, \la), \wti B(\cdot,\la)$ satisfy the same assumptions as $A(\cdot, \la) $ and $B(\cdot, \la)$.   
It is clear that $ \wti l, \wti \gm_{j_0} \in \mc{C}^{\nu, 0}(bD_{j_0}^{\la} )$.  

We can now apply \rt{pAscvdt} to the family of bounded simply-connected domains $D_{j_0}^{\la}$, and we get $\wti w_{j_0} \in \mc{C}^{\nu,0}(\ov{D_{j_0}^{\la}})$, for each $0  \leq  j_0 \leq  m$. (Note that the conditions on the normally distributed set are trivially satisfied since we already know from part (i) that $w_\ast^{\la} \in \mc{C}^{\nu^-,0}(\ov{\Om^{\la}})$, and hence by \rp{gCiHne} one has $w_{j_0}^{\la} \in \mc{C}^{\nu^-,0}(\ov{\Om_{j_0}^{\la}})$.)
Since
$
w^{\la}_{j_0}(z^{\la}) = \wti w_{j_0} (\var^{-1} (z^{\la}) ), 
$
we have $w_{j_0}^{\la} \in \mc{C}^{\nu, 0} (\ov{\Om_j^{\la}} )$, for each $0 \leq j_0 \leq m$.  
In view of \re{pAwmcr}, we get $w_\ast^{\la} \in \mc{C}^{\nu,0}(\ov{\Om^{\la}})$.

It remains to prove for each $j_0$ and $j \neq j_0$, we have $w_{j}^{\la} \in \mc{C}^{\nu,0}(b \Om_{j_0}^{\la})$. 
First we show that $w_{j}^{\la}$ is continuous in the $C^{0}(b \Om_{j_0}) $-norm.  Let $ d_\ell = \dist(b \Om_j, b \Om_{j_0})$ and 
$dt^{\la} = a(s,\la) ds$.
By 
\re{pAwjmcr}, we have for $z \in b \Om_{j_0}$, 
\begin{align*}
&|w_j(z,\la_{1}) - w_j(z,\la_{2})| \\ 
& \quad \leq C \sum_{i=1,2} \int_{b \Om_{j}} |\del^{\la_{1}, \la_{2}} G_{i}(z, \cdot)| |w_\ast(t,\la_{1})| \, dt^{\la_1} + |G_{i} (z,t,\la_{2})| | \del^{\la_{1}, \la_{2}}(w_\ast)(t)| \, ds \\
& \quad \leq \| w_\ast \|_{0,0}  \int_{b \Om_{j}} \frac{\Si_{i=1,2} |e^{\om_{i}(z,t,\la_{1})} -   e^{\om_{i}(z,t,\la_{2})}|}{|t^{\la_1} - z^{\la_1}| } \, dt^{\la_1} \\ 
& \qquad  + \| w_\ast \|_{0,0} \int_{b \Om_{j}} \sum_{i=1}^{2} \left| e^{\om_{i}(z,t,\la_{2})} \right|  \left| \frac{1}{|t^{\la_1} - z^{\la_1} |}  - \frac{1}{| t^{\la_2} - z^{\la_2} |} \right| \, dt^{\la_1} \\
& \qquad + \int_{b \Om_{j}} \frac{ \sum_{i=1}^{2}  \left| e^{\om_{i}(z,t,\la_{2})} \right|}{| t^{\la_2} - z^{\la_2} |} 
|\del^{\la_{1}, \la_{2}} (w_\ast)(s) |  \, ds. 
\end{align*}  
For $z \in b \Om_{j_0}$ and $t \in b \Om_j'$, we have $| t^{\la} - z^{\la} |$ is bounded below by some constant uniform in $\la$. Hence the first term in the above sum is bounded by 
\[ 
C_{1,0} C_j \|w_\ast \|_{0,0} \sum_{i=1,2} \sup_{t \in b \Om_{j}} 
| \om_{i}(\cdot ,t, \la_1) - \om_i (\cdot , t, \la_2) |_{C^0 (b \Om_{j_0}) }.   
\]
Similarly, the second term is bounded by 
$
C_{1,0} C_j \|w_\ast \|_{0,0}  \left( \Si_{i=1,2} 
\| \om_i  \|_{0,0} \right) |\Gm^{\la_{1}} - \Gm^{\la_{2}}|_{1} , 
$
and the third term  is bounded by 
$$
C_{1,0} C_j \left( \Si_{i=1,2} \| \om_i \|_{0,0} \right) |\del^{\la_{1},\la_{2}}(w_\ast a)|_{C^{0}(\ov{\Om})}, 
$$
for $\| w_\ast \|_{0,0} = \sup_{t \in b \Om_j, \la \in [0,1]} |w_\ast(t,\la)|$,  $\| \om_i \|_{0,0} = \sup_{z \in b\Om_{j_0}, t \in b \Om_j \la \in [0,1] } |\om_i (z,t, \la)| $. 

By part (i), we have $w_\ast^{\la} \in \mc{C}^{\nu^-,0}(\ov{\Om^{\la}})$ so in particular 
$|w_\ast^{\la_1} - w_\ast^{\la_2} |_0 \to 0$ as $|\la_1-\la_2| \to 0$. Together with \re{cino}, we readily see that the above expressions converge to $0$ as $|\la_{1} - \la_{2}| \to 0$. This shows that for any $j \neq j_0$, $w_{j}(\cdot, \la)$ is continuous in $\la$ in the $C^{0}(b \Om_{j_0})$- norm.   

Next, we show that $|w_j(\cdot, \la)|_{C^{\nu }(b \Om_{j_0})}$ is bounded uniformly in $\la$. In fact we will show the boundedness of $|w_j(\cdot, \la)|_{C^{\all_p}(b \Om_{j_0})}$. 
Recall that $w_j^{\la}$ is given by 
\re{pAwjmcr}: 
\eq{wjcnuest}  
w_{j}(z,\la) = \frac{1}{2 \pi i} \int_{b \Om_{j}} G_{1}(z, t, \la) w_\ast(t,\la) \, dt^{\la} + G_{2}(z, t, \la) \ov{w_\ast(t, \la)} \, \ov{dt^{\la}}, 
\eeq 
for $ z \in \Om_j,$ where $G_1$ and $G_2$ are given by 
\begin{gather*}
G_1 (z, t, \la) = \frac{e^{\om_1 (z, t, \la)} + e^{\om_1 (z, t, \la)} }{t^{\la} - z^{\la}}, \qquad 
G_2 (z, t, \la) = \frac{e^{\om_1(z, t, \la)} - e^{\om_1(z, t, \la)} }{t^{\la} - z^{\la}}, 
\end{gather*}
and $\om_i$ are given by formula \re{omiexp}. By the proof of \rl{ombdd}, we can show that
\[ 
\sup_{t \in b \Om_j} | \om_j(\cdot, t, \la)|_{\all_p} \leq C_{1,0} C(p, \Om) \left( |A(\cdot, \la)|_{L^{p}(\Om)}  + |B (\cdot, \la)|_{L^{p}(\Om)} \right),  \quad 2 < p < \infty. 
\]
Now for $z \in b \Om_{j_0}$ and $t \in b \Om_{j}$ where $j \neq j_0$, $|t^{\la} - z^{\la} |$ is bounded below by a positive constant $c_j$ uniform in $\la$. Hence it follows by the above formula that $ |G_i (\cdot, \la, \la)|_{C^{\all_p} (b \Om_0)} $ is bounded uniformly in $t \in b \Om_j$ and $\la \in [0, 1]$.  Since we know that $w_\ast^{\la} \in \mc{C}^{\nu-,0} (\ov{\Om^{\la}} ) $, in particular $|w_\ast (t, \la) |$ is 
bounded above uniformly in $t \in b \Om_j$ and $\la \in [0, 1]$. \re{wjcnuest} then implies that $|w_j(\cdot, \la)|_{C^{\nu }(b \Om_{j_0})}$ is bounded uniformly in $\la$. This proves the earlier claim and the proof is now complete.

\subsection{Differentiability in space variable: The $\mc{C}^{k+1+\mu, 0}$ estimate} 
\ 

\medskip 

In this subsection we prove part (ii) of \rt{RHmt}, namely the following: 
\begin{prop}
Let $\Om$ be a bounded domain in $\C$ such that $b\Om$ has $m+1$ connected components. Let $k$ be a non-negative integer and $0 
< \mu < 1$. Let $\Gm^{\la}: \Om \to \Om^{\la}$ be a $\mc{C}^{k+1+\mu, 0}$ embedding. For each $\la$, let $w^{\la}$ be the unique solution to Problem \re{pAmc} on $\Om^{\la}$ with index $n > m-1$,    
satisfying condition \re{mcwucip}  
on a normally distributed set $\{z_r^{\la}, z_s' \}$ in $\Om$. 
Suppose $A^{\la}, B^{\la}, F^{\la} \in \mc{C}^{k+\mu, 0}(\ov{\Om ^{\la}})$, $l^{\la}, \gm^{\la} \in \mc{C}^{k+1+\mu, 0}(b \Om^{\la})$, and $a_r, b_r, c_s \in C^0 ([0,1])$. Then $w^{\la} \in \mc{C}^{k+1+\mu, 0}(\ov{\Om^{\la}})$.  Furthermore, there exists some constant $C$ independent of $\la$ such that 
\begin{gather} \label{pAmcsupest}    
\| w^{\la} \|_{0,0} \leq C \left( \| F^{\la} \|_{0,0} + \| \gm^{\la} \|_{\mu,0} + \sum_{r=1}^{N_0} \left( |a_r |_0 + |b_r|_0 \right)  + \sum_{s=1}^{N_1} |c_s|_0 \right), 
\\  \nn 
\| w^{\la} \|_{k+1+\mu,0} \leq C \left( \| F^{\la} \|_{k+\mu,0} + \| \gm^{\la} \|_{k+1+\mu,0} + \sum_{r=1}^{N_0} \left( |a_r |_0 + |b_r|_0 \right) + \sum_{s=1}^{N_1} |c_s|_0 \right). 
\end{gather}  
\end{prop}
\nid \tit{Proof.} 
As before we write $A(z, \la) := A^{\la} (z^{\la})$, $B(z, \la) := B^{\la} (z^{\la})$ and $F(z, \la) := F^{\la} (z^{\la})$. 
Note for fixed $\la$, $w^{\la} \in C^{k+1+\mu}(\ov{\Om^{\la}})$ by \rp{pArt}. By \rp{cinhd}, it suffices to show that $w(\cdot,\la)$ is continuous in $\la$ in the $C^0 (\ov{\Om})$ norm and bounded uniformly in $\la$ in the $C^{k+1+\mu}(\ov{\Om})$-norm. 

Like before we can make a reduction to the case $F^{\la} \equiv 0$.  Define $w_{F^{\la}}$ by formula \re{w_wtiF}. For the reduction to work we need to show that $w_F^{\la} \in \mc{C}^{k+1+\mu, 0} (\ov{\Om^{\la}})$. Now $w_F^{\la}$ satisfies the equation: 
\eq{wFiteqn}  
\pa_{\ov{z^{\la}}} w_{F^{\la}} + A^{\la} w_{F^{\la}} + B^{\la} \ov{w_{F^{\la}}} = F^{\la} \quad \text{ in $ \Om^{\la}$. } 
\eeq
Since by assumption $F^{\la}$ is at least H\"older continuous, \rl{wFle} shows that $w_{F^{\la}} \in\mc{C}^{1^-, 0} (\ov{\Om^{\la}})$. It then follows from \re{wFiteqn} that 
$\pa_{\ov{z^{\la}}} w_{F}^{\la} \in \mc{C}^{\mu, 0} (\ov{\Om^{\la}})$. 
For each fixed $\la$, we apply Cauchy-Green formula to $w^{\la}_{F}$ on $\Om^{\la}$ to get 
\[
w_{F}(z, \la) = T_{\Om^{\la}} (\pa_{\ov{z^{\la}}} w_{F^{\la}}) + \frac{1}{2 \pi i} \int_{b \Om} \frac{w_{F}(t, \la) }{t^{\la} - z^{\la}} d t^{\la}, \quad  z \in \Om 
\] 
where we write $w_{F} (z, \la) = w_{F^{\la}} (z^{\la} )$. Taking $\pa_{z^{\la}}$ on both sides, we have
\eq{wFtizd} 
\pa_{z^{\la}} w_{F^{\la}}(z^{\la}) = \Pi_{\Om^{\la}} (\pa_{\ov{z^{\la}}} w_{F^{\la}}) + \frac{1}{2 \pi i} \int_{b \Om} \frac{w_{F}(t, \la) }{(t^{\la} - z^{\la})^{2}} \, dt^{\la} , 
\eeq 
where $ \Pi_{\Om^{\la}} := \pa_z T_{\Om^{\la}}$. 
Since $w_{F^{\la}} \in\mc{C}^{1^-, 0} (\ov{\Om^{\la}})$, the second term is in $\mc{C}^{1-,0} (\ov{\Om^{\la}})$. 
Now applying \rl{zdTop} to $\pa_{\ov{z^{\la}}} w_{F^{\la}} \in \mc{C}^{\mu, 0} (\ov{\Om^{\la}})$, we have $ \Pi_{\Om^{\la}} (\pa_{\ov{z^{\la}}} w_{F^{\la}}) \in \mc{C}^{\mu, 0} ( \ov{\Om^{\la}})$. 
It then follows from \re{wFtizd} that $ \pa_{z^{\la}} w_{F^{\la}}(z^{\la})  \in \mc{C}^{\mu, 0} (\ov{\Om^{\la}})$. 
This shows that $w_{F^{\la}} \in \mc{C}^{1+\mu,0 } 
(\ov{ \Om^{\la}})$.  We can iterate this argument by using \re{wFiteqn} and \re{wFtizd} so that eventually we get $w_{F^{\la}} \in \mc{C}^{k + 1 + \mu,0 }  (\ov{ \Om^{\la}})$.  
This proves the reduction. 

The problem now is reduced to showing that if $w_\ast^{\la}$ solves \re{wrbvp}, where $\gm^{\la} \in \mc{C}^{k+1+\mu,0 } (b \Om^{\la})$, then $w_\ast^{\la} \in \mc{C}^{k+1+\mu, 0} (\ov{\Om^{\la}})$. 
As in the proof of part (ii), we reduce the problem to one on simply-connected domain, and we use the same notation $w_j$ and $b \Om^j$ as before. 

Note also that by the assumptions and the result of part (ii), $w_\ast^{\la}  \in  \mc{C}^{\frac{p-2}{p }, 0}(\ov{\Om^{\la}} )$ for any $2 < p < \infty$. Hence $w_\ast^{\la}  \in  \mc{C}^{1-, 0}(\ov{\Om^{\la}} )$. 

In view of formula \re{nhgaerf} with $F \equiv 0$, we can write the solution of problem \re{wrbvp} in the form
$
w_\ast(z, \la) = w_{0}(z,\la) + \cdots + w_{m}(z,\la),   z \in \Om, 
$
where we set 
\[ 
w_{j}(z,\la) = \frac{1}{2 \pi i} \int_{b \Om_{j}} G_{1}(z, t, \la) w_\ast(t,\la) \, dt^{\la} + G_{2}(z, t, \la) \ov{w_\ast(t, \la)} \, \ov{dt^{\la}}.
\]

Fix $j_0$ with $0 \leq j_0 \leq m$, we need to show that for all $j \neq j_0$, we have $w_{j}^{\la} \in \mc{C}^{k+1+\mu,0}(b \Om_{j_0}^{\la})$. The rest of the argument follows the same as in part (ii). 

That $w_{j}^{\la}$ is continuous in the $C^{0}(b \Om_{j_0}) $-norm is proved in part (ii), 
so we only need to show that $|w_j(\cdot, \la)|_{C^{k+1+\mu}(b \Om_{j_0})}$ is bounded in $\la$. Recall that
$w_{j}^{\la}$ satisfies the equation: 
\[ 
\pa_{\ov{z^{\la}}} w^{\la}_{j} (z^{\la} ) +  A^{\la}(z^{\la}) w^{\la}_{j} (z^{\la} ) +  B^{\la}(z^{\la}) \ov{w_j^{\la}(z^{\la})} = 0, \quad  z^{\la} \in \Om_{j}^{\la}. 
\] 
By part (i), we have $w_\ast^{\la} \in \mc{C}^{1^-,0}(\ov{\Om^{\la}})$. In view of \rp{gCiHne} and \re{pAwjmcr}, 
$w_{j}^{\la} \in \mc{C}^{1^-, 0}(\ov{\Om_{j}^{ \la}})$.    
Fix an open set $V_{j}$ so that $V_j \subset \subset  (\Om_j^{\la} \cap \Om^{\la})$ and $\dist (b \Om_{j_0}^{\la}, V_j ) \geq \del_0$ for all $\la$. Certainly $w^{\la}_j$ satisfy the same equation on $V_j$: 
\eq{wj'deq}
\pa_{\ov{z} } w^{\la}_{j} (z ) +  A^{\la}(z) w^{\la}_{j}(z) +  B^{\la}(z) \ov{w_{j}^{\la}(z)} = 0, \quad  z \in V_j. 
\eeq
Here $A^{\la}, B^{\la} \in \mc{C}^{k + \mu,0} (\ov{V_j}) \subset \mc{C}^{\mu,0} (\ov{V_j }) $ and $ w^{\la}_{j} \in \mc{C}^{1^-,0} (\ov{V_j})$, thus equation \re{wj'deq} implies that $ \pa_{\ov{z}}w_{j}^{\la} \in \mc{C}^{\mu,0} (\ov{V_j})$.   

Applying Cauchy-Green formula to $V_j$, we have for each fixed $\la$, 
\[ 
 w_{j}^{\la}(z) = T_{V_j} (\pa_{\ov{z}} w_j) (z) + \frac{1}{2 \pi i} \int_{b V_j} \frac{w_j^{\la}(\zeta)}{\zeta- z }  \, d\zeta, \quad z \in V_j. 
\] 
Taking $\pa_{z}$ on both sides we get
\eq{pAwj'zd}
\pa_{z} w_{j}^{\la} (z) = \pa_{z} T_{V_j}  (\pa_{\ov{z}} w_{j}^{\la})(z) + \frac{1}{2 \pi i} \int_{b V_j } \frac{w_j^{\la}(\zeta) }{(\zeta - z)^{2}} \, \, d \zeta,  
\quad z \in V_j. 
\eeq
Now, for $\zeta \in b V_j $ and $z  \in b\Om_{j_0}^{\la}$, $|\zeta - z| $ is bounded below by a positive constant independent of $\la$, so the second term above is bounded by
$C |w_j^{\la} |_{C^0 (b V_j)}$ for some $C$ independent of $\la$.  
On the other hand by \rl{zdTop}, 
$
| \pa_{z} T_{V_{j}} (\pa_{\ov{z}} w_{j}^{\la})  |_{C^{\mu} ( \ov{V_j})}
\leq C | \pa_{\ov{z}} w_{j}^{\la} |_{C^{\mu} ( \ov{V_j})}. 
$ 
Thus
$
| \pa_{z} w_j^{\la} |_{C^{\mu}( \ov{V_j})} 
\leq C \left( | \pa_{\ov{z}} w_{j'}^{\la} |_{C^{\mu}(\ov{V_j})} +  |w_{j}^{\la} |_{C^{0}(\ov{V_j})} \right), 
$
where $C$ is some constant independent of $\la$.  
Hence $|w_{j}^{\la}|_{C^{1+\mu}(V_j)}$ is uniformly bounded in $\la$. 
By repeating the argument using \re{wj'deq} and \re{pAwj'zd}, we can show that 
$|w_{j}^{\la}|_{C^{k+1+\mu}(V_j)}$ is uniformly bounded in $\la$.  
Since $\Gm^{\la} \in \mc{C}^{k+1+\mu, 0}(\ov{\Om})$ and $b \Om_{j_0}  \subset V_j$,  $|w_j^{\la}(z^{\la})|_{C^{k+1+\mu} (b \Om_{j_0}) }$ is bounded uniformly in $\la$.  
This completes the proof of the claim and thus the theorem.  
\hfill \qed 

\subsection{Differentiability in parameter: the $\mc{C}^{k+1+\mu, j}$ estimate} 
\ 

\medskip 

In this subsection we prove part (iii) of \rt{RHmt}, namely the following: 
\th{mt2} 
Let $k, j,\mu$ be as in \rt{mtintro}.
Let $\Om$ be a bounded domain in $\C$ such that $b\Om$ has $m+1$ connected component. Suppose $\Gm^{\la}: \Om \to \Om^{\la}$ is a $\mc{C}^{k+1+\mu,j}$ embedding. For each $\la$, let $w^{\la}$ be the unique solution to Problem \re{pAmc} on $\Om^{\la}$ with index $n > m-1$, satisfying condition \re{mcwucip} 
on a normally distributed set $\{ \zeta_r^{\la}, (\zeta_s')^\la \}$ in $\Om^\la$. 
Suppose $A^{\la}, B^{\la}, F^{\la} \in \mc{C}^{k+\mu,j}(\ov{\Om ^{\la}})$, $l^{\la}, \gm^{\la} \in \mc{C}^{k+1+\mu,j}(b \Om^{\la})$, and $a_r, b_r, c_s \in C^{j} ([0,1])$. Then $w^{\la} \in \mc{C}^{k+1+\mu,j}(\ov{\Om^{\la}})$.  Furthermore, there exists some constant $C$ independent of $\la$ such that 
\begin{equation} \label{lader_est}  
\| w^{\la} \|_{k+1+\mu, j} \leq C \left( \| F^{\la} \|_{k+\mu,j} + \| \gm^{\la} \|_{k+1+\mu,j} + \sum_{r=1}^{N_0} \left( |a_r |_j + |b_r|_j \right) + \sum_{s=1}^{N_1}|c_s|_j \right). 
\end{equation} 
\eth 
\begin{proof}
We proceed with induction on $j$. The $j=0$ case is proved in Theorem \re{RHmt} part (ii). Suppose the statement has been proved for $j-1$. In particular we know that $w^{\la} \in \mc{C}^{k+1+\mu,j-1}(\ov{\Om^{\la}})$ for $k \geq j \geq 1$.

Let $\Psi^{\la} = (\Gm^{\la})^{-1}$ and $v^{\la} (\zeta) = w^{\la} \circ \Gm^{\la} (\zeta)$. Then $w^{\la} = v^{\la} \circ \Psi^{\la}$, $\Psi^{\la}(z^{\la}) = \zeta$. For fixed $\la$, we have
\begin{align*}
\pa_{\ov{z^{\la}}} w^{\la} &= \DD{\Psi^{\la}}{\ov{z^{\la}}} (\Gm^{\la}(\zeta)) \pa_{\zeta} v^{\la} + \DD{\ov{\Psi^{\la}}}{\ov{z^{\la}}} (\Gm^{\la} (\zeta)) \pa_{\ov{\zeta}} v^{\la} = h^{\la}(\zeta) \left( \pa_{\ov{\zeta}} v^{\la} + e^{\la}(\zeta) \pa_{\zeta} v^{\la} \right), 
\end{align*} 
where we denote
\[
e^{\la} (\zeta) := \DD{\Psi^{\la}}{\ov{z^{\la}}} (\Gm^{\la}(\zeta)) \left( \DD{\ov{\Psi^{\la}}}{\ov{z^{\la}}} (\Gm^{\la} (\zeta)) \right)^{-1},  
\quad h^{\la} (\zeta) :=  \DD{\ov{\Psi^{\la}}}{\ov{z^{\la}}} (\Gm^{\la} (\zeta)). 
\]
Note that since $\Psi^0 = \Gm^0 = id$, we have $e^0 \equiv 0$ and $h^0 \equiv 1$ on $ \Om$. Thus for $|\la|$ small, $ |e^{\la}| < 1$. Now $v^{\la}$ is the solution to 
\begin{gather} \label{mcfdbvp} 
\begin{cases}
\pa_{\ov{\zeta}} v^{\la} + e^{\la}(\zeta) \pa_{\zeta} v^{\la} + A_1^{\la}(\zeta) v^{\la}(\zeta) + B_1^{\la}(\zeta) \ov{v^{\la}(\zeta)} = F_1^{\la}(\zeta) & \text {in $\Om$;} \\ 
Re[\ov{ l_1^{\la}(\zeta)} v^{\la}(\zeta)] = \gm_1^{\la}(\zeta) & \text {on $b \Om$}, 
\end{cases} 
\end{gather}
where 
$A_1^{\la} (\zeta) = (h^{\la})^{-1} A^{\la} (\Gm^{\la} (\zeta)), 
 B_1^{\la} (\zeta) = (h^{\la})^{-1} B^{\la} (\Gm^{\la} (\zeta)), $ and 
\begin{gather*}
F_1^{\la} (\zeta) = (h^{\la})^{-1} F^{\la} (\Gm^{\la} (\zeta)), \quad l_1^{\la}(\zeta) = l^{\la} (\Gm^{\la} (\zeta)), \quad 
\gm_1^{\la}(\zeta) = \gm^{\la} (\Gm^{\la} (\zeta)). 
\end{gather*}
By assumption, $e^{\la}, A^{\la}_1, B^{\la}_1, F^{\la}_1 \in \mc{C}^{k+\mu, j} (\Om^{\la})$, and $l_1^{\la}, \gm_1^{\la} \in \mc{C}^{k+1+\mu,j} (b\Om^{\la})$. 
Subtracting equation \re{mcfdbvp} by the same equation with $\la$ replaced by $\la_0$ and dividing the resulting difference by $\la- \la_0$ we get respectively on $\Omega$ and $b\Omega$
\begin{gather} \label{diffqt_eq} 
\begin{cases}
\pa_{\ov{\zeta}} (D^{\la, \la_0} v) + e^{\la_0} \pa_{\zeta} (D^{\la, \la_0} v^{\la}) + A_1^{\la_0} D^{\la, \la_0}v + B_1^{\la_0} \ov{D^{\la, \la_0}v} = F_2^{\la, \la_0}; 
\\ 
Re[\ov{ l_1^{\la_0}} D^{\la, \la_0} v ] =
\gm_2^{\la, \la_0} 
\end{cases} 
\end{gather}
where we used the notation $D^{\la, \la_0} f := ( f^{\la} - f^{\la_0} ) / (\la - \la_0)$, and 
\begin{gather} \label{F2lala0}  
F_2^{\la, \la_0} := D^{\la, \la_0} F_1 - (D^{\la, \la_0} e) \pa_{\zeta} v^{\la} - (D^{\la, \la_0} A_1)v^{\la} - (D^{\la, \la_0} B_1) v^{\la}, 
\\ \label{gm2lala0} 
\gm_2^{\la, \la_0} := D^{\la, \la_0} \gm_1 - Re[\ov{ (D^{\la, \la_0} l_1)} v^{\la}  ].  
\end{gather}
In addition, $D^{\la, \la_0} v$ satisfies the condition
\begin{equation} \label{dq_uc}  
\begin{gathered}	
D^{\la, \la_0} v  (\zeta_r) = D^{\la, \la_0} a_r + i D^{\la, \la_0} b_r,  \quad r = 1, \dots N_0; 
\\   
D^{\la, \la_0} v  (\zeta_s') = D^{\la, \la_0} \left\{ l (\zeta_s', \cdot) [ \gm (\zeta_s', \cdot) + i c_s(\cdot) ]  \right\}, 
\quad  \quad s = 1, \dots N_1. 
\end{gathered}
\end{equation}

Consider the problem 
\begin{gather} \label{ladeqn}  
\begin{cases}
\pa_{\ov \zeta} u^{\la} + e^{\la} \pa_{\zeta} u^{\la} + A_1^{\la} u^{\la} + B_1^{\la} \ov{u^{\la} } = F_2^{\la} & \text {in $\Om$; } \\ 
Re[\ov{ l_1^{\la}} u^{\la}  ] = \gm_2^{\la} & \text {on $b \Om$}, 
\end{cases} 
\end{gather}
where 
\begin{gather} \label{F2la}  
F_2^{\la} := (\pa_{\la} F_1^{\la}) - (\pa_{\la} e^{\la}) \pa_{\zeta} v^{\la}- (\pa_{\la} A_1^{\la}) v^{\la} - (\pa_{\la} B_1^{\la}) v^{\la}, 
\\ \label{gm2la} 
\gm_2^{\la} := (\pa_{\la} \gm_1^{\la}) - Re[\ov{ (\pa_{\la} l_1^{\la} ) } v^{\la}  ].  
\end{gather}
In addition, we require that $u^{\la}$ satisfies the condition: 
\begin{equation} \label{lad_ula_uc}
\begin{gathered} 
u^{\la}(\zeta_r) = a_r'(\la) + i \pa b_r'( \la), 
\quad r = 1, \dots N_0;  
\\  
u^{\la} (\zeta_s') =  \pa_{\la} \left\{  l (\zeta_s', \la) \left[ \gm (\zeta'_s, \la) + i c_s (\la)  \right] \right\}, 
\quad s=1,\dots, N_1.  
\end{gathered}
\end{equation}

By assumption, $\pa_{\la} e^{\la}, \pa_{\la} A_1^{\la}, \pa_{\la} B_1^{\la}, \pa_{\la} F_1^{\la} \in \mc{C}^{k-1+\mu,j-1}(\ov{\Om^{\la}})$, and $\pa_{\la} \gm_1^{\la}, \pa_{\la} l_1^{\la}  \in \mc{C}^{k+\mu, j-1}(b \Om^{\la})$. By the induction hypothesis,  $v^{\la} = w^{\la} \circ \Gm^{\la} \in \mc{C}^{k+1+\mu, j-1}(\ov{\Om^{\la}})$, and there exists some constant $C$ independent of $ \la$ such that 
\begin{equation} \label{lad_vla_est}
\| v^{\la} \|_{\mc{C}^{k+1+\mu, j-1}  (\Om)} 
\leq C \left( \| F ^{\la} \|_{\mc{C}^{k+\mu, j-1} (\Om^{\la})} 
+  \| \gm^{\la} \|_{\mc{C}^{k+1+ \mu,j-1}(b \Om^{\la}) } \right) .  
\end{equation} 
Thus in view of \re{F2la} and \re{gm2la} we have $F_2^{\la} \in \mc{C}^{k-1+\mu , j-1}(\ov{\Om^{\la}})$, and $ \gm_2^{\la} \in \mc{C}^{k+\mu, j-1} (b \Om^{\la}) $. 
To solve problem \re{ladeqn} we would like to make a change of coordinates $\psi^{\la}: \zeta \mapsto \zeta_1^{\la}$ so that 
\eq{isocd}
\pa_{\ov{\zeta}} u^{\la} (\zeta) + e^{\la}(\zeta) \pa_{\zeta} u^{\la} (\zeta) = f_1^{\la}(\zeta_1^{\la}) \pa_{\ov {\zeta_1^{\la}} } u^{\la}_1 (\zeta_1^{\la}), \quad u_1^{\la} (\zeta_1^{\la}) = u^{\la}(\zeta), 
\eeq
for some function $f_1^{\la}$. 
We seek $\psi^{\la}$ in the form $\zeta_1^{\la} = \psi^{\la}(\zeta) = \zeta + g^{\la}(\zeta)$, where $g^{0} \equiv 0$. By \re{isocd} we obtain the following equations for $g^{\la}$:  
\begin{gather} \label{nndfeqn}    
\pa_{\ov{\zeta}} g^{\la} (\zeta) + e^{\la} (\zeta) \pa_{\zeta} g^{\la} (\zeta) + e^{\la} (\zeta) = 0, \quad |e^{\la}|_0 < 1,  \\
\label{f1eqn}
f_1^{\la}(\zeta_1^{\la}) = 1 + \pa_{\ov{\zeta}} \ov{g^{\la}} (\zeta) + e^{\la} (\zeta) \pa_{\zeta} \ov{g^{\la}} (\zeta), 
\quad \zeta = (\psi^{\la})^{-1} (\zeta). 
\end{gather} 
By Theorem 6.4 in \cite{B-G-R14} , there exists a solution $g^{\la}$ to the equation \re{nndfeqn} in the class $\mc{C}^{k+1+\mu,j} (\ov{\Om^{\la}})$. Hence $\psi^{\la} \in \mc{C}^{k+1+\mu,j} (\ov{\Om^{\la}})$. In the following we write $D^{\la} = \psi^{\la} (\ov{\Om^{\la}})$. 
From equation \re{f1eqn} we get $f_1^{\la} \in \mc{C}^{k+ \mu, j}  (\ov{D^{\la}})$. Since $g^0 \equiv 0$, by \re{f1eqn} we see that $|f_1^{\la}|$ is nowhere vanishing on $D^{\la}$ when $\la$ is small.  In view of \re{isocd} and \re{ladeqn} we then get 
\begin{gather*}   
\begin{cases}
\pa_{\ov{\zeta_1^{\la}}} u_1^{\la} + A_2^{\la}(\zeta_1^{\la}) u_1^{\la_1} + B_2^{\la}(\zeta_1^{\la}) \ov{u_1^{\la} } = F_3^{\la} (\zeta_1^{\la})& \text {in $D^{\la}$}; \\ 
Re[\ov{ l_2^{\la} (\zeta_1^{\la}) } u_1^{\la} (\zeta_1^{\la})  ] = \gm_3^{\la} (\zeta_1^{\la}) & \text {on $b D^{\la}$}, 
\end{cases} 
\end{gather*}
where $D^{\la} \in \mc{C}^{k+1+\mu,j} $, $A_2^{\la}, B_2^{\la} \in \mc{C}^{k+\mu,j}(\ov{D^{\la}})$, $F_3^{\la} \in C^{k-1+\mu,j-1} (\ov{D^{\la}})$, $l_2^{\la} \in \mc{C}^{k+\mu,j}(\ov{D^{\la}})$, $\gm_3^{\la} \in \mc{C}^{k+\mu, j} (\ov{D^{\la}})$. The functions are given by  
\begin{gather} \label{A2B2} 
(A_2^{\la}  (\zeta_1^{\la}), B_2^\lambda(\zeta_1^\lambda)) := \left[ f_1^{\la} (\zeta_1^{\la}) \right]^{-1}\left( A_1^{\la} ( (\psi^{\la} )^{-1} (\zeta_1^{\la})), 
B_1^{\la} ( (\psi^{\la} )^{-1} (\zeta_1^{\la}))\right), 
\\ \nn
F_3^{\la}  (\zeta_1^{\la}):= \left[ f_1^{\la} (\zeta_1^{\la}) \right]^{-1} F_2^{\la} ( (\psi^{\la} )^{-1} (\zeta_1^{\la})), 
\\ \label{l2} 
(l_2^{\la}  (\zeta_1^{\la}),\gm_3^{\la}  (\zeta_1^{\la})):= \left[ f_1^{\la} (\zeta_1^{\la}) \right]^{-1}
\left( l_1^{\la} ( (\psi^{\la} )^{-1} (\zeta_1^{\la})), 
\gm_2^{\la} ( (\psi^{\la} )^{-1} (\zeta_1^{\la}))\right) . 
\end{gather}    
By \re{lad_ula_uc}, $u_1^{\la}$ satisfies the estimate   
\begin{gather*}
u_1^{\la}(\psi^{\la} (\zeta_r))  = a_r'(\la) + i b_r'( \la) ,  \quad  r = 1, \cdots, N_0; 
\\  
u_1^{\la} ( \psi^{\la} (\zeta_s')) =  \pa_{\la} \left\{  l (\zeta_s', \la) \left[ \gm (\zeta'_s, \la) + i c_s (\la)  \right] \right\}, \quad s = 1,  \cdots, N_1. 
\end{gather*} 

By the induction hypothesis, $u_1^{\la} \in \mc{C}^{k+\mu, j-1} (\ov{D^{\la}})$. Then $u^{\la}(\zeta) = u_1^{\la} \circ \psi^{\la}(\zeta) \in \mc{C}^{k+\mu, j-1} (\ov{\Om})$, and $u^{\la}$ is the solution to \re{ladeqn}. Furthermore, in view of \re{lad_vla_est} there exists some constant $C$ independent of $\la$ such that 
\begin{align} \label{lad_ula_est} 
&\| u^{\la} \|_{\mc{C}^{k+\mu, j-1} ( \ov{\Om}) }  
\leq C \| u_1^{\la} \|_{C^{k+\mu, j-1} (D^{\la})} 
\\ \nn 
&\qquad\leq C \left( \| F_3^{\la} \|_{\mc{C}^{k-1+\mu, j-1} (\ov{\Om})} 
+ \| \gm_3^{\la} \|_{\mc{C}^{k+\mu, j-1} (b \Om)} + \|(a,b,c) \|_{C^{j}}. 
\right)  
\\ \nn &\qquad\leq C \left( \| F^{\la} \|_{\mc{C}^{k+\mu,j} ( \ov{\Om^{\la}} )} + \| \gm^{\la} \|_{\mc{C}^{k+1+\mu, j} (b\Om^{\la})} + \|(a,b,c) \|_{C^{j}} \right).  
\end{align}
By the induction hypothesis applied for $j=0$, $v^{\la}$ satisfies 
\begin{equation} \label{lad_vla_j0est}
\| v^{\la}  \|_{\mc{C}^{k+1+\mu,0} (\Om)} \leq 
C \left( \| F^{\la} \|_{\mc{C}^{k+\mu, 0} ( \ov{\Om^{\la}} )} + \| \gm^{\la} \|_{\mc{C}^{k+1+\mu, 0} (b\Om^{\la})} + \|(a,b,c) \|_{C^{0}} \right).  
\end{equation}

The induction and thus the theorem will be proved if we show that $u^{\la_0}$ is indeed the $\la$ derivative of $v^{\la}$. Define $G^{\la} := D^{\la,\la_0} v - u^{\la_0}$. Then in view of \re{diffqt_eq} and \re{ladeqn}, $G^{\la}$ is the solution to the problem
\begin{gather*} 
\begin{cases}
\pa_{\ov{\zeta}} G^{\la} + e^{\la_0} \pa_{\zeta} G^{\la} + A_1^{\la_0} G^{\la} + B_1^{\la_0} \ov{G^{\la} } = E^{\la} & \text {in $\Om$;} \\ 
Re[\ov{ l_1^{\la_0}} G^{\la}  ] = \phi^{\la} & \text {on $b \Om$}, 
\end{cases} 
\end{gather*}
where 
$
E^{\la} (\zeta) := F_2^{\la,\la_0} (\zeta) - F_2^{\la_0} (\zeta),  \phi^{\la}(\zeta) := \gm^{\la, \la_0}_2(\zeta) - \gm^{\la_0}_2(\zeta). 
$
In addition, by \re{dq_uc} and \re{lad_ula_uc}, $G^{\la}$ satisfies the conditions:
\begin{equation} \label{Gla_uc}
\begin{gathered}  
G^{\la}  (\zeta_r)  =  (D^{\la, \la_0} a_r - \pa_{\la} a_r )+ i ( D^{\la, \la_0} b_r - \pa_{\la} b_r ); 
\\
G^{\la} (\zeta_s') = D^{\la, \la_0} \left\{ l (\zeta_s', \cdot) [ \gm (\zeta_s', \cdot) + i c_s(\cdot) ]  \right\} 
- \pa_{\la} |_{\la = \la_0} \left\{  l (\zeta_s', \la) \left[ \gm (\zeta'_s, \la) + i c_s (\la)  \right] \right\}. 
\end{gathered}
\end{equation}

Since $e^{\la}, A_1^{\la}, B_1^{\la}, F_1^{\la}, (f_1^{\la})^{-1} \in \mc{C}^{k+\mu,j} (\ov{\Om})$ and $k \geq j \geq 1$, then $\pa_{\la} e^{\la}$, $\pa_{\la} A_1^{\la}$, $ \pa_{\la} B_1^{\la}$, $\pa_{\la} F_1^{\la}$ are in  $ C^{\mu,0} (\ov{\Om})$.  
Since $\gm^{\la}, l^{\la} \in \mc{C}^{k+1+\mu,j} (b \Om)$, then  $ \pa_{\la} \gm^{\la}$, $ \pa_{\la} l^{\la} \in C^{1+\mu,0}(b \Om)$. By \rp{cinhd}, we see that $| \pa_{\la} \gm^{\la} (\la_1) - \pa_{\la} \gm^{\la} (\la_2) |_{\tau}$ converges to $0$ as $|\la_1- \la_2| \to 0$, for any $0 < \tau < 1+\mu$.  It follows from \re{F2lala0}, \re{gm2lala0}, \re{F2la} and \re{gm2la} that 
\eq{Ephicvg0} 
|E^{\la} |_0 \ra 0, \quad | \phi^{\la} |_{\tau} \ra 0, \quad \text{as $| \la - \la_0| \to 0$.}  
\eeq
As before we can do a change of coordinate $\psi^{\la_0}: \zeta \mapsto \zeta_1$ so that in the new coordinates, $G_1^{\la}(\zeta_1) := G^{\la} ( (\psi^{\la_0})^{-1} (\zeta_1) )$ is the solution to the problem 
\begin{gather*} 
\begin{cases} 
\pa_{\ov{\zeta}} G_1^{\la} + A_2^{\la_0} G_1^{\la} + B_2^{\la_0} \ov{G_1^{\la} } = E_1^{\la} & \text {in $\Om_1$;} \\ 
Re[\ov{ l_2^{\la_0}} G_1^{\la}  ] = \phi_1^{\la} & \text {on $b \Om_1$},  
\end{cases} 
\end{gather*}
where $A_2^{\la_0}, B_2^{\la_0}, l_2^{\la_0}$ are defined in the same way as expressions \re{A2B2} and \re{l2} with $\la$ replaced by $\la_0$, and  
\begin{gather*} 
E_1^{\la} (\zeta_1)= \left[ f_1^{\la_0} (\zeta_1) \right]^{-1} E^{\la} ( (\psi^{\la_0} )^{-1} (\zeta_1)), \quad \zeta_1 \in \Om_1; \\
\phi_1^{\la} (\zeta_1) = \left[ f_1^{\la_0} (\zeta_1) \right]^{-1} \phi^{\la} ( (\psi^{\la_0} )^{-1} (\zeta_1)), \quad \zeta_1 \in \Om_1.   
\end{gather*}

In addition, if we set $\wti{\zeta_r} = \psi^{\la_0} (\zeta_r) $ and $\wti{\zeta_s'} =  \psi^{\la_0}(\zeta_s') $, then by \re{Gla_uc} $G_1^{\la}$ satisfies 
\begin{gather*}
G_1^{\la}  ( \wti{\zeta_r })  =  \left\{  (D^{\la, \la_0} a_r - \pa_{\la} a_r )+ i ( D^{\la, \la_0} b_r - \pa_{\la} b_r )  \right\}
\circ ( \psi^{\la_0} )^{-1},  
\\
G^{\la} (\wti{\zeta_s'}) = D^{\la, \la_0} \left\{ l (\zeta_s', \cdot) [ \gm (\zeta_s', \cdot) + i c_s(\cdot) ]  \right\} 
- \pa_{\la} |_{\la = \la_0} \left\{  l (\zeta_s', \la) \left[ \gm (\zeta'_s, \la) + i c_s (\la)  \right] \right\} 
\circ (\psi^{\la_0})^{-1}. 
\end{gather*}

In view of estimate \re{pAmcsupest}, the solution $G_1^{\la}$ satisfies 
\[ 
| G_1^{\la} |_0 \leq C \left( \sum_{r=1}^{N} G_1^{\la} (\wti{\zeta_r})   
+ \sum_{s=1}^{N'}  G_1^{\la} (\wti{\zeta}'_s )  
+ | \phi_1^{\la}|_{\tau} + | E_1^{\la} |_{0} \right), \quad \tau >0.  
\]
As $\la \to \la_0$, the right-hand side converges to $0$.  
Hence $ | G_1^{\la}|_{C^0 (\ov{\Om_1})}\to 0$, and it follows that $|D^{\la, \la_0} v - u^{\la_0} |_{C^0 (\ov{\Om}) }  =| G^{\la} |_{C^0 (\ov{\Om})} 
= |G_1^{\la}|_{C^0(\ov{\Om_1})} 
\to 0$. 
Finally \re{lad_ula_est} and \re{lad_vla_j0est} together imply \re{lader_est}, so the proof is complete. 
\end{proof}

\section{Appendix}

\subsection{Estimates for the Cauchy-Green/ $\dbar$ operator}    
Let $\Om$ be a bounded domain in $\C$. Recall the operator $T_{\Om}$ given by \re{Topt}. 

\begin{prop}[{\cite[Theorem 1.13]{VA62}}] 
Let $\Om$ be a bounded domain. If $ f \in L^1 (\Om)$, then $T_\Om f$ belong to $L^p (\wti{\Om}) $, for any $1 \leq p < 2$ and for any bounded domain $ \wti{\Om}$ in $\C$. 
\end{prop}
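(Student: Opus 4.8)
The plan is to show that $T_\Om f \in L^p(\wti\Om)$ for $1 \le p < 2$ by a standard Young-type convolution argument, exploiting that the Cauchy kernel $k(z) = \frac{1}{\pi|z|}$ lies in $L^r_{\mathrm{loc}}(\C)$ for every $r < 2$. First I would write $T_\Om f = -\frac{1}{\pi}(f \mathbf{1}_\Om) * \frac{1}{z}$, where $f$ is extended by zero outside $\Om$ so that $f \mathbf{1}_\Om \in L^1(\C)$ with compact support. Since both $\Om$ and $\wti\Om$ are bounded, for $z \in \wti\Om$ and $\zeta \in \Om$ the difference $z - \zeta$ ranges over a fixed bounded set $B_R(0)$, so only the behaviour of $\frac{1}{|\cdot|}$ on $B_R(0)$ matters. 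Thus I may replace the kernel by $K(w) := \frac{1}{\pi|w|}\mathbf{1}_{B_R(0)}(w)$, and $K \in L^p(\C)$ for every $p < 2$ because $\int_{B_R} |w|^{-p}\, dA(w) = 2\pi \int_0^R \rho^{1-p}\, d\rho < \infty$ exactly when $p < 2$.

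Next I would invoke Young's inequality for convolutions in the form $\|g * h\|_{L^p(\C)} \le \|g\|_{L^1(\C)} \|h\|_{L^p(\C)}$ (the exponent identity $1 + \frac1p = \frac11 + \frac1p$ is satisfied). Applying this with $g = f\mathbf{1}_\Om$ and $h = K$ gives
\[
\|T_\Om f\|_{L^p(\wti\Om)} \le \|T_\Om f\|_{L^p(\C)} \le \frac{1}{\pi}\|f\|_{L^1(\Om)} \|K\|_{L^p(\C)} = C(p,R)\, \|f\|_{L^1(\Om)} < \infty,
\]
which is the desired conclusion. (One should note for this step that the value of $T_\Om f$ is unaffected by whether we integrate the kernel $\frac1z$ or its truncation $K$, since the two agree on the relevant range of $z - \zeta$; alternatively one can simply estimate $\iint_{\wti\Om}\big(\iint_\Om \frac{|f(\zeta)|}{|z-\zeta|}\,dA(\zeta)\big)^p dA(z)$ directly by Minkowski's integral inequality, bounding $\big(\iint_{\wti\Om}|z-\zeta|^{-p}\,dA(z)\big)^{1/p} \le \|K\|_{L^p}$ uniformly in $\zeta \in \Om$.)

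There is essentially no serious obstacle here; the only point requiring a little care is the integrability threshold, namely that $p < 2$ is exactly what makes $\frac{1}{|w|} \in L^p_{\mathrm{loc}}$, and that boundedness of both domains is what lets us localise the kernel. The case $p = 1$ is immediate from Fubini (or is the trivial endpoint of the above). No appeal to the deeper $L^p_\nu$ or $L^p_\alpha$ estimates cited earlier in the paper is needed — this proposition is the elementary base case.
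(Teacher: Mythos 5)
Your argument is correct: localizing the Cauchy kernel to a ball $B_R(0)$ containing all differences $z-\zeta$ with $z\in\wti\Om$, $\zeta\in\Om$, noting $|w|^{-1}\in L^p(B_R)$ precisely for $p<2$, and applying Young's inequality $\|g*h\|_{L^p}\le\|g\|_{L^1}\|h\|_{L^p}$ (or equivalently Minkowski's integral inequality with the uniform bound on $\iint_{\wti\Om}|z-\zeta|^{-p}\,dA(z)$) is exactly the standard proof of this fact. The paper itself supplies no proof and merely cites Vekua's Theorem 1.13, whose argument is essentially the one you give, so your write-up is a complete and faithful substitute.
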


\begin{prop}[{\cite[Theorem 1.14]{VA62}}] \label{dbar_opt_dist} 
If $f \in L^1 (\Om)$, then for any $\var \in C^{\infty}_{0} (\Om)$ (i.e. the class of compactly supported smooth functions in $\Om$), one has the following
\[ 
\iint_{\Om} Tf \, \DD{\var}{\ov{z}} d A(z) = - \iint_{\Om} f  \var \, dA(z). 
\] 
In other words, the equation $\pa_{\ov{z}} (Tf) = f$ holds in the sense of distribution on $\Om$. 
\end{prop}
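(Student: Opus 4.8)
The statement to prove is that for $f \in L^1(\Om)$, the function $T_\Om f$ satisfies $\pa_{\ov z}(T_\Om f) = f$ in the sense of distributions, i.e.
\[
\iint_\Om T_\Om f \cdot \DD{\var}{\ov z} \, dA(z) = -\iint_\Om f\,\var \, dA(z), \qquad \var \in C_0^\infty(\Om).
\]
The plan is the standard one: first establish the identity for $f$ in a dense, well-behaved subclass, then pass to the limit using the $L^p$-continuity of $T_\Om$ recorded in the preceding proposition. Concretely, I would first treat $f \in C_0^\infty(\Om)$ (or even $f \in C^1_0$), where one can differentiate under the integral sign after a change of variables: writing $T_\Om f(z) = -\frac{1}{\pi}\iint_\Om \frac{f(\zeta)}{\zeta - z}\,dA(\zeta) = -\frac{1}{\pi}\iint_{\C}\frac{f(z+\eta)}{-\eta}\,dA(\eta)$ (using $f \equiv 0$ outside $\Om$), so that $\pa_{\ov z}$ can be moved onto $f(z+\eta)$, and then changing variables back gives $\pa_{\ov z}(T_\Om f)(z) = -\frac{1}{\pi}\iint_\Om \frac{\pa_{\ov\zeta} f(\zeta)}{\zeta - z}\,dA(\zeta) = T_\Om(\pa_{\ov\zeta}f)(z)$. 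Combining this with the classical Cauchy–Pompeiu formula (which for $f \in C^1_0(\Om)$ reads $f(z) = T_\Om(\pa_{\ov\zeta} f)(z)$, the boundary term vanishing because $f$ has compact support) yields $\pa_{\ov z}(T_\Om f) = f$ pointwise, hence in particular distributionally, for such $f$.

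Next I would pass to general $f \in L^1(\Om)$ by density. Given $\var \in C_0^\infty(\Om)$ fixed, choose $f_n \in C_0^\infty(\Om)$ with $f_n \to f$ in $L^1(\Om)$. The right-hand side converges: $\iint_\Om f_n \var \to \iint_\Om f\var$ since $\var$ is bounded. For the left-hand side, fix a bounded domain $\wti\Om \supset \supp\var$; by the previous proposition $T_\Om: L^1(\Om) \to L^p(\wti\Om)$ is bounded for some $p \in (1,2)$, so $T_\Om f_n \to T_\Om f$ in $L^p(\wti\Om)$, and since $\pa\var/\pa\ov z \in L^{p'}(\wti\Om)$ with $p' = p/(p-1)$, Hölder's inequality gives $\iint_{\wti\Om} T_\Om f_n \cdot \pa_{\ov z}\var \to \iint_{\wti\Om} T_\Om f \cdot \pa_{\ov z}\var$. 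Since the identity holds for each $f_n$, it holds in the limit for $f$, which is exactly \re{Tfdbareqn}.

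The only genuinely delicate point is justifying the differentiation under the integral sign in the smooth case, because the kernel $1/(\zeta - z)$ is singular on the diagonal; this is handled cleanly by the translation trick above, which transfers the $z$-dependence onto the smooth factor $f(z+\eta)$ and leaves the (locally integrable) singularity $1/\eta$ untouched, after which dominated convergence applies. Everything else is routine, so I do not expect a substantive obstacle. One could alternatively prove the smooth-case identity by testing against $\pa_{\ov z}\var$ directly and integrating by parts after excising a small disk around the singularity — but the translation argument is shorter and is the version I would write up.
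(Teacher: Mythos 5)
Your proof is correct. The paper itself offers no argument for this proposition — it simply cites Vekua (Theorem 1.14, p.~29) — so there is no in-paper proof to compare against; your two-step scheme (establish the identity for $f\in C_0^\infty$ via the translation trick plus Cauchy--Pompeiu, then pass to general $f\in L^1$ by density using the $L^p(\wti\Om)$ bound from the preceding proposition) is a complete and standard substitute. It is worth noting that there is a shorter route, which is essentially what the cited source does: apply Fubini directly to $\iint_\Om T_\Om f\,\pa_{\ov z}\var\,dA(z)$ — the double integral is absolutely convergent because $|\zeta-z|^{-1}$ is locally integrable and $\var$ is bounded with compact support — and then recognize the inner integral $-\frac{1}{\pi}\iint_\Om \frac{\pa_{\ov z}\var(z)}{\zeta-z}\,dA(z)$ as $-\var(\zeta)$ by the Cauchy--Pompeiu formula applied to the \emph{test function} $\var$ rather than to $f$. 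That version needs no approximation of $f$ and no appeal to the $L^1\to L^p$ mapping property. Two small blemishes in your write-up: the substitution should read $\zeta=z+\eta$, giving $f(z+\eta)/\eta$ rather than $f(z+\eta)/(-\eta)$ (a harmless sign slip); and in the limiting step you use that $T_\Om$ is a \emph{bounded} operator from $L^1(\Om)$ to $L^p(\wti\Om)$, whereas the preceding proposition as stated only asserts that $T_\Om f$ lands in $L^p(\wti\Om)$ — the quantitative bound does hold (e.g.\ by Minkowski's integral inequality, since $\sup_\zeta \bigl\| |\zeta-\cdot|^{-1}\bigr\|_{L^p(\wti\Om)}<\infty$ for $p<2$), but you should say so explicitly.
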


\pr{dbarot} 
Let $\Om$ be a bounded domain. Suppose $f \in L^{p}(\Om)$, for $2 < p < \infty$. Then for $z,z_1,z_2\in \C$ the following estimates hold:
\begin{gather*} 
\left| T_{\Om} f (z) \right| \leq C(p) \left| f  \right|_{L^{p}(\Om)}, \quad  z \in \C; \\
\left| T_{\Om} f (z_1) - T_{\Om} f (z_2) \right| \leq C(p) \left| f  \right|_{L^{p}(\Om)} |z_1 - z_2| ^{\all_p}, \quad \all_p = \frac{p-2}{p}.
\end{gather*} 
\epr

\begin{prop}[{\cite[p.~34]{VA62}}] \label{dbarie}
Let $\Om \subset \C$ be a bounded domain. Suppose $\pa_{\ov z} w \in L^1(\Om)$. Then 
$
w - T_{\Om} (\pa_{\ov z} w) = \Phi(z), 
$
where $\Phi$ is some holomorphic function in $\Om$.
\end{prop} 

\begin{prop}[{\cite[Theorem 1.23]{VA62}}] \label{dbarubd}
Let $2 < p < \infty$. Suppose $f$ satisfies 
\[
M = \int_{\D} \left| f \left( \zeta^{-1} \right) \right|^{p} |\zeta|^{-2p} \, dA(\zeta) < \infty.
\]
Then for $z,z_1,z_2\in\C$, we have
$
|T_{\D^c} f(z) | \leq C(p) M  $ and $$
|T_{\D^c}f(z_1) - T_{\D^c} f(z_2) | \leq C(p) M |z_1 - z_2 |^{\all_p}, \quad \all_p = \frac{p-2}{p}. 
$$
\end{prop} 
Combining \rp{dbarie} and \rp{dbarubd} one gets
\pr{Pnhest1}  
Let $\D$ be the unit disk in the complex plane, and suppose $f \in L^p (\D)$ for $2 < p < \infty$. Let $P_n$ be the operator defined by formula \re{Pnf2}: 
\eq{Pnf2'}
P_n \var (z) = T_{\D} \var (z) + z^{2n+1} T_{\D^c}  \var_1 (z), 
\eeq  
where $\var_1$ is given by the second formula in \re{Pnf2}. Then $P_n \var \in C^{\all_p} (\ov{D})$ for $\all_p = \frac{p-2}{p}$, and 
$
|P_n \var |_{\all_p} \leq C(p) |f|_{L^p (\D)}.
$
\epr

\pr{Pnhest2} 
Let $\Om \in C^{k+1+\mu}$, where $k$ is a non-negative integer, and $0< \mu < 1$. Let $P_{n}$ be the operator given by formula \re{Pnf2'}. Then
$
| P_n \var |_{C^{k+1+\mu}(\ov{\D})} \leq C | \var |_{C^{k+\mu} (\ov{\D})}. 
$
\epr
\begin{proof} Let $\var_1(\zeta) = \ov{\var(1/ \ov{\zeta})} / (\zeta \ov{\zeta^{2}})$ and  
$
T_{\D^c} \var_1 (z) = - \frac{1}{\pi} \iint _{\C \sm \D} \frac{\var_1( \zeta) }{\zeta -z} \, dA(\zeta).
$
We have
\eq{Pnexp}
P_n \var (z) = T_{\D} \var (z) + z^{2n+1} T_{\D^{c}} \var_1 (z). 
\eeq       
By Theorem 1.32 \cite[p.~56]{VA62}, we have
\begin{align} \label{dbarder} 
\pa_{\ov z} T_{\D} \var (z) = \var (z), \quad  \pa_z T_{\D} \var (z) = \Pi_{\D} \var (z), 
\end{align} 
and $\Pi$ is bounded linear operator mapping $C^{k+\mu} (\ov{\D})$ to itself. It follows from \re{dbarder} that $T_{\D}$ is a bounded linear operator mapping $C^{k+\mu} (\ov{\D})$ to $C^{k+1+\mu} (\ov{\D})$. To estimate the second operator $T_{\D^{c}} \var_1$. We first extend $\var_1$ to a function in the class $C^{k+\mu}(\C)$, by extending $\var$ to a function in the class $C^{k+\mu}_0(\C)$. $\var_1$ vanishes in some neighborhood of $0$, and $\var_1 \in L^1(\C)$.  
Write
$
T_{\D^{c}} \var_1 = T_{\C} \var_1 (z) - T_{\D} \var_1 (z).  
$
By \cite[p.~63, Thm 1.34]{VA62}, $T_{\C} \var_1 \in C^{k+1+\mu}(\C)$. Since $T_{\D} \var_1 \in C^{k+1+\mu}(\ov{\D})$, we obtain from the above equation that $T_{\D^c} \var_1 \in C^{k+1+\mu}(\ov{\D})$. The proposition 
follows from 
\re{Pnexp}.   
\end{proof}

\pr{zdTop}      
Let $k$ be a non-negative integer and $0 < \beta < 1$. Let $\Om$ be a domain in $\C$ with $C^{k+1+\beta}$ boundary, and let $\Gm^{\la}: \Om \to \Om^{\la}$ be a $\mc{C}^{k+1+\beta, 0}$ embedding. Let $f^{\la}  \in \mc{C}^{k+\beta, 0}(\ov{\Om^{\la}})$. Then 
$\pa_{z^{\la}} T_{\Om^{\la}} f^{\la}  $ is in $\mc{C}^{k+\beta, 0}(\ov{\Om^{\la}})$, and there exists some constant $C$ independent of $\la$ so that  
$
| \pa_{z^{\la}} T_{\Om^{\la}} f^{\la} |_{k+ \beta} \leq C | f^{\la} |_{k+ \beta}.  
$
\epr 
The proof follows similarly as in \cite[p.~56-61]{VA62}.  

\subsection{Estimates for Cauchy type operators} 
\begin{lemma} \label{pvintest}  
Let $\Om^{\la}$ be a family of bounded domains in $\C$ in the class $\mc{C}^{1,0}$. That is there is a family of embedding $\Gm^{\la}: \Om \to \Om^{\la}$ such that $\Gm^{\la} \in \mc{C}^{1, 0} (\ov{\Om})$. For $0 < \beta < 1$, let $f^{\la}  \in \mc{C}^{\beta, 0} (b \Om)$, and let $g$ be defined by 
\[
g(\zeta, \la) := P.V. \int_{b \Om} \frac{f(t, \la) }{t^{\la} - \zeta^{\la}} \, dt^{\la} 
= \int_{b \Om} \frac{ f (t, \la) - f(\zeta, \la) }{t^{\la} - \zeta^{\la} } \, dt^{\la} + (\pi i ) f(\zeta, \la) . 
\]  
Then $g \in \mc{C}^{\beta, 0} (b \Om^{\la})$, and we have
$
\| g \|_{\beta, 0} \leq C_{1,0} \| f \|_{\beta, 0}.
$
\end{lemma}
\begin{proof} 
Let $\zeta_1, \zeta_2 \in b \Om$. Then we have 
\begin{align} \label{pvintzeta12} 
& |g ( \zeta_1,\la) - g (\zeta_2,\la)| 
\leq \int_{| s | < 2 \del} \frac{| f (t,\la) -  f (\zeta_1,\la)|}{| t^{\la}- \zeta_1^{\la}|} + \frac{|f (t,\la) - f (\zeta_2,\la)|}{| t^{\la}-\zeta_2^{\la}|} \, d t^{\la}
\\ \nn &\qquad + \left| \int_{|s  | \geq 2 \del} \frac{ \ov{f(t,\la)} - \ov{f(\zeta_1,\la)} }{t^{\la}-\zeta_1^{\la}} - \frac{ \ov{f(t,\la)} - \ov{f (\zeta_2,\la)} }{t^{\la}- \zeta_2^{\la}}\, d t^{\la}  \right|
+ \pi |f (\zeta_1, \la) - f(\zeta_2, \la)| 
\\ \nn &\quad \leq C_{1,0} \| l \|_{\beta,0} \, \del^{\beta}
+ \left| \left( \ov{f(\zeta_2,\la)} - \ov{f (\zeta_1,\la)}  \right) \int_{| s | \geq 2 \del}  \frac{d t^{\la}}{t^{\la}- \zeta_1^{\la}} \right|
\\ \nn & \qquad+ \int_{|s| \geq 2 \del}  \left( \ov{f(t,\la)} - \ov{ f( \zeta_2,\la)}  \right) \left( \frac{1}{t^{\la}- \zeta_1^{\la}} - \frac{1}{t^{\la}- \zeta_2^{\la}} \right) \, d t^{\la}.
\end{align}
The first term is bounded by $C_{1,0} \| l \|_{\beta,0} \, \del^{\beta}$. 
The second term is bounded by  
\begin{align*}  
C_{1,0} \| f  \|_{\beta,0} \, \del^{\beta} \left| log \frac{\Gm^{\la} (2 \del)- \Gm^{\la} (0) }{\Gm^{\la} (-2 \del) - \Gm^{\la} (0)} \right|.
\end{align*}
Since $\Gm \in C^{1,0}(\ov{\Om^{\la}})$, we have for positive constants $c_{1,0}$ and $C_{1,0}$, 
\[ 
c_{1,0}
= 
c_{1,0}  \frac{2 \del}{2 \del} 
\leq   
\left| \frac{\Gm^{\la} (2 \del)- \Gm^{\la} (0) }{\Gm^{\la} (-2 \del) - \Gm^{\la} (0)}  \right|
\leq  
C_{1,0} \frac{2 \del}{2 \del}
\leq 
C_{1,0}.  
\] 
Thus 
$
\left| log \frac{\Gm^{\la} (2 \del)- \Gm^{\la} (0) }{\Gm^{\la} (-2 \del) - \Gm^{\la} (0)}  \right|$  is bounded uniformly in $\la$.
Finally the last term in \re{pvintzeta12} is bounded by  
\[ 
C_{1,0} \| f \|_{\beta,0} \, \del^{\beta} \int_{| s  | \geq 2 \del} \frac{| t^{\la} - \zeta_2^{\la} |^{\beta}}{|t ^{\la}- \zeta_1^{\la}| |t^{\la} - \zeta_2^{\la}|} \, d t^{\la}. 
\]
Since
$|t - \zeta_2| \geq | t - \zeta_1| - | \zeta_1 - \zeta_2 | \geq | t - \zeta_1| - c_0 |t - \zeta_1 | \geq (1- c_0) |t - \zeta_1 |$, for some $c_0 > 0$, the above expression is bounded by 
\begin{align*}
C_{1,0} \| f \|_{\beta,0} \, \del^{\beta} \int_{|s| \geq 2 \del} |t ^{\la} - \zeta_1^{\la} |^{\beta-2} \, dt^{\la}    
&\leq C_{1,0}' \| f \|_{\beta,0} \, \del^{\beta}   \int_{|s | \geq 2 \del} |s| ^{\beta-2} \, ds  
\leq C_{1,0}'' \| f \|_{\beta,0}  \, \del^{\beta}.
\end{align*}
Putting together the results we get
$
| v (\zeta_1, \la) - v(\zeta_2, \la) | \leq C_{1,0} \| f^{\la}  \|_{\beta,0}  \| \zeta_1 - \zeta_2|^{\beta}. \qedhere
$
\end{proof}

\begin{lemma} \label{redopt}  
Let $\Om^{\la}$ be a family of bounded domains in $\C$ in the class $\mc{C}^{1,0}$. That is there is a bounded domain $\Om$ of $C^1$ boundary, and a family of embedding $\Gm^{\la}: \Om \to \Om^{\la}$ such that $\Gm^{\la} \in \mc{C}^{1, 0} (\ov{\Om})$. Let $\mc{N}$ be the operator defined by 
\begin{equation*}
\mc{N}f(\zeta, \la) = \int_{b \Om} \frac{1}{t^{\la} - \zeta^{\la} } \left[ v (\zeta,t, \la) - v (t,t,\la) \right] f (t, \la) \, dt^{\la}. 
\end{equation*}
Suppose $v \in C^{\beta} (\ov{\Om})$, for $0 < \beta <1$, and $| v (\cdot, t, \la) |_{\beta}$ is bounded by some constant $A_{\beta}$ uniformly in $\la$ and $t \in b \Om$. Then $\mc{N}$ maps the space $C^{0,0} (b \Om^{\la})$ to the space $C^{\beta', 0}(b \Om^{\la})$, for any $0 < \beta' < \beta$, and 
\eq{redoptest}
\| \mc{N} f \|_{\beta',0} \leq C_{\beta'} C_{1,0} \| f \|_{0,0}. 
\eeq
\end{lemma}
\nid \tit{Proof. } 
First we estimate the sup norm:  
\begin{align*}
|\mc{N}f (\zeta, \la)| &\leq C_{1,0} A_{\beta} |f (\cdot, \la)|_{C^{0} (b \Om) }  \int_{b \Om} \frac{1}{| t- \zeta |^{1-\beta}} \, dt 
\leq C_{1,0}  A_{\beta} |f(\cdot, \la)|_{C^{0}(b \Om)},  
\end{align*}
where the constants are independent of $\la$.  

Next, we estimate the difference of $\mc{N} f $ at two points $\zeta_1, \zeta_2 \in b \Om$. Let
$n (\zeta, t, \la) = v (\zeta,t, \la) - v (t,t,\la)$. Then $|n(\zeta, t, \la)| \leq A_{\beta} |\zeta - t|^{\beta} $. 
Writing $\del = | \zeta - \zeta'|$, we have 
\begin{align*}
&|\mc{N} f(\zeta_1, \la ) - \mc{N} f(\zeta_2, \la )| 
= \left| \int_{b \Om} \left[ \frac{n(\zeta_1, t, \la )}{|\zeta_1^{\la} - t^{\la} | } 
- \frac{n(\zeta_2, t, \la )}{|\zeta_2^{\la} - t^{\la}| } \right] f(t, \la)  \, dt^{\la} \right|   \\
&\quad\leq A_{\beta} |f(\cdot, \la) |_{C^0(b \Om) } \left\{ \int_{|\zeta_1^{\la} - t^{\la} | <2\del} \frac{d t^{ \la}}{|\zeta_1^{\la} - t^{\la}|^{1- \beta} } + \int_{|\zeta_1^{\la} - t^{\la}| < 2 \del} \frac{d t^{\la}}{|\zeta_2^{\la} - t^{\la}|^{1- \beta} } \right\} 
\\ &\qquad +  |f(\cdot, \la) |_{C^0(b \Om) } \int_{|\zeta_1 -t| \geq 2\del} \left| \frac{n(\zeta_1, t, \la)}{|\zeta_1^{\la} -t^{\la}|} 
- \frac{n (\zeta_2, t, \la)}{|\zeta_2^{\la} -t^{\la}| } \right|   \, dt^{\la}.
\end{align*}
The first two integrals are bounded by $C_{1,0 } A_{\beta}' | f(\cdot, \la) |_{C^0 (b \Om) } |\zeta_1- \zeta_2|^{\beta} $. The last integral is bounded by  
\begin{align*} 
& \int_{|\zeta_1 -t| >2\del}\left\{
\frac{|n(\zeta_1,t, \la) - n(\zeta_2,t,\la)|}{|\zeta_1^{\la} - t^{\la}|} 
+  
| n(\zeta_2, t, \la)|  
\left[ \frac{1}{|\zeta_1^{\la}- t^{\la}|  } - \frac{1}{|\zeta_2^{\la} - t^{\la}|} \right]\right\}
\, dt^{\la} \\
& \quad \leq C_{1,0} \int_{| \zeta_1 - t | \geq 2 \del} \frac{| v(\zeta_1, t, \la) - v (\zeta_2, t, \la) | }{ |\zeta_1 - t|} \, dt  + C_{1,0} \int_{|\zeta_1  -t| >2\del}  \frac{|\zeta_1 - \zeta_2| |\zeta_2 - t|^{\beta} }{|\zeta_1 -t| | \zeta_2 - t| } \, dt \\
&\quad \leq C_{1,0} |\zeta_1 - \zeta_2 |^{\beta} (\log |\zeta_1 - \zeta_2| + 1 ) 
+ C_{1,0} |\zeta_1 - \zeta_2| \int_{|\zeta_1 - t| > 2 \del }  \frac{1}{|\zeta_1 -t |^{2- \beta} } \, dt \\ 
&\quad \leq C_{1,0}' |\zeta_1 - \zeta_2|^{\beta'}
\end{align*}
for any $0 < \beta' < \beta$. Combining the estimates we get \re{redoptest}. 
\hfill \qed

\pr{Cint}
Let $0< \beta <1$ and $k, l \geq 0$ be integers. Let $\Om$ be a bounded domain in $\C$ with $b \Om \in C^1$.  For $f \in L^1(b \Om)$, define the Cauchy transform $\mf{C}f$ on $\C \sm b\Om$ by 
$
\mf{C} f (z):= \frac{1}{2 \pi i } \int_{b \Om} \frac{f(\zeta) }{\zeta -z} \, d\zeta. 
$
 Suppose $f \in C^{\beta} (b \Om)$. Then $f$ extends to a function $\mf{C}^{+} f \in C^{\beta} (\ov{\Om})$. Moreover on $b \Om$
\[
\mf{C}^+ f (z) = f(z) + \frac{1}{2 \pi i } \int_{b \Om} \frac{f(z) - f(\zeta)}{z - \zeta} \, d\zeta.   
\]
(ii) Suppose $b \Om \in C^{k+1+\beta}$, and $f \in C^{l+\beta}(b \Om)$ with $k+1 \geq l$. Then $\mf{C}^+ f \in C^{l+ \beta}(\ov{\Om})$. 
\epr
The proof is standard and the reader may refer to \cite{B-G14}.

\pr{gCiHne}
Let $A,B \in L_{p,2}(\C)$.  Let $w(z)$ be the generalized Cauchy integral with density $\var$, defined by 
\begin{equation*}
w(z) = \frac{1}{2 \pi i} \int_{b \Om} G_{1}(z,t) \var(t) \, dt - G_{2}(z,t) \ov{\var(t)} \, \ov{dt}, 
\end{equation*}
where $G_{1}$ and $G_{2}$ are the fundamental kernels of the class $\mc{L}_{p,2}(A,B,\C)$. Suppose $\var(t) \in C^{\mu} (b \Om)$, then $w(z) \in C^{\nu}(\ov{\Om})$, where $\nu = \text{min} (\all_p, \mu)$, $\all_p = \frac{p}{p-2}$.
\epr
\begin{proof}
We write 
$
G_{1}(z,t) = \frac{ \Si_{i=1,2} [e^{\om_{i}(z,t)} - e^{\om_{i}(t,t)}]}{t-z} + \frac{2}{t-z}$ and
$ 
G_{2}(z,t) = \frac{e^{\om_{1}(z,t)} - e^{\om_{2}(z,t)} }{t-z} . 
$	 
By \re{omiHne} and \re{omde} we have
$
|e^{\om_{i}(z,t)} - e^{\om_{i}(t,t)}| \leq C |z-t|^{\all_p} $ for $ i = 1,2$ and $  
|e^{\om_{1}(z,t)} - e^{\om_{2}(z,t)}| \leq C |z-t|^{\all_p}. $
Then
\begin{align*} 
w(z) &= \frac{1}{2 \pi i} \int_{\pa \Om} \frac{\var(t) \, dt}{t-z} 
\\ &\quad+ \int_{\pa \Om}  \frac{\Si_{i=1,2} [e^{\om_{i}(z,t)} - e^{\om_{i}(t,t)}]}{t-z} \var(t) \, dt - \int_{\pa \Om}  \frac{e^{\om_{1}(z,t)} - e^{\om_{2}(z,t)} }{t-z} \ov{\var(t)} \, \ov{dt} \\ 
&=  \frac{1}{2 \pi i} \int_{\pa \Om} \frac{\var(t) \, dt}{t-z} + \int_{\pa \Om} \frac{F(z,t)}{|t-z|^{1-\all_p}} \var(t) \, dt + \int_{\pa \Om} \frac{G(z,t) }{|t-z|^{1-\all_p}} \ov{\var(t)} \, \ov{dt},    
\end{align*}  
where $|F(z,t)| \leq C|z-t|$ and $|G(z,t)| \leq C|z-t|$, for any $z \in \ov{\Om}$ and $t \in \pa \Om$. 
The result then follows from \rp{Cint}.  
\end{proof}   

\pr{gCijft} 
Let $L$ be an arc of the class $C^{1}$ and let $0< \beta < 1$. Define $w$ to be the generalized Cauchy integral with density $\var \in C^{\beta}(L)$.  
\[ 
w(z) := \frac{1}{2 \pi i} \int_{L} G_{1} (z,t) \var(t) \, dt - G_{2} (z,t) \ov{\var(t)} \, \ov{dt}, \quad z \notin L. 
\]
Then $w$ extends to a continuous function from either side of the arc. Furthermore, let us denote by $w^{+}$ and $w^{-}$ the limiting values of $w$ from the region to the left and right of $L$ when going in the positive direction. Then the following formulae hold:
\begin{gather} \label{gCijf1} 
w^{+} (\zeta) = \yh \var(\zeta)  + w(\zeta), \quad
w^{-}(\zeta) = - \yh \var(\zeta) + w(\zeta),  \quad \zeta \in L,
\end{gather} 
where
\[ 
w(\zeta) = \frac{1}{2 \pi i} \,  P.V. \int_{L} G_{1}(\zeta, t)  \var(t) \, dt - G_{2} (\zeta, t) \ov{\var(t)} \, \ov{dt}, \quad \quad  \zeta \in L.  
\] 
The first integral on the right-hand side is understood to be the Cauchy  principal value, and the second integral converging in the ordinary sense. 
\epr
\begin{proof} 
See formula (14.2)-(14.3) from \cite[p.~188]{VA62}. 
The proof uses Plemelj's formula and estimate \re{fke}. 
\end{proof}
\pr{gCith}   
Let $\Om$ be a domain in $\C$ with $C^1$ and compact boundary. Suppose $A,B \in L^p  (\Om) $ and $A \equiv B \equiv 0$ in $\C \sm \Om$. Define  
\[ 
w(z) := \frac{1}{2 \pi i} \int_{b \Om} G_{1}(z,t,\Om) \var(t) \, dt - G_{2}(z,t, \Om) \ov{\var(t)} \, \ov{dt}, 
\] 
where $\var \in C^{\beta} (b \Om)$ for $0<\beta<1$, and $G_i$ are defined by \re{Gidef}.
Then $w$ satisfies the following integral equation (in fact is the unique solution in $C^0 (\ov{\Om})$) 
\eq{gCiie}
w(z) + T_{\Om} (Aw + B\ov{w}) = \Phi(z) , 
\eeq
where
\eq{gCiePhid}
\Phi(z) := \frac{1}{2 \pi i} \int_{b \Om} \frac{\var(t) \, dt}{t-z}. 
\eeq
\epr
\begin{proof}
We will follow some arguments in \cite[p.~183-188]{VA62}. By the well-known jump formula for Cauchy integral, we have 
\[ 
\var(\zeta) = \Phi^{+}(\zeta) - \Phi^{-}(\zeta),  \quad \zeta \in b \Om. 
\]
In addition, $\Phi \in C^{\beta}(\ov{\Om})$.  
By \re{Gieq}, 
$
\pa_{\ov{z}} G_{i} (z, \zeta,  \Om)= 0$   
if $z \in \C \sm \Om$ and $\zeta \in \Om$. 
By \re{fkr}, 
\eq{Giholc}
\pa_{\ov{z}} G_{i} (\zeta, z,  \Om)= 0, \quad \quad \text{if $z \in \C \sm \Om$, \quad $\zeta \in \Om$}; 
\eeq
\[ 
w(z) = \frac{1}{2 \pi i} \int_{b \Om} G_{1}(z, t, \Om) (\Phi^{+} - \Phi^{-}) (t) \, dt 
- G_{2}(z, t, \Om) \ov{(\Phi^{+} - \Phi^{-}) (t)} \,  \ov{d{t}} . 
\]
By \re{Giholc} and Cauchy's theorem, 
$
  \int_{b \Om} G_{1}(z, t, \Om) \Phi^{-} (t) \, dt 
- G_{2}(z, t, \Om) \ov{\Phi^{-}(t)}  \,  \ov{dt}  = 0. 
$ 
Using this and applying Green's identity on the domain $\Om_{\ve} = \{ \zeta \in \Om: |\zeta -z| > \ve \}$,  we obtain 
\begin{align*} 
&w(z) =  \frac{1}{2 \pi i} \int_{b \Om} G_{1}(z, t, \Om) \Phi^{+} (t) \, dt 
-  G_{2}(z, t, \Om) \ov{\Phi^{+}(t)}  \,  \ov{dt}  \\
&= \lim_{\ve \to 0} \left\{ \frac{1}{ \pi} \iint_{\Om_{\ve}} \left( \pa_{\ov{\zeta}} G_{1}(z, \zeta, \Om) \right) \Phi(\zeta) \, d A (\zeta)  + \frac{1}{ \pi} \iint_{\Om_{\ve}} \pa_{ \ov{\zeta} } \left( G_{2}(z, \zeta, \Om) \right ) \ov{\Phi(\zeta)} \, d A(\zeta) \right\} \\
&\quad + 
\lim_{\ve \to 0} \left\{ \frac{1}{2 \pi i} \int_{|t -z| = \ve} G_{1} (z, t, \Om) \Phi(t) -  \Om_{2}(z, t, \Om) \ov{\Phi(t)} \, \ov{d t} \right\}, 
\end{align*} 
where we used that $\Phi$ is holomorphic.

Taking the limit as $\ve \to 0$, and taking into account estimate \re{fke}, we have
\eq{rstf}  
w (z) = \Phi(z) + \iint_{\Om} R_{1}(z, \zeta, \Om) \Phi(\zeta)  d A(\zeta) 
+ \iint_{\Om}  R_{2}(z, \zeta, \Om) \ov{\Phi(\zeta)} \, d A(\zeta) , 
\eeq
where
$ 
R_{1}(z, \zeta, \Om) = \frac{1}{\pi} \pa_{\ov{\zeta}} G_{1}(z, \zeta, \Om)$ and $
R_{2}(z, \zeta, \Om) = \frac{1}{\pi} \pa_{\ov{\zeta}} G_{2}(z, \zeta, \Om). 
$

We now show that expression \re{rstf} represents the unique solution to the integral equation:
\eq{gCiie2}
w(z) - \frac{1}{\pi} \iint_{\Om} \frac{A(\zeta) w(\zeta) + B(\zeta) \ov{w(\zeta)}}{\zeta -z } \, d A(\zeta) = \Phi(z), \quad z \in \Om . 
\eeq
By Vekua \cite[p.~156]{VA62}, there exists a unique solution $w_0$ in the class $L^{q} (\ov{\Om})$, $q = \frac{p}{p-1}$. Our goal is to show that $w$ defined by \re{rstf} is equal to $w_0$. 

Taking $\pa_{\ov{z}}$ on both sides we get $\mc{L}_{A,B} w = 0$ in $\Om$.
Write \re{gCiie2} as
\eq{wiePhi}
w_0 - P_{\Om} w_0 = \Phi, \quad \quad P_{\Om} w_0 := -T_{\Om}(A w_0+ B\ov{w_0})  = T_{\Om} (\pa_{\ov z}  w_0) .  
\eeq 
By \cite[p.~50]{VA62}, there exists an integer $n$ so that $P^{n} w_0$ is in the class $C^{\tau}$ for some $\tau>0$. Since $P(\Phi) \in C^{\frac{p}{p-2}}(\ov{\Om})$ (\rp{Pnhest1}) and
\[ 
w_0 = P^{n} w_0 + P^{n-1} (\Phi) + P^{n-2} (\Phi) + \cdots + \Phi,
\]
we have that $w_0 \in C^{0}(\ov{\Om})$. Since $A,B \in L^{p} (\Om)$, we have $\pa_{\ov{z}} w_0 \in L^p (\Om)$. Hence by the remark in \cite[p.41]{VA62} we have 
\eq{wC-Gf}
w_0 - P_{\Om} w_0 \equiv w_0 - T_{\Om} (\pa_{\ov z}  w_0)  = \frac{1}{2 \pi i} \int_{b \Om} \frac{w_0(t) \, dt}{t -z}. 
\eeq
Now the left-hand sides of \re{wiePhi} and \re{wC-Gf} are the same, so the right-hand side are the same as well and we get
\[ 
\Phi(z) = \frac{1}{2 \pi i} \int_{b \Om} \frac{\var(t) \, dt}{t-z} = \frac{1}{2 \pi i} \int_{b \Om} \frac{w_0(t) \, dt}{t -z}, \quad \quad z \in \Om. 
\]
It follows then by the jump formula for Cauchy integral that $w_0 (\zeta) = \Phi^{+}(\zeta) - \Phi^{-}( \zeta)$, for $ \zeta \in b \Om$. 
By \rp{nhgaerfe}, $w_0$ can be represented as 
\begin{align*}
w_0 (z)  &= \frac{1}{2 \pi i} \int_{\pa \Om} G_{1}(z, t, \Om) w_0 (t) \, dt - G_{2}(z, t, \Om) \ov{w_0 (t)} \, \ov{dt} \\
&= \frac{1}{2 \pi i} \int_{\pa \Om} G_{1}(z, t, \Om) (\Phi^{+} - \Phi^{-}) (t) \, dt  
-  G_{2}(z, t, \Om) \ov{(\Phi^{+} - \Phi^{-}) (t)} \,  \ov{d{t}} = w(z) .  \qedhere
\end{align*}
\end{proof} 

\subsection{Theory of singular integral operators on curves}  
\ 
\\ 

In the following we let $L$ be a piece-wise, non-intersecting $C^1$ curve on the plane.   
\begin{defn}
We call an operator $\mc{N}$ a \emph{Fredholm} operator if it takes the form 
$
\mc{F} \var (t_0) = A(t_0) \var (t_0) + \mc{N} (\var) (t_0),
$
where $A$ is a function nowhere vanishing on  $L$ and $\mc{N}$ is a compact operator on $C^0 (L)$.
\end{defn}

\begin{defn} 
We call an operator defined by 
\[ 
\mc{K} \var (t_0) := A (t_0)  \var(t_0) + \frac{1}{\pi i} P.V. \int_{L} \frac{K(t_0, t) \var(t) \, dt}{t - t_0}, \quad t_0 \in L 
\] 
a \emph{singular integral operator (with Cauchy type kernel)}.  
\end{defn}
The functions $A$ and $K$ are assumed to be H\"older continuous in their respective arguments. We can write $\mc{K} \var$ as
\begin{align*}
\mc{K} \var (t_0) 
&= A (t_0)  \var(t_0) +  \frac{B(t_0) }{\pi i} \,  P.V. \int_L \frac{ \var (t) dt}{t-t_0} 
+ \frac{1}{\pi i} \int_{L} k(t_0, t) \, dt, 
\end{align*} 
where 
$ 
B(t_0) = K(t_0, t_0)$ and $ k(t_0, t) =   \frac{K(t_0, t) \var(t) - K(t_0, t_0) \var(t_0) }{t - t_0}. 
$
The operator $\mc{K}^0$ defined by 
\[
\mc{K}^0  \var (t_0) 
=A (t_0)  \var(t_0) +  \frac{B(t_0) }{\pi i} \,  P.V. \int_L \frac{ \var (t) dt}{t-t_0}  
\]
will be called the \emph{dominant part} of the operator $\mc{K}$. 

We define the \emph{adjoint operator} of $\mc{K}$ by
\[
\mc{K}' (\psi) (t_0) = A(t_0)  \var(t_0) - \frac{1}{\pi i} P.V. \int_{L} \frac{K(t, t_0) \psi(t) \, dt}{t - t_0} . 
\]    
The dominant part of $\mc{K'}$ is  
$
(\mc{K}')^0 (\psi) (t_0) = A(t_0) \psi(t_0) - \frac{ B(t_0) }{\pi i } P.V. \int_L \frac{\psi(t) }{t - t_0 } \, dt. 
$
For any two functions $\var$ and $\psi$ which are H\"older continuous on  $L$, one has
$
\int_{L} ( \mc{K} \var) \psi \, dt  = \int_L \var (\mc{K}' \psi ) \, dt.  
$

\begin{lemma}[Composition of singular integral operators]
Let $\mc{K}_1$ and $\mc{K}_2$ be the two singular integral operators defined by
\begin{gather}
\mc{K}_j (t_0) =  A_i(t_0 ) \var (t_0 ) + \frac{1}{\pi i} \int_{L} \frac{K_j (t_0, t) \var (t)}{t - t_0} \, dt, \quad  t_0 \in L, \quad j=1,2. 
\end{gather}  
Then the composition or the product of $\mc{K}_1$ and $\mc{K}_2$ is  
\begin{equation} \label{soprod}
\begin{aligned}
\mc{K}_1  \mc{K}_2 (t_0 )   
&= \left[ A_1 (t_0) A_2 (t_0) + B_1(t_0) B_2(t_0) \right] \var (t_0) \\ 
&\quad + \frac{1}{\pi i } \int_L \frac{A_1 (t_0) K_2 (t_0, t) + K_1  (t_0, t) A_2 (t) }{t- t_0} \var (t) \, dt
\\  
&\quad + \frac{1}{(\pi i)^2 } \int_L \left[ \int_L \frac{K_1 (t_0, t_1) K_2 (t_1, t) }{(t_1 - t_0) (t - t_1) } \, dt_1 \right] \var (t) \, dt. 
\end{aligned}  
\end{equation} 
\end{lemma}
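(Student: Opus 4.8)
The plan is to compute $\mc{K}_1\mc{K}_2$ directly from the definitions and then reorganize the resulting iterated singular integral by means of the Poincaré--Bertrand commutation formula for double Cauchy principal value integrals.

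First I would set $\psi := \mc{K}_2\var$, so that
\[
\psi(t_1) = A_2(t_1)\var(t_1) + \frac{1}{\pi i}\int_L \frac{K_2(t_1,t)\var(t)}{t-t_1}\,dt,
\]
and substitute this into
\[
\mc{K}_1\psi(t_0) = A_1(t_0)\psi(t_0) + \frac{1}{\pi i}\int_L \frac{K_1(t_0,t_1)\psi(t_1)}{t_1-t_0}\,dt_1 .
\]
Expanding produces four pieces: the product $A_1(t_0)A_2(t_0)\var(t_0)$; the term $\frac{1}{\pi i}\int_L \frac{A_1(t_0)K_2(t_0,t)}{t-t_0}\var(t)\,dt$; the term $\frac{1}{\pi i}\int_L \frac{K_1(t_0,t)A_2(t)}{t-t_0}\var(t)\,dt$ (after renaming $t_1\mapsto t$); and the iterated singular integral
\[
J(t_0) := \frac{1}{(\pi i)^2}\int_L \frac{dt_1}{t_1-t_0}\int_L \frac{K_1(t_0,t_1)K_2(t_1,t)\var(t)}{t-t_1}\,dt .
\]
The first three pieces already account for the ``diagonal'' part $A_1(t_0)A_2(t_0)\var(t_0)$ and for the single-integral term $\frac{1}{\pi i}\int_L \frac{A_1(t_0)K_2(t_0,t)+K_1(t_0,t)A_2(t)}{t-t_0}\var(t)\,dt$ of \re{soprod}.

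The heart of the argument is $J(t_0)$. Setting $\phi(t_1,t) := K_1(t_0,t_1)K_2(t_1,t)\var(t)$, which is H\"older continuous in $t_1$ and in $t$ because $K_1,K_2$ and $\var$ are H\"older continuous, I would apply the Poincaré--Bertrand formula for interchanging the order of integration in a double principal value integral over the $C^1$ curve $L$ (see \cite{MI92}), which in the present normalization reads
\[
\frac{1}{(\pi i)^2}\int_L \frac{dt_1}{t_1-t_0}\int_L \frac{\phi(t_1,t)}{t-t_1}\,dt = \phi(t_0,t_0) + \frac{1}{(\pi i)^2}\int_L\left[\int_L \frac{\phi(t_1,t)}{(t_1-t_0)(t-t_1)}\,dt_1\right]dt .
\]
Since $\phi(t_0,t_0)=K_1(t_0,t_0)K_2(t_0,t_0)\var(t_0)=B_1(t_0)B_2(t_0)\var(t_0)$ and the remaining double integral is exactly the last term of \re{soprod}, adding this expression for $J(t_0)$ to the three elementary pieces gives the asserted identity.

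The step I expect to be the main obstacle is the rigorous use of the Poincaré--Bertrand interchange formula: one must check that all the principal value integrals exist (this follows from the H\"older hypotheses on $A_i$, $K_i$, $\var$ together with the $C^1$ smoothness of $L$) and keep careful track of the $\pi i$ normalizations and of the sign of the extra term $\phi(t_0,t_0)$. Everything else is routine bookkeeping, so I would quote the formula in the form given in \cite{MI92} and cite the relevant section there rather than reprove it.
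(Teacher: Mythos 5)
Your proposal is correct: the paper itself does not reprove this identity but simply cites formula (45.9) of \cite{MI92}, and your derivation (substitute $\mc{K}_2\var$ into $\mc{K}_1$, split into four pieces, and apply the Poincar\'e--Bertrand transposition formula to the iterated principal-value integral, which produces the extra diagonal term $B_1(t_0)B_2(t_0)\var(t_0)$) is exactly the argument given in that reference. Your normalization check is also right: with $(\pi i)^2=-\pi^2$ the classical Poincar\'e--Bertrand constant $-\pi^2\phi(t_0,t_0)$ becomes $+\phi(t_0,t_0)=B_1(t_0)B_2(t_0)\var(t_0)$, matching \re{soprod}.
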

\begin{proof} See formula (45.9) in \cite[p.~119]{MI92}.
\end{proof} 

Denote the double integral in expression \re{soprod} by $\mc{F}_{12} (\var) $. We claim that $\mc{F}_{12}$ is a compact integral operator. Indeed, we have
\begin{align*}
I (t, t_0) = \int_L \frac{K_1 (t_0, t_1) K_2 (t_1, t) }{(t_1 - t_0) (t - t_1) }  \, d t_1
&= \frac{1}{t-t_0} \left[ v(t_0, t)  - v(t, t) \right] 
\end{align*}
where we set $F(t_1, t, t_0) = K_1 (t_0, t_1) K_2 (t_1, t) $, and 
\begin{gather*}
v(t_0, t) = P.V. \int_L \frac{F(t_1, t, t_0) }{t_1 - t_0}  \, dt_1 
=  \int_L \frac{F(t_1, t, t_0) - F(t_0, t, t_0) }{t_1 - t_0} \, dt_1 + (\pi i) F (t_0, t, t_0),
\\ 
v(t, t) = P.V. \int_L \frac{F(t_1, t, t_0) }{t_1 - t}  \, dt_1 
=  \int_L \frac{F(t_1, t, t_0) - F(t, t, t_0) }{t_1 - t} \, dt_1 + (\pi i) F (t, t, t_0). 
\end{gather*}
Then the double integral becomes 
$
\int_L \frac{1}{t -t_0} \left[ v(t_0, t)  - v(t, t) \right]  \var (t) \, dt. 
$
By the proof of \rl{pvintest}, we see that $v$ is H\"older continuous in the first argument. It follows then by \rl{redopt} that $\mc{F}$ is a compact operator.  

Denote the single integral in the above product $\mc{K}_1 \mc{K}_2$ by $\mc{K}_{12} (\var)$. Then $\mc{K}_{12}$ is a singular integral operator, and we can write it as
\begin{align*} 
&\mc{K}_{12} \var (t_0) 
= \frac{A_1  (t_0)B_2 (t_0) + A_2 (t_0) B_1 (t_0)  }{\pi i} \, P.V. \int_{L} \frac{\var(t)}{t - t_0} \, dt \\ 
& + \int_L \frac{ \left[ A_1 (t_0) K_2 (t_0, t) + K_1  (t_0, t) A_2 (t)  \right] - \left[ A_1 (t_0) K_2 (t_0, t_0) + K_1  (t_0, t_0) A_2 (t_0) \right] }{t- t_0} \var (t)\, dt. 
\end{align*}
The second integral above is clearly a compact integral operator. Then in view of \re{soprod} $\mc{K}_1 \mc{K}_2$ will be a Fredholm operator if one has
\eq{redopteqn} 
0 = A_1  (t_0)B_2 (t_0) + A_2 (t_0) B_1 (t_0) ,  \quad  t_0 \in L. 
\eeq 

\df{redoptdf}
Let $\mc{K}_2$ be a singular integral operator defined on $C^{\beta} (L)$, for some $0 < \beta < 1$, and let $\mc{K}_1$ be another singular integral operator such that the composition $\mc{K}_1 \mc{K}_2$ is a Fredholm operator. Then we call $\mc{K}_1$ a \emph{reducing integral operator} of $\mc{K}_2$.  
\edf

Given an operator $\mc{K}_2$ with the corresponding $A_2$ and $B_2$, one can choose another operator $\mc {K}_1$ whose corresponding $A_1$ and $B_1$ satisfy condition \re{redopteqn}. In particular one can choose $\mc{K}_1$ to be the dominant part of the adjoint of $\mc{K}_2$, since in this case $A_1(t_0) = A_2(t_0)$ and $B_1(t_0) = - B_2 (t_0)$. 

\subsection{Some other results}  
\ 

The following result is stated and used in Vekua's proof \cite{VA62} and we shall provide the details.   
\pr{RHpind}
Let $w$ be a holomorphic function on the unit disk $\D$, continuous up to the boundary circle $S^1$, and satisfying the following boundary condition: 
\[
Re ( z^{-n} w) = \gm (z), \quad n > 0, \quad \text{on $S^1$}, 
\]
where we assume $\gm \in C^{\beta} (S^1)$, for $0 < \beta < 1$. Then 
$ 
w = z^{n} \mc{S} \gm + \sum_{k=0}^{2n} c_{k} z^{k}$ for $ z \in \D, 
$
where $\mc{S} \gm $ is the Schwarz integral 
\[
\mc{S} \gm (z) := \frac{1}{2 \pi i} \int_{S^1} \gm(t) \frac{t+ z}{t-z} \frac{dt}{t}, \quad z \in \D. 
\]
\epr
\begin{proof} 
Since $w$ is holomorphic, one can write  $w (z) = \sum_{k=0}^{\infty} c_{k} z^{k}$. Then
$
z^{-n} w (z) = \sum_{k=0}^{\infty} c_{k} z^{k-n} 
= \sum_{j=-n}^{-1} c_{n+j} z^{j} + \sum_{j=0}^{\infty} c_{n+j} z^{j}. 
$ 
Thus on $S^1$, one has
\begin{align*} 
Re ( \sum_{j=0}^{\infty} c_{n+j} z^{j} ) &= Re (z^{-n} w) - Re ( \sum_{j=-n}^{-1} c_{n+j} z^{j} ) 
= \gm (z) - Re ( \sum_{j=-n}^{-1} c_{n+j} z^{j} ). 
\end{align*}
It is easy to see that $\sum_{j=0}^{\infty} c_{n+j} z^{j}$ is holomorphic in $\D$, and in addition by the assumption the series is continuous up to $S^1$. Hence it can be represented as the Schwarz integral of the real part of its boundary value: 
\[
\sum_{j=0}^{\infty} c_{n+j} z^{j} = \mc{S} (\gm) (z) - \mc{S} (  Re  \sum_{j=-n}^{-1} c_{n+j} \zeta^{j}   ) (z), \quad z \in \D. 
\]
Multiplying both sides by $z^{n}$ we get
\eq{n+jsum}
\sum_{j=0}^{\infty} c_{n+j} z^{n+j} = z^{n} \mc{S} (\gm)(z) - z^{n} \mc{S} (  Re  \sum_{j=-n}^{-1} c_{n+j} \zeta^{j}  ) (z), \quad z \in \D. 
\eeq
For $\zeta \in S^1$, one has 
$
2 Re  \sum_{j=-n}^{-1} c_{n+j} \zeta ^{j}   =   \sum_{j=-n}^{-1} c_{n+j} \zeta^{j} 
+  \sum_{j=-n}^{-1} \ov{c_{n+j}} \zeta ^{-j} 
=   \sum_{j=-n}^{-1} c_{n+j} \zeta^{j} +   \sum_{j=1}^{n} \ov{c_{n-j}} \zeta ^{j}. 
$ 
Let 
\eq{varnsum}
\var (z) = \mc{S} ( Re  \sum_{j=-n}^{-1} c_{n+j} \zeta^{j} )  (z), \quad z \in \D, 
\eeq 
in other words, $\var$ is the holomorphic function in $\D$ whose boundary value has real part equal to 
$  Re \left( \sum_{j=-n}^{-1} c_{n+j} \zeta^{j} \right) $ . 
Then we claim that $ \var$ takes the form $\Si_{k=0}^n  a_{k} z^{k} $.
Indeed, since $\var (z)$ is holomorphic in $ \D$, it has power series expansion $\var = \sum_{k=0}^{\infty} a_{k} z^{k}$, with
\begin{align*} 
a_{k} &= \frac{1}{2 \pi i} \int_{|t|= 1} \frac{\var(t)}{t^{k+1}} 
= \frac{1}{2 \pi i} \int_{|t|= r} \frac{\yh \sum_{j=-n}^{-1} c_{n+j} t^{j} \,dt + \yh \sum_{j=1}^{n} \ov{c_{n-j}} t^{j} }{t^{k+1}} \, dt, \quad k \geq 0. 
\end{align*}
For $k > n$, we have $a_{k} = 0$, so the claim holds. Substituting \re{varnsum} into \re{n+jsum} we obtain 
\begin{align*} 
\sum_{j=0}^{\infty} c_{n+j} z^{n+j} &= z^{n} \mc{S} (\gm) (z) - z^{n} \mc{S} (  Re  \sum_{j=-n}^{-1} c_{n+j} \zeta^{j} ) (z), \quad z \in \D \\
&= z^{n} \mc{S} (\gm) (z) - z^{n} \sum_{k=0}^{n} a_k z^{k} 
= z^{n} \mc{S}(\gm)(z) - \sum_{k=n}^{2n} a_k' z^{k}. 
\end{align*}
Finally, $w(z) = \sum_{j=0}^{n-1} c_{j} z^{j} +  \sum_{j=n}^{\infty} c_{j} z^{j}$ implies
$
w(z) =  z^{n} \mc{S} (\gm) (z) + \sum_{n=0}^{2n} c_{k}' z^{k}, 
$
for some $c_k'$. 
\end{proof}

\pr{L1HnC0}
Let $\Om$ be a bounded domain in $\C$, and $0 < \beta < 1$. Suppose $\{ f_{n} \}$ is a sequence of functions in $C^{\beta} (\ov{\Om})$ satisfying $|f_{n}|_{C^{\beta}(\ov{\Om})} \leq C$ and $|f_n|_{L^1(\Om) } \to 0$. Then $|f_{n}|_{C^{0}(\ov{\Om})} \to 0$.  
\epr 
\nid \tit{Proof.}
Suppose $|f|_{C^{0} (\ov{\Om})} > \ve_1$. Then there exists $x_0 \in \Om$ such that $f (x_0) > \ve_1$. Since $| f(x) - f(x_0) | \leq |f|_{\beta} | x - x_0|^{\beta}$, we have $| f(x)  - f(x_0) | < \frac{\ve_1}{2}$ whenever $| x - x_0 | < \del = \left( \frac{\ve_1}{2 |f|_{\beta} } \right)^{\frac{1}{\beta}}$. Let $B(x_0, \del)$ be the ball of radius $\del$ centered at $x_0$, then for all $x \in B(x_0, \del)$, $| f(x)|  > \frac{\ve_1}{2}$. Consequently
$
| f |_{L^{1} (\Om)} \geq \int_{B(x_0, \del)} | f (x) | \, dx \geq \frac{\ve_1}{2} \pi \del^{2} = C_{\beta} | f |_{\beta}^{- \frac{2}{\beta}}  \ve_1^{\frac{\beta+2}{\beta}}, 
$
where $C_{\beta} = \pi 2^{- \frac{\beta+2}{\beta}} $. Note that if $x_0$ happen to be on the boundary $b \Om$, then the above estimates hold with an extra constant. 

Given $\ve_0$, we set $\ve_1 = C_{\beta}' | f |_{\beta}^{\frac{2}{2 + \beta}} \ve_0^{\frac{\beta}{2+ \beta}}$, where $C'_{\beta} = C_{\beta}^{- \frac{\beta}{2+ \mu}}$. Then by above we see that whenever $| f |_{L^{1} (\Om)}  < \ve_0$, we have $| f |_{C^{0} (\ov{\Om} )}  \leq \ve_1$. By taking $\ve_0 \to 0$ we get the conclusion. 
\hfill \qed

\begin{lemma} \label{argfnt} 
Let $\Gm^{\la}: \Om \to  \Om^{\la}$ be a $\mc{C}^{1+\mu,0}$ embedding, where $0 < \mu < 1$. For $t, t_0 \in b \Om^{\la}$, let $t^{\la} := \Gm^{\la} (t)$ and $t_0^{\la} :=  \Gm^{\la}  (t_0)$. Define 
 $ 
K (t, t_0, \la) := \pa_{\tau_{t_0^{\la}} } \arg (t_0^{\la} - t^{\la} ).  
$ 
\\
(i)We have
$
| K (t, t_0, \la) | \leq C_{1+ \mu, 0} | t_0 - t |^{\mu-1}.  
$
\\
(ii) For any $t_0, t_1, t_2 \in b \Om $ with $| t_1 - t_0 | \geq 2| t_1 - t_2|$, we have
\[
|K(t_1, t_0, \la) - K (t_2, t_0, \la) | \leq C_{1+ \mu, 0} \, \frac{|t_1 - t_2| }{| t_0 - t_1 |^{2- \mu} }. 
\]
\end{lemma}
\nid \tit{Proof.}
(i) We parametrize $b \Om$ by $\rho(s)$ such that $ds$ is the arc-length element agreeing with the standard orientation of $b \Om$. Then $\rho^{\la} (s) = \Gm^{\la} \circ \rho$ gives a parametrization of $b \Om^{\la}$. Let $t= \rho(s)$ and $t_0 = \rho(s_0)$. Then $t^{\la} = \rho^{\la} (s)$ and $t_0^{\la} = \rho^{\la}  (s_0)$, and we can write $K$ as
$ 
K(t, t_0, \la) 
= \pa_{\tau_{t_0^{\la}} } \arg (t_0^{\la} - t^{\la} ) 
= \left| \pa_{s_0} \arg [ \rho^{\la}(s_0) - \rho^{\la}(s)] \right|. 
$ 
Denote by $\nu^{\la}(s_0)$ the outer unit normal vector to $b \Om^{\la}$ at $t_0^{\la} = \rho^{\la}(s_0)$. Then we have
\eq{argder} 
\pa_{s_0}  \arg [\rho^{\la}(s_0) - \rho^{\la}(s)] = \frac{\nu^{\la}(s_0) \cdot (\rho^{\la}(s_0) - \rho^{\la}(s)) }{| \rho^{\la}(s_0) - \rho^{\la}(s) |^{2} }. 
\eeq
Since 
$ 
\nu^{\la}(s_0) \cdot (\rho^{\la}(s_0 ) - \rho^{\la}(s)) = \nu^{\la}(s_0) \cdot \int_{s}^{s_0} (\rho^{\la})'(r) - (\rho^{\la}) '(s_0 ) \, dr 
$ 
and $\rho^{\la} \in \mc{C}^{1+\mu,0}$, the above expression is bounded by $C_{1+\mu,0} |s_0 - s|^{1+ \mu}$. Since $\Gm^{\la} \in \mc{C}^{1+\mu,0}$, we have $c_{1,0} |s_0- s| \leq | \rho^{\la}(s_0) - \rho^{\la}(s) | \leq C_{1,0}  |s_0 - s| $. Hence from \re{argder} we obtain 
$
|K(t,t_0,\la) | \leq C_{1+\mu,0} \frac{|s_0 - s|^{1+ \mu}}{|s_0 - s|^2 } \leq C_{1+\mu, 0} 
| s_0 - s|^{\mu- 1} \leq C_{1+\mu, 0}' | t_0 - t|^{\mu-1}.  
$

(ii) Using formula \re{argder} we have
\begin{align*}
| K(t_1, t_0, \la) &- K(t_2, t_0, \la) | 
= \left| \pa_{s_0} \arg [\rho^{\la} (s_0) - \rho^{\la} (s_1) ] -  \pa_{s_0} \arg [\rho^{\la} (s_0) - \rho^{\la} (s_2) ] \right|   \\
&= \left| \frac{\nu^{\la}(s_0 ) \cdot (\rho^{\la}(s_0) - \rho^{\la}(s_1)) }{| \rho^{\la}(s_0) - \rho^{\la}(s_1) |^{2} } - \frac{\nu^{\la}(s_0) \cdot (\rho^{\la}(s_0) - \rho^{\la}(s_2) }{| \rho^{\la}(s_0) - \rho^{\la}(s_2) |^{2} } \right|. 
\end{align*}
The above expression is bounded by the sum $K_1 + K_2$, where
\begin{gather*}
K_1 = \frac{1}{|\rho^{\la}(s_0) - \rho^{\la}(s_1)|^2 } \left| \nu^{\la} (s_0) \cdot [\rho^{\la} (s_2) - \rho^{\la} (s_1) ]  \right|,  
\\ 
K_2 = \left| \nu^{\la}(s_0) \cdot [\rho^{\la}(s_0) - \rho^{\la}(s_2)] \right|
\left| \frac{1}{| \rho^{\la}(s_0) - \rho^{\la}(s_1) |^{2} } - \frac{1}{| \rho^{\la}(s_0) - \rho^{\la}(s_2) |^{2}}  \right|. 
\end{gather*}
We estimate each $K_i$. For $K_1$ we have 
\begin{align*}
K_1 &= \frac{1}{|\rho^{\la}(s_0) - \rho^{\la}(s_1)|^2 } 
\left| \nu^{ \la} (s_0 ) \cdot \int_{s_1}^{s_2} (\rho^{\la} )'(r) - ( \rho^{\la} )' (s_0) \, dr \right| \\
&\leq C_{1+\mu,0} \frac{1}{|s_0 - s_1|^2 } | s_1 - s_2|^{1+\mu} 
= C_{1+\mu,0}  \frac{|s_1- s_2| }{|s_0 - s_1|^{2- \mu} } 
\leq C_{1+\mu,0}'  \frac{|t_1- t_2| }{|t_0 - t_1|^{2- \mu} }, 
\end{align*}
where we used $|t_1 - t_0 | \geq 2| t_1 - t_2| $.  
For $K_2$, we have 
\begin{align*}
&K_2 = \left| \nu^{\la}(s_0) \cdot [\rho^{\la}(s_0) - \rho^{\la}(s_2)] \right| 
\frac{\left| |\rho^{\la}(s_0) - \rho^{\la} (s_2) |^2 - |\rho^{\la}(s_0) - \rho^{\la} (s_1) |^2  \right| }{| \rho^{\la}(s_0) - \rho^{\la}(s_1) |^{2} | \rho^{\la}(s_0) - \rho^{\la}(s_2) |^{2} } \\
&\leq  \left| \nu^{\la}(s_0) \cdot  [ \rho^{\la}(s_0) - \rho^{\la}(s_2) ]\right| \frac{| \rho^{\la} (s_1) - \rho^{\la} (s_2) | \left( |\rho^{\la} (s_0) - \rho^{\la} (s_2) | + |\rho^{\la} (s_0) - \rho^{\la} (s_1) |  \right)   }{| \rho^{\la}(s_0) - \rho^{\la}(s_1) |^{2} | \rho^{\la}(s_0) - \rho^{\la}(s_2) |^{2} }, 
\end{align*}
where the constant $C_0$ is uniform in $\la$.  Since 
\begin{align*} 
\left| \nu^{\la}(s_0) \cdot [\rho^{\la} (s_0)  - \rho ^{\la} (s_2) ] \right| 
&= \left| \nu^{\la}(s_0) \cdot \int_{s_2}^{s_0} (\rho^{\la})' (r) - (\rho^{\la}) ' (s_0)  \, dr  \right| 
\leq C_{1+\mu,0} |s_0 - s_2|^{1+\mu}, 
\end{align*}
we have 
$ 
K_2 \leq C_{1+ \mu,0}  |s_0 - s_2|^{1+ \mu}  |s_1 - s_2 | \left( \frac{1}{|s_0 - s_1|^2 |s_0 - s_2| } 
+ \frac{1}{|s_0 - s_1| |s_0 - s_2|^2 }  \right). 
$ 
Since $|t_1 - t_0 | \geq 2 |t_1- t_2|$, we have $|t_2 - t_0| \geq |t_1 - t_0 | - | t_1 - t_2| \geq \yh  | t_1 - t_0| $, and  
$ | t_2  - t_0| \leq  |t_1 - t_0 |  + | t _1+ t_2| \leq \frac{3}{2} |  t_1 - t_0|$. Then $c_0 |s_1 - s_0| \leq |s _2 - s_0 | \leq C_0' | s_1 - s_0 | $. 
Consequently, 
$
K_2 \leq C_{1+\mu,0}' |s_1- s_2| \frac{|s_0 - s_1|^{1+ \mu} }{|s_0 - s_1|^3}  
= C_{1+\mu,0}'  \frac{|s_1- s_2|}{|s_0 - s_1|^{2- \mu}}.  $
Combining the bounds for $K_1$ and $K_2$ we get the desired estimate. 
\hfill \qed



\newcommand{\doi}[1]{\href{http://dx.doi.org/#1}{#1}}
\newcommand{\arxiv}[1]{\href{https://arxiv.org/pdf/#1}{arXiv:#1}}

\def\MR#1{\relax\ifhmode\unskip\spacefactor3000 \space\fi%
\href{http://www.ams.org/mathscinet-getitem?mr=#1}{MR#1}}

\bibliographystyle{plain} 

\bibliography{master} 

\end{document}